\newtheorem{theorem}{Theorem}[section]
\newtheorem{definition}[theorem]{Definition}
\newtheorem{example}[theorem]{Example}
\newtheorem{proposition}[theorem]{Proposition}
\newtheorem{lemma}[theorem]{Lemma}
\newtheorem{remark}[theorem]{Remark}
\newtheorem{conjecture}[theorem]{Conjecture}
\newtheorem{corollary}[theorem]{Corollary}
\renewcommand{\AA}{\mathbb{A}}
\newcommand{\altA}{\mathcal{A}}
\newcommand{\CC}{\mathbb{C}}
\newcommand{\EE}{\mathcal{E}}
\newcommand{\FF}{\mathcal{F}}
\newcommand{\GG}{\mathbb{G}}
\newcommand{\altG}{\mathcal{G}}
\newcommand{\kk}{\Bbbk}
\newcommand{\KK}{\mathbb{K}}
\newcommand{\LL}{\mathbb{L}}
\newcommand{\MM}{M}
\newcommand{\NN}{\mathbb{N}}
\newcommand{\OO}{\mathcal{O}}
\newcommand{\PP}{\mathbb{P}}
\newcommand{\PPP}{P}
\newcommand{\QQ}{\mathbb{Q}}
\newcommand{\altT}{\mathcal{T}}
\newcommand{\WW}{\mathcal{W}}
\newcommand{\Xst}{\mathfrak{X}}
\newcommand{\Yst}{\mathfrak{Y}}
\newcommand{\ZZ}{\mathbb{Z}}
\newcommand{\Zst}{\mathfrak{Z}}
\newcommand{\Mst}{\mathfrak{M}}
\newcommand{\Msp}{\mathcal{M}}
\newcommand{\unit}{1}
\newcommand{\ICS}{\mathcal{IC}}
\newcommand{\DTS}{\mathcal{DT}}
\newcommand{\Part}{\mathcal{P}}
\newcommand{\AAA}{{\mathbb{A}^1}}
\newcommand{\muu}{{\hat{\mu}}}
\newcommand{\fst}{\mathfrak{f}}
\newcommand{\gst}{\mathfrak{g}}
\newcommand{\SSm}{\mathfrak{S}}
\newcommand{\PPro}{\mathfrak{P}}
\newcommand{\Lin}{\mathcal{L}}
\newcommand{\Ab}{\mathcal{A}}
\newcommand{\mm}{\mathfrak{m}}
\DeclareMathOperator{\Hom}{Hom}
\DeclareMathOperator{\End}{End}
\DeclareMathOperator{\Ext}{Ext}
\DeclareMathOperator{\Vect}{Vect}
\DeclareMathOperator{\rep}{-mod}
\DeclareMathOperator{\Ka}{K}
\DeclareMathOperator{\Aut}{Aut}
\DeclareMathOperator{\codim}{codim}
\DeclareMathOperator{\Perv}{Perv}
\DeclareMathOperator{\MHM}{MHM}
\DeclareMathOperator{\rat}{rat}
\DeclareMathOperator{\IC}{IC}
\DeclareMathOperator{\DT}{DT}
\DeclareMathOperator{\Sym}{Sym}
\DeclareMathOperator{\Alt}{Alt}
\DeclareMathOperator{\Sch}{Sch}
\DeclareMathOperator{\Spec}{Spec}
\DeclareMathOperator{\Gl}{GL}
\DeclareMathOperator{\id}{id}
\DeclareMathOperator{\Mod}{-mod}
\DeclareMathOperator{\Tr}{Tr}
\DeclareMathOperator{\Crit}{Crit}
\DeclareMathOperator{\Pic}{Pic}
\DeclareMathOperator{\Con}{Con}
\DeclareMathOperator{\Char}{char}
\DeclareMathOperator{\St}{ASt}
\DeclareMathOperator{\QSt}{QSt}
\DeclareMathOperator{\rk}{rk}
\DeclareMathOperator{\Ho}{H}
\DeclareMathOperator{\pr}{pr}
\DeclareMathOperator{\SF}{SF}
\DeclareMathOperator{\Th}{Th}
\DeclareMathOperator{\Sm}{\mathcal{S}}
\DeclareMathOperator{\Bl}{Bl}
\DeclareMathOperator{\CTh}{CaTh}
\DeclareMathOperator{\DM}{DM}
\DeclareMathOperator{\Gr}{Gr}
\DeclareMathOperator{\cl}{cl}
\DeclareMathOperator{\cone}{Cone}
\DeclareMathOperator{\Pro}{\mathcal{P}}
\DeclareMathOperator{\obj}{Obj}
\DeclareMathOperator{\gr}{gr}
\DeclareMathOperator{\Pot}{Pot}
\DeclareMathOperator{\Coh}{Coh}
\DeclareMathOperator{\Proj}{Proj}
\DeclareMathOperator{\rad}{rad}
\DeclareMathOperator{\Ta}{T}
\DeclareMathOperator{\Cat}{\mathcal{C}}
\DeclareMathOperator{\Mat}{Mat}
\title[DT-theory for categories of dimension one with potential]{Donaldson--Thomas theory for categories of homological dimension one with potential}
\author{Ben Davison \and Sven Meinhardt}
\begin{document}

\begin{abstract}
The aim of the paper is twofold. Firstly, we give an axiomatic presentation of Donaldson--Thomas theory for categories of homological dimension at most one with potential. In particular, we provide rigorous proofs of all standard results concerning the integration map, wall-crossing, PT--DT correspondence, etc.\ following Kontsevich and Soibelman. We also show the equivalence of their approach and the one given by Joyce and Song. Secondly, we relate Donaldson--Thomas functions for such a category with arbitrary potential to those with zero potential under some mild conditions. As a result of this, we obtain a geometric interpretation of Donaldson--Thomas functions in all known realizations, i.e.\ mixed Hodge modules, perverse sheaves and constructible functions.   	
\end{abstract}

\maketitle

\tableofcontents

\section{Introduction}

In analogy to the classical Casson invariant, R.\ Thomas invented a holomorphic Casson invariant in his PhD-thesis \cite{Thomas1} by constructing a perfect obstruction theory on the moduli space of Gieseker stable sheaves on a compact Calabi--Yau 3-fold. If there are no strictly semistable sheaves of a particular Chern character, he ``integrated'' the resulting degree zero cycle associated to the perfect obstruction theory to define his invariant which is nowadays known as the ($\ZZ$-valued) Donaldson--Thomas invariant. \\
A couple of years later, K.\ Behrend proved in \cite{Behrend} that the integral over the degree zero cycle of a symmetric perfect obstruction theory can also be written as the integral over a $\ZZ$-valued constructible function, the so-called Behrend function, with respect to the compactly supported Euler characteristic. Writing the Donaldson--Thomas invariant in this way, its motivic nature became more transparent which was the starting point for generalizing the invariant to cases in which strictly semistable objects are present. \\
There were essentially two independent approaches aiming at generalizing Thomas's invariant to all semistable sheaves. One was developed by M.\ Kontsevich and Y.\ Soibelman in two papers \cite{KS1}, \cite{KS2}. A survey of the theory can be found in \cite{KS3}. Their project to develop a motivic theory of all reasonable Calabi--Yau 3-categories is quite ambitious and involves a lot of technical material which was often only sketched in their papers. \\
A second approach is due to D.\ Joyce, who developed in a series of papers \cite{JoyceI}, \cite{JoyceCF}, \cite{JoyceII},   \cite{JoyceIII}, \cite{JoyceMF}, \cite{JoyceIV} and \cite{JoyceDT} (in collaboration with Y.\ Song) a framework to give a rigorous definition of $\ZZ$-valued Donaldson--Thomas invariants.  A survey of the theory can be found in \cite{JoyceDTS}.\\
It turns out that both approaches not only apply to sheaves on (compact) Calabi--Yau 3-folds, but also to representations of quivers with potential. In fact, the theory is much easier to handle in this case as many technical difficulties disappear, namely the need of derived algebraic geometry and orientation data. On the other hand, quivers with potential describe the ``local'' structure of any polystable object in a Calabi--Yau 3-category. Hence, understanding this class of examples is a big step toward more general Calabi--Yau 3-categories, such as for instance the derived category of coherent sheaves on a compact Calabi--Yau 3-manifold. \\
The case of quivers with zero potential has been studied quite intensively by M.\ Reineke in a series of papers \cite{Reineke1}, \cite{Reineke_HN} ,\cite{Reineke_counting}, \cite{Reineke2}, \cite{Reineke3}, \cite{Reineke4} even before Kontsevich, Soibelman and Joyce revolutionized Donaldson--Thomas theory. Indeed, for zero potential, Donaldson--Thomas theory is very much related to classical work going back to Ringel, Hall and many other mathematicians. The complete understanding of Donaldson--Thomas theory for quivers with zero potential has been achieved by M.\ Reineke and the second author in \cite{MeinhardtReineke}. It turns out that all results for quivers with zero potential can be generalized to arbitrary abelian categories ``of homological dimension at most one''. This will include sheaves on smooth projective curves but also on smooth projective surfaces under special conditions. It will also cover the case of representations or sheaves satisfying some locally closed condition. For example, representations of cycles in the quiver should act invertibly or sheaves should not meet a certain divisor. All this material can be found in \cite{Meinhardt4}.\\

The aim of the present paper is to extend the results in \cite{Meinhardt4} to categories of homological dimension at most one with arbitrary potential, generalizing the case of quiver with potential. We provide a rigorous approach to Donaldson--Thomas theory  including a geometric interpretation of Donaldson--Thomas invariants in reasonable realizations. We will mostly follow the approach suggested by Kontsevich and Soibelman, but also show the equivalence of this version with the approach developed by Joyce if both theories can be applied. Our contribution to the field is twofold. Firstly, we give rigorous proofs in an axiomatic framework of many of the facts in (motivic) Donaldson--Thomas theory which are folklore to the experts but never written down properly, hoping to close the gap in the literature. Many of the proofs were already sketched by Kontsevich and Soibelman, and we will mostly follow their ideas. However, the idea of looking at a Donaldson--Thomas ``functions'' extending the Behrend function instead of just ``numbers'' was motivated by the paper \cite{JoyceDT} of Joyce and Song, where the concept was already sketched. We believe that Donaldson--Thomas ``functions'' are more fundamental than invariants, and by developing this idea rigorously, we were finally able to relate 
Donaldson--Thomas functions for categories of homological dimension at most one with arbitrary potential to those defined for the same category but with zero potential. Using the results of \cite{Meinhardt4}, we can give a geometric interpretation of Donaldson--Thomas functions in all known realizations. This is the second contribution of the paper to Donaldson--Thomas theory. \\

Let us sketch the idea how to tackle potentials. In fact, it turns out that the potential will only play a minor part and does not enter the formalism in a substantial way. It will just provide examples of a much more general formalism which we explain now. The central object of interest is that of a ($\lambda$-)ring theory which is a structure putting the following two example into an axiomatic framework. If you are familiar with constructible function, you know that such functions can be pulled back along morphisms and one can even define a push-forward along morphisms by integration along the fibers with respect to the (compactly supported) Euler characteristic. Moreover, given  constructible function on $X$ and $Y$, one can form their ``exterior product'' producing a constructible function on $X\times Y$. Here is an example which is more complicated but shares the same properties. Instead of constructible functions on $X$, we consider constructible sheaves or more general complexes of sheaves on $X$ with constructible cohomology. In fact, we are only interested in their classes in the associated Grothendieck group. Such (complexes of) sheaves can be pulled back, and we can also form the (derived) push-forward (with compact support) and exterior products. Thus, a ring theory is a rule associating an abelian group to every scheme $X$ along with pull-backs, push-forwards and exterior products. In fact, there is also a $\lambda$-ring structure involved which can be seen at best in the case of constructible sheaves. There is an obvious morphism $\oplus:\Sym(X)\times \Sym(X)\to \Sym(X)$ turning the disjoint union $\Sym(X)$ of all symmetric powers of $X$ into a commutative monoid and given by concatenating two unordered tuples of points in $X$. Pushing down the exterior product of two complexes of  sheaves on $\Sym(X)$ along $\oplus$, we end up with a symmetric monoidal tensor product on the derived category of complexes of sheaves on $\Sym(X)$ with constructible cohomology. As the product preserves the perverse t-structure, there is a standard way to construct a $\lambda$-ring structure on the associated Grothendieck group, and a $\lambda$-ring theory will also formalize this. Given a complex of sheaves on $X$ with constructible cohomology and a point $x\in X$, we can take the alternating sum of dimensions of the stalk of the complex at $x$ and obtain a constructible function by varying $x$. This construction descends to the Grothendieck group of sheaves providing us with a first example of a morphism between theories. There are also theories which do not come from a ``theory of something'' but just exist by formal arguments doing some abstract non-sense. Nevertheless, these theories play an important role as we will see shortly. At this point we should mention two important things. Firstly, pull-backs and push-forwards might not exist for all morphisms and we have to specify two classes $\Sm$ (for pull-back) and $\Pro$ (for push-forwards) for which these operations are defined. To make the hole story work, the pair $(\Sm,\Pro)$ has to satisfy some properties. In the case of constructible functions, the push-forward is for example only defined for finite type morphisms. Secondly, we can replace schemes $X$ by schemes $X\xrightarrow{f}\MM$ over a fixed base leading us to theories over $\MM$. To generalize the previous discussion, $\MM$ should be a commutative monoid in the category of schemes over some fixed ground field $\kk$. So far we only considered the case $\MM=\Spec\kk$ and this is exactly the situation considered (implicitly) in \cite{Meinhardt4}. Another important case is $\MM=(\AA^1,+,0)$, because vanishing cycles can be interpreted as morphisms from a particular ``abstract nonsense'' theory to the theory of interest, e.g.\ constructible functions or complexes of sheaves with constructible cohomology. Let us point out, that both theories, the abstract one and the one of interest were originally defined over $\Spec\kk$. The abstract one comes  actually from the initial object in the category of ring theories over 
$\Spec \kk$. But it is possible to pull-back theories along monoid homomorphisms, e.g. $\AA^1\to \Spec\kk$. The vanishing cycle interpreted as morphism of theories does only exist on the pull-backs to $\AA^1$, in other words, the pull-back functor from theories over $\Spec\kk$ to theories over $\AA^1$ is not surjective on morphisms, i.e.\ not full. \\
In fact, the situation we are interested in, is the case where $\MM=\Msp$ is the moduli space of objects in our category $\Ab$ of homological dimension one. Taking direct sums of objects, turns $\Msp$ into a commutative monoid. Using a monoid homomorphism $W:\Msp\to \AA^1$, this is our version of a potential, we can pull back vanishing cycles discussed earlier to obtain examples of the following situation: a $\lambda$-ring theory $R$ over $\Msp$ along with a morphism $\phi:\underline{\ZZ}(Sm^{proj})\longrightarrow R$ from the pull-back $\underline{\ZZ}(Sm^{proj})$ of the initial ring-theory over $\Spec\kk$ along $\Msp\to \Spec\kk$ to $R$. This kind of morphism can be interpreted as a very general $R$-valued vanishing cycle for regular functions with values in $\Msp$ instead of $\AA^1$. Given such a situation, we will define   
Donaldson--Thomas ``functions'' $\DTS(\Ab,\phi)$ in $R(\Msp\xrightarrow{\id}\Msp)$ which should be interpreted as some very general  function on $\Msp$. In the case of constructible functions, this is indeed just a constructible function on $\Msp$ extending the Behrend function on some singular subscheme depending on the potential. In the case of constructible sheaves, it is the class of some complex of sheaves in the associated Grothendieck group. \\
Even though this machinery looks rather abstract, it provides us with the correct formalism to prove our main result in very few lines.
\begin{theorem}[Existence and Uniqueness] 
Assume that $\phi$ has a factorization $\phi:\underline{\ZZ}(Sm^{proj})\xrightarrow{\unit} \underline{\Ka}_0(\Sch^{ft})[\LL^{-1/2}] \xrightarrow{\phi} R$ for some morphism $\phi$ of $\lambda$-ring theories\footnote{We apologize for the confusing notation. But in practice it will be clear which $\phi$ to take.}. Then, 
\[ \DTS(\Ab,\phi) = \phi_{\id} \big(\DTS(\Ab,mot)\big). \]
is a Donaldson--Thomas function and $\DTS(\Ab,\phi)_d$ is uniquely determined up to an element annihilated by $[\PP^{gcd(d)-1}]$. In particular, its image under the map $R(\id)\longrightarrow R(\id)[[\PP^n]^{-1}\mid n\in \NN]$ is unique. 
\end{theorem}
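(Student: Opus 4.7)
The plan is to reduce the claim to the motivic case and transport it along $\phi$. First I would set
\[ \DTS(\Ab,\phi) := \phi_{\id}\bigl(\DTS(\Ab,mot)\bigr) \in R(\Msp \xrightarrow{\id} \Msp), \]
where $\DTS(\Ab,mot)$ is the motivic DT function, whose existence and uniqueness (up to $[\PP^{\gcd(d)-1}]$-torsion) I take as established in the universal case $R=\underline{\Ka}_0(\Sch^{ft})[\LL^{-1/2}]$ by the preceding sections of the paper, following the axiomatic framework laid out in the introduction and the earlier work \cite{Meinhardt4}.

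The next step is to verify that this candidate satisfies the axioms of a DT function in $R$. Because $\phi$ factors as $\unit$ followed by a morphism of $\lambda$-ring theories, the structural operations entering the definition---pullbacks along $\Sm$-morphisms, pushforwards along $\Pro$-morphisms, exterior products, and the $\lambda$-operations induced by $\oplus\colon\Msp\times\Msp\to\Msp$---are all strictly preserved by $\phi$. Consequently the characteristic identity satisfied by $\DTS(\Ab,mot)$, a Harder--Narasimhan type recursion relating pushforwards from stacks of semistable objects to moduli schemes and weighted by the class $[\PP^{\gcd(d)-1}]$ coming from the $\GG_m$-scaling automorphism, pushes forward via $\phi_{\id}$ to the analogous identity in $R(\id)$. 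For the uniqueness half, I would observe that the same recursion only pins down $\DTS(\Ab,\phi)_d$ modulo annihilators of $[\PP^{\gcd(d)-1}]$, an ambiguity inherited from the motivic case and cleared by localising at the classes $[\PP^n]$, $n\in\NN$.

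The main obstacle is checking the naturality of the motivic identity under morphisms of $\lambda$-ring theories, and in particular the compatibility with the $\lambda$-structure. The motivic Grothendieck ring carries a rather rigid power structure, whereas an arbitrary ring theory $R$ may admit its own $\lambda$-operations, so one has to verify that the symmetric-product maps attached to $\oplus$ commute with $\phi$ and that no auxiliary choices (e.g.\ the square root $\LL^{-1/2}$) interfere. Once this naturality is in place, the theorem becomes a formal consequence of applying $\phi_{\id}$ to the motivic statement together with the functoriality built into the definition of a $\lambda$-ring theory; no further category-specific argument is required.
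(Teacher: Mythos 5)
Your existence half agrees with the paper: set $\DTS(\Ab,\phi)=\phi_{\id}(\DTS(\Ab,mot))$, note that a morphism of $\lambda$-ring theories preserves pushforwards, exterior products and the $\sigma^n$-operations, so it carries the motivic defining equation over to $R$. This is exactly the paper's Remark preceding the theorem plus the existence theorem for $\DTS(\Ab,mot)$.

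The uniqueness half, however, is essentially missing. You write that the recursion ``only pins down $\DTS(\Ab,\phi)_d$ modulo annihilators of $[\PP^{\gcd(d)-1}]$, an ambiguity inherited from the motivic case,'' but this is the statement to be proved, not an observation, and the ``inheritance'' logic is backwards: uniqueness cannot be transported along $\phi_{\id}$, since that map need not be injective, so whatever ambiguity exists in $R(\id)$ must be controlled internally in $R$. Concretely, for a single framing vector $f$ the defining equation only forces the difference $\Delta_d$ of two solutions to satisfy $[\PP^{fd-1}]\Delta_d=0$ (because $\Sym$ is an isomorphism onto $1+F^1R$ over the complete filtration). The paper then does a genuine computation: writing $[n]_\LL=1+\LL+\cdots+\LL^{n-1}=[\PP^{n-1}]$ and using the division identity $[n]_\LL=\LL^r[p]_{\LL^m}[m]_\LL+[r]_\LL$ for $n=pm+r$, it picks integer (not merely natural) vectors $f\in\ZZ^{\oplus I}$ with $fd=\gcd(d)=:r$, splits $f=f_+-f_-$ with $f_\pm\in\NN^{\oplus I}$, uses $[\PP^{df_\pm-1}]\Delta_d=0$ for both, and deduces $[r]_\LL\Delta_d=[\PP^{r-1}]\Delta_d=0$ from the identity $[df_+]_\LL=\LL^r[df_-]_\LL+[r]_\LL$. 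Conversely, for any $\Delta_d$ with $[\PP^{r-1}]\Delta_d=0$ one checks directly that $\DTS(\Ab,\phi)_d+\Delta_d$ still satisfies the defining equation, since $r\mid fd$ gives $[fd]_\LL\Delta_d=[fd/r]_{\LL^r}[r]_\LL\Delta_d=0$. None of this is in your sketch; without it the claimed bound by $[\PP^{\gcd(d)-1}]$-torsion, and hence the ``in particular'' about localisation at the $[\PP^n]$, is unsupported.

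Two smaller points. First, your description of the defining equation as a ``Harder--Narasimhan type recursion weighted by $[\PP^{\gcd(d)-1}]$'' does not match section 4: the framed defining equation involves the noncommutative Hilbert--Chow morphisms $\pi_{f,d}$ and the weights $[\PP^{fd-1}]$; the HN stratification only enters in the wall-crossing discussion in section 6. Second, the worry about whether $\phi$ respects the $\lambda$-structure is not an ``obstacle'' to be resolved: it is literally the hypothesis of the theorem (that $\phi$ is a morphism of $\lambda$-ring theories), and the whole point of that hypothesis is that $\phi_{\id}$ commutes with $\Sym$ so that the defining equation is preserved.
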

Let us make some comments on this statement. Firstly, $\underline{\Ka}_0(\Sch^{ft})[\LL^{-1/2}]$ is another theory over $\Spec\kk$ associating to every scheme (over $\kk$) its naive Grothendieck ring of varieties extended by an inverse of the square root of the motive $\LL$ of $\AAA$, and $\underline{\ZZ}(Sm^{proj})\xrightarrow{\unit} \underline{\Ka}_0(\Sch^{ft})[\LL^{-1/2}]$ is the (pull-back of the) unique morphism of theories over $\kk$ using that $\underline{\ZZ}(\Sm^{proj})$ is the pull-back of the initial object. The function $\DTS(\Ab,mot)\in \underline{\Ka}_0(\Sch^{ft}_\Msp)[\LL^{-1/2}]$ is the one studied extensively in the paper \cite{Meinhardt4}. Hence, the theorem tells us how the reduce the complicated case of very general vanishing cycles to the case of no potentials and vanishing cycles. Secondly, the Theorem is a statement about existence of Donaldson--Thomas functions, which does not follow from its definition. This existence is just the generalized version of the famous integrality conjecture which is therefore proven in this paper. Notice, that the theorem applies to all classical vanishing cycles for constructible functions, constructible sheaves and mixed Hodge modules. It would also apply to any reasonable categorification of the motivic vanishing cycle $\phi^{mot}$ introduced by Denef and Loeser (cf.\ \cite{DenefLoeser1}, \cite{DenefLoeser2}, \cite{DenefLoeser3} and \cite{Heinloth1}). However, the motivic vanishing cycle $\phi^{mot}$ itself  might not fulfill the assumptions of the theorem as it is not clear at the moment whether or not $\phi^{mot}$ commutes with $\lambda$-operations. In particular, the integrality conjecture, while proven in any classical realization, is still open in the context of naive motives. \\
The following theorem is very similar to the previous one and provides an even better characterization of $\DTS(\Ab,\phi)$ in all classical examples using the theory of mixed Hodge modules.
\begin{theorem}[Geometric interpretation] 
Assume that $\oplus:\Msp\times \Msp\longrightarrow\Msp$ is a finite morphism. If  $\phi$ has a factorization $\phi:\underline{\ZZ}(Sm^{proj})\xrightarrow{\unit} \underline{\Ka}_0(\MHM)[\LL^{-1/2}] \xrightarrow{\phi} R$ for some morphism $\phi$ of $\lambda$-ring theories. Then, 
\[ \DTS(\Ab,\phi)_d = \begin{cases} \phi_{\iota_d} \big(\cl(\ICS^{mhm}_{\Msp_d})\big) & \mbox{if } \Msp^s_d\not=\emptyset, \\ 0 &\mbox{else.} \end{cases} \]
Here, $\cl(\ICS^{mhm}_{\Msp_d})$ denotes the class of the (normalized) intersection complex $\ICS^{mhm}_{\Msp_d}\in \MHM(\Msp_d)[\QQ(1/2)]$ of the singular space $\Msp_d\subset \Msp$, and $\iota_d$ denotes the embedding.
\end{theorem}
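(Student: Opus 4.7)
The strategy mirrors the previous Existence-and-Uniqueness theorem, but with the mixed Hodge module theory $\underline{\Ka}_0(\MHM)[\LL^{-1/2}]$ playing the role of naive motives, and with the main result of \cite{Meinhardt4} supplying the explicit geometric identification. First, using the factorization $\phi: \underline{\ZZ}(Sm^{proj}) \xrightarrow{\unit} \underline{\Ka}_0(\MHM)[\LL^{-1/2}] \xrightarrow{\phi} R$ and the functoriality of the DT construction under morphisms of $\lambda$-ring theories, I would reduce to showing
\[
\DTS(\Ab,\phi)_d = \phi_{\iota_d}\big(\DTS(\Ab,mhm)_d\big),
\]
where $\DTS(\Ab,mhm)_d \in \Ka_0(\MHM(\Msp_d))[\LL^{-1/2}]$ denotes the DT class computed inside the intermediate theory. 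This reduces the entire statement to the identification of $\DTS(\Ab,mhm)_d$ with the class of the IC complex (or with zero).

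Next, I would invoke the main theorem of \cite{Meinhardt4} in the mixed Hodge module realization to identify $\DTS(\Ab,mhm)_d$ with $\cl(\ICS^{mhm}_{\Msp_d})$ whenever $\Msp^s_d \neq \emptyset$, and with $0$ otherwise. The finiteness hypothesis on $\oplus$ enters crucially: it guarantees that $\oplus_\ast$ preserves the perverse t-structure on $\MHM(\Msp)$, so that the symmetric monoidal tensor product is well-behaved and compatible with the $\lambda$-structure on the Grothendieck group. Combined with the BBD decomposition theorem this allows the DT class to be written as a combination of IC sheaves, and an induction on the dimension vector---using the wall-crossing identities established in the preceding sections---isolates the leading summand $\cl(\ICS^{mhm}_{\Msp_d})$ and kills the rest. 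The vanishing in the case $\Msp^s_d = \emptyset$ comes out of the same induction: every object of dimension vector $d$ then decomposes nontrivially into stable pieces of strictly smaller dimension vectors, whose contributions are already accounted for at lower levels.

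The main technical obstacle lies in the reduction step. One must verify that the argument of the previous theorem transports from $\underline{\Ka}_0(\Sch^{ft})[\LL^{-1/2}]$ to $\underline{\Ka}_0(\MHM)[\LL^{-1/2}]$, i.e.\ that $\phi$ intertwines the $\lambda$-operations (the relevant compatibility again hinging on the finiteness of $\oplus$), and that the rigidity provided by the uniqueness up to annihilation by $[\PP^{\gcd(d)-1}]$ combines with the explicit IC formula to pin down the class on the nose rather than merely up to a negligible term. Granting this, the assertion that $\phi_{\iota_d}(\cl(\ICS^{mhm}_{\Msp_d}))$ does satisfy the defining properties of a DT function then follows tautologically from $\phi$ being a morphism of $\lambda$-ring theories.
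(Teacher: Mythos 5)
Your proposal matches the paper's two-step structure: (1) use the factorization and the functoriality of the Donaldson--Thomas construction under morphisms of $\lambda$-ring theories to reduce to identifying $\DTS(\Ab,mhm)_d$ in $\underline{\Ka}_0(\MHM)[\LL^{-1/2}]$, then (2) invoke \cite{Meinhardt4} to identify that class with the normalized intersection complex (or zero when $\Msp^s_d=\emptyset$). The paper presents this essentially as a one-line citation --- ``Using the second main result of \cite{Meinhardt4}, one can conclude\ldots'' --- so you are on the right track.

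Two small corrections in emphasis are worth making. First, the reduction step is not an obstacle: since $\underline{\Ka}_0(\Sch^{ft})[\LL^{-1/2}]$ maps canonically to $\underline{\Ka}_0(\MHM)[\LL^{-1/2}]$, your factorization through the latter automatically factors through the former, so Theorem \ref{main_theorem} applies verbatim. The statement that $\phi$ intertwines the $\lambda$-operations is a \emph{hypothesis}, not something you need to verify, and it has nothing to do with finiteness of $\oplus$. Second, the finiteness of $\oplus$ is the hypothesis under which the second main theorem of \cite{Meinhardt4} is proved (it ensures the convolution product preserves the perverse $t$-structure so the $\lambda$-ring structure on the Grothendieck group is well-behaved and so the decomposition-theorem argument there can run). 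Your speculative reconstruction of the internal argument of \cite{Meinhardt4} --- decomposition theorem, induction on dimension vector --- has the right flavor, but note that it does not use wall-crossing from ``the preceding sections'' of \emph{this} paper, both because the wall-crossing formula here appears only in Section 6 (after this theorem) and because the actual work lives in the cited reference; citing it as a black box, as the paper does, is the cleaner move.
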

The finiteness assumption on $\oplus$ is fulfilled in all examples. If  $\phi$ is the pull-back of a classical vanishing cycle, considered as a morphism of theories over $\AA^1$, along a potential $W:\Msp\to \AA^1$, then $\DTS(\Ab,\phi)$ is just the classical vanishing cycle functor applied to the intersection complex of the trivial local system on the smooth space $\Msp^s$ of simple objects in $\Ab$. This result has a categorification which along with many other interesting material will be developed in \cite{DavisonMeinhardt4}.\\

The paper is organized as follows. In section 2 we give a short survey of the content of \cite{Meinhardt4}, mostly to fix the notation used throughout the paper. We will also introduce two sorts of potentials, the version seen above and another version which is closer to the definition for quivers. The latter gives rise to a potential in the former sense.\\
Starting with section 3, we introduce the axiomatic framework used throughout the paper. The category of ($\lambda$-)ring $(\Sm,\Pro)$-theories is the object of interest in section 3, and we already provided a motivation and the basic idea. At the end of this section we give a short introduction to a possible categorification  which will be the central subject of \cite{DavisonMeinhardt4}. For the moment it will provide us with many examples.\\
Section 4 gives an alternative definition of Donaldson--Thomas functions using framed objects in $\Ab$. This approach has two advantages. Firstly, it avoids the notion of stacks allowing us to work within the framework introduced in section 3. Secondly, it requires the weakest assumption on our $\lambda$-ring theory $R$. The price of this generality is the absence of any Ringel--Hall algebra formalism and wall-crossing formulas. Moreover, the definition depends on a choice of a fiber functor $\omega$ for $\Ab$ as introduced in section 2. However, by our main theorem, the Donaldson--Thomas functions are independent of this choice whenever the theorem applies. We will also give a prospect of a categorification to which we come back in \cite{DavisonMeinhardt4}.\\
In section 5 we generalize the framework of theories to Artin stacks. We will show that under good conditions, a theory on Artin stacks or slightly more specific on quotient stacks is uniquely determined by their restriction to schemes. The interesting part of this correspondence is the converse point of few allowing us to determine which theories on schemes have a (unique) extension to Artin or quotient stacks. \\
Using the framework of theories on quotient stacks developed in section 5, we can give the standard definition of Donaldson--Thomas functions following the idea of Kontsevich and Soibelman in section 6. We will introduce Ringel--Hall algebras and give a rigorous proof that some integration map involving our general vanishing cycle $\phi$ is an algebra homomorphism under the assumption that some non-linear integral identity holds. Moreover, we show that the non-linear integral identity holds for quiver with potential or in all classical realizations for arbitrary categories $\Ab$. Having this at hand, we prove the PT--DT correspondence saying that the functions defined in section 4 agree with the one of section 6 if both formalisms apply. We will also discuss the wall-crossing formula and give a rigorous proof that the approach taken by Joyce and Song to define Donaldson--Thomas functions will give the same result. \\
We close the paper by giving two appendices. The first one contains a detailed proof of the linear integral identity which will also enter the proof of the  non-linear version. The proof was already given in \cite{KS2} and we only fill in some details and provide the generalization needed in this paper. The second quite extensive appendix gives some background on $\lambda$-rings which occur throughout the paper. The aim of this part is twofold. Firstly, we need to fix the notation. Secondly, we discuss in full detail the process of adjoining elements which to our knowledge has never been written down properly but plays an important role in Donaldson--Thomas theory. \\

We apologize to the reader for writing such a lengthy paper. The authors felt the need of writing down all the details which are usually sketch or only conjectured in papers dealing with Donaldson--Thomas invariants. We could not cover all material, but we hope that the present text in combination with \cite{Meinhardt4} and \cite{DavisonMeinhardt4} provides a solid basis for learning Donaldson--Thomas theory. \\

\textbf{Acknowledgments.} The authors would like to thank  Michel van Garrel and Bumsig Kim for giving us the opportunity to visit KIAS, where the central ideas of this paper were born, mostly during a delicious social dinner. During the writing of this paper, Ben Davison was a postdoc at EPFL, supported by the Advanced Grant ``Arithmetic and physics of Higgs moduli spaces'' No. 320593 of the European Research Council. Sven Meinhardt wants to thank Markus Reineke for providing a wonderful atmosphere to complete this research.

\section{Moduli of objects in abelian categories} 

In this section we recall the main results of \cite{Meinhardt4} which provide the background of this paper. We start by listing the assumptions we want to impose on our categories and their moduli functors. We proceed by recalling the properties of moduli stacks and spaces which will be a consequence of our assumptions.  A couple of examples will be given to illustrate the theory. We also discuss framed objects and their relation to the unframed ones. Finally, we introduce potentials in this general context generalizing potentials for quivers. 

\subsection{The general setting}

We wish to talk about moduli of objects in a given $\kk$-linear abelian category $\Ab_\kk$. However, there is no unique way to form a moduli functor, and part of our data will be a choice of such a moduli functor. For good reasons we also need morphisms between ``families of objects'' which are not necessarily isomorphisms. For example, we want to talk about families of short exact sequences or of subobjects which are obviously related to short exact sequences. Therefore, our moduli functor $\Ab$ will not only take values in groupoids but in exact categories. The reason why we restrict ourselves to exact and not to abelian categories is very simple. If we think of vector bundles as being ``good'' families of vector spaces, then it is easy to see that vector bundles on a given scheme do not form an abelian category but an exact one. In fact, this is more than just an example. There is a deeper connection between families of objects in $\Ab_\kk$ and good families of vector spaces, i.e.\ vector bundles. Following the Tannakian approach we have a good reason to assume that our objects in $\Ab_\kk$ can be seen as vector spaces with an addition structure. This assumption is usually fulfilled in examples, but in practice it is hard to understand its meaning. Given a smooth compact curve $X$ over $\kk$ with an ample line bundle $\OO_X(1)$, consider the category of semistable coherent sheaves of slope $\mu\in (-\infty,+\infty]$ on $X$. It is hard to believe but nevertheless true that such a sheaf $E$  is completely described by some additional structure on $\Ho^0(X,E(m))$ for some sufficiently high twist of $E$ with $\OO_X(m)$. Nevertheless, this picture has a beautiful consequence. Fixing an isomorphism $\psi:\Ho^0(X,E(m))\xrightarrow{\sim} \kk^d$, where $d=P(m)$ for the Hilbert polynomial $P$ of $E$ with respect to $\OO_X(1)$, we can describe the moduli of semistable sheaves  with fixed Hilbert polynomial $P$ as the space $X_d$ of all possible (depending on our setting) additional structures on $\kk^d$ modulo a $\Gl(d)$-action given by changing $\psi$. Hence, the stack of all coherent sheaves as above is a quotient stack $X_d/\Gl(d)$. The technical ingredient to obtain this beautiful picture  is a functorial choice $\omega$ of (families of) vector spaces for all (families of) objects. This allows us to construct a compatible family of quotient stacks for all types of diagrams in $\Ab_\kk$, making it much easier to deal with all these Artin stacks. The final piece of data we want to require is a functorial connection between the tangent bundle of our moduli stacks and the stack of short exact sequences, i.e.\ extensions. It is the latter which is of great importance in Donaldon--Thomas theory and we need to get some control over it. This is essentially done by identifying it with a direct summand of a tangent bundle (restricted to some subspace). Technically, we achieve this by means of an isomorphism $p$ between two functors. We should mention that only $\Ab$ is the object  we want to deal with, whereas $\omega$ and $p$ are just some technical tools allowing us to prove some deep results. At the end of the day, all results should be independent of the choice of $\omega$ or $p$.  
Let us fill in the details and have a look at the set of axioms we want to impose. Examples will be given afterwards.\\
\\
\textbf{(1) Existence of a moduli theory:} We fix a contravariant (pseudo)functor $\Ab:S\mapsto \Ab_S$ from the category of $\kk$-schemes $S$ to the category of essentially small exact categories  using the shorthand $\Ab_R:=\Ab_{\Spec R}$ for any commutative $\kk$-algebra $R$, satisfying the usual gluing axioms of a stack for the big Zariski topology\footnote{One can also take the big \'{e}tale or any other Grothendieck topology for which the prestack of vector bundles is a stack. We haven taken the big Zariski topology for simplicity.}. We assume that pull-backs and push-outs of short exact sequences exist in $\Ab_S$ so that $\Ext^1_{\Ab_S}(E,F)$ is a well-defined group for every pair $E,F\in \Ab_S$. Moreover, for every $\kk$-scheme $S$ and every pair $E,F\in\Ab_S$ the groups $\Hom_{\Ab_S}(E,F)$ and $\Ext^1_{\Ab_S}(E,F)$ should have an enrichment to a finitely generated $\OO_S(S)$-module such that composition is $\OO_S$-bilinear. The action of $f\in \OO_S(S)$  on $\Ext^1_{\Ab_S}(E_1,E_2)$ coincides with the pull-back along  $f\in \End_{\Ab_S}(E_1)$ or the push-out of a short exact sequence along $f\in \End_{\Ab_S}(E_2)$. If $\KK\supset \kk$ is a field extension, $\Ab_\KK$ should be abelian. \\
\\
\textbf{(2) Existence of a fiber functor:} There is a faithful exact natural transformation  $\omega:\Ab \to \Vect^I$, where $\Vect^I$ is the (pseudo)functor associating to every $\kk$-scheme the exact category of $I$-graded vector bundles on $S$ with finite total rank on each connected component of $S$, where $I$ is any not necessarily finite set.  Moreover, as $\OO_S(S)$ acts on $\Hom$- and $\Ext^1$-groups in $\Ab_S$ as well as in $\Vect^I_S$, we assume that $\omega_S$ is $\OO_S(S)$-linear.  We also require that every morphism $f:E\to E''$ in $\Ab_S$ with $\omega_S(f):\omega_S(E)\to \omega_S(E'')$ fitting into an exact sequence $0 \to V' \to\omega_S(E) \xrightarrow{\omega_S(f)} \omega_S(E'')\to 0$ in $\Vect^I_S$ can be completed to an exact sequence $0\to E' \to E \xrightarrow{f} E''\to 0$ in $\Ab_S$ which implies  $V'\cong \omega_S(E')$ as $\omega_S$ is exact.\\ 
\\
\textbf{(3) Existence of good moduli stacks:} Denoting by $\Mst_d(S)\subset \Ab_S$ the subcategory consisting of objects $E\in \Ab_S$ such that $\omega_S(E)$ has multi rank $d\in \NN^{\oplus I}$ and isomorphisms between them, we get a morphism $\omega|_{\Mst_d}:\Mst_d \to \Spec\kk/G_d$ of stacks with $G_d=\prod_{i\in I}\Gl(d_i)$. We require that $\omega|_{\Mst_d}$ is representable for all $d\in \NN^{\oplus I}$. \\
\\
In particular, $\Mst=X_d/G_d$ is a quotient stack, where $X_d=\Mst_d\times_{\Spec\kk/G_d}\Spec\kk$ is the algebraic space with $S$-points given by pairs $(E,\psi)$ with $E\in \Ab_S$ of rank $d$ and $\psi:\omega_S(E)\xrightarrow{\sim} \OO_S^d$ an isomorphism of locally free sheaves. Throughout the paper we use the notation $\OO_S^d=\bigoplus_{i\in I}\OO_S^{\oplus d_i}$. The next assumption is about GIT-quotients. Given a $\kk$-scheme $Y$ of finite type over $\kk$ with an action of a reductive algebraic group $G$ and a linearization $\Lin$ of the $G$-action on some line bundle $\Lin$, we define the integer $\mu^\Lin(y,\lambda)$ depending on a point $y\in Y$ with residue field $\kk(y)$ and a one-parameter subgroup, 1-PS for short, $\lambda:\GG_m\to G$ defined over some algebraic extension $K\supset \kk(y)$ as follows. If the limit $y_0^\lambda=\lim\limits_{z\to 0}\lambda(z)y$ does not exist, we put $\mu^\Lin(y,\lambda)=+\infty$. Otherwise, we get an induced $\GG_m$-action in the fiber $\Lin_{y^\lambda_0}$ of $\Lin$ over $y_0^\lambda$ given by $z\mapsto z^r$ and we put $\mu^{\Lin}(y,\lambda)=-r$. The following result is a generalization of classical results of Mumford (in the projective case) and King (in the affine case).
\begin{theorem}[\cite{GHH}, Theorem 3.3]
Let $Y\to \Spec A$ be a projective morphism to some affine scheme of finite type over $\kk$ such that $\Lin$ is relatively ample, i.e.\ some power of $\Lin$ provides a closed embedding of $Y$ into $\PP^N_A$ compatible with the projection to $\Spec A$. Then, $y\in Y$ is semistable if and only if $\mu^{\Lin}(y,\lambda)\ge 0$ for all 1-PS defined over some algebraic extension of $\kk(y)$. Moreover, $y$ is stable if and only if $\mu^{\Lin}(y,\lambda)>0$.  
\end{theorem}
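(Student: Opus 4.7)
The plan is to reduce to the well-known projective case (Mumford) and affine case (King) by producing an equivariant embedding of $Y$ into a product $\PP(V)\times \AA(W)$ of a projective space with an affine space, where $V$ and $W$ are finite-dimensional $G$-representations. First, relative ampleness of $\Lin$ provides, for some $k\gg 0$, a surjection of the $G$-equivariant coherent sheaf $\pi_*\Lin^{\otimes k}$ (where $\pi\colon Y\to \Spec A$) by a module of the form $A\otimes_\kk V^*$ with $V^*$ a finite-dimensional $G$-subrepresentation of $\Ho^0(Y,\Lin^{\otimes k})$ generating $\pi_*\Lin^{\otimes k}$ over $A$; this uses reductivity of $G$. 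This yields a $G$-equivariant closed embedding $Y\hookrightarrow \PP(V)\times_\kk \Spec A$ over $\Spec A$ such that $\Lin^{\otimes k}$ becomes the pullback of $\OO_{\PP(V)}(1)$. Since $A$ is a finitely generated $\kk$-algebra with $G$-action, I can pick a finite-dimensional $G$-stable subspace $W^*\subset A$ generating $A$ as a $\kk$-algebra, producing an equivariant closed embedding $\Spec A\hookrightarrow \AA(W)$. Composing gives a $G$-equivariant closed embedding $\iota\colon Y\hookrightarrow \PP(V)\times \AA(W)$ with $\Lin^{\otimes k}=\iota^*p_1^*\OO_{\PP(V)}(1)$.

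Next I would analyze $\mu^\Lin(y,\lambda)$ through this embedding. Writing $y=(y_1,y_2)$ and decomposing $V$ and $W$ into $\lambda$-weight spaces, the limit $\lim_{z\to 0}\lambda(z)y$ exists in $Y$ if and only if it exists in $\PP(V)\times \AA(W)$. It always exists in $\PP(V)$, while in $\AA(W)$ it exists precisely when $y_2$ has no strictly negative $\lambda$-weight components. In the case where the limit exists, $k\cdot \mu^\Lin(y,\lambda)$ equals the classical Hilbert--Mumford weight of $\OO_{\PP(V)}(1)$ on the limit point $y_{1,0}^\lambda$. This formula is what allows me to apply the classical numerical criteria.

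For the easy direction, if $y$ is $\Lin$-semistable there is $N>0$ and a $G$-invariant section $s\in \Ho^0(Y,\Lin^{\otimes N})^G$ with $s(y)\neq 0$; when $\lim \lambda(z)y$ exists, invariance forces $s(y_0^\lambda)=s(y)\neq 0$, which rules out a positive $\GG_m$-weight on $\Lin_{y_0^\lambda}$, i.e.\ $\mu^\Lin(y,\lambda)\ge 0$; if no limit exists the inequality is vacuous. For the converse I would combine Mumford's fundamental lemma on the projective factor with King's argument on the affine factor: $G$-invariant sections $\Ho^0(Y,\Lin^{\otimes N})^G$ correspond, via $\iota$, to $G$-invariants in $\Sym^N(V^*)\otimes \Sym(W^*)$ modulo the ideal of $Y$, and the assumption $\mu^\Lin(y,\lambda)\ge 0$ for all $\lambda$ is used to show that the orbit closure $\overline{G\cdot y}$ does not meet the ``null cone'', a closed conical $G$-subvariety where every $G$-invariant section vanishes. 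The refinement for stability, $\mu^\Lin(y,\lambda)>0$ for all non-trivial $\lambda$, translates via the same reduction into closedness of $G\cdot y$ in the semistable locus together with finiteness of the stabilizer modulo the kernel of the linearization. The main obstacle I anticipate is precisely this last step: the standard Hilbert--Mumford proof uses properness of the ambient projective space in an essential way, and one must carefully control 1-PS whose limit fails to exist on $Y$ (contributing $\mu=+\infty$) via a Kempf instability stratification or King's completion trick applied to the affine factor, so that the semistable locus of $Y\subset \PP(V)\times \AA(W)$ for the combined linearization really is cut out by the inequalities $\mu^\Lin(y,\lambda)\ge 0$.
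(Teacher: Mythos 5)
The paper does not prove this statement; it is cited from [GHH], so there is no in-text argument to compare against and your sketch must be assessed on its own. The overall reduction you propose — a $G$-equivariant closed embedding $Y\hookrightarrow\PP(V)\times\AA(W)$ with $\Lin^{\otimes k}$ pulling back from $\OO_{\PP(V)}(1)$, combining Mumford's projective criterion with King's affine one — is the right framework and is essentially the route the cited authors take. However, two genuine problems remain.

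First, the ``easy direction'' as written has a type error: the equation $s(y_0^\lambda)=s(y)$ does not make sense, since $s(y_0^\lambda)\in\Lin^{\otimes N}_{y_0^\lambda}$ and $s(y)\in\Lin^{\otimes N}_y$ live in different fibers, and in any case $y_0^\lambda$ lies only in the \emph{closure} of the $G$-orbit of $y$, not in the orbit, so $G$-invariance of $s$ implies nothing about $s(y_0^\lambda)$. The correct argument is the one behind Mumford's fundamental lemma: the restriction of an invariant section to the $\GG_m$-orbit closure of $y$ is a $\GG_m$-semiinvariant, and the fact that this semiinvariant is regular at the fixed point $y_0^\lambda$ and nonzero at $y$ constrains the weight of the fiber action, giving $\mu^\Lin(y,\lambda)\ge 0$; one cannot and need not conclude $s(y_0^\lambda)\neq 0$.

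Second, and more seriously, the converse direction — which carries the entire force of the theorem — is not established. Having reduced to $\PP(V)\times\AA(W)$ with linearization $p_1^*\OO_{\PP(V)}(1)$, you gesture at ``King's completion trick'' and ``Kempf instability,'' but this is exactly the step that demands a genuinely new argument: Mumford's criterion is proved using properness of the ambient projective variety, and King's criterion applies to an affine $G$-variety with a trivial line bundle twisted by a character, so neither applies off the shelf to the mixed linearization at hand. The missing ingredient is the semicontinuity or valuative step showing that $\mu^\Lin(y,\lambda)\ge 0$ for all $\lambda$ over all algebraic extensions of $\kk(y)$ forces the existence of a nonvanishing invariant section of some power of $\Lin$ at $y$ — that is, that no destabilizing specialization hides in the noncompact affine directions. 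You correctly flag this as the main obstacle, and it really is the crux of the relative case; as it stands the proposal is a plausible outline, not a proof.
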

Let us come to our next assumption. Notice that every 1-PS $\lambda:\GG_m\to G_d$ induces a descending filtration $W^n\subset \KK^d$ given by the sum of all eigenspaces of weight $\ge n$ for the induced $\GG_m$-action. Conversely, every subspace $W\subset \KK^d$ and every $n\in \ZZ$ define a (non-unique) 1-PS $\lambda^{W,n}$ defined over $\KK$ such that $W^{n+1}=0, W^n=W$ and $W^{n-1}=\KK^d$ for the induced filtration on $\KK^d$. \\
\\

\textbf{(4) Existence of good GIT-quotients:} Assume that  every $X_d$ has an embedding into the open subscheme  $\overline{X}_d^{ss}$ of semistable points inside some $\kk$-scheme $\overline{X}_d$ with $G_d$-action and 
$G_d$-linearization $\Lin_d$. Moreover, there should be a projective morphism $f_d:\overline{X}_d\to \Spec A_d$ to some affine scheme of finite type over $\kk$ such that $\Lin_d$ is ample with respect to $f_d$. Let $\tilde{p}_d:\overline{X}^{ss}\to \overline{X}^{ss}/\!\!/G_d$ be the uniform categorical quotient (see \cite{MumfordGIT}, Theorem 1.10). We also require $X_d=\tilde{p}_d^{-1}(\Msp_d)$ for some locally closed subscheme $\Msp_d\subset \overline{X}^{ss}_d/\!\!/G_d$ which should be open if $\Char\kk>0$. Furthermore, for every 1-PS in $G_d$ and any $x\in \overline{X}_d$ we require 
\begin{equation} \label{eq7} \mu^{\Lin_d}(x,\lambda)=\sum_{n\in \ZZ} \mu^{\Lin_d}(x,\lambda^{W^n,n}). \end{equation}
Moreover, for $x\in X_d$ represented by $(E,\psi)$ defined over $\KK$ the conditions 
\begin{enumerate}
 \item[(a)] the limit point $x_0^\lambda$ exists and is in $X_d$,
 \item[(b)] there are subobjects $E^n\subset E$ with $\omega_\KK(E^n)=\psi(W^n)$ for all $n\in \ZZ$,
 \item[(c)] the equation $\mu^{\Lin_d}(x,\lambda)=0$ holds,
\end{enumerate}
should be equivalent. In this case $x_0^\lambda$ is given by the associated graded (trivialized) object $(\gr E^\bullet=\oplus_{n\in\ZZ} E^n/E^{n+1},\gr \psi^\bullet)$.\\
\\
In this case $\Msp_d$ is a categorical quotient of $X_d$ and corepresents $\Mst_d$. Hence, it does neither depend on the choice of $\overline{X}_d$ and $\Lin_d$ nor on the choice of $\omega$. However, $\Lin_d$ descends to an ample line bundle on $\Msp_d$ making it into a quasiprojective variety. Also, $X_d$ is a quasiprojective variety and not just an algebraic space. It also follows that each of the conditions (a), (b) and (c) are equivalent to (d) saying that $x_0^\lambda$ exists in $\overline{X}_d^{ss}$.   \\
The following result has been proven in \cite{Meinhardt4}. Recall that an object in $\Ab_\KK$ is called absolutely simple if it remains simple in $\Ab_{\KK'}$ for every algebraic extension $\KK'\supset \KK$. A direct sum of absolutely simple objects is called absolutely semisimple. 
\begin{lemma}
 The $\KK$-points in $\Msp$ are in bijection with the absolutely semisimple objects in $\Ab_\KK$. The $\KK$-points in $(X_d\cap \overline{X}_d^{st})/\!\!/G_d=:\Msp^s$ are in bijection with the absolutely simple objects in $\Ab_\KK$. 
\end{lemma}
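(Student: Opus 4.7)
The plan is to use GIT together with the Hilbert--Mumford criterion, in the form encoded by axiom (4), to identify $\KK$-points of $\Msp_d$ with closed $G_d(\bar{\KK})$-orbits in $X_d(\bar{\KK})$, and then to translate closedness of an orbit into an intrinsic semisimplicity condition on the underlying object of $\Ab_{\bar{\KK}}$. By construction a $\bar{\KK}$-point of $X_d$ is an isomorphism class of pairs $(E,\psi)$ with $E\in\Ab_{\bar{\KK}}$ of rank $d$ and $\psi:\omega(E)\xrightarrow{\sim}\bar{\KK}^d$, and two such pairs lie in the same $G_d(\bar{\KK})$-orbit iff the underlying objects of $\Ab_{\bar{\KK}}$ are isomorphic. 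Since $X_d=\tilde p_d^{-1}(\Msp_d)$ sits inside the uniform categorical quotient $\overline{X}_d^{ss}/\!\!/G_d$, the $\bar{\KK}$-points of $\Msp_d$ are in bijection with the closed $G_d(\bar{\KK})$-orbits in $X_d(\bar{\KK})$, and the $\KK$-points with the Galois-stable such orbits.

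Next I would identify these orbit closures via the equivalence $(a)\Leftrightarrow(b)\Leftrightarrow(c)$ of axiom (4). Given any non-trivial 1-PS $\lambda$ in $G_d(\bar{\KK})$ whose limit exists in $X_d$, condition (b) produces a descending filtration $E^\bullet\subset E$ of subobjects with $\omega(E^n)=\psi(W^n)$, and the limit point is exactly the associated graded $(\gr E^\bullet,\gr\psi^\bullet)$. Conversely, every filtration of $E$ by subobjects realizes such a 1-PS via $\omega$. Iterating, the closure of the $G_d$-orbit of $(E,\psi)$ in $X_d$ contains precisely the associated gradeds of filtrations of $E$ by subobjects, and its unique closed orbit is that of the Jordan--H\"older semisimplification $\bigoplus_i S_i$ of $E$.

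It follows that the orbit of $(E,\psi)$ is itself closed iff every short exact sequence $0\to E'\to E\to E''\to 0$ in $\Ab_{\bar{\KK}}$ splits, i.e.\ iff $E$ is semisimple. Combined with the GIT description above, this establishes a bijection between $\bar{\KK}$-points of $\Msp_d$ and isomorphism classes of semisimple objects of $\Ab_{\bar{\KK}}$ of rank $d$. A $\KK$-point of $\Msp_d$ corresponds to a Galois-stable such class, and since $\Msp_d$ corepresents $\Mst_d$, this class descends to some $E\in\Ab_\KK$; because further base change to any algebraic $\KK'\supset\KK$ preserves the corresponding point of $\Msp_d$, the object $E$ remains semisimple in $\Ab_{\KK'}$, which is exactly absolute semisimplicity. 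For the stable part one runs the same argument restricted to $\overline{X}_d^{st}$: by Hilbert--Mumford a point is stable iff $\mu^{\Lin_d}(x,\lambda)>0$ for every non-trivial 1-PS whose limit exists in $\overline{X}_d^{ss}$, and by axiom (4)(b),(c) these 1-PS are parametrized by the non-trivial proper subobjects of $E$, so stability becomes the assertion that $E$ has no proper non-trivial subobjects, i.e.\ is simple; absolute simplicity then follows by the same base-change argument. The main obstacle I foresee is the Galois-descent step together with the compatibility of axiom (4) with base change to $\bar{\KK}$, which must be handled carefully since $\Msp_d$ is only locally closed in $\overline{X}_d^{ss}/\!\!/G_d$.
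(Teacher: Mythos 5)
The GIT/Hilbert--Mumford part of your argument — identifying closed $G_d(\bar\KK)$-orbits with semisimple objects of $\Ab_{\bar\KK}$ by translating 1-PS limits into filtrations via axiom (4) — is sound and is the natural route; the paper itself only cites \cite{Meinhardt4} for this lemma, so there is no in-text proof to compare against. The problem is the final descent paragraph, which you rightly flag as the ``main obstacle'' but then dispatch too quickly, and it contains a genuine error, not just a technicality to be tidied up.

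First, ``since $\Msp_d$ corepresents $\Mst_d$, this class descends to some $E\in\Ab_\KK$'' is not a consequence of corepresentability: the universal property of a categorical quotient concerns maps \emph{out} of $\Mst_d$, and says nothing about lifting a $\KK$-point of $\Msp_d$ to a $\KK$-point of $\Mst_d$ (or $X_d$). For a Galois-fixed closed orbit over $\bar\KK$, the obstruction to descending the underlying object (even after fixing a trivialization) lives in Galois cohomology of its automorphism group and is not automatically trivial. Second, and more seriously, even granting a descent $E\in\Ab_\KK$ with $E\otimes\bar\KK$ semisimple, your conclusion ``the object $E$ remains semisimple in $\Ab_{\KK'}$, which is exactly absolute semisimplicity'' conflates two inequivalent notions. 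The paper defines \emph{absolutely semisimple} as ``a direct sum of absolutely simple objects'', which is strictly stronger than ``remains semisimple after every base change''. Already for $\Ab=\kk[x]\Mod$ (the Jordan quiver), take $\kk=\KK=\QQ$ and the $\QQ$-point $(0,1)\in\AA^2=\Msp_2$ corresponding to the characteristic polynomial $x^2+1$: the associated semisimple $\QQ[x]$-module is $\QQ[x]/(x^2+1)$, which is simple over $\QQ$, splits as a sum of two non-isomorphic one-dimensional simples over $\QQ(i)$, and is therefore neither absolutely simple nor a direct sum of absolutely simple $\QQ[x]$-modules. So it is \emph{not} absolutely semisimple in the paper's sense, yet it certainly gives a $\QQ$-point of $\Msp$. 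What your argument (once the descent step is repaired, say by restricting to algebraically closed $\KK$ or by invoking a suitable vanishing of the cohomological obstruction) actually establishes is a bijection between $\KK$-points of $\Msp$ and isomorphism classes of objects of $\Ab_\KK$ that stay semisimple under base change, i.e.\ the weaker notion; the second clause, about $\Msp^s$ and absolute simples, does go through because $\End_\KK(E)=\KK$ for an object whose stabilizer in $PG_d(\bar\KK)$ is trivial, and that condition \emph{is} equivalent to absolute simplicity.
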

By assumption on $\omega$ we also get a faithful functor from the stack of short exact sequences $0\to E_1\to E_2\to E_3\to 0$  in $\Ab_\kk$ to the stack of short exact sequences in $\Vect^I_\kk$. Fixing the dimensions $d=\dim_\kk \omega_\kk(E_1)$ and $d'=\dim_\kk\omega_\kk(E_3)$ of the outer terms, the latter is given by $\Spec\kk/G_{d,d'}$, where $G_{d,d'}$ is the stabilizer of the subspace $\kk^d\subset \kk^d\oplus \kk^{d'}=\kk^{d+d'}$. In particular, we get a commutative diagram
\[ \xymatrix {  & X_{d,d} \ar[dl]_{\hat{\pi}_1\times \hat{\pi}_3} \ar[dr]^{\hat{\pi}_2} \ar[dd]^{\rho_{d,d'}} & \\ X_d\times X_{d'} \ar[dd]_{\rho_d\times \rho_{d'}} & & X_{d+d'} \ar[dd]^{\rho_{d+d'}}\\
 & \mathfrak{E}xact_{d,d'} \ar[dl]_{\pi_1\times \pi_3} \ar[dr]^{\pi_2} & \\ \Mst_d\times \Mst_{d'} & & \Mst_{d+d'} } 
\]
with $X_{d,d'}=\mathfrak{E}xact_{d,d'} \times_{\Spec\kk/G_{d,d'}} \Spec\kk$ being the set valued functor mapping $S$ to the set of equivalence classes of tuples $(0\to E_1\to E_2\to E_3\to 0,\psi_1,\psi_2,\psi_3)$ consisting of a short exact sequence in $\Ab_S$ and an isomorphism
\[ \xymatrix { 0 \ar[r] & \omega_S(E_1)\ar[d]^{\psi_1}_\wr \ar[r] & \omega_S(E_2)\ar[d]^{\psi_2}_\wr \ar[r] & \omega_S(E_3)\ar[d]^{\psi_3}_\wr \ar[r] & 0 \\ 
 0 \ar[r] & \OO_S^d \ar[r] & \OO_S^{d} \oplus \OO_S^{d'} \ar[r] & \OO_S^{d'} \ar[r] & 0 } 
\]
to the trivial sequence of $I$-graded vector bundles on $S$. Two tuples as above are equivalent  if there is a (unique) isomorphism of the underlying short exact sequences in $\Ab_S$ compatible with the trivializations.  The maps $\pi_i$ map a short exact sequence to their $i$-th entry and similarly for $\hat{\pi}_i$. 
\begin{lemma} \label{finite_sum}
 If $X_d=\overline{X}_d^{ss}$ for all $d\in \NN^{\oplus I}$, then the morphism $\oplus:\Msp\times\Msp\to \Msp$ defined by taking direct sums is a finite morphism. 
\end{lemma}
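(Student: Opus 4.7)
I plan to verify the two conditions equivalent to finiteness between Noetherian schemes: properness together with finiteness of geometric fibers.

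First, I would analyze geometric fibers. Under the hypothesis $X_d=\overline{X}_d^{ss}$, the space $\Msp_d$ is the full categorical quotient $\overline{X}_d^{ss}/\!\!/G_d$, and by the preceding lemma its $\KK$-points are in bijection with the absolutely semisimple objects of $\Ab_\KK$ of multi-rank $d$. Fix $[E]\in \Msp_{d+d'}(\KK)$ with its Krull--Schmidt decomposition $E\cong \bigoplus_j S_j^{\oplus n_j}$ into absolutely simple summands. A $\KK$-point of the fiber $\oplus^{-1}([E])$ consists of a pair of absolutely semisimple objects $(E_1,E_2)$ of multi-ranks $d,d'$ with $E_1\oplus E_2\cong E$; by uniqueness of the isotypic decomposition, such a pair is determined by integers $0\le n_j'\le n_j$ satisfying $\sum_j n_j'\dim_\KK\omega_\KK(S_j)=d$. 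Only finitely many such tuples exist, so every geometric fiber of $\oplus$ is finite.

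Next, for properness, I would use the GIT description. The direct-sum operation on $\Ab_S$ should lift to a $G_d\times G_{d'}$-equivariant morphism $\Phi_{d,d'}:\overline{X}_d\times \overline{X}_{d'}\to \overline{X}_{d+d'}$ assembling a pair of structures into a block-diagonal one, compatible with the inclusion $G_d\times G_{d'}\hookrightarrow G_{d+d'}$ and pulling $\Lin_{d+d'}$ back to $\Lin_d\boxtimes \Lin_{d'}$. Granted this, using the additivity of the Hilbert--Mumford numerical function in \eqref{eq7} together with the Hilbert--Mumford criterion recalled above, I would check $\Phi_{d,d'}^{-1}(\overline{X}_{d+d'}^{ss})=\overline{X}_d^{ss}\times \overline{X}_{d'}^{ss}$: a 1-PS in $G_d\times G_{d'}$ destabilizes a pair if and only if its image in $G_{d+d'}$ destabilizes the block-diagonal object, and conversely \eqref{eq7} lets one decompose any destabilizing 1-PS in $G_{d+d'}$ into weight components lying in $G_d\times G_{d'}$. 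Descending $\Phi_{d,d'}$ to the uniform categorical quotients then gives a projective morphism between schemes projective over affine bases, so $\oplus$ is projective, in particular proper. Combined with the first step, this forces $\oplus$ to be finite.

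The main obstacle is constructing the block-diagonal lift $\Phi_{d,d'}$ at the level of the ambient GIT schemes: this is not explicitly listed among axioms (1)--(4), so one must argue that the direct-sum structure on $\Ab_S$, together with the functoriality of $\omega$ and the representability requirement in axiom (3), produces a morphism with the required equivariance and linearization compatibility, or else invoke a small additional hypothesis (satisfied in all running examples such as quiver representations or the $X_d$ arising from twisted Quot schemes) ensuring this. Once this geometric input is secured, the remainder reduces to standard facts about the Hilbert--Mumford criterion and GIT quotients.
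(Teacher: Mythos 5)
Your first step is correct and clean: under $X_d=\overline{X}_d^{ss}$, a $\KK$-point of $\Msp_{d+d'}$ is an absolutely semisimple object, and by the Krull--Schmidt property of $\Ab_\KK$ (which holds since $\omega_\KK(E)$ is finite-dimensional, so every object has finite length and local endomorphism rings of indecomposables) there are only finitely many ways to split the isotypic decomposition into pieces of multi-rank $d$ and $d'$. So $\oplus$ is quasi-finite, and since $\oplus$ is separated and of finite type, finiteness reduces to properness.

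The second step, however, contains a genuine gap, which you yourself flag. Axioms (1)--(4) supply the closed immersion $X_d\times X_{d'}\hookrightarrow X_{d+d'}$ on the level of the \emph{semistable loci} (it is the $\GG_m$-fixed locus of a 1-PS, as in property (4)(c)), but they say nothing at all about the ambient GIT schemes $\overline{X}_d$ beyond their existence. Nothing forces a $G_d\times G_{d'}$-equivariant morphism $\Phi_{d,d'}:\overline{X}_d\times\overline{X}_{d'}\to\overline{X}_{d+d'}$ to exist, and even if one did exist, the linearization compatibility $\Phi_{d,d'}^{\ast}\Lin_{d+d'}\cong\Lin_d\boxtimes\Lin_{d'}$ is an additional, unwarranted assumption: the $\Lin_d$ for different $d$ may be chosen completely independently of one another. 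Moreover, even granted both, the claim $\Phi_{d,d'}^{-1}(\overline{X}_{d+d'}^{ss})=\overline{X}_d^{ss}\times\overline{X}_{d'}^{ss}$ would still not follow from the stated Hilbert--Mumford additivity (eq.\ (\ref{eq7})), because destabilizing 1-PS's of $G_{d+d'}$ need not lie in $G_d\times G_{d'}$, and the equivalences (a)--(c) of axiom (4) are only stated for points of $X_d$, not of all of $\overline{X}_d$. Hoping for ``a small additional hypothesis'' is not a proof; as written, Step 2 cannot be completed from the given axioms.

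Note that there is a route to properness that stays on the semistable loci and avoids the lift entirely. One has the closed, $G_d\times G_{d'}$-equivariant immersion $\iota:X_d\times X_{d'}\hookrightarrow X_{d+d'}=\overline{X}_{d+d'}^{ss}$ intertwining the quotient maps $p_d\times p_{d'}$ and $q:X_{d+d'}\to\Msp_{d+d'}$ with $\oplus$. Given a DVR $R$ with fraction field $K$ and compatible points $\Spec K\to\Msp_d\times\Msp_{d'}$, $\Spec R\to\Msp_{d+d'}$, one lifts (after a finite extension of $R$) the $R$-point to a family in $X_{d+d'}$ using that $q$ is a good quotient, matches its generic fiber with the polystable object $E_1\oplus E_2$ coming from the $K$-point, and then extends the resulting rank-$d$ subobject across the closed point; the special fiber is semisimple, so the extended subobject is a direct summand, giving the required lift $\Spec R\to\Msp_d\times\Msp_{d'}$. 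The step of extending the subobject is precisely the properness of the universal Grassmannian, which appears as axiom (5) in the paper. In other words, the properness of $\oplus$ is most naturally tied to the Langton-type semicontinuity encoded in axiom (5), not to a hypothetical lift of $\oplus$ to the ambient $\overline{X}_d$.
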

For a generalization of this one should have a look at Example \ref{finite_direct_sum}.\\
\\
\textbf{(5) The universal Grassmannian is proper:} The map $\pi_2:\mathfrak{E}xact \to \Mst$ is representable and proper. \\

Because of this condition $Y_{d,d'}=\mathfrak{E}xact_{d,d'}\times_{\Mst_{d+d'}}X_{d+d'}\cong X_{d,d'}\times_{G_{d,d'}}G_{d+d'}$ is an algebraic space which embeds into $X_{d+d'}\times \Gr_{d}^{d+d'}$ as a closed subspace. Hence, $Y_{d,d'}$ is a quasiprojective variety containing $X_{d,d'}$ as a closed subfunctor. It will follow from the next assumption that $X_{d,d'}$ is also a scheme, hence a closed subscheme of $Y_{d,d'}$. An $S$-point of $Y_{d,d'}$ is a  short exact sequence $0\to E_1\to E_2\to E_3\to 0$ in $\Ab_S$ together with an isomorphism $\psi_2:\omega_S(E_2)\xrightarrow{\sim} \OO_S^{d+d'}$. Having this description at hand, the maps $X_{d,d'}\to Y_{d,d'}$, $Y_{d,d'}\to X_{d+d'}$ and $Y_{d,d'}\to \Gr_{d}^{d+d'}$ should be clear. Let us mention that the fiber of $\pi_2$ over $E\in \Ab_\KK$ is the proper scheme of subobjects of $E$ in $\Ab_\KK$. Therefore, $\pi_2$ is indeed the universal Grassmannian parameterizing subobjects. \\

The next assumption is important to describe the map $\hat{\pi}_1\times \hat{\pi}_3:X_{d,d'}\longrightarrow X_d\times X_{d'}$.
For any $\kk$-algebra $R$, an $R$-object in $\Ab_S$ is an object $E\in \Ab_S$ together with a $\kk$-algebra homomorphism $\nu:R\to \End_{\Ab_S}(E)$ inducing an $R$-action on $E$. The definition of a homomorphism between $R$-objects in $\Ab_S$ should be clear giving rise to a category of $R$-objects in $\Ab_S$. The functor $\omega_S$ lifts to a functor $\omega_S^R$ from the category of $R$-objects in $\Ab_S$ to the category of $R\otimes_\kk \OO_S$-modules on $S$ which are locally free as $\OO_S$-modules. We denote with $\Ab_S^R$ the full subcategory of $R$-objects $(E,\nu)$ in $\Ab_S$ such that $\omega_S^R(E,\nu)$ is even locally free as an $R \otimes_\kk \OO_S$-module. Obviously, we can pull-back $R$-objects in $\Ab_S$ along any morphism $S'\to S$. \\

\textbf{(6) Existence of a good deformation theory:} For every morphism $R\to R'$ of local Artin $\kk$-algebras the functor $R'\otimes_R(-)$ from locally free $R\otimes_\kk\OO_S$-modules to locally free $R'\otimes_\kk\OO_S$-modules  lifts to a functor $R'\otimes_R(-):\Ab_S^R \to \Ab_S^{R'}$ making it into a bifunctor on $\{\kk\mbox{-schemes}\}^{op}\times \{\mbox{local Artin }\kk\mbox{-algebras}\}$. We assume the existence of  an equivalence $p_S^R:\Ab_{\Spec(R)\otimes S}\xrightarrow{\sim} \Ab_S^R$ of exact categories for every $\kk$-scheme $S$ and every local Artin $\kk$-algebra $R$ such that $p_{(-)}^{(-)}:\Ab_{\Spec(-)\times (-)} \longrightarrow \Ab_{(-)}^{(-)}$ is an equivalence of bifunctors. This equivalence should be compatible with the functors to locally free $R\otimes_\kk\OO_S$-modules, were we identify each locally free $\OO_{\Spec(R)\times S}$ module with its push-down onto $S$. \\

Here are a couple of consequences.
\begin{proposition}
 The map $\hat{\pi}_1\times\hat{\pi}_3:X_{d,d'}\longrightarrow X_d\times X_{d'}$ is an abelian cone, i.e.\ the relative spectrum  $\Spec \Sym \mathcal{G}$ of the symmetric algebra generated by some coherent sheaf $\mathcal{G}$ on $X_d\times X_{d'}$. It can be identified with a direct summand of the tangent cone $TX_{d+d'}=\Spec \Sym \Omega^1_{X_{d+d'}}$ restricted to $X_d\times X_{d'}$ which embeds into $X_{d+d'}$ by taking direct sums. Moreover, $X_{d,d'}$ is normal to that embedding. Conversely, the restriction of $X_{d,d}\longrightarrow X_d\times X_d$ to the diagonal $\Delta_{X_d}$ is the tangent cone $TX_d$. The fiber $F$ of $\hat{\pi}_1\times \hat{\pi}_3$ over any point $((E_1,\psi_1),(E_3,\psi_3))\in X_d\times X_{d'}$ defined over $\KK$ fits canonically into the following exact sequence of $\KK$-vector spaces 
 \[ 0 \longrightarrow \Hom_{\Ab_\KK}(E_3,E_1)\xrightarrow{\,\omega_S\,} \Hom_\KK(\KK^{d'},\KK^{d}) \longrightarrow F\longrightarrow \Ext^1_{\Ab_\KK}(E_3,E_1) \longrightarrow 0.\]
\end{proposition}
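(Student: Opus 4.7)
The strategy is to realize $X_{d,d'}$ as a direct summand of $TX_{d+d'}|_{X_d\times X_{d'}}$ using the canonical $1$-parameter subgroup of $G_{d+d'}$, then identify both sides fiber-wise via the deformation theory axiom~(6).

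I would first compute the fibers of $\hat\pi_1\times\hat\pi_3$ set-theoretically. Over $((E_1,\psi_1),(E_3,\psi_3))$, a $\KK$-point is an equivalence class of quadruples $(0\to E_1\to E_2\to E_3\to 0,\psi_2)$. Stratifying by extension class yields $\Ext^1_{\Ab_\KK}(E_3,E_1)$; for each extension, compatible $\psi_2$ form a torsor over $\Hom_\KK(\KK^{d'},\KK^d)$ parametrizing splittings of $\omega_\KK(E_2)\twoheadrightarrow\omega_\KK(E_3)$; and automorphisms of the extension fixing $E_1,E_3$ identically, namely $1+\iota_1\circ f\circ\pi_3$ for $f\in\Hom_{\Ab_\KK}(E_3,E_1)$, act on this torsor through the map $\omega\colon\Hom_{\Ab_\KK}(E_3,E_1)\to\Hom_\KK(\KK^{d'},\KK^d)$, which is injective by faithfulness of $\omega$. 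Assembling the three layers produces precisely the claimed four-term exact sequence for $F$.

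Next I would construct a morphism $X_{d,d'}\to TX_{d+d'}|_{X_d\times X_{d'}}$. Let $\lambda\colon\GG_m\to G_{d+d'}$ be the $1$-PS of weight $1$ on $\kk^d$ and $0$ on $\kk^{d'}$. By axiom~(4), for $(E_2,\psi_2)\in X_{d+d'}$ arising from a short exact sequence via $\hat\pi_2$, the limit $\lim_{z\to 0}\lambda(z)\cdot(E_2,\psi_2)$ exists and equals $(E_1\oplus E_3,\psi_1\oplus\psi_3)$, so the induced morphism $\AA^1\to X_{d+d'}$ has a well-defined tangent vector at $z=0$. By axiom~(6) with $R=\kk[\epsilon]/(\epsilon^2)$, the tangent space $T_{(E_1\oplus E_3,\psi_1\oplus\psi_3)}X_{d+d'}$ fits into a four-term exact sequence
\[ 0\to\End_{\Ab_\KK}(E_1\oplus E_3)\to\End_\KK(\KK^{d+d'})\to T\to\Ext^1_{\Ab_\KK}(E_1\oplus E_3,E_1\oplus E_3)\to 0, \]
and the $2\times 2$ block decomposition induced by the summands $E_1,E_3$ splits $T$ as $T_{(1,1)}\oplus T_{(3,1)}\oplus T_{(1,3)}\oplus T_{(3,3)}$. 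The $1$-PS tangent vector has positive weight under $\lambda$ and so lands in $T_{(3,1)}$, and the $(3,1)$-block of the four-term sequence coincides with the fiber sequence for $F$. Applying this construction in families via axiom~(6) yields an isomorphism of $X_d\times X_{d'}$-schemes from $X_{d,d'}$ onto the $(3,1)$-direct summand of $TX_{d+d'}|_{X_d\times X_{d'}}=\Spec\Sym(\Omega^1_{X_{d+d'}}|_{X_d\times X_{d'}})$, which presents $X_{d,d'}$ as an abelian cone $\Spec\Sym\mathcal{G}$.

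The remaining claims follow quickly. Normality to $\oplus\colon X_d\times X_{d'}\hookrightarrow X_{d+d'}$ holds because $T(X_d\times X_{d'})$ corresponds to the diagonal blocks $T_{(1,1)}\oplus T_{(3,3)}$, complementary to the off-diagonal $T_{(3,1)}$. For $d=d'$, setting $E_1=E_3=E$ collapses the $(3,1)$-block sequence to $0\to\End_{\Ab_\KK}(E)\to\End_\KK(\KK^d)\to F\to\Ext^1_{\Ab_\KK}(E,E)\to 0$, which by axiom~(6) is $T_{(E,\psi)}X_d$, recovering $TX_d$. The principal technical hurdle is to globalize the pointwise $1$-PS tangent-vector construction to a scheme morphism that is an isomorphism onto the $(3,1)$-summand in families rather than merely on $\KK$-points; this calls for careful use of the naturality of axiom~(6) over families.
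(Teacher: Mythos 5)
The paper states this proposition without proof---it is recalled from the companion work cited as the source of Section~2---so your argument cannot be matched against a proof written in the present text, and I must evaluate it on its own merits.

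Your Step~1 (the fiber computation) is correct and is exactly the right way to see where the four-term sequence comes from: the stratification by extension class, the torsor of splittings $\psi_2$ over $\Hom_\KK(\KK^{d'},\KK^d)$, and the identification of $\Hom_{\Ab_\KK}(E_3,E_1)$ with the automorphisms $1+\iota_1 f\pi_3$ acting on that torsor, with injectivity from faithfulness of $\omega$.

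The serious gap is in Step~2, and it is not merely the globalization you flag at the end. The first and central assertion of the proposition is that $\hat{\pi}_1\times\hat{\pi}_3$ \emph{is an abelian cone}, i.e.\ that the functor of points of $X_{d,d'}$ over $X_d\times X_{d'}$ carries a natural $\OO$-module structure. Your orbit-tangent-vector map presupposes the existence of a linear structure on the target ($T_{(3,1)}$) and tries to transport it back, but never establishes additivity on the source. Moreover, the map $(E_2,\psi_2)\mapsto\frac{d}{dz}\big(\lambda(z)\cdot(E_2,\psi_2)\big)\big|_{z=0}$ is a projection onto the weight-$1$ coordinates in a $\GG_m$-equivariant chart; in the Bialynicki--Birula setting for a \emph{smooth} ambient this is an isomorphism from the attracting set onto the positive tangent summand, but $X_{d+d'}$ is not assumed smooth here (axiom~(7) has not yet been imposed). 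In general the attracting set of a $\GG_m$-action on a singular variety need not agree with $\Spec\Sym$ of the positive-weight part of $\Omega^1$ restricted to the fixed locus (it can, for instance, acquire extra nilpotent structure), and your argument supplies no reason why the axioms~(1)--(6) exclude this. You also tacitly assume the $\GG_m$-weights on $\Omega^1_{X_{d+d'}}|_{X_d\times X_{d'}}$ are confined to $\{0,\pm 1\}$ (so that the tangent vector ``lands in $T_{(3,1)}$'' and the block decomposition is exhaustive); this is believable from the four-term sequence in the fiber, but needs to be lifted to the level of sheaves, which is precisely the hard part of using axiom~(6).

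The standard and more robust route is the reverse order: first exhibit the $\OO_S$-module structure on the $S$-points of $X_{d,d'}\to X_d\times X_{d'}$ directly and functorially, using Baer sum of extensions and the pushout/pullback action of scalars together with the affine-space structure on the trivializations $\psi_2$; this linear functor is then representable by $\Spec\Sym\mathcal{G}$ for a coherent $\mathcal{G}$ by the classical characterization of abelian cones, which is the first assertion. Only then does one use axiom~(6) to identify $\mathcal{G}$ with a weight-graded direct summand of $\Omega^1_{X_{d+d'}}|_{X_d\times X_{d'}}$, by matching $\kk[\epsilon]$-points of $X_{d+d'}$ over $(E_1\oplus E_3,\psi_1\oplus\psi_3)$ with deformations and reading off the $(3,1)$-block as in your Step~2. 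Your $1$-PS is the right organizing principle for that second step, but cannot substitute for establishing the cone structure in the first.
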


\begin{lemma}
 The space $X_{d,d'}$ is a quasiprojective scheme and $\hat{\pi}_2$ is a closed embedding. 
\end{lemma}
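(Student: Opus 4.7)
The plan proceeds in two stages. For the first claim, that $X_{d,d'}$ is a quasiprojective scheme, I would invoke the preceding proposition, which identifies $\hat{\pi}_1\times\hat{\pi}_3\colon X_{d,d'}\to X_d\times X_{d'}$ as an abelian cone, i.e.\ the relative spectrum of a symmetric algebra on a coherent sheaf. This map is in particular affine, so $X_{d,d'}$ is a scheme. Combined with the earlier observation that $X_{d,d'}$ sits as a closed subfunctor of the quasiprojective variety $Y_{d,d'}=\mathfrak{E}xact_{d,d'}\times_{\Mst_{d+d'}}X_{d+d'}$, we conclude that $X_{d,d'}$ is realized as a closed subscheme of $Y_{d,d'}$, hence itself quasiprojective.

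For the second claim, I would factor $\hat{\pi}_2$ as the composition $X_{d,d'}\hookrightarrow Y_{d,d'}\xrightarrow{\,q\,}X_{d+d'}$. The map $q$ is proper, being the base change along $X_{d+d'}\to\Mst_{d+d'}$ of the proper representable morphism $\pi_2\colon\mathfrak{E}xact\to\Mst$ from axiom~(5). Since the first arrow is a closed embedding by the previous step, $\hat{\pi}_2$ is proper. It then suffices to check that $\hat{\pi}_2$ is a monomorphism of functors, because a proper monomorphism of schemes is automatically a closed immersion (a monomorphism has singleton or empty fibres, and proper plus quasi-finite is finite, while a finite monomorphism corresponds on affine charts to a surjective ring map).

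To verify the monomorphism property, suppose two $T$-points $(0\to E_1\to E_2\to E_3\to 0,\psi_1,\psi_2,\psi_3)$ and $(0\to E_1'\to E_2'\to E_3'\to 0,\psi_1',\psi_2',\psi_3')$ of $X_{d,d'}$ share the same image $(E_2,\psi_2)$ in $X_{d+d'}$. The framings identify the underlying framed objects, so the problem reduces to showing that a subobject $E_1\subset E_2$ in $\Ab_T$ with $\omega_T(E_1)=\OO_T^d\subset\OO_T^{d+d'}=\omega_T(E_2)$ (via $\psi_2$) is unique when it exists. This is immediate from the faithfulness of $\omega$ in axiom~(2): two such subobjects have identical defining monomorphisms into $E_2$ after applying $\omega_T$, and hence coincide. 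The outer framings $\psi_1,\psi_3$ are then canonically forced by $\psi_2$ via the exactness clause of axiom~(2).

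The only genuinely non-routine input is the general fact that a proper monomorphism of schemes is a closed immersion, which I would cite rather than prove; everything else is straightforward bookkeeping against the axioms already in place. No subtle deformation-theoretic or tangent-space computation is needed here, since the abelian-cone structure (and hence schematicness of $X_{d,d'}$) has been handed to us by the previous proposition.
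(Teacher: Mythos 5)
Your proof is correct, and the reduction to the two facts you use is sound. That said, the paper has set up a shorter route that avoids the proper-monomorphism theorem altogether, and it is worth noting. After assumption~(5) the paper observes that $Y_{d,d'}\cong X_{d,d'}\times_{G_{d,d'}}G_{d+d'}$ embeds as a closed subspace of $X_{d+d'}\times \Gr_d^{d+d'}$, and that $X_{d,d'}\subset Y_{d,d'}$ is a closed subfunctor. Since the compatibility of $(\psi_1,\psi_2,\psi_3)$ with the standard sequence forces $\psi_2(\omega_S(E_1))=\OO_S^d$, the image of $X_{d,d'}$ in the Grassmannian factor is the single point $[\kk^d]$; equivalently, $X_{d,d'}$ is the fibre of $Y_{d,d'}\to\Gr_d^{d+d'}$ over that closed point. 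One then reads off directly that $X_{d,d'}\hookrightarrow X_{d+d'}\times\{[\kk^d]\}\cong X_{d+d'}$ is a composition of closed embeddings, hence a closed embedding, without any appeal to the structure theory of proper monomorphisms. Both arguments give quasiprojectivity for the same reason (closed subscheme of the quasiprojective $Y_{d,d'}$, schematic by the abelian-cone proposition).

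Two small points about your monomorphism step. First, the phrase ``identical defining monomorphisms into $E_2$ after applying $\omega_T$, and hence coincide'' papers over a genuine (if standard) argument: two admissible subobjects $E_1,E_1'\subseteq E_2$ with $\omega_T(E_1)=\omega_T(E_1')$ as subsheaves of $\omega_T(E_2)$ are shown equal by observing that $\omega_T$ kills the composite $E_1\hookrightarrow E_2\twoheadrightarrow E_2/E_1'$, hence faithfulness kills it, hence $E_1\hookrightarrow E_2$ factors through the kernel $E_1'$ of $E_2\twoheadrightarrow E_2/E_1'$, and symmetrically; faithfulness alone is not quite the whole story, you also use exactness of $\omega$ and the universal property of kernels in the exact category. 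Second, your appeal to ``proper monomorphism of schemes of finite type is a closed immersion'' is a correct citation but is exactly the non-elementary input the Grassmannian route avoids. Neither point is a gap; your proof stands.
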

\begin{lemma}
 We have $\Ab_{\bar{\kk}}=\varinjlim_{\kk\subset\KK} \Ab_\KK$ and $\Ka_0(\Ab_{\bar{\kk}})=\varinjlim_{\kk\subset \KK} \Ka_0(\Ab_\KK)$, where the limit is taken over all finite extensions of $\kk$. 
\end{lemma}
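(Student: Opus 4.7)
My plan is to deduce both claims from one central statement: the natural exact functor $\Phi: \varinjlim_{\KK} \Ab_\KK \to \Ab_{\bar{\kk}}$, where the filtered colimit runs over finite extensions $\KK/\kk$, is an equivalence of exact categories. Granted this, $\Ka_0$ commutes with the colimit because it is defined by generators (objects) and relations (short exact sequences), each of which involves only finitely many data that therefore already lie in some $\Ab_\KK$. For essential surjectivity of $\Phi$ on objects, the key observation is finite-typeness of $X_d$: given $E \in \Ab_{\bar{\kk}}$ of rank $d \in \NN^{\oplus I}$ and any trivialization $\psi: \omega_{\bar{\kk}}(E) \xrightarrow{\sim} \bar{\kk}^d$, the pair $(E,\psi)$ defines a $\bar{\kk}$-point of the quasiprojective scheme $X_d$ (axiom (4)), and hence factors through $X_d(\KK)$ for some finite $\KK/\kk$, yielding a pre-image $(E_\KK,\psi_\KK)$ whose pullback to $\bar{\kk}$ is isomorphic to $(E,\psi)$.

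For full faithfulness, the main step is to represent, for $E,F \in \Ab_\KK$ of ranks $d,d'$, the functor $T \mapsto \Hom_{\Ab_T}(E_T,F_T)$ on $\KK$-schemes by a finite-type $\KK$-scheme $\underline{\Hom}(E,F)$. The construction uses axiom (5): the closed embedding $Y_{d,d'} \hookrightarrow X_{d+d'} \times \Gr_d^{d+d'}$ makes the projection $Y_{d,d'} \to X_{d+d'}$ proper, so the fiber $Z$ over the $\KK$-point $(E \oplus F, \psi_E \oplus \psi_F)$ is a proper $\KK$-scheme parameterizing subobjects $U \subset E \oplus F$ of rank $d$. The graph construction $f \mapsto \Gamma_f$ (the image of $(\id_E,f): E \to E \oplus F$) identifies $\Hom(E,F)$ with the sublocus of $Z$ on which the composite $U \hookrightarrow E \oplus F \twoheadrightarrow E$ is an isomorphism. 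This sublocus is open because, by axiom (2) together with abelianness of $\Ab_{\KK'}$, a morphism in $\Ab_{\KK'}$ is an isomorphism iff its image under $\omega$ is, and the locus where $\omega(U) \to \omega(E)$ is an isomorphism is the non-vanishing locus of a determinant. The same graph construction works in families, so $\underline{\Hom}(E,F)$ represents the Hom functor; being of finite type over $\KK$, it satisfies $\underline{\Hom}(E,F)(\bar{\kk}) = \varinjlim_{\KK'/\KK} \underline{\Hom}(E,F)(\KK')$, which is the desired equality of Hom-sets. Faithfulness of $\Phi$ on morphisms is automatic from faithfulness of $\omega$.

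To upgrade $\Phi$ to an equivalence of exact categories, I must finally verify that short exact sequences descend: if $0 \to A \to B \to C \to 0$ is exact in $\Ab_{\bar{\kk}}$ and all three objects together with the two morphisms are defined over $\KK'$, then it is exact already in $\Ab_{\KK'}$. The plan is to apply axiom (2) to $B \to C$, which is $\omega$-surjective since $\omega$ is exact and the sequence is exact over $\bar{\kk}$, producing a SES $0 \to B' \to B \to C \to 0$ in $\Ab_{\KK'}$; the composition $A \to B \to C$ vanishes in $\Ab_{\KK'}$ by faithfulness of $\omega$ (it vanishes over $\bar{\kk}$), so $A \to B$ factors uniquely through $B'$, and a final appeal to axiom (2) combined with abelianness of $\Ab_{\KK'}$ identifies this factorization with an isomorphism $A \xrightarrow{\sim} B'$. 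The main obstacle I anticipate is the representability step: one must carefully check that the graph locus is open and that the Hom functor is represented at the level of arbitrary test bases, not merely field-valued points, which depends on a careful reading of axiom (2) in families.
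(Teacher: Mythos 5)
Your proposal is sound and, as far as I can tell, correct. The paper itself states this lemma as one of several ``consequences'' of axioms (1)--(6) and defers the proof to \cite{Meinhardt4}, so there is no in-text argument to compare against; what you have written is a reasonable self-contained reconstruction. The key ideas --- essential surjectivity from finite-typeness of $X_d$ (axiom (4)), a Hom-scheme via the graph construction inside the proper subobject Grassmannian (axiom (5)), and propagation of exactness via axiom (2) plus faithfulness of $\omega$ --- are exactly the pieces these axioms make available.

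One point of mild imprecision that you partially flag yourself: when showing that the graph locus $Z^o \subset Z$ really represents the Hom functor, you invoke ``abelianness of $\Ab_{\KK'}$'' to reflect isomorphisms along $\omega$. That argument is literally available only over field bases, whereas representability requires it over arbitrary test schemes $T$. The fix is to avoid abelianness entirely: if $\alpha\colon U\to E_T$ in $\Ab_T$ has $\omega_T(\alpha)$ an isomorphism, then $\omega_T(\alpha)$ fits into $0\to 0\to \omega_T(U)\to\omega_T(E_T)\to 0$, so by axiom (2) $\alpha$ completes to a conflation $0\to U'\to U\to E_T\to 0$ with $\omega_T(U')\cong 0$; faithfulness of $\omega_T$ then forces $U'=0$, and an admissible epimorphism with zero kernel is an isomorphism in any exact category. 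The same caveat applies when you check that the graph $\Gamma_f$ is an admissible subobject in families: here you should appeal to the fact that $0\to E_T\xrightarrow{(\id,f)} E_T\oplus F_T\xrightarrow{(-f,\id)} F_T\to 0$ is a split exact sequence, hence a conflation in any exact category, rather than to abelianness. With these substitutions the representability claim, and thus the finite-type scheme $\underline{\Hom}(E,F)$ and the identification $\underline{\Hom}(E,F)(\bar{\kk})=\varinjlim_{\KK'}\underline{\Hom}(E,F)(\KK')$, go through; the passage to $\Ka_0$ is then formal as you say.
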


\begin{example}\rm
 Consider a small $\kk$-linear category $A_\kk$ with finite presentation. That means, $A_\kk$ is the quotient of the $\kk$-linear path category $\kk Q$ of a quiver $Q$ with $Q_0$ being the set $I$ of objects in $A_\kk$ and such that the set $Q(i,j)$ of arrows from $i$ to $j$ is finite for every pair $(i,j)$. Moreover, there should be a finite set $r(i,j)\subset \Hom_{\kk Q}(i,j)$ of ``relations'' for every pair $(i,j)$ such that the kernel of $\kk Q \twoheadrightarrow A_\kk$ is generated (as a two-sided ideal of $\kk Q$) by $r:=\sqcup_{i,j\in I} r(i,j)$. We could form the quiver $Q^r$ of relations with $Q^r_0=I$ and $Q^r(i,j)=r(i,j)$ and a canonical $\kk$-linear functor $\kk Q^r \to \kk Q$. Then, the kernel of $\kk Q \to A_\kk$ is just the two-sided ideal generated by the image of $\kk Q^r\to \kk Q$. Let $\Ab_S=A_\kk\Mod_S$ be the category  of $\kk$-linear functors $V:A_\kk\to \Vect_S$ with finite total rank. Put $\omega_S(V)=(V(i))_{i\in I}$, and let $p$ be defined by the push-forward along the affine morphism $\Spec R\times S \to S$. Then $(\Ab,\omega,p)$ satisfies all properties (1)--(6). We can even modify this example by introducing a King stability condition given by a ``universal character'' $\theta\in \ZZ^I$. Let $A_\kk\Mod^{\theta-ss}_\mu$ be the full subfunctor of $A_\kk\Mod$ whose objects are  families of $\theta$-semistable $A_\kk$-modules of slope $\mu\in (-\infty,+\infty]$, then $(A_\kk\Mod^{\theta-ss}_\mu,\omega,p)$ still satisfies our assumptions. We can even choose a generic Bridgeland stability condition given by $\zeta\in \{ r\exp(\sqrt{-1}\pi\phi)\mid r>0, \phi\in (0,1]\}^I$ and consider the $\zeta$-semistable modules of slope $\mu$. For $\zeta_i=-\theta_i+\sqrt{-1}$ we get back King stability.
\end{example}

\begin{example} \rm
 Let $X$ be a smooth projective variety over $\kk$ with polarization $\OO_X(D)$. Unless $\dim X=0$, there is no functor $\omega_\kk:\Coh(X)\to \Vect^I_\kk$ satisfying all of our assumptions, because then $\Coh(X)$ would  be artinian which is not the case. The next natural candidate is the category of Gieseker semistable sheaves of a given normalized Hilbert polynomial with respect to $\OO_X(D)$. As a fiber functor $\omega_S$ we should choose $\omega^{(m)}_S(E)=\pr_{S\, \ast} (E \otimes \pr_X^\ast \OO(m D))$ for $E$ being a flat family of Gieseker semistable sheaves. Ideally, this functor satisfies our assumptions for $m\gg 0$, and this is indeed true for curves. In general, it is not clear whether or not a lower bound for $m$ depends only on the normalized and not just on the absolute Hilbert polynomial. Nevertheless, one can exhaust the category $\Coh(X)^{ss}_P$ of Gieseker semistable sheaves of a fixed normalized Hilbert polynomial $P$ by Serre subcategories such that $\omega_S^{(m)}$ restricted to the subcategory satisfies all of our requirements for some $m$ depending only on the subcategory. It turns out that this is not a big restriction and we can pretend that a fiber functor on $\Coh(X)^{ss}_P$ satisfying all of our assumptions (1)--(5) exists. Recall that $\omega$ is just a technical tool, and only the category $\Ab$ along with its descendants $\Mst$ and $\Msp$ is of interest. The transformation $p$ satisfying (6) is again given by push-forward along the projection to $S$. In particular, all results of this section are also true in this example. Similarly to the quiver case, it is also possible to work with generic real polarizations $D$.  
\end{example}

Let us state our next assumption. \\
\\
\textbf{(7) Smoothness assumption:} The schemes $X_d$ are locally integral, and the quantity $\dim_\KK\Hom_{\AA_\KK}(E,F)- \dim_\KK \Ext^1(E,F)$ is locally constant on $\Mst\times \Mst$. \\
\\
This condition should be seen as our version of saying that $\Ab_\kk$ is of homological dimension at most one. Note that  none of the categories $\Ab_\KK$ might have enough projective and injective objects, and so it is not obvious how to define higher $\Ext$-groups. 
\begin{lemma}
 The abelian cone $\hat{\pi}_1\times\hat{\pi}_3:X_{d,d'} \to X_d\times X_{d'}$ is a vector bundle.
\end{lemma}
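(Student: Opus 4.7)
The plan is to leverage the explicit description of $\hat{\pi}_1\times\hat{\pi}_3$ furnished by the preceding proposition: the morphism is an abelian cone $\Spec\Sym\mathcal{G}$ for some coherent sheaf $\mathcal{G}$ on $X_d\times X_{d'}$, and the fiber $F$ over a $\KK$-point $((E_1,\psi_1),(E_3,\psi_3))$ sits in the short exact (actually four-term) sequence
\[ 0 \to \Hom_{\Ab_\KK}(E_3,E_1) \to \Hom_\KK(\KK^{d'},\KK^d) \to F \to \Ext^1_{\Ab_\KK}(E_3,E_1) \to 0. \]
A vector-bundle criterion is then available: an abelian cone is a vector bundle precisely when the underlying coherent sheaf $\mathcal{G}$ is locally free, and for a coherent sheaf on a reduced locally Noetherian base this is equivalent to the fiber dimensions being locally constant.

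First I would read off the fiber dimension from the displayed sequence:
\[ \dim_\KK F \;=\; \sum_{i\in I} d_i d'_i \;-\; \big(\dim_\KK\Hom_{\Ab_\KK}(E_3,E_1) - \dim_\KK\Ext^1_{\Ab_\KK}(E_3,E_1)\big). \]
The first summand is manifestly locally constant, so the content of the statement is local constancy of the Euler form $\chi(E_3,E_1) := \dim\Hom - \dim\Ext^1$ on $X_d\times X_{d'}$. Next I would invoke assumption (7), which asserts exactly that $\chi$ is locally constant on $\Mst_d\times\Mst_{d'}$, and transport this to $X_d\times X_{d'}$ via the smooth surjective $G_d\times G_{d'}$-torsor $X_d\times X_{d'}\to \Mst_d\times\Mst_{d'}$.

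Finally I would conclude by applying the criterion: since $X_d$ and $X_{d'}$ are locally integral, their product is locally reduced, and a coherent sheaf with locally constant fiber rank on a locally Noetherian reduced scheme is locally free. Since the fiber of $\Spec\Sym\mathcal{G}$ at a point $x$ is the affine space of dimension $\dim_{\kappa(x)}\mathcal{G}\otimes\kappa(x)$, the equality of this number with $\dim_\KK F$ (which we have just shown to be locally constant) forces $\mathcal{G}$ to be locally free. Hence $X_{d,d'}=\Spec\Sym\mathcal{G}$ is a vector bundle on $X_d\times X_{d'}$, as claimed.

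The main obstacle is not substantive but notational: verifying that the reducedness of $X_d\times X_{d'}$ really does follow from local integrality of the factors (a harmless check, granting that the paper works in sufficient generality for products of locally integral $\kk$-schemes to remain locally reduced), and that the local freeness criterion from fiber-dimension constancy applies in this generality. Everything else is an immediate consequence of the structural result in the previous proposition combined with assumption (7).
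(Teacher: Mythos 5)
Your argument is correct and, as far as I can tell, matches what the paper has in mind (the paper states the lemma as an immediate consequence of assumption (7) without writing out a proof). The skeleton is sound: read off the fiber dimension $\dim_\KK F = dd' - (\gamma',\gamma)$ from the four-term exact sequence of the preceding proposition, invoke assumption (7) to see this number is locally constant on $\Mst_d\times\Mst_{d'}$ and hence on $X_d\times X_{d'}$ by pulling back along the smooth $G_d\times G_{d'}$-torsor, and then apply the criterion that a coherent sheaf of constant fiber rank on a reduced Noetherian scheme is locally free.

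The one point you wave at as ``not substantive but notational'' deserves more care than you give it: local integrality of $X_d$ and $X_{d'}$ does \emph{not} by itself guarantee that $X_d\times_\kk X_{d'}$ is reduced. Over an imperfect field the tensor product of two domains can acquire nilpotents (the standard example being $\kk=\mathbb{F}_p(t)$ and $A=\kk[x]/(x^p-t)$, where $A\otimes_\kk A$ has $(x-y)^p=0$ with $x\ne y$). What one really wants is that the $X_d$ are \emph{geometrically} reduced, which is automatic when $\kk$ is perfect but is a genuine extra hypothesis otherwise. Given that the paper's examples and its appeal to \cite{Bittner04}, GIT over $\CC$, perverse sheaves, and so on are all in the perfect or characteristic-zero world, this is not a problem for the paper, but it is worth flagging rather than dismissing: your criterion for local freeness does fail on non-reduced bases, so the reducedness of the product is the load-bearing hypothesis, not a bookkeeping remark. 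Everything else — the use of the four-term sequence, the identification of the fiber rank of $\mathcal{G}$ with $\dim F$, and the descent of local constancy from $\Mst\times\Mst$ to $X_d\times X_{d'}$ — is exactly right.
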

\begin{lemma}
 For every field extension $\KK\supset \kk$, the number $\dim_\KK\Hom_{\AA_\KK}(E,F)- \dim_\KK \Ext^1_{\AA_\KK}(E,F)$ defines a pairing $(-,-)$ on $\Ka_0(\Ab_\KK)$.
\end{lemma}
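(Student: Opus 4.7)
Define
\[ \chi(E,F) := \dim_\KK \Hom_{\Ab_\KK}(E,F) - \dim_\KK \Ext^1_{\Ab_\KK}(E,F). \]
The standard isomorphisms $\Hom(E_1\oplus E_2,F)\cong \Hom(E_1,F)\oplus\Hom(E_2,F)$ and $\Ext^1(E_1\oplus E_2,F)\cong\Ext^1(E_1,F)\oplus\Ext^1(E_2,F)$ (valid in any abelian category, with no reference to higher Ext), and their analogues in the second variable, show that $\chi$ is biadditive on direct sums. To descend $\chi$ to a bilinear form on $\Ka_0(\Ab_\KK)$, it therefore suffices to prove that $\chi(E,F)=\chi(E',F)+\chi(E'',F)$ for every short exact sequence $0\to E'\to E\to E''\to 0$ in $\Ab_\KK$, together with the analogous identity in the second variable.

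The plan is to connect $E$ to the split extension $E'\oplus E''$ by a connected family and then invoke axiom (7). Fix framings making $E'$ and $E''$ into $\KK$-points of $X_d$ and $X_{d'}$, and let $V$ denote the fiber over this point of the vector bundle $\hat{\pi}_1\times\hat{\pi}_3\colon X_{d,d'}\to X_d\times X_{d'}$ supplied by the preceding lemma. By the four-term exact sequence of the earlier Proposition, $V$ surjects onto $\Ext^1_{\Ab_\KK}(E'',E')$; choose $v\in V$ mapping to the Yoneda class of our extension. Scaling by a parameter $t$ defines a morphism $\AA^1_\KK\to V\hookrightarrow X_{d,d'}$, and composing with $\hat{\pi}_2\colon X_{d,d'}\to X_{d+d'}$ and with the quotient $X_{d+d'}\to\Mst_{d+d'}$ produces a family $\mathcal{E}$ over $\AA^1_\KK$ whose fiber at $t=1$ is isomorphic to $E$ and at $t=0$ to $E'\oplus E''$.

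Now axiom (7), applied to the morphism $\AA^1_\KK\to \Mst\times\Mst$ sending $t\mapsto (\mathcal{E}_t,F)$ with $F$ constant in the second slot, forces the integer-valued function $t\mapsto\chi(\mathcal{E}_t,F)$ to be locally constant on the connected scheme $\AA^1_\KK$, hence constant. Evaluating at the two endpoints gives
\[ \chi(E,F) = \chi(\mathcal{E}_1,F) = \chi(\mathcal{E}_0,F) = \chi(E'\oplus E'',F) = \chi(E',F)+\chi(E'',F). \]
A symmetric deformation argument applied to short exact sequences in the second variable yields additivity in $F$, so $\chi$ descends to a well-defined biadditive pairing $(-,-)$ on $\Ka_0(\Ab_\KK)$.

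The one potentially delicate step is the production of the deforming family, but this is essentially handed to us by the foregoing structural results: axioms (1)--(5) provide the scheme $X_{d,d'}$ of framed short exact sequences, the previous lemma makes $\hat{\pi}_1\times\hat{\pi}_3$ a vector bundle (so we may freely scale fiberwise), and $\hat{\pi}_2$ realizes the middle term as a morphism to $X_{d+d'}$. The family $\mathcal{E}$ is then simply the pullback of the universal middle-term family along the $\AA^1_\KK$-parameterized line in $V$.
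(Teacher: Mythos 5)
Your proof is correct. Since the paper cites \cite{Meinhardt4} for this lemma rather than proving it, there is no paper-internal proof to compare against; but your argument is exactly the one suggested by the surrounding material: the preceding lemma establishes that $\hat{\pi}_1\times\hat{\pi}_3\colon X_{d,d'}\to X_d\times X_{d'}$ is a vector bundle, and axiom (7) gives local constancy of $\chi$ on $\Mst\times\Mst$, so that scaling the extension class inside a fiber of this bundle connects the middle term $E$ to the split form $E'\oplus E''$ by an $\AA^1_\KK$-family, forcing $\chi(E,F)=\chi(E'\oplus E'',F)$. The only point worth making explicit, which you handle correctly in substance, is that the fiber at $t=1$ need not literally \emph{be} $E$ as a framed object, only isomorphic to $E$ as an object of $\Ab_\KK$; since $\chi$ is isomorphism-invariant (and the two points map to the same point of $\Mst$) this suffices. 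The direct-sum biadditivity of $\Hom$ and of Baer $\Ext^1$ is a genuine fact in any abelian category, so your reduction of the short-exact-sequence relation to the split case is legitimate and neatly sidesteps the absence of higher $\Ext$ groups noted by the authors.
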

A special role is played by the pairing on $\Ka_0(\Ab_{\bar{\kk}})=\varinjlim_{\KK\supset \kk\mbox{ \scriptsize finite}}\Ka_0(\Ab_\KK)$ which descends to a non-degenerate pairing on $\Gamma:=\Ka_0(\Ab_{\bar{\kk}})/\rad (-,-)$. Indeed, we obtain a further decomposition $\Msp_d=\sqcup_{\gamma\in \Gamma} \Msp_{\gamma,d}$. The rank of the vector bundle mentioned in the previous Lemma over $\Msp_{\gamma,d}\times\Msp_{\gamma',d'}$ is $dd'-(\gamma',\gamma)$. In all of our examples, $(-,-)$ is just the Euler pairing.  
\begin{example} \rm This assumption is fulfilled for ``free'' $\kk$-linear categories $A=\kk Q$ but also for some finitely presented categories $A$ satisfying suitable relations. For example, fix a quiver $Q$ as before and a finite family of cycles $(C_\kappa)_{\kappa\in K}$ at vertex $i_\kappa\in Q_0$. We extend $Q$ by introducing more loops $l_\kappa$ at vertex $i_\kappa$ and impose the relations $l_\kappa C_\kappa-\id_{i_\kappa}$ and $C_\kappa l_\kappa - \id_{i_\kappa}$ for all $\kappa\in K$. A module for the quotient category is a $\kk Q$-module such that all the $C_\kappa$ act invertibly, which is an open condition.  
\end{example}
\begin{example} \rm
In the case of sheaves we have to restrict ourselves to curves or to surfaces with $K_X\cdot D<0$ if we are interested in 2-dimensional sheaves or 1-dimensional sheaves whose support has negative intersection number with $K_X$. Similarly to the quiver case, we can modify this example by imposing open conditions. Given a curve $C$ of genus $g$ and a line bundle $\Lin$ on $C$, we consider the ruled surface $\Proj_C(\Lin\oplus \OO_C)$ with its divisor $D_\infty$ at infinity. Sheaves of dimension 1 which do not meet $D_\infty$ can also be described as sheaves $E$ on $C$ with a $\OO_S$-linear map $\Lin \otimes_{\OO_C} E \to E$. Having homological dimension one requires $\deg(L)< 2-2g$.
\end{example}

Finally, we come to our last assumption on $\Ab$ which is very important in Donaldson--Thomas theory.\\
\\
\textbf{(8) Symmetry assumption:} For all field extensions $\KK\supset \kk$  the pairing $(-,-)$ on $\Ka_0(\AA_\KK)$ is symmetric, i.e.\ 
\[\dim_\KK \Hom_{\AA_\KK}(E,F)-\dim_\KK\Ext_{\AA_\KK}^1(E,F)=\dim_\KK\Hom_{\AA_\KK}(F,E)-\dim_\KK \Ext_{\AA_\KK}^1(F,E).\]
for all objects $E,F\in \AA_\KK$.

\begin{example} \rm
 In all of our examples above, this condition is fulfilled if we choose our stability condition, e.g.\ $\zeta$ or $D$, to be a bit less generic. Nevertheless, they can be chosen in the complement of countably many real codimension one walls inside some real or complex manifold. 
\end{example} 
\begin{example}\label{finite_direct_sum} \rm
Let $\PPP$ be a locally closed property which is closed under extensions and forming subquotients. This is equivalent to giving a locally closed subscheme $\Msp^\PPP\subset \Msp$ such that
\[ \xymatrix @C=2cm { \Msp^{\PPP}\times \Msp^\PPP \ar[r]^{\oplus} \ar@{^{(}->}[d] & \Msp^\PPP \ar@{^{(}->}[d] \\ \Msp\times\Msp \ar[r]^{\oplus} & \Msp }\]
is cartesian. We denote with $\Ab^{\PPP}\subset \Ab$ the full exact subfunctor such that $E\in \Ab_S$ is in $\Ab_S^{\PPP}$ if and only if every fiber of $E$ over $s\in S$ has property $\PPP$. The category $\Ab^{\PPP}$ satisfies all the properties of $\Ab$. If $X_d=\overline{X}_d^{ss}$ for all $d\in \NN^{\oplus I}$, then $\oplus:\Msp^{\PPP}\times \Msp^{\PPP}\to \Msp^{\PPP}$ is still a finite morphism as  being finite is stable under base change. This will be the case in the following two examples.
\end{example}
\begin{example} \rm An example for an open property $P$ in the quiver case is given by the requirement that a finite set of cycles act invertibly. An example for a closed property $P$ in the quiver case is given by the requirement that an arbitrary set of cycles act nilpotently. 
\end{example}
\begin{example} \rm In the case of 1-dimensional sheaves on a ruled surface $\Proj_C(\Lin\oplus \OO_C)$, such an open property $P$ is given by the requirement that the support of the sheaf does not meet the divisor at infinity, i.e.\ the sheaf is located on the vector bundle $\Spec_C \Sym \Lin$ associated to $\Lin$. Conversely, if we require that the support of the sheaves is set-theoretically contained in a closed subscheme, this defines a closed property $\PPP$ as above.
\end{example}

\subsection{Framings} 
\label{Sec_fr_rep}
As before, we just recall the notion and some facts about framings. Details can be found in \cite{Meinhardt4}. Let $(\Ab,\omega)$ be a pair as above satisfying properties (1)--(6). Given a framing vector $f\in \NN^I$, we define a new pair $(\Ab_f,\omega_f)$ as follows. Objects in $\Ab_{f,S}$ are triples $(E,W,h)$ with $E\in \Ab_S$, $W\in \Vect^I_S$ and a morphism $h:W\to \omega_S(E)$ of locally free sheaves on $S$. A morphism from $(E,W,h)$ to $(E',W',h')$ is a pair of morphisms $\phi:E\to E'$ and $\phi_\infty:W\to W'$ such that the obvious square commutes. We put $\hat{I}=I\sqcup \{\infty\}$ and $\omega_{f,S}(E,W,h):=\omega_S(E)\oplus W$ with $W$ corresponding to the index $\infty$.  
\begin{proposition}
 The pair $(\Ab_f,\omega_f)$ satisfies all the properties (1)--(6).
\end{proposition}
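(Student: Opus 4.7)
The plan is to verify axioms (1)--(6) for $(\Ab_f,\omega_f)$ one at a time, in each case reducing to the corresponding axiom for $(\Ab,\omega)$. The essential observation is that $\Ab_f$ is a comma-type construction gluing $\Ab$ to $\Vect^I$ via the morphism $h$, so every structure of interest splits into an $\Ab$-component, a $\Vect^I$-component, and a compatibility condition coming from $h$.

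For axiom (1), stackification is immediate because $\Ab$ and $\Vect^I$ are stacks and morphisms of locally free sheaves glue in the Zariski topology. Short exact sequences, pull-backs, and push-outs are defined componentwise. The group $\Hom_{\Ab_{f,S}}((E,W,h),(E',W',h'))$ fits as the equalizer of the obvious pair of $\OO_S(S)$-linear maps
\[\Hom_{\Ab_S}(E,E')\oplus \Hom_{\OO_S}(W,W')\rightrightarrows \Hom_{\OO_S}(W,\omega_S(E')),\]
and hence is a finitely generated $\OO_S(S)$-module; the analogous five-term exact sequence bounds $\Ext^1_{\Ab_{f,S}}$. Abelianness of $\Ab_{f,\KK}$ reduces to that of $\Ab_\KK$, since the kernel and cokernel of a framed morphism are computed componentwise. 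Axiom (2) is similarly straightforward: $\omega_{f,S}(E,W,h)=\omega_S(E)\oplus W$ is manifestly faithful, exact, and $\OO_S(S)$-linear, and the lifting property for morphisms with a given surjective image in $\Vect^{\hat{I}}$ is obtained by applying the corresponding property for $\omega_S$ to the $\Ab$-component and extending along the split $W$-component trivially.

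For axiom (3), the framed rigidification $X_{f,(d,f)}$ parametrizes tuples $(E,\psi,h')$ with $(E,\psi)\in X_d$ and $h'$ a morphism between trivialized bundles, so $X_{f,(d,f)}\cong X_d\times\AA^N$ for the appropriate $N$; this is a scheme, and the morphism to $\Spec\kk/G_{(d,f)}$ with $G_{(d,f)}:=G_d\times G_f$ is representable. Axiom (4) is the substantial step: we set $\overline{X}_{f,d}:=\overline{X}_d\times\AA^N$ with the evident $G_{(d,f)}$-action, linearized by $\Lin_d\boxtimes \OO_{\AA^N}$ twisted by a suitable character of $G_f$, following King's construction for framed representations. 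Additivity (\ref{eq7}) of $\mu$-weights over 1-PS holds because any 1-PS of $G_{(d,f)}$ splits as a product and $\mu$ is additive over both factors and over the character twist. The equivalence of conditions (a)--(c) for framed objects then follows from the unframed equivalence combined with a direct weight computation for the $h'$-component. For axiom (5), the universal Grassmannian for $\Ab_f$ embeds as a closed subscheme of $\mathfrak{E}xact(\Ab)\times\Gr(W)$, cut out by the compatibility condition $h(W')\subseteq \omega_S(E')$, so properness of $\pi_2$ is inherited from $\Ab$. Axiom (6) is immediate: the deformation functor for $\Ab_f$ is the product-type fibration of the deformation functor for $\Ab$ with the deformation functor of morphisms between locally free sheaves, and $p_S^R$ extends componentwise.

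The principal obstacle is axiom (4), specifically identifying the correct character twist of $\Lin_d\boxtimes\OO_{\AA^N}$ so that conditions (a)--(c) transfer faithfully to the framed moduli: one must verify that the $h'$-component contributes the expected weight to $\mu^{\Lin_{f,d}}$ and that limits under the 1-PS $\lambda^{W^n,n}$ interact correctly with subobjects of the framed object. The remainder is essentially bookkeeping once the linearization is fixed, and the argument closely mirrors the classical framed-quiver construction.
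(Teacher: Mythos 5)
The paper itself offers no proof of this proposition --- it is recalled from \cite{Meinhardt4} --- so your proposal has to be judged on its own terms. The componentwise comma-category reduction is the right strategy, and (1), (3), (5), (6) go through essentially as you say; for (2), ``extending along the split $W$-component trivially'' hides the one substantive point, namely that the candidate kernel $(E',W',h')$ requires $h(W')\subseteq\omega_S(E')$, which does follow from the commuting square $\omega_S(\phi)\circ h = h''\circ\phi_\infty$ together with $W'=\ker\phi_\infty$ and $\omega_S(E')=\ker\omega_S(\phi)$ --- but you should say so rather than wave at a nonexistent splitting.

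Axiom (4) is where the proposal goes wrong, and the ``principal obstacle'' you flag is self-inflicted. Twisting $\Lin_d\boxtimes\OO_{\AA^N}$ by a nontrivial character in King's style must \emph{not} be done here: for this proposition the relevant rigidified space $X_d\times\AA^N$ consists of \emph{all} trivialized framed objects of the given rank, including those with framing $h=0$, and axiom (4) requires this space to sit inside the semistable locus of the compactification. Any nontrivial character twist on the $\Gl$-factor acting on $W$ destabilizes the zero framing --- take a one-parameter subgroup that is trivial on $G_d$; its limit on $(x_1,0)$ exists, and the twisted weight can be made strictly negative --- destroying exactly the inclusion you need. The correct choice is the untwisted $\Lin_d\boxtimes\OO_{\AA^N}$. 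One then has $\overline{X}_{f}^{ss}=\overline{X}_d^{ss}\times\AA^N$, because the trivial line bundle on the affine factor contributes nothing to $\mu$ while any destabilizing $\lambda_1$ for $x_1$ extends to a destabilizing $(\lambda_1,\lambda_\infty)$ with convergent framing limit by choosing $\lambda_\infty$ with sufficiently negative weights. With this linearization, additivity (\ref{eq7}) and the equivalence of (a)--(c) reduce directly to the same statements for $\Ab$: the affine factor affects only \emph{whether} a limit exists, and ``$h$ respects the $\lambda$-filtration'' is precisely the compatibility needed in (b) to upgrade an unframed subobject filtration to a framed one. The King-style twist you have in mind is the right tool for the \emph{later} proposition that builds $X_{f,d}$ as a geometric quotient of stable rank-one framed objects --- a separate and subsequent statement, not part of verifying (1)--(6).
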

In fact, we are not interested in all framed objects, but in those with $W$ having rank one. 
\begin{proposition} There is a quasiprojective scheme $X_{f,d}$ whose $\KK$-points correspond to tuples $(E,\psi,W\cong \KK,h)$ (up to equivalence) with $E\in \Ab_\KK$, $\psi:\omega_\KK(E)\xrightarrow{\sim} \KK^d$ and $h:\KK\to \KK^d$ corresponding to a vector in $\KK^d$ such that there are no proper subobjects  $(E',\psi',W'\cong \KK,h')$ of $(E,\psi,W\cong \KK,h)$. The group $G_d$ acts freely on $X_{f,d}$ in the obvious way. There is a geometric quotient $X_{f,d}/G_d\cong \Msp_{f,d}$ in the category of quasiprojective schemes. Mapping  a tuple $(E,\psi,W\cong \KK,h)$ to $(E,\psi)$, we obtain a $G_d$-equivariant morphism $X_{f,d}\to X_d$ which induces a projective morphism $\pi_{f,d}:\Msp_{f,d}\to \Msp_d$ for every dimension vector $d$, called the (non-commutative) Hilbert--Chow morphism. The fiber over $\Msp^s_d$ is isomorphic to  $\PP^{fd-1}$. 
\end{proposition}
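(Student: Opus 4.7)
The plan is to build $X_{f,d}$ as an open subscheme of a vector bundle over $X_d$, establish that the $G_d$-action is free, then reinterpret the whole construction via GIT on the framed category $\Ab_f$ (restricted to $W$ of rank one) in order to obtain the geometric quotient as a quasiprojective scheme and the projectivity of $\pi_{f,d}$.

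First I would consider the functor sending a $\kk$-scheme $S$ to the set of equivalence classes of tuples $(E,\psi,h)$ with $(E,\psi)\in X_d(S)$ and $h:\OO_S^f \to \omega_S(E)$ a morphism of $I$-graded locally free sheaves; via $\psi$ such an $h$ is just a section of the trivial bundle $\bigoplus_i\Hom(\OO_S^{f_i},\OO_S^{d_i})\cong \OO_S^{fd}$ where $fd=\sum_{i\in I}f_id_i$. This functor is representable by the trivial bundle $V=X_d\times\AA^{fd}$. The stability condition (no proper subobject $(E',\psi',W',h')$) is open: its failure locus is the image, under the proper morphism $\mathfrak{E}xact\times_{\Mst}X_d\to X_d$ provided by axiom (5), of the closed subscheme of triples $(E,\psi,h)$ whose framing factors through a proper universal subobject of $E$. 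Hence $X_{f,d}\subset V$ is a quasiprojective scheme.

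For the freeness of the $G_d$-action $g\cdot(E,\psi,h)=(E,g\psi,gh)$, suppose $g\in G_d(\KK)$ stabilizes $(E,\psi,h)$. Then $\psi^{-1}g\psi$ lifts, by axiom (2), to an automorphism $\phi$ of $E$ fixing the framing vector $h$. Again by (2), the image of $h$ generates a subobject $E'\subset E$, and since $(E,\psi,h,W)$ has no proper framed subobjects containing $h$, we must have $E'=E$; thus $\phi$ fixes a generating system and so $\phi=\id$, forcing $g=1$. For the existence of the geometric quotient and projectivity of $\pi_{f,d}$, I would reinterpret $X_{f,d}$ as the locus of $\theta$-stable objects in the framed category $\Ab_f$ (with $W\cong\KK$ at $\infty$) for a King-type stability $\theta\in \ZZ^{\hat I}$ chosen so that $\theta$-stability on dimension vector $(d,1)\in \NN^{\oplus \hat I}$ coincides with the framed stability above (the standard choice is $\theta_\infty$ negative and the $\theta_i$ all positive and sufficiently small, forcing every $\theta$-semistable framed object to be framed-stable). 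Because the framed pair $(\Ab_f,\omega_f)$ itself satisfies axioms (1)--(6), one may apply Theorem 2.1 to obtain a quasiprojective categorical quotient, which is a geometric quotient on the $\theta$-stable locus by the freeness above, yielding $\Msp_{f,d}=X_{f,d}/G_d$.

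The Hilbert--Chow morphism $\pi_{f,d}:\Msp_{f,d}\to\Msp_d$ is then the descent of the forgetful map $(E,\psi,h)\mapsto(E,\psi)$, which corresponds on GIT quotients to varying the stability parameter from $\theta$ towards the wall where $\theta_\infty=0$, i.e.\ towards the trivial framing stability; projectivity of $\pi_{f,d}$ follows from the standard fact that such variation-of-GIT-quotients morphisms are projective (and the categorical quotient for the limiting stability is $\Msp_d$). For the fiber over an absolutely simple $[E]\in \Msp^s_d(\KK)$: Schur's lemma together with absolute simplicity gives $\Aut_{\Ab_\KK}(E)=\KK^\ast$, hence a framed lift is a nonzero $h\in \Hom(\KK^f,\omega_\KK(E))\cong \KK^{fd}$ modulo the scaling action of $\Aut_{\Ab_\KK}(E)=\KK^\ast$, giving $\PP^{fd-1}$. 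The main obstacle I expect is the GIT reformulation: one must justify that framed stability is genuinely equivalent to $\theta$-stability for an appropriate $\theta$ and, importantly, that under this choice semistability forces stability, so that Theorem 2.1 produces a geometric, not merely categorical, quotient and one can extract projectivity of $\pi_{f,d}$ from the VGIT comparison. The remaining steps (representability of the framed functor, openness of the stability condition, freeness, fiber computation) should be essentially formal consequences of axioms (1)--(6).
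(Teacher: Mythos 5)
The paper itself gives no proof of this proposition; it merely refers to \cite{Meinhardt4} for the details. Your strategy --- realize $X_{f,d}$ as the open stable locus in the trivial bundle $X_d\times\AA^{fd}$, deduce openness from the properness of the universal Grassmannian (axiom (5)), check the $G_d$-action is free, then pass to GIT in the framed category $(\Ab_f,\omega_f)$ with a King-type character $\theta$ --- is exactly the route the axiomatic framework is set up to support, and is almost certainly what \cite{Meinhardt4} does. In particular, you are right that $\theta_i>0$ on $I$ with $\theta_\infty = -\sum_i \theta_i d_i$ makes $\theta$-semistability coincide with $\theta$-stability and with the framed stability condition of the statement, and the Schur-type fiber computation over $\Msp^s_d$ is correct.

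Two steps need to be tightened. \textbf{Freeness:} the phrase ``$\phi$ fixes a generating system and so $\phi=\id$'' is not an argument available in the abstract setting (and axiom (2) gives faithfulness, not fullness, of $\omega$, so it does not itself produce the lift $\phi$; the lift comes from the definition of $X_d$ as a $G_d$-torsor over $\Mst_d$). The clean argument is: for an automorphism $\phi$ of $E$ with $\omega_\KK(\phi)\circ\tilde h = \tilde h$, the subobject $\ker(\phi-\id)\subseteq E$ (which exists since $\Ab_\KK$ is abelian, axiom (1)) satisfies $\omega_\KK(\ker(\phi-\id))=\ker(\omega_\KK(\phi)-\id)\supseteq\operatorname{im}(\tilde h)$ by exactness of $\omega$, so $\tilde h$ factors through it, and framed stability forces $\ker(\phi-\id)=E$, i.e.\ $\phi=\id$. \textbf{Projectivity and the identification of quotients:} the King GIT quotient in $\Ab_f$ is by $G_{(d,1)}=G_d\times\Gl(1)$, which does \emph{not} act freely --- its stabilizer on stable points is the diagonal central $\GG_m$. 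You need to observe that this diagonal $\GG_m$ acts trivially on $X_{f,d}=X_{f,(d,1)}$ (twist both trivializations by a scalar and absorb it into an automorphism of the framed object), so the $G_{(d,1)}$-quotient equals the $G_d$-quotient; only then does your freeness computation for $G_d$ give the geometric quotient. For projectivity of $\pi_{f,d}$ you invoke VGIT, but the crucial identification ``the $\theta=0$ categorical quotient is $\Msp_d$'' is asserted without proof. It follows from the fact that the central one-parameter subgroup $t\mapsto (t\cdot\id_{G_d},1)$ fixes $(E,\psi)$ up to equivalence while scaling $h\mapsto th$, so the closure of every $G_{(d,1)}$-orbit meets $\{h=0\}$; hence the closed orbits are exactly the semisimple unframed objects with $h=0$, giving $\Msp_d$. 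With this noted, projectivity of $\pi_{f,d}$ does follow from the standard fact that the character-twisted GIT quotient is projective over the untwisted one.
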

One of the main theorems of \cite{Meinhardt4} which is a straightforward generalization of the corresponding result for quivers proven in \cite{MeinhardtReineke} is the following result.
\begin{theorem}
 If, moreover, $\Ab$ satisfies (7) and (8), then $\Msp_{f,d}$ is smooth and the map $\pi_{f,d}:\Msp_{f,d}\to \Msp_d$ is virtually small, that is, there is a finite stratification $\Msp_d=\sqcup_\lambda S_\lambda$ with empty or dense stratum $S_0=\Msp^{s}_d=X^{st}_d/PG_d$ such that $\pi_{f,d}^{-1}(S_\lambda) \longrightarrow S_\lambda$ is \'etale locally trivial and 
 \[ \dim \pi_{f,d}^{-1}(x_\lambda) - \dim \PP^{f\cdot d-1} \le \frac{1}{2} \codim S_\lambda\] 
for every $x_\lambda\in S_\lambda$ with equality only for $S_\lambda=S_0\not=\emptyset$ with fiber $\pi_{f,d}^{-1}(x_0)\cong \PP^{f\cdot d-1}$. 
\end{theorem}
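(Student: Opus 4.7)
My strategy is to reduce the statement to the symmetric-quiver case treated in \cite{MeinhardtReineke}, using a Luna-type \'etale slice analysis at each polystable point to realize a local model as the framed moduli of the Ext-quiver of the polystable representative.

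First I would establish smoothness of $\Msp_{f,d}$. By the lemma stated just before this theorem, the abelian cone $\hat{\pi}_1\times\hat{\pi}_3:X_{d,d'}\to X_d\times X_{d'}$ is a vector bundle; restricting to the diagonal identifies the tangent cone of $X_d$ with (a summand of) this vector bundle, hence each $X_d$ is smooth of constant dimension. The framed scheme $X_{f,d}$ is an open subscheme of the trivial vector bundle $X_d\times \Hom(W,\kk^d)$ parameterizing framings, and so is smooth. Framed stability makes the $G_d$-action free, and $\Msp_{f,d}=X_{f,d}/G_d$ is therefore smooth.

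Next I would stratify $\Msp_d$ by polystable type. To $[E]\in\Msp_d$ one associates the unordered tuple $\lambda=\{(e^{(j)},m_j)\}_{j=1}^s$ coming from the decomposition $E\cong\bigoplus_{j=1}^s E_j^{m_j}$ with $E_j$ pairwise non-isomorphic absolutely simple of dimension vector $e^{(j)}\in\NN^{\oplus I}$. This gives a finite locally closed stratification $\Msp_d=\bigsqcup_\lambda S_\lambda$ whose open dense stratum (when non-empty) is $S_0=\Msp^s_d$, corresponding to $s=1$, $m_1=1$.

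The geometric heart of the proof is a Luna-type \'etale slice theorem at $x_\lambda=[E]\in S_\lambda$: using the good deformation theory of (6) together with the vector bundle structure of $X_{d,d'}\to X_d\times X_{d'}$ from (7) to linearize extensions, one constructs a $\Aut(E)=\prod_j\Gl(m_j)$-equivariant \'etale neighborhood of $E\in X_d$ isomorphic to an open in the representation space of the \emph{Ext-quiver} $Q^E$, whose vertex set is $\{1,\ldots,s\}$ with $\dim_\kk\Ext^1(E_j,E_k)$ arrows $j\to k$ and dimension vector $m=(m_j)$. Passing to framings extends this to an \'etale identification of $\pi_{f,d}$ near $\pi_{f,d}^{-1}(x_\lambda)$ with the framed Hilbert--Chow morphism for $Q^E$ at framing vector $f'_j=f\cdot e^{(j)}$. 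The same construction yields the \'etale local triviality of $\pi_{f,d}^{-1}(S_\lambda)\to S_\lambda$.

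Finally, by assumption (8) applied with $\dim_\kk\Hom(E_j,E_k)=\delta_{jk}$ for absolutely simple $E_j$, we have $\dim_\kk\Ext^1(E_j,E_k)=\dim_\kk\Ext^1(E_k,E_j)$, so $Q^E$ is a symmetric quiver. The main theorem of \cite{MeinhardtReineke} then furnishes virtual smallness of the framed Hilbert--Chow morphism of a symmetric quiver together with the precise equality condition (equality iff the dimension vector is concentrated at one vertex with value $1$), which transports back via the Luna model to give the inequality and pin down the equality case to $S_\lambda=S_0$. I expect the principal obstacle to be the careful construction of the framed Luna slice and verification of its compatibility with assumptions (2), (6), and (7); once this \'etale-local reduction to the Ext-quiver is in place, the combinatorial dimension count is performed entirely in the quiver setting.
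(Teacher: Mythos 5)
The paper itself does not prove this theorem: the surrounding text explicitly identifies it as ``one of the main theorems of \cite{Meinhardt4}'' and describes it as a ``straightforward generalization of the corresponding result for quivers proven in \cite{MeinhardtReineke}.'' So there is no in-text proof to compare your proposal against; what we can do is assess your strategy on its own terms and note where it diverges from the direct-generalization route the paper gestures at.

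Your Luna-slice reduction to the Ext-quiver is a genuinely different strategy from what the phrase ``straightforward generalization'' suggests. The most natural reading of that phrase is that \cite{Meinhardt4} re-runs the \cite{MeinhardtReineke} dimension count inside the axiomatic framework — stratify $\Msp_d$ by polystable type, use the vector-bundle structure of $X_{d,d'}\to X_d\times X_{d'}$ from (7) to control the fibers of $\pi_{f,d}$ via iterated extensions (filtrations of a framed object with prescribed associated graded), and let the symmetry of $(-,-)$ from (8) close the combinatorial inequality. Your reduction to a symmetric quiver via an étale slice is a clean way to organize essentially the same computation, and the Ext-quiver being symmetric via Schur's lemma plus (8) is correct. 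But a reduction to the quiver case is structurally different from a proof that re-derives the estimate in the axiomatic setting, and there are two genuine technical obstacles your sketch does not resolve.

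The first is the characteristic. The Luna étale slice theorem in the form you invoke — an $\Aut(E)$-equivariant étale neighborhood of a polystable $E$ in $X_d$ modeled on $\Ext^1(E,E)$ — requires linear reductivity of $\Aut(E)=\prod_j\Gl(m_j)$, which over a field of positive characteristic is generically false. The theorem as stated does not restrict $\Char\kk$ (contrast Theorem~\ref{integral_identity_2}, which does), and assumption (4) is written to accommodate positive characteristic (``which should be open if $\Char\kk>0$''). If your intention is to cover all characteristics, the Luna machinery has to be replaced; a direct parameterization of $\pi_{f,d}^{-1}(x_\lambda)$ by Jordan–Hölder–type filtrations, with dimension control coming only from the vector-bundle $X_{d,d'}\to X_d\times X_{d'}$, avoids the problem and is closer to what the cited papers presumably do.

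The second is the compatibility of the slice with the framing. You assert that passing to framings extends the slice identification to an ``étale identification of $\pi_{f,d}$ near $\pi_{f,d}^{-1}(x_\lambda)$ with the framed Hilbert--Chow morphism for $Q^E$ at framing vector $f'_j=f\cdot e^{(j)}$.'' Two things need checking there: (i) that $\Hom(W,\omega(E))$ decomposes over the simple summands as you claim, which is true but should be stated with the correct grading conventions for $W\in\Vect^I$ and $f\in\NN^I$, and (ii) that framed stability in $\Ab_f$ is \emph{equivalent} to framed stability of the corresponding $Q^E$-representation in the slice. The latter is not automatic: framed stability of $(E',\psi',W',h')$ quantifies over all subobjects of $E'$ in $\Ab$, while framed stability for $Q^E$ sees only subrepresentations visible in the local model. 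One needs to argue that, sufficiently near $[E]$, every potentially destabilizing subobject of a deformed $E'$ is captured by a subrepresentation of the corresponding Ext-quiver representation. This is plausible — it is essentially the statement that the slice is deformation-theoretically complete, coming from (6) and (7) — but it is the step I would ask you to write out before regarding the reduction as complete, since the conclusion of the theorem (virtual smallness, with the precise equality case) rests entirely on getting the fibers $\pi_{f,d}^{-1}(x_\lambda)$ identified correctly.

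As a smaller note, in your smoothness argument the restriction of $X_{d,d}\to X_d\times X_d$ to the diagonal \emph{is} the tangent cone $TX_d$, not merely a summand of it (the ``direct summand'' language in the proposition refers to $TX_{d+d'}$ restricted to $X_d\times X_{d'}$). Combining this with the lemma after (7) and local integrality gives smoothness of $X_d$, and the rest of your smoothness argument for $\Msp_{f,d}$ is fine.
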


\subsection{Potentials} \label{potential}

We will now introduce two versions of potentials which are related to each other by means of the trace homomorphism. The first definition of a  potential depends only on $\Ab$ while the second  depends on a pair $(\Ab,\omega)$ as above. 
\begin{definition} Let $\Ab$ satisfy our assumptions (1)--(4) giving rise to a commutative monoid $(\Msp,\oplus,0)$ in the category of schemes over $\kk$. A potential for $\Ab$ is a monoid homomorphism $W:(\Msp,\oplus,0) \longrightarrow (\AA^1,+,0)$.  We denote with $\Hom(\Msp,\AA^1)$ the set  of potentials for $\Ab$. It has a natural structure of a $\kk$-vector space. 
\end{definition}
To avoid confusion, we will sometimes call potentials for $\Ab$ also ``generalized'' potentials. Thus, a (generalized) potential for $\Ab$ is just an additive character of $\Msp$. For the second definition of a potential  we need to fix a fiber functor $\omega:\Ab\to \Vect^I$.
\begin{definition} Given a pair $(\Ab,\omega)$ satisfying assumptions (1) and (2), we define  $\End_\kk(\omega)$ to be the $\kk$-algebra  of $\OO$-linear natural transformations $\theta:\omega\to \omega$ with multiplication given by composition. 
\end{definition}
Spelling out the definition, $\theta\in \End_\kk(\omega)$ is given by a family $\theta^S_E:\omega_S(E)\to \omega_S(E)$ of $\OO_S$-linear maps such that for morphisms $f:S'\to S$ and $\alpha:E\to E'$ in $\Ab_S$ the following two diagrams commute
\[ \xymatrix { \omega_{S'}(f^\ast (E)) \ar[r]^{\theta^{S'}_{f\ast(E)}} \ar[d]^\wr & \omega_{S'}(f^\ast(E)) \ar[d]^\wr &  & \omega_S(E)\ar[d]^{\omega_S(\alpha)} \ar[r]^{\theta^S_E} & \omega_S(E) \ar[d]^{\omega_S(\alpha)} \\ f^\ast\omega_S(E) \ar[r]^{f^\ast \theta^S_E} & f^\ast\omega_S(E) &  & \omega_S(E')\ar[r]^{\theta^S_{E'}} & \omega_S(E'). } \] 
Similar to the previous definition, we define a $\kk$-linear category $\Cat(\Ab,\omega)$ associated to $(\Ab,\omega)$ as follows.
\begin{definition} The set of objects of the category $\Cat(\Ab,\omega)$ is $I$. For $(i,j)\in I$ we define the $\kk$-vector space of morphisms $\Cat(\Ab,\omega)(i,j)$ from $i$ to $j$ as the $\kk$-vector space of $\OO$-linear natural transformations $\vartheta:\omega_i\to \omega_j$. Composition of morphisms is given by composition of natural transformations.
\end{definition}
\begin{example}\rm
Given a   $\kk$-linear category $A_\kk$ with $\obj(\Ab)=:I$ being a set. Let $\Ab=A_k\Mod$ be  the functor of $A_\kk$-modules mapping a $\kk$-scheme $S$ to the $\OO_S(S)$-module of $\kk$-linear functors $V:A_\kk\longrightarrow \Vect_S$. There is a natural $\kk$-linear functor $A_\kk\to \Cat(A_\kk\Mod,\omega)$ which is the identity on objects. By a standard argument using Zorn's lemma, one can show that this functor is faithful, i.e.\ $A_\kk \hookrightarrow \Cat(A_\kk\Mod,\omega)$ is an embedding. Conjecturally, this is  an isomorphism. Notice that the category of natural transformations between the graded components $(\omega_{\kk})_i$ of  $\omega_\kk$ is much larger. But requiring this for all $\omega_S=\oplus_{i\in I}(\omega_S)_i$ in a compatible way, puts severe restrictions.  
\end{example}
By construction $\End_\kk(\omega)\cong \prod_{i\in I}\End_{\Cat(\Ab,\omega)}(i)$ as $\kk$-algebras. Moreover, \\ $\bigoplus_{i,j\in I} \Hom_{\Cat(\Ab,\omega)}(i,j)$ is a $\kk$-algebra without unit if $|I|=\infty$. Abusing notation, we also denote this algebra with $\Cat(\Ab,\omega)$. Let $[\Cat(\Ab,\omega),\Cat(\Ab,\omega)]$ be the $\kk$-linear span of all commutators in $\bigoplus_{i,j\in I} \Hom_{\Cat(\Ab,\omega)}(i,j)$. Let $[\Cat(\Ab,\omega),\Cat(\Ab,\omega)]^\circ$ be the intersection of $[\Cat(\Ab,\omega),\Cat(\Ab,\omega)]$ with $\bigoplus_{i\in I} \End_{\Cat(\Ab,\omega)}(i)=:\End^\circ_\kk(\omega)$.
\begin{lemma}
 The inclusion $\End^\circ_\kk(\omega)\hookrightarrow \Cat(\Ab,\omega)$ induces an isomorphism \[\End^\circ_\kk(\omega)/[\Cat(\Ab,\omega),\Cat(\Ab,\omega)]^\circ \cong \Cat(\Ab,\omega)/[\Cat(\Ab,\omega),\Cat(\Ab,\omega)]  \] of $\kk$-vector spaces.
\end{lemma}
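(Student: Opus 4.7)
Write $\Cat := \Cat(\Ab,\omega)$ and $\End^\circ := \End^\circ_\kk(\omega)$, and split $\Cat = \End^\circ \oplus \Off$ as a $\kk$-vector space, where $\Off := \bigoplus_{i\ne j}\Hom_{\Cat}(i,j)$ is the off-diagonal part. The inclusion $\End^\circ\hookrightarrow \Cat$ followed by the quotient map yields a $\kk$-linear map
\[
\bar q : \End^\circ \longrightarrow \Cat/[\Cat,\Cat],
\]
and it suffices to show that $\bar q$ is surjective and that $\ker\bar q = [\Cat,\Cat]^\circ$; the claimed isomorphism is then the first isomorphism theorem.

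\textbf{Surjectivity.} The idea is that every off-diagonal morphism is visibly a commutator once one remembers that composition in the ``path-algebra-without-unit'' $\Cat$ is zero whenever the source and target labels do not match. Concretely, for $\vartheta\in \Hom_{\Cat}(i,j)$ with $i\ne j$ we have $\id_j\cdot \vartheta = \vartheta$ (the target of $\vartheta$ is $j$), while $\vartheta\cdot \id_j = 0$ (the source of $\vartheta$ is $i\ne j$). Hence
\[
[\id_j,\vartheta] \;=\; \id_j\vartheta - \vartheta\id_j \;=\; \vartheta,
\]
so $\Off\subseteq [\Cat,\Cat]$ and therefore $\Cat = \End^\circ + [\Cat,\Cat]$. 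This gives surjectivity of $\bar q$.

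\textbf{Identification of the kernel.} By construction $\ker \bar q = \End^\circ \cap [\Cat,\Cat]$, and this is exactly $[\Cat,\Cat]^\circ$ by the very definition given just before the lemma. Combining with the previous paragraph completes the proof.

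\textbf{Where the (minor) subtlety lies.} There is essentially no hard step; the only point that could mislead is the fact that when $|I|=\infty$ the algebra $\Cat$ has no global unit, only the local units $\id_i$. The argument above uses only these local units, so the absence of a global unit does not interfere. The entire content of the lemma is the observation that off-diagonal morphisms are automatically commutators with the relevant local identity, reducing the abelianization to its diagonal part.
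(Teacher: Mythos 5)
Your proof is correct. The paper states this lemma without proof, and your argument is the natural one: the single observation $[\id_j,\vartheta]=\id_j\vartheta-\vartheta\id_j=\vartheta$ for $\vartheta\in\Hom_{\Cat(\Ab,\omega)}(i,j)$ with $i\neq j$ shows the off-diagonal summand lies in $[\Cat(\Ab,\omega),\Cat(\Ab,\omega)]$, from which surjectivity of the induced map and the identification of its kernel with $[\Cat(\Ab,\omega),\Cat(\Ab,\omega)]^\circ$ follow immediately.
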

As $\End^\circ(\omega)\subset \End(\omega)$, there is an injective $\kk$-linear map 
\[\Cat(\Ab,\omega)/[\Cat(\Ab,\omega),\Cat(\Ab,\omega)] \hookrightarrow \End_\kk(\omega)/[\Cat(\Ab,\omega),\Cat(\Ab,\omega)]^\circ\]
due to the previous lemma which is an isomorphism if and only if  $|I|<\infty$. Note that $1:=\id_{\omega}$ represents an element in the left hand side which is not in the right hand side if $|I|=\infty$. 
\begin{definition}
 We define $\Pot(\Ab,\omega)$ to be the $\kk$-vector space
 \[ \End_\kk(\omega)/\big( [\Cat(\Ab,\omega),\Cat(\Ab,\omega)]^\circ + [\End_\kk(\omega),\End_\kk(\omega)]\big)\]
 whose elements are called potentials for $(\Ab,\omega)$ and denoted with $\WW$.
\end{definition}
If $|I|<\infty$, then $\Pot(\Ab,\omega)= \Cat(\Ab,\omega)/[\Cat(\Ab,\omega),\Cat(\Ab,\omega)]$. Given a $\kk$-linear category $A_\kk$ with object set $I$, there is a canonical $\kk$-linear morphism $A_\kk/[A_\kk,A_\kk]\longrightarrow \Pot(\Ab,\omega)$. We do not know if this is injective or surjective even if $I$ is finite.\\
There is a morphism $\Tr:\Pot(\Ab,\omega) \longrightarrow \Hom(\Msp,\AA^1)$ of $\kk$-vector spaces given as follows. We fix an element $\theta\in\End_\kk(\omega)$ as above and consider the universal trivialized object of dimension $d$ on $X_d$, i.e.\ an object $E\in \Ab_{X_d}$ along with an isomorphism $\psi:\omega_{X_d}(E)\xrightarrow{\sim}\OO_{X_d}^d$ having some universal property. Consider the function $\Tr(\psi \theta^{X_d}_E \psi^{-1})$ on $X_d$. It is certainly $\kk$-linear in $\theta$ and $G_d$-invariant. By the universal property of $\Msp_d$, it defines a morphism $\Msp\to \AA^1$ for fixed $\phi$.  As a function in $\phi$ it descends to a function on $\Pot(\Ab,\omega)$ by the properties of the trace. 
\begin{definition}
The $\kk$-linear morphism $\Tr:\Pot(\Ab,\omega)\to \Hom(\Msp,\AA^1)$ mapping a potential for $(\Ab,\omega)$ to a (generalized) potential for $\Ab$ is called the trace homomorphism for $(\Ab,\omega)$. 
\end{definition}
\begin{example}\rm
For $\WW=1$ we get $\Tr(\WW)(E)=|\dim( E)|:=\sum_{i \in I}\dim_\KK\omega_\KK(E)_i \in \KK$ for every $\KK$-point $E\in \Msp$. Similarly, if $\WW=e_i$ is the identity on $\omega(-)_i$ and zero else, then $\Tr(\WW)=\dim(E)_i=\dim_\KK \omega_\KK(E)_i$. 
\end{example}
Given a potential $W$ for $\Ab$, we define define $\Mst^W$ to be the closed substack in $\Mst$ such that $\Mst^W_d=\Crit(Wp_d\rho_d)/G_d$ for all $d\in \NN^{\oplus I}$, using the notation $X_d\xrightarrow{\rho_d}\Mst_d\xrightarrow{p_d}\Msp_d\xrightarrow{W}\AA^1$. Moreover,  the full subcategory $\Ab^W_S\subset \Ab_S$ is given by all objects $E\in \Ab_S$ such that the classifying map $S\to \Mst$ factorizes through $\Mst^W$. This subcategory is closed under subquotients once they exist, but not necessarily under extensions. Finally, $\Msp^W_d=\Crit(Wp_d\rho_d)/\!\!/G_d$ is the image of $\Mst^W$ under $p:\Mst\to \Msp$. It is a closed subscheme of $\Msp$. 
\begin{example}
 In the case of quivers let $\WW\in \kk Q/[\kk Q,\kk Q]$ be a potential for $(\kk Q\rep,\omega)$. Then $\Mst^{\Tr(\WW)}$ is the stack of $\kk Q$-modules such that the non-commutative derivatives of $\WW$ act by zero.  
\end{example}

\section{($\lambda$-)ring and ($\lambda$-)algebra theories} \label{theories}

The aim of this section is to introduce the formal setting in which Donaldson--Thomas theory will be formulated. We try to be as general as possible for two reasons. First of all, the formal structure of Donaldson--Thomas theory will be more transparent, and it is quite remarkable that moduli spaces of objects in $\Ab$ fit perfectly into this framework. Secondly, we wish to provide a framework which allows many applications and various realizations, and it would be very interesting to see new examples of this kind. \\

\subsection{($\lambda$-)ring theories}

Let $\Sch_\kk$ be the category of schemes $X=\sqcup_{X_j \in \pi_0(X)}X_j$ over $\Spec \kk$ with possibly infinitely many quasiprojective connected components $X_j$. In particular, a morphism  $u:Y\to X$ is proper if and only if its restriction $u^{-1}(X_j)\to X_j$ to every connected component $X_j$ of $X$ is projective in the sense of Hartshorne \cite{Hartshorne}, i.e.\ has a factorization $u^{-1}(X_j)\hookrightarrow X_j\times \PP^{n_j} \xrightarrow{\pr} X_j$. The restriction to quasiprojective components is not essential but this assumption is always fulfilled in our examples and allows more ``theories''. Similarly, given $\MM\in \Sch_\kk$, $\Sch_\MM$ denotes the category of morphisms $f:X\to M$ inside $\Sch_\kk$. \\
Let $\Sm, \Pro$ be two subcategories of $\Sch_\kk$ having the following properties:
\begin{enumerate}
\item $\obj(\Sm)=\obj(\Pro)=\obj(\Sch_\kk)$, in other words, $\Sm$ and $\Pro$ specify a class of morphisms closed under composition.
\item All isomorphisms are in $\Sm$ and $\Pro$.
\item For every family $(u_j:X_j\to Y_j)_{j \in J}$ of morphisms in $\Sm$ resp.\ $\Pro$, the direct sum $\sqcup_{j \in J}u_j:\sqcup_{j \in J}X_j \longrightarrow \sqcup_{j \in J}Y_j$ is in $\Sm$ resp.\ $\Pro$. The codiagonal $\nabla_X:X^{\sqcup J} \longrightarrow X$ is in $\Sm$ for every index set $J$.    
\item For morphisms $u,v\in\Pro$ between schemes over $\MM$, the fiber product $u\times_\MM v$ is a morphism in $\Pro$. Similarly, $u,v\in \Sm$ defined over $\MM$ implies $u\times_\MM v\in \Sm$.
\item For $u\in \Pro$ and $v\in \Sm$ we have $\tilde{u}\in \Pro$ and $\tilde{v}\in \Sm$ with $\tilde{u}$ and $\tilde{v}$ denoting the pull-backs
\[ \xymatrix { X\times_Z Y \ar[d]_{\tilde{u}} \ar[r]^{\tilde{v}} & X \ar[d]^u \\ Y \ar[r]_v & Z.} \]
\item  If the composition $vu:X\xrightarrow{u} Y\xrightarrow{v} Z$ is in $\Pro$ and $u$ is finite, then $v\in \Pro$. 
\item Every open embedding is in $\Sm$, and $\Sm$ is stable under pull-backs along open inclusions.
\item The class of finite morphisms is in $\Pro$, and $\Pro$ is stable under pull-backs along finite morphisms. Every morphism in $\Pro$ is of finite type.
\end{enumerate}
By the last property, every closed inclusion is in $\Pro$, and $\Pro$ is stable under pull-backs along closed embeddings. As a consequence of (6) and (8) we see that $\Sym^n(u):\Sym^n(X)\longrightarrow \Sym^n(Y)$ is in $\Pro$ if $X\xrightarrow{u}Y$ is in $\Pro$, where $\Sym^n(X)=X/\!\!/S_n$ denotes the $n$-th symmetric product of $X$. Since all schemes in $\Sch_\MM$ have connected components of finite type over $\kk$, the last condition in (8) is equivalent to the requirement that the map $\pi_0(u):\pi_0(X)\to \pi_0(Y)$ between the sets of connected components has finite fibers for every $u\in \Pro$.\\  
For a family $(u_j:X_j\to Y_j)_{j \in J}$ of morphisms in $\Sm$ with the same target $Y$, the morphism $\sqcup_{j \in J} X_j \longrightarrow Y$ is also in $\Sm$ by property (3). This is also true if $J$ is finite since then $X^{\sqcup J} \to X$ is finite and, therefore, also in $\Pro$.  \\ 
Given a scheme $\MM$ in $\Sch_\kk$, we denote by $\Sm_\MM$ and  $\Pro_\MM$ the preimages of $\Sm$ and $\Pro$ under the forgetful functor $\Sch_\MM\ni (X\to \MM)\longmapsto X\in \Sch_\kk$. In other words, $\Sm_\MM$ and $\Pro_\MM$ denote the class of morphisms in $\Sm$ and $\Pro$ respectively, which are defined over $\MM$. 
\begin{example} \rm
The first example is given by $\Sm=\Sch_\kk$ and $\Pro=ft$ being the class of all morphisms of finite type. By our assumptions, this is the biggest choice of $\Sm$ and $\Pro$. The restriction on $\Pro$ was made to obtain a ``sheafification'' functor for abelian group valued $(\Sm,\Pro)$-(pre)theories as we will see later. The smallest choice for $\Pro$ is certainly the class of all finite morphisms.    
\end{example}
\begin{example} \rm
The second example the reader should have in mind is the one where $\Sm$ is the class $Sm$ of smooth morphisms and $\Pro=prop$ denotes the class of proper morphisms. All the properties can be checked easily except maybe for property (6).  This can be checked with the help of  the valuation criterion using the following two facts. Firstly, finite morphisms are surjective on points. However, the residue field of a lift of a point $x$ might be a finite  extension of the residue field of $x$. But secondly, if $R$ is a valuation ring with quotient field $K$ and $L\supset K$ a finite field extension, then $R\subset L$ is dominated by a valuation ring $S$ for $L$.    
\end{example}

\begin{definition}  \label{theory}
A  (set valued) $(\Sm,\Pro)$-pretheory over $\MM$ is a rule $R$ associating to every object $X\xrightarrow{f}\MM$ in  $\Sch_\MM$ a set $R(f)$ which extends to a  contravariant functor on $\Sm_\MM$ and to a covariant functor on $\Pro_\MM$, i.e.\ for every commutative diagram 
\[ \xymatrix @C=0.5cm {Y \ar[dr]^g \ar[rr]^u && X \ar[dl]_f\\ & \MM }  \]
there is a map $u^\ast:R(f)\longrightarrow R(g)$ if $u\in \Sm$ and  a map $u_!:R(g)\longrightarrow R(f)$ if $u\in \Pro$. Moreover, we require that ``base change'' holds i.e.\ for every cartesian diagram 
\[ \xymatrix { X\times_Z Y \ar[rr]^{\tilde{v}} \ar[dd]_{\tilde{u}} \ar[dr] & & X \ar[dd]^u \ar[dl]_f \\ & \MM & \\ Y \ar[rr]_v \ar[ur]_g & & Z \ar[ul]_h } \]
with $u\in \Pro$ and $v\in \Sm$, we have $\tilde{u}_!\circ \tilde{v}^\ast = v^\ast \circ u_!$.\\ 
A ($\Sm,\Pro)$-pretheory $R$ is called a $(\Sm,\Pro)$-theory if in addition, considered as a contravariant functor to sets, $R$ commutes with all (not necessarily finite) ``products'', i.e.\ the morphism
\[ R(X\xrightarrow{f}\MM) \longrightarrow \prod_{X_j\in\pi_0(X)=\pi_0(f)} R(X_j\xrightarrow{f|_{X_j}}\MM) \]
given by restriction to connected components is an isomorphism for all $f\in \Sch_\MM$.\\
A morphism between two  $(\Sm,\Pro)$-(pre)theories $R_1,R_2$ over $\MM$ is a natural transformation $\eta:R_1\to R_2$, i.e.\ a family of maps $\eta_f:R_1(f)\to R_2(f)$ such that   $u_2^\ast \circ\eta_f=\eta_g \circ u_1^\ast$ holds for every morphism $u:g\to f$ in $\Sm_\MM$, and  $u_{2\,!}\circ \eta_g=\eta_f\circ u_{1\,!}$ holds for every  morphism $u:g\to f$ in $\Pro_\MM$. Thus, we obtain a category of $(\Sm,\Pro)$-(pre)theories over $\MM$. 
\end{definition}

\begin{example} \rm
If we had not required that $\Pro$ contains all finite morphisms, then we could have chosen $\Sm=\Sch_\kk$ and $\Pro\subset \Sch_\kk$ to be the subcategory of isomorphisms. In this case, a $(\Sm,\Pro)$-pretheory is  just  a presheaf on $\Sch_\kk$, and a $(\Sm,\Pro)$-theory is a sheaf on $\Sch_\kk$ with respect to a suitable rather coarse Grothendieck topology. The assumptions on $\Pro$ were made in order to have good algebraic structures at hand (cf.\ Lemma \ref{algebraic_structure}) which will not exist for ordinary (pre)sheaves.    
\end{example}
Let us give an alternative definition by introducing the category $\Sm\Pro^{op}_\MM$ of ``correspondences'' over $\MM$. The objects of $\Sm\Pro^{op}_\MM$ and of $\Sch_\MM$ are the same, but morphisms from $X\xrightarrow{f}\MM$ to $Y\xrightarrow{g}\MM$ are isomorphism classes of correspondences
\[\xymatrix @R=0.5cm { & Z \ar[dl]_p \ar[dr]^s \ar[dd] & \\ X \ar[dr]_f & & Y \ar[dl]^g \\ & \MM & }\]
with $p\in \Pro$ and $s\in \Sm$. The composition of two correspondences $X \xleftarrow{p}Z\xrightarrow{s}Y$ and $Y \xleftarrow{p'}Z'\xrightarrow{s'}V$ is defined in the usual way by forming fiber products $X \xleftarrow{p\tilde{p}'} Z\times_Y Z' \xrightarrow{s'\tilde{s}}V$. By construction, we have a faithful covariant functor $(-)_\sharp:\Sm_\MM \to \Sm\Pro^{op}_\MM$ and a faithful contravariant functor $(-)^\sharp:\Pro_\MM \to \Sm\Pro^{op}_\MM$. Moreover, $\Sm_\MM$ has a Grothendieck topology. A cover of a scheme $X\to \MM$ over $\MM$ is given by a family $(X_\kappa\to \MM)_{\kappa\in K}$ of disjoint unions of connected components of $X$ (over $\MM$) such that each point of $X$ is contained in at least one of these $X_\kappa$.

\begin{lemma}  \label{factorization}
 A $(\Sm,\Pro)$-pretheory over $\MM$ is nothing else than a presheaf on $\Sm\Pro^{op}_\MM$. A $(\Sm,\Pro)$-theory over $\MM$ is just a presheaf on $\Sm\Pro^{op}_\MM$ such that its restriction to $\Sm_\MM$ along $(-)_\sharp$ is a sheaf on $\Sm_\MM$. The category $\Sm\Pro^{op}_\MM$ has arbitrary direct sums computed as in $\Sch_\MM$. Moreover, finite sums and finite products agree.
\end{lemma}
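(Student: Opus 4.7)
The plan is to verify the four assertions in sequence. For the equivalence of $(\Sm,\Pro)$-pretheories over $\MM$ with presheaves on $\Sm\Pro^{op}_\MM$, I would assign to each pretheory $R$ the presheaf sending a correspondence $X\xleftarrow{p}Z\xrightarrow{s}Y$ to the composite $p_!\circ s^\ast$. Well-definedness on isomorphism classes of correspondences is immediate from the functoriality of $(-)^\ast$ and $(-)_!$. Functoriality with respect to composition of correspondences is exactly the content of the base-change axiom: for a second correspondence $(p',s')$ from $Y$ to $V$ with middle object $Z'$, the composed correspondence has middle object $Z\times_Y Z'$, and one computes
\[ (p\tilde p')_!\circ(s'\tilde s)^\ast \;=\; p_!\,\tilde p'_!\,\tilde s^\ast\, s'^\ast \;=\; p_!\,s^\ast\,p'_!\,s'^\ast \]
by applying base change to the cartesian square. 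Conversely, a presheaf on $\Sm\Pro^{op}_\MM$ recovers a pretheory via the trivial correspondences $u^\ast\leftrightarrow(\id,u)$ for $u\in\Sm$ and $u_!\leftrightarrow(u,\id)$ for $u\in\Pro$, and the base-change axiom follows from functoriality of the presheaf applied to the tautological identity of composite correspondences.

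For the characterisation of theories, I would observe that the Grothendieck topology on $\Sm_\MM$ described in the text has as its ``universal'' cover of $X\to\MM$ the family of all connected components of $X$. For that particular cover, the double fibre products $X_j\times_X X_{j'}$ are empty for $j\neq j'$ and equal to $X_j$ otherwise, so the sheaf equaliser collapses to the isomorphism $R(X)\cong\prod_{X_j\in\pi_0(X)}R(X_j)$, which is exactly the extra axiom distinguishing theories from pretheories. Any other allowed cover (a family of unions of connected components) is refined by the component cover, so its sheaf condition is then automatic.

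For direct sums, given $\{X_j\to\MM\}_{j\in J}$ I would propose $\sqcup_{j\in J}X_j\to\MM$ with coproduct injections realised by the correspondences $X_j\xleftarrow{\id}X_j\xrightarrow{\iota_j}\sqcup X_{j'}$ (inclusions of connected components are open embeddings, hence in $\Sm$). Given maps $f_j:X_j\to Y$ in $\Sm\Pro^{op}_\MM$ represented by $(p_j,s_j)$ with middle objects $Z_j$, I would build $f:\sqcup X_j\to Y$ with middle object $Z=\sqcup Z_j$, $p=\sqcup p_j:\sqcup Z_j\to\sqcup X_j$ in $\Pro$ and $s=\nabla_Y\circ\sqcup s_j:\sqcup Z_j\to Y$ in $\Sm$, both membership claims following from axiom (3). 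Composition with $\iota_j$ cuts out $p^{-1}(X_j)=Z_j$ and recovers $(p_j,s_j)$. Uniqueness is similarly automatic: any correspondence out of $\sqcup X_{j'}$ decomposes canonically as the disjoint union of its restrictions to preimages of the individual components.

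Finally, for the identification of finite sums with finite products, the key observation is that when $J$ is finite each inclusion $\iota_j:X_j\hookrightarrow\sqcup_{j'\in J}X_{j'}$ is closed (hence finite) and therefore lies in $\Pro$ by axiom (8); likewise the codiagonal $\nabla_Y:Y^{\sqcup J}\to Y$ is finite. This yields ``projection'' morphisms $\sqcup X_{j'}\xleftarrow{\iota_j}X_j\xrightarrow{\id}X_j$ in $\Sm\Pro^{op}_\MM$, and the product universal property is verified by the mirror construction: given $f_j:Y\to X_j$ represented by $(p_j,s_j)$, take middle object $\sqcup Z_j$ with $p=\nabla_Y\circ\sqcup p_j\in\Pro$ (using finiteness of $\nabla_Y$ for finite $J$) and $s=\sqcup s_j\in\Sm$. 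The only delicate point in the entire argument is this appeal to axiom (8) to put these finite morphisms into $\Pro$; without that closure property, the identification of finite coproducts with finite products in $\Sm\Pro^{op}_\MM$ would fail.
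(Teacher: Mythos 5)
Your proposal is correct and takes essentially the same approach as the paper, which merely cites property~(3) for arbitrary disjoint unions and the fact that finite codiagonals lie in $\Pro$; you spell out all the details that the paper leaves implicit, and you correctly locate the crux of the finite-sums-equal-finite-products assertion in the finiteness of $\nabla_Y$ (the closedness of each $\iota_j$, which you also invoke, actually holds for any $J$, not just finite $J$, but that is a harmless imprecision).
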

\begin{proof}
 The first part is obvious. For the second part we use property (3) of $(\Sm,\Pro)$ and the fact that finite codiagonals are also in $\Pro$. 
\end{proof}
Using the fact that $\Pro$ contains finite morphisms, one can prove that there is no  Grothendieck topology on $\Sm\Pro^{op}_\MM$ such that sheaves correspond to $(\Sm,\Pro)$-theories and vice versa.  
Even more is true, every topology on $\Sm\Pro^{op}_\MM$ such that $\Sm_\MM\to \Sm\Pro^{op}_\MM$ is continuous, i.e.\ such that every sheaf on $\Sm\Pro^{op}_\MM$ is a $(\Sm,\Pro)$-theory, must be the discrete topology whose only sheaf is the trivial one $R:f\mapsto \{\star\}$. To see this, let us fix $\MM=\Spec\kk$ for simplicity and denote by $\underline{(-)}$ the sheafification functor.  Given an arbitrary sheaf $R$, the following must hold 
\[ R(\underline{X\sqcup X})=R(X\sqcup X)=R(X)\times R(X)=R(\underline{X})\times R(\underline{X})\] 
for every scheme $X$ by assumption on our topology. Hence, $\underline{X\sqcup X}$ must be the disjoint union of $\underline{X}$ with itself. Consider now the co-diagonal $\nabla:X\sqcup X\rightarrow X$ which is finite and, therefore, in $\Pro$. It induces a morphism $\nabla^\sharp:X\to X\sqcup X$ in $\Sm\Pro^{op}_\MM$. The reader should check  that the pull-back of the two inclusions $\iota_{1,2\,\sharp}:X\to X\sqcup X$ along $\nabla^\sharp$ in the category (of presheaves on) $\Sm\Pro^{op}_\MM$ is $\emptyset$. As sheafification commutes with colimits and finite limits, we see that the pull-back of the disjoint union $\underline{X}\sqcup\underline{X}=\underline{X\sqcup X}$ along $\underline{\nabla^\sharp}$ is $\emptyset\sqcup \emptyset=\underline{X}$. This implies $\underline{X}=\emptyset$, and $R(X)=R(\underline{X})=R(\emptyset)=\{\star\}$ follows for every sheaf $R$ proving the claim.
Nevertheless there is a ``sheafification functor'' for $(\Sm,\Pro)$-(pre)theories with values in abelian groups as we will see soon. 
\begin{remark} \rm 
From that perspective, the situation described here is very similar to the construction of the triangulated category of geometric motives by Voevodsky, where $\Sm$ is replaced with the category of smooth schemes over $\kk$ equipped with the Nisnevich topology and the role of $\Sm\Pro^{op}_\kk$ is also played by some category of correspondences between smooth schemes. The ``sheafification'' functor for (complexes of) abelian group valued presheaves with transfers by means of the Suslin complex does not work for set valued presheaves with transfer. See \cite{Voevodsky} for more details.  
\end{remark}
 
\begin{example}\rm
 Using the Yoneda embedding of $\Sm\Pro^{op}_\MM$, we obtain a $(\Sm,\Pro)$-pretheory for every scheme $X\xrightarrow{f} \MM$ over $\MM$. Applying  Lemma \ref{factorization}, one can show that the presheaf associated to $f$ is actually a $(\Sm,\Pro)$-theory giving rise to a fully faithful embedding of $\Sm\Pro^{op}_\MM$ into the category of $(\Sm,\Pro)$-theories over $\MM$.  
\end{example}
\begin{example} \rm Fixing $\MM$, we denote by $\Sm^{\Pro}$ the $(\Sm,\Pro)$-pretheory over $\MM$ such that  $\Sm^{\Pro}(X\xrightarrow{f}\MM):=\Sm^{\Pro}_X$ is the set of all morphisms $U\to X$ in $\Pro$ with $U\to \Spec\kk$ being in $\Sm$. As the definition does not involve $\MM$, we dropped it from notation, hoping not to confuse the reader. If $\MM=\Spec\kk$, $\Sm^{\Pro}$ is just the $(\Sm,\Pro)$-theory represented by $\Spec\kk$ via the Yoneda embedding. In general it is not representable but the pull-back of the $(\Sm,\Pro)$-theory represented by $\Spec\kk$ along the morphism $\MM\to \Spec\kk$, as we will see in section \ref{pushforwards_pull-backs}. We leave it to the reader to show that $\Sm^{\Pro}$ is in fact a $(\Sm,\Pro)$-theory over arbitrary $\MM$.   
\end{example} 
\begin{example} \rm
 The category of $(\Sm,\Pro)$-pretheories has all (small) limits and colimits computed objectwise. The full subcategory of $(\Sm,\Pro)$-theories is closed under limits, but not under direct sums taken in the category of pretheories. Nevertheless, the Yoneda embedding into the category of $(\Sm,\Pro)$-theories preserves not only limits but also arbitrary direct sums, which is not a contradiction to the previous assertion as the inclusion functor of the category of theories into the category of pretheories does not preserve direct sums. Recall that finite products and finite direct sums agree in $\Sm\Pro^{op}_\MM$.     
\end{example}

Following the general machinery, we can define abelian group valued $(\Sm,\Pro)$-(pre)theories either as corresponding functors to abelian groups, as abelian group  objects in the category of $(\Sm,\Pro)$-(pre)theories or as modules of the free pre-additive category $\ZZ\Sm\Pro^{op}_\MM$ generated by $\Sm\Pro^{op}_\MM$ (satisfying a ``sheaf condition''). The following lemma shows that abelian group valued $(\Sm,\Pro)$-theories behave much better than their set valued counterparts.
\begin{lemma}
There is a ``sheafification'' functor for abelian group valued $(\Sm,\Pro)$-theories over $\MM$, i.e.\ an exact functor $R\mapsto \underline{R}$ from the category of abelian group valued $(\Sm,\Pro)$-pretheories to the full subcategory of abelian group valued $(\Sm,\Pro)$-theories which is left adjoint to the inclusion functor. In particular, it preserves all colimits, and the category of abelian group valued $(\Sm,\Pro)$-theories has all (small) colimits obtained by sheafification of objectwise colimits. 
\end{lemma}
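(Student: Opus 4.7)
The plan is to construct the sheafification functor explicitly by reading off sections over each connected component. Given an abelian-group-valued $(\Sm,\Pro)$-pretheory $R$, I would define
\[ \underline{R}(X\xrightarrow{f}\MM) := \prod_{X_j\in \pi_0(X)} R(f|_{X_j}). \]
For $u:Y\to X$ in $\Sm_\MM$, each component $Y_k$ maps into a unique component $X_{j(k)}$, and the restriction $u|_{Y_k}:Y_k\to X_{j(k)}$ lies in $\Sm$ by property (7), so the pull-back is defined component-wise via $(u|_{Y_k})^{*}$. For $u:Y\to X$ in $\Pro_\MM$, property (8) guarantees that only finitely many $Y_k$ satisfy $u(Y_k)\subseteq X_j$, and each $u|_{Y_k}:Y_k\to X_j$ is in $\Pro$, because $Y_k$ is a clopen component of the finite disjoint union $u^{-1}(X_j)$, and the inclusion $Y_k\hookrightarrow u^{-1}(X_j)$ is therefore a closed embedding, hence in $\Pro$. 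I can then define $(u_!r)_j:=\sum_{k:u(Y_k)\subseteq X_j} (u|_{Y_k})_!(r_k)$, a finite sum in the abelian group $R(f|_{X_j})$. It is precisely this step that fails for set-valued (pre)theories, in keeping with the earlier discussion of why the Yoneda sheafification is trivial in the set-valued case.

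Next, I would verify that $\underline{R}$ is genuinely an $(\Sm,\Pro)$-theory. Functoriality of pull-backs and push-forwards reduces to the pointwise assertion for $R$ together with the observation that compositions in $\Sm$ and in $\Pro$ preserve the combinatorics of $\pi_0$. For base change along a cartesian diagram $X\times_Z Y$, I would decompose $\pi_0(X\times_Z Y)$ along connected components of $X$, $Y$ and $Z$, apply base change for $R$ on each matching piece, and reassemble; finiteness of all sums again comes from property (8). The defining formula for $\underline{R}$ tautologically turns the product over $\pi_0$ into an isomorphism, so $\underline{R}$ satisfies the theory condition.

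The unit $R\to\underline{R}$ is the tuple of pull-backs along the clopen inclusions $\iota_j:X_j\hookrightarrow X$, which lie in $\Sm$ by property (7). For the adjunction, any morphism $\eta:R\to S$ into a theory $S$ extends uniquely to $\bar\eta:\underline{R}\to S$: on connected schemes nothing changes, and on a general $X$ one uses the theory property $S(X)\cong \prod_j S(X_j)$ to reassemble from the $\eta_{X_j}$; uniqueness follows from naturality under the $\iota_j$. Compatibility of $\bar\eta$ with push-forwards in $\Pro$ is the nontrivial point and requires that $u_!$ in $S$ be additive on finite disjoint unions of components, which is forced by the combination of the theory property of $S$ and base change applied to the clopen decomposition $u^{-1}(X_j)=\sqcup_k Y_k$.

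Finally, exactness of $\underline{(-)}$ follows because arbitrary products in the category of abelian groups are exact, and kernels in $(\Sm,\Pro)$-pretheories are computed objectwise. Being a left adjoint, $\underline{(-)}$ automatically preserves all (small) colimits; and since objectwise colimits of pretheories trivially exist, small colimits in the category of theories exist and are obtained by sheafifying the objectwise colimits of pretheories. The main technical obstacle in executing this plan is the bookkeeping for base change and for compatibility of $\bar\eta$ with push-forwards, both of which depend crucially on property (8) to keep all sums over components finite.
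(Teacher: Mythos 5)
Your construction is the same as the paper's: define $\underline{R}(X\xrightarrow{f}\MM)=\prod_{X_j\in\pi_0(X)}R(f|_{X_j})$, pull back component-wise, and push forward by summing over the (finite, by property (8)) fibers of $\pi_0(u)$. The paper states exactly this definition and leaves the verification of the theory axioms, the adjunction, and exactness to the reader; your proposal correctly fills in those checks along the same lines.
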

\begin{proof}
We define $\underline{R}(X\xrightarrow{f}\MM):=\prod_{X_j\in \pi_0(X)}R(f|_{X_j})$ and extend this to a contravariant functor on $\Sm_\MM$ in the obvious way. If $u:X\to Y$ is a morphism in $\Pro$, the fibers of $\pi_0(u):\pi_0(X)\to \pi_0(Y)$ are finite by assumption (8) on $\Pro$. Thus, given $Y\to \MM$ and $Y_i\in \pi_0(Y)$, the map
\[ \prod_{X_j\in \pi_0(u)^{-1}(Y_i)} R(X_j \to \MM)\ni (a_j)_j \to \sum_j (u|_{X_j})_!(a_j) \in R(Y_i\to \MM) \]
is well-defined using the additive structure on $R$. By taking the product over all $Y_i\in \pi_0(Y)$, we finally obtain the map $u_!:\underline{R}(X\xrightarrow{gu}\MM)\to \underline{R}(Y\xrightarrow{g}\MM)$. We leave it to the reader to show that $\underline{R}$ is a $(\Sm,\Pro)$-theory.
\end{proof}
\begin{example}\rm For a morphism $Y\xrightarrow{g}\MM$, we denote with $\ZZ g$ the representable abelian group valued $(\Sm,\Pro)$-pretheory given by the object $g\in \ZZ\Sm\Pro^{op}_\MM$. For $X\xrightarrow{f} \MM$, $\ZZ g(f)$ is just the free abelian group generated by all morphisms from $f$ to $g$, i.e.\ by all isomorphism classes of correspondences 
  \[ \xymatrix @R=0.4cm { & \ar[ld]_p Z \ar[dd]\ar[rd]^s & \\ X \ar[dr]_f  &  & Y \ar[dl]^g \\ & \MM } \]
 with $p\in \Pro$ and $s \in \Sm$. The sheafification of $\ZZ g$ will be denoted by $\underline{\ZZ}g$. Every other abelian group valued $(\Sm,\Pro)$-(pre)theory $R$ is a colimit of such representable $(\Sm,\Pro)$-(pre)theories. For $f:X\to \Spec\kk$ we simply write $\underline{\ZZ}X$. 
\end{example}
\begin{example} \rm Fixing $\MM$, we denote with $\underline{\ZZ}(\Sm^{\Pro})$ the sheafification of the $(\Sm,\Pro)$-pretheory $\ZZ(\Sm^{\Pro})$ over $\MM$ such that  $\ZZ(\Sm^{\Pro})(X\xrightarrow{f}\MM):=\ZZ(\Sm^{\Pro}_X)$ is the free abelian group  generated by isomorphism classes of morphisms $U\to X$ in $\Pro$ with $U\to \Spec\kk$ being in $\Sm$. The pull-back is given by fiber products and the push-forward linearly extends the composition of morphisms. If $\MM=\Spec\kk$, $\underline{\ZZ}(\Sm^{\Pro})$ is just $\underline{\ZZ}\kk:=\underline{\ZZ}\Spec\kk$. In general it is the pull-back of $\underline{\ZZ}\kk$ along the morphism $\MM\to \Spec\kk$ as we will see in section \ref{pushforwards_pull-backs}. 
\end{example}  

In a similar way we can define ($\lambda$-)ring valued $(\Sm,\Pro)$-(pre)theories, but as for set valued $(\Sm,\Pro)$-(pre)theories there will be no ``sheafification'' functor in general. Note that the sheafification functor defined above gives a $(\lambda$-)ring valued functor with respect to pull-backs but not with respect to push-forwards. For our purposes, we need the notion of a ($\lambda$-)ring $(\Sm,\Pro)$-(pre)theory which should not be confused with a ($\lambda$-)ring valued $(\Sm,\Pro)$-(pre)theory. For this  
let $(\MM,+,0)$ denote a (commutative) monoid in the category $\Sch_\kk$. 
\begin{example} \label{trivMonEx} \rm The commutative monoids which the reader should have in mind were defined in section 2 and shown in the following diagram along with (natural) monoid homomorphisms

\[ \xymatrix @C=1cm {  & & & \NN^{\oplus I} \ar[dr] & \\   \Msp \ar[rr]^{\dim\times W} \ar@/^/[urrr]^{\dim} \ar@/_/[drrr]_{W} & & \NN^{\oplus I}\times \AAA \ar[rr] \ar[ur]^{\pr} \ar[dr]_{\pr} & & \Spec\kk. \\ & & & \AAA \ar[ur] } \] 
The ``multiplication'' is always given by taking (direct) sums.
\end{example}

If $(\MM,+,0)$ is a (commutative) monoid in $\Sch_\kk$,  $\Sm\Pro^{op}_\MM$ comes with a (symmetric) monoidal structure. The product of two morphisms $X\xrightarrow{\;f\;}\MM$, $Y\xrightarrow{\;g\;}\MM$ is defined via
\[ f\boxtimes g: X\times Y \xrightarrow{f\times g} \MM\times \MM \xrightarrow{\;+\;} \MM. \]
The unit object is given by $\Spec \kk \xrightarrow{\;0\;} \MM$.
Having a (symmetric) monoidal structure at hand, we can talk about (commutative) monoids in $\Sch_\MM$. Note that a (commutative) monoid in $\Sch_\MM$ is just a (commutative) monoid $(X,+,0)$ in $\Sch_\kk$ together with a monoid homomorphism $f:(X,+,0)\longrightarrow (\MM,+,0)$. 
\begin{example} \rm
All monoid homomorphisms $N\rightarrow M$ in Example \ref{trivMonEx} provide commutative monoids in the  category $\Sch_\MM$ for the corresponding $\MM$.
\end{example}

\begin{definition} \label{ring_theory}
A  ring $(\Sm,\Pro)$-(pre)theory over a monoid $\MM$ is an abelian group valued $(\Sm,\Pro)$-(pre)theory $R$ with a functorial  ``exterior'' product
\[ \boxtimes: R(f)\otimes R(g) \longrightarrow R(f\boxtimes g) \]
defined for every object $(f,g)\in \Sm\Pro^{op}_\MM\times\Sm\Pro^{op}_\MM$ which is bilinear, associative and comes with a unit $1\in R(\Spec \kk \xrightarrow{0} \MM)$. 
If $M$ is commutative, a ring $(\Sm,\Pro)$-(pre)theory $R$ over $\MM$ is called commutative if $\tau_!(a\boxtimes b )=\tau^\ast(a\boxtimes b) = b\boxtimes a$, where $\tau:f\boxtimes g \stackrel{\sim}{\to} g\boxtimes f$ is the transposition.
A morphism between two ring $(\Sm,\Pro)$-(pre)theories $R_1,R_2$ over $\MM$ is a natural transformation $\eta:R_1\to R_2$, i.e.\ a family of group homomorphisms $\eta_f:R_1(f)\to R_2(f)$ commuting with pull-backs along $s\in \Sm$ and push-forwards along $p\in\Pro$, such that   $\eta$ commutes with exterior products, i.e.\ $\eta_{\Spec\kk\to \MM}(1_1)=1_2$ and $\eta_{f\boxtimes g}\circ \boxtimes_1=\boxtimes_2 \circ (\eta_f\otimes \eta_g)$. Thus, we obtain a category  of (commutative) ring $(\Sm,\Pro)$-(pre)theories over $\MM$. 
\end{definition}
Throughout this paper we are only interested in commutative monoids and commutative ring $(\Sm,\Pro)$-theories. We denote the category of the latter by $\Th(\MM)$ and unless $\MM$ is non-commutative, a ``ring $(\Sm,\Pro)$-theory'' should always be commutative.
It is easy to see that the sheafification of the underlying abelian group valued $(\Sm,\Pro)$-pretheory  of a ring $(\Sm,\Pro)$-pretheory is again a ring $(\Sm,\Pro)$-theory in big contrast to ring valued $(\Sm,\Pro)$-pretheories. Moreover, the category of ring $(\Sm,\Pro)$-(pre)theories has all limits taken objectwise, and the forgetful functor preserves limits. The ring $(\Sm,\Pro)$-theory $R\equiv 0$ is the terminal object.  We do not know if colimits or free ring $(\Sm,\Pro)$-(pre)theories exist. However, there is an initial ring $(\Sm,\Pro)$-(pre)theory for arbitrary $\MM$. 
\begin{example} \rm 
If $(g,+,0)$ is a (commutative) monoid in $\Sm_\MM$, the representable $(\Sm,\Pro)$-theory  $\underline{\ZZ} g$ is a (commutative) ring $(\Sm,\Pro)$-theory. For $g:Y\xrightarrow{0}\MM$, we simply write $\underline{\ZZ} Y$. This applies in particular to $\Spec\kk \xrightarrow{0}\MM$ with its trivial monoidal structure, and the resulting ring $(\Sm,\Pro)$-theory will be denoted with $\underline{\ZZ}\kk$. Note that $\ZZ\kk(X\xrightarrow{f}\MM)$ is the free abelian group generated by isomorphism classes of maps $U\to X_0=f^{-1}(0)$ such that $U\to X_0\hookrightarrow X$ is in  $\Pro$ and $U\to \Spec\kk$ is in $\Sm$. 
\end{example}
\begin{lemma} \label{initial_object}
The  ring $(\Sm,\Pro)$-theory  $\underline{\ZZ}\kk$ is an initial object in the category $\Th(\MM)$ of ring $(\Sm,\Pro)$-theories  over $\MM$. 
\end{lemma}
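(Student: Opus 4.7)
The plan is to construct, for any ring $(\Sm,\Pro)$-theory $R$ over $\MM$, a unique morphism $\eta:\underline{\ZZ}\kk\to R$. Since $\underline{\ZZ}\kk$ is the sheafification of $\ZZ\kk$, I would first work at the pretheory level and then use the universal property of sheafification: as $R$ is already a theory, any pretheory morphism $\ZZ\kk\to R$ extends uniquely to $\underline{\ZZ}\kk\to R$. So the task reduces to producing a unique morphism of ring pretheories $\tilde{\eta}:\ZZ\kk\to R$.

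For uniqueness, I would observe that for any morphism $\tilde{\eta}$ of ring pretheories, compatibility with the unit forces $\tilde{\eta}_{0}(1)=1_R\in R(\Spec\kk\xrightarrow{0}\MM)$. Now a generator of $\ZZ\kk(X\xrightarrow{f}\MM)$ is a correspondence $X\xleftarrow{p}U\xrightarrow{s}\Spec\kk$ with $p\in\Pro$, $s\in\Sm$, and inside $\ZZ\kk$ itself this class is nothing but $p_!(s^\ast(1))$ applied to the identity correspondence of $\Spec\kk$. Compatibility of $\tilde{\eta}$ with pull-backs and push-forwards therefore forces
\[ \tilde{\eta}_f\bigl([X\xleftarrow{p}U\xrightarrow{s}\Spec\kk]\bigr)=p_!\bigl(s^\ast(1_R)\bigr). \]

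For existence, I would take the formula above as the definition of $\tilde{\eta}_f$ on generators and extend $\ZZ$-linearly. Four things must be verified: (i) compatibility with push-forward along $q\in\Pro$, which is immediate from functoriality of $(-)_!$ because composing correspondences amounts to postcomposing $p$ with $q$; (ii) compatibility with pull-back along $r\in\Sm$, which follows from base change in $R$ applied to the cartesian square defining $U\times_X Y$; (iii) compatibility with the exterior product, where for generators $[U\to X]$ and $[V\to Y]$ one has $[U\to X]\boxtimes[V\to Y]=[U\times V\to X\times Y]$, and then
\[ (p\times q)_!\bigl((s\times t)^\ast(1_R)\bigr) = p_!(s^\ast 1_R)\boxtimes q_!(t^\ast 1_R) \]
by functoriality of $\boxtimes$ in each argument (with respect to both $(-)_!$ and $(-)^\ast$) combined with the unit axiom $1_R\boxtimes 1_R=1_R$; (iv) the unit condition $\tilde{\eta}_0(1)=1_R$, which is built into the definition. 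Sheafifying then yields the required $\eta:\underline{\ZZ}\kk\to R$ in $\Th(\MM)$.

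The only mildly delicate point is step (iii): strictly speaking one needs that the bilinear natural transformation $\boxtimes$ commutes separately with pull-backs and push-forwards in each factor, i.e.\ $(p\times\id)_!(a\boxtimes b)=(p_!a)\boxtimes b$ and similarly for $(-)^\ast$. These are part of what is meant by ``functorial'' in Definition \ref{ring_theory}, as $\boxtimes$ is a natural transformation of bifunctors on $\Sm\Pro^{op}_\MM\times\Sm\Pro^{op}_\MM$. Everything else is formal manipulation with the axioms of a $(\Sm,\Pro)$-theory.
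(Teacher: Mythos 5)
Your proof is correct and takes essentially the same approach as the paper: both observe that a generator $[X\xleftarrow{p}U\xrightarrow{s}\Spec\kk]$ of $\underline{\ZZ}\kk(f)$ equals $p_!(s^\ast(1))$ inside $\underline{\ZZ}\kk$, so any morphism of ring theories is forced to send it to $p_!(s^\ast(1_R))$, and conversely that formula defines a valid morphism. You spell out the verification of compatibility with $\boxtimes$, pull-backs and push-forwards more explicitly than the paper (which leaves it to the reader after invoking the sheaf property), but the argument is the same.
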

\begin{proof} We write $R(X)$ for $R(X\xrightarrow{0}\MM)$, $\unit_X:=c^\ast(1)\in R(X)$ if $c:X\to \Spec\kk$ is in $\Sm$, and $\int_X a:=c_!(a)$ for $a\in R(X)$ if $c:X\to \Spec\kk$ is in $\Pro$.
Given a morphism $\eta:R_1:=\underline{\ZZ}\kk\rightarrow R_2$ between  ring $(\Sm,\Pro)$-theories  over $\MM$ and a generator $[U\xrightarrow{\alpha} X_0\hookrightarrow X]$ of $\underline{\ZZ}\kk(X\to \MM)$ for connected $X$, we obtain using the constant map $c:U\to \Spec\kk$
\begin{eqnarray*}\eta_X([U\to X])&=& \eta_X \alpha_{1\,!}(\unit_{1,U})\;=\;\alpha_{2\,!}\eta_U(\unit_{1,U})\;=\;\alpha_{2\,!}\eta_U c_{1}^\ast (1_1)\\
&=&\alpha_{2\,!}c_{2}^\ast\eta_{\Spec\kk}(1_1)\;=\;\alpha_{2\,!}c_{2}^\ast(1_2)\;=\;\alpha_{2\,!}(\unit_{2,U}).\end{eqnarray*}
Hence, $\eta$ is uniquely determined for connected $X$ and by the sheaf property  also for every $X\in \Sch_\kk$. Conversely, we can define a morphism $\unit:R_1=\underline{\ZZ}\kk\longrightarrow R_2$ by $\ZZ$-linear extension of $\eta_X([U\xrightarrow{\alpha}X]):=\alpha_{2\,!}(\unit_{2,U})$ for connected $X$ and using the sheaf property again. 
\end{proof} 
\begin{example}\rm The $(\Sm,\Pro)$-theory $\underline{\ZZ}(\Sm^{\Pro})$ is a ring $(\Sm,\Pro)$-theory for every $M$. The product is defined via the cartesian product of its canonical generators. It agrees with $\underline{\ZZ}\kk$ if and only if $\MM=\Spec\kk$. \end{example} 
\begin{example} \rm
 For $\MM=\Spec\kk$, $\Sm=Sm$ being the class of smooth morphisms and $\Pro=prop$ being the class of proper morphisms, there is a ``quotient'' ring $(Sm,prop)$-theory   $\underline{\Ka}_0(Sm^{prop})$ of $\underline{\ZZ}(\Sm^{prop})=\underline{\ZZ}\kk$ obtained by modding out the following relations. Let $\pi:\Bl_V U \to U$ be the blow-up of a smooth scheme $U$ in a smooth center $V$ with exceptional divisor $E\hookrightarrow \Bl_V U$. If $\alpha:U\to X$ is a projective morphism, we define a relation by the linear combination \[ [\Bl_V U\xrightarrow{\alpha \pi} X]-[E\xrightarrow{\alpha\pi|_E} X] \stackrel{!}{=} [U\xrightarrow{\alpha}X] - [V\xrightarrow{\alpha|_V} X]. \]
 We write $\underline{\Ka}_0(Sm_X^{prop})$ for $\underline{\Ka}_0(Sm^{prop})(X)$.
\end{example}
Note that the functor from the category of commutative monoids in $\Sch_M$ to the category $\Sch_M$, given by forgetting the commutative monoid structure, has a left adjoint associating to every $X\xrightarrow{\;f\;}\MM$ the free commutative monoid 
\[ \Sym(f):= \sqcup_{n\in \NN} \Sym^n(f) \mbox{ with } \Sym^n(f)=\Sym^n(X) \longrightarrow \MM, \]
``generated'' by $X\xrightarrow{\;f\;}\MM$ with $\Sym^n(X)\to \MM$ being induced by $X^n\xrightarrow{f\circ \pr_1 + \ldots +f\circ \pr_n }\MM$. These symmetric powers should remind the reader of $\lambda$-rings, motivating the following definition.  
\begin{definition}
A $\lambda$-ring $(\Sm,\Pro)$-(pre)theory over $\MM$ is a ring $(\Sm,\Pro)$-(pre)theory $R$ over $\MM$ as defined above with certain operations 
\[ \sigma^n: R(f) \longrightarrow R(\Sym^n( f)) \]
for every  $n\in \NN$ and $f\in \Sch_\MM$ commuting with push-forwards and pull-backs, i.e.\ $\sigma^n(u_!(a))=\Sym^n (u)_!(\sigma^n(a))$ for all $u\in \Pro_\MM$ and $\sigma^n(u^\ast(a))=\Sym^n(u)^\ast (\sigma^n(a))$ for all $u$ such that $\Sym(u)\in \Sm_\MM$.
 Using the monoidal structure $\oplus:\Sym(f)\boxtimes \Sym(f)\longrightarrow \Sym(f)$ and $0:\Spec\kk \to \Sym(f)$ on $\Sym(f)$, we define  the universal convolution product $ab\in R(\Sym(f))$ of $a,b\in R(\Sym(f))$ by means of $\oplus_!(a\boxtimes b)$, and  it is easy to see that the universal convolution product makes $R(\Sym(f))$ into a commutative ring with unit $1_f:=0_!(1)$. We will also require that $R(\Sym(f))$ equipped with the convolution product and the operations\footnote{Note that $\Sym^n(\Sym(f)) \longrightarrow \Sym(f)$ is finite.}
\[ \sigma^n: R(\Sym(f)) \xrightarrow{\sigma^n} R(\Sym^n(\Sym(f))) \xrightarrow{\oplus_!} R(\Sym(f)) \] 
is a  $\lambda$-ring. For a $\lambda$-ring $(\Sm,\Pro)$-theory $R$, the $\lambda$-ring $R(\Sym (f))$ is complete with respect to the filtration 
\[F^iR(\Sym (f)):= R(\sqcup_{n\ge i} \Sym^n(X) \rightarrow \MM) \hookrightarrow R(\sqcup_{n\ge 0} \Sym^n(X) \rightarrow \MM)=R(\Sym (f)).\]
A morphism between $\lambda$-ring $(\Sm,\Pro)$-(pre)theories is a morphism $\eta:R_1 \to R_2$ of the underlying ring $(\Sm,\Pro)$-(pre)theories such that additionally $\eta_{\Sym^n(f)}\circ \sigma^n_1=\sigma^n_2\circ \eta_{f}$ holds. We denote by $\Th^\lambda(M)$ the category of $\lambda$-ring $(\Sm,\Pro)$-theories over $\MM$. 
\end{definition}
It is easy to see that the sheafification $\underline{R}$ of a $\lambda$-ring $(\Sm,\Pro)$-pretheory $R$ is a $\lambda$-ring $(\Sm,\Pro)$-theory.
\begin{example} \rm \label{darstellbar}
If $Y\xrightarrow{g}\MM$ is a commutative monoid in $\Sm_\MM$, the set $g(X\xrightarrow{f}\MM)$ of isomorphism classes $[p,s]$ of correspondences $X \xleftarrow{p} Z \xrightarrow{s} Y$ over $\MM$ can be made into a commutative monoid if $f$ is a commutative monoid in $\Pro_\MM$. Indeed, given two correspondences $(p,s)$ and $(p',s')$ we  define $[p,s][p',s']=[+_f\circ (p\times p'), +_g \circ (s\times s')]$. As explained in Lemma \ref{special_elements}, the free abelian group $\ZZ g(f)$ has a unique $\lambda$-ring structure $\sigma_t$ such that $[p,s]\in \Pic(\ZZ g(f),\sigma_t)$ for all $[p,s]\in g(f)$. This applies in particular to the free commutative monoid $\Sym(f)$ turning $\ZZ g(\Sym(f))$ into a $\lambda$-ring for every $f\in \Sch_\MM$. The $\lambda$-ring structure descends to the sheafification making $\underline{\ZZ} g$ into a $\lambda$-ring $(\Sm,\Pro)$-theory. 
\end{example}
\begin{example} \rm
The same construction can be done for $\underline{\ZZ}(\Sm^{\Pro})$ making it into a $\lambda$-ring $(\Sm,\Pro)$-theory.  
\end{example}

\begin{example} \rm \label{motives2}
For $\MM=\Spec\kk$, $\Sm=\Sch_\kk$ and $\Pro=ft$ being the class of morphisms of finite type, there is a ``quotient'' ring $(\Sch_\kk,ft)$-theory $\underline{\Ka}_0(\Sch^{ft})$ of $\underline{\ZZ}(\Sch^{ft})=\underline{\ZZ}\kk$ obtained by modding out the cut and paste relation 
\[ [U\xrightarrow{\alpha} X]=[V\xrightarrow{\alpha|_V} X]+[U\setminus Z\xrightarrow{\alpha_{U\setminus V}} X] \]
in $\underline{\ZZ}(\Sch_X^{ft})$ for every $X\in\Sch_\kk$ and every for every closed subscheme $V\subseteq U$. This turns out to be a $\lambda$-ring $(\Sch_\kk,ft)$-theory with $\lambda$-operations $\sigma^n$ given as in Example \ref{motives1}. We write $\underline{\Ka}_0(\Sch_X^{ft})$ for $\underline{\Ka}_0(\Sch^{ft})(X)$. It is not true that $\underline{\ZZ}(\Sch^{ft})\to \underline{\Ka}_0(\Sch^{ft})$ is a homomorphism of $\lambda$-ring theories. There is a morphism of ring $(Sm,prop)$-theories  $\underline{\Ka}_0(Sm^{prop})\to \underline{\Ka}_0(\Sch^{ft})$ which is an isomorphism if $\Char \kk=0$ (see \cite{Bittner04}). 
\end{example}
\begin{example} \rm Let $\kk=\CC$.  There is a $\lambda$-ring $(\Sch_\kk,ft)$-theory  $\Con$ over $\kk$ such that $\Con(X)$ is the abelian group of constructible $\ZZ$-valued functions on $X$ for every $X\in\Sch_\kk$. The pull-back is the usual pull-back of functions, and the (proper) push-forward is the fiberwise integral with respect to the Euler characteristic $\chi_c$ with compact support. The exterior product $\Con(X)\otimes\Con(Y) \longrightarrow \Con(X\times Y)$ is the usual product of functions after pulling them back to the product $X\times Y$ (cf.\ Proposition \ref{algebraic_structure}), and the unit is the function with value $1$ on $\Spec\kk$. The operation $\sigma^n$ is uniquely determined by the requirement that $\sigma^n$ applied to the characteristic function of a locally closed subset $Z\subseteq X$ is the characteristic function of $\Sym^n(Z)\subseteq \Sym^n(X)$.     
\end{example}
\begin{example} \rm
Given a connected scheme $X$ and a generator $[U\xrightarrow{\alpha} X]$ of $\Ka_0(\Sch^{ft}_X)$, the function $X\ni x \to \chi_c(u^{-1}(x))\in \ZZ$ is constructible. By extending this assignment linearly and by passing to the sheafification, we get a morphism $\chi_c:\underline{\Ka}_0(\Sch^{ft})\longrightarrow \Con$ of $\lambda$-ring $(\Sch_k,ft)$-theories  over $\kk$. 
\end{example}
\begin{example}\rm
Given a ($\lambda$-)ring $(\Sm,\Pro)$-theory $R$ over a commutative monoid $\MM$, we define a new ($\lambda$-)ring $(\Sm,\Pro)$-theory $R^{red}$ over $\MM$ by means of $R^{red}(f)=R(X\xrightarrow{0} \MM)$ for every $X\xrightarrow{f}\MM$ in $ \Sch_\MM$. For example, $\underline{\ZZ}\kk^{red}=\underline{\ZZ}(\Sm^{\Pro})$ and $\underline{\ZZ}(\Sm^{\Pro})^{red}=\underline{\ZZ}(\Sm^{\Pro})$. More general, $R^{red,red}=R^{red}$, in other words, $R\mapsto R^{red}$ is an idempotent endofunctor on $\Th^{(\lambda)}(\MM)$. A more conceptual characterization of the ``reduction'' will be given in section \ref{pushforwards_pull-backs}. 
\end{example}

\begin{example} \rm \label{fiber_example}
Given a ($\lambda$-)ring $(\Sm,\Pro)$-theory $R$ over $\MM$, we define a new ($\lambda$-)ring $(\Sm,\Pro)$-theory $R^{fib}$ by sheafification of the ($\lambda$-)ring $(\Sm,\Pro)$-pretheory 
\[ (X\xrightarrow{f}\MM)\longmapsto \oplus_{a\in \MM(\bar{\kk})} R(X_a\xrightarrow{c_a}\MM). \]
Here $c_a=f|_{X_a}:X_a\to \MM$ denotes the constant map of the fiber $f^{-1}(a):=\Spec \kk(a) \times_\MM X$ to $\MM$, where $\kk(a)$ denotes the residue field of $a$ which is a finite extension of $\kk$. Given $X\xrightarrow{f}\MM$ and $Y\xrightarrow{g}\MM$, we define the external product by means of 
\[ R(X_a\xrightarrow{c_a}\MM)\otimes R(Y_b\xrightarrow{c_b}\MM)\xrightarrow{\boxtimes}R(X_a\times Y_b\xrightarrow{c_a\boxtimes c_b} \MM)\to R((X\times Y)_{a+b}\xrightarrow{c_{a+b}}\MM)\]
and, if $R$ is a $\lambda$-ring $(\Sm,\Pro)$-theory, the $\lambda$-operations by 
\[ R(X_a\xrightarrow{c_a}\MM)\xrightarrow{\sigma^n} R(\Sym^n (X_a)\xrightarrow{\Sym^n(c_a)}\MM) \to R((\Sym^n X)_{na}\xrightarrow{c_{na}}\MM).\]
Details are left to the reader. By pushing forward along the inclusion $X_a\subseteq X$, we get a canonical morphism $R^{fib}\to R$ of ($\lambda$-)ring $(\Sm,\Pro)$-theories. If $\Sm$ contains closed embeddings, the pull-back along the inclusion $i_a:X_a\hookrightarrow X$ provides a left inverse of $i_{a!}$ and $R^{fib}$ is a subtheory of $R$. In this situation we may think of $R^{fib}(f)$ as  the subgroup of $R(f)$ containing elements supported on (finitely many over each connected component) fibers of $f$. In particular, if $f$ is locally constant, e.g.\ if $\MM$ is discrete, $R^{fib}=R$. The construction $R\mapsto R^{fib}$ is an idempotent endofunctor  on $\Th^{(\lambda)}(\MM)$. 
\end{example}

\subsection{Algebraic structures}

Given a ($\lambda$-)ring $(\Sm,\Pro)$-theory  $R$ over $\MM$ and a commutative monoid $(X\xrightarrow{f}\MM,+,0)$ in $\Pro_\MM$, i.e.\ a commutative monoid in $\Sch_\MM$  such that $f\boxtimes f \xrightarrow{+} f$ and $0:\Spec \kk \to f$ are in $\Pro_\MM$. Then $\Sym^n(f) \xrightarrow{+} f$ is also in $\Pro_\MM$ for every $n\in\NN$, and we can define a convolution product $\cdot$ on $R(f)$ and operations $\sigma_\cdot^n$, if $R$ is a $\lambda$-ring $(\Sm,\Pro)$-theory, as follows
\begin{eqnarray*}
& & \cdot : R(f)\otimes R(f) \xrightarrow{ \boxtimes } R(f\boxtimes f) \xrightarrow{+_!} R(f), \\
& & \sigma^n_\cdot: R(f)  \xrightarrow{\sigma^n} R(\Sym^n(f)) \xrightarrow{+_!} R(f).
\end{eqnarray*}
\begin{proposition} \label{algebraic_structure}
Let $R$ be a ($\lambda$-)ring $(\Sm,\Pro)$-theory  and let $(X\xrightarrow{f} \MM,+,0)$ be a commutative monoid in  $\Pro_\MM$.
\begin{enumerate}
\item  The convolution product and the operations $\sigma_\cdot^n$ defined above turn $R(f)$ into a ($\lambda$-)ring  with unit  $1_0=0_!(1)$.
\item If $X$ has a stratification $X = \cup_{n\in \NN}X^n$ with $\overline{X^n}=\cup_{m\ge n}X^m$ and such that the map $\Sym^n(X^1)\rightarrow X$ factors through the inclusion $X^n\hookrightarrow X$ for $n\in \NN$, the ($\lambda$-)ring $R(f)$ is complete with respect to the filtration $F^nR(f):=(\overline{X^n}\hookrightarrow X)_!R(f|_{\overline{X_n}})$.
\item If $u:(X\xrightarrow{f} \MM,+,0)\rightarrow (Y\xrightarrow{g} \MM,+',0)$ is a  homomorphism of commutative monoids in $\Pro_\MM$ and $u\in\Pro_\MM$, then $u_!:R(f)\to R(g)$ is a homomorphism of ($\lambda$-)rings.  
\item If $u:(X\xrightarrow{f} \MM,+,0)\rightarrow (Y\xrightarrow{g} \MM,+',0)$ is a homomorphism of commutative monoids  $\Pro_\MM$ and $u\in \Sm_\MM$ such that the diagram
\[
\xymatrix{
X\times X\ar[d]_+\ar[r]^{u\times u} &Y\times Y\ar[d]_{+'}\\
X\ar[r]^u&Y
}
\]
is cartesian, then $u^*:R(g)\to R(f)$ is a ($\lambda$-)ring homomorphism.
\end{enumerate}
\end{proposition}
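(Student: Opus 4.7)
My plan is to deduce all four assertions from the monoid axioms for $(f,+,0)$, functoriality of exterior products and $\lambda$-operations under push-forwards and pull-backs, and base change. I would prove (3) first since it is purely formal, then (4) by the parallel base-change argument, then (1) by applying (3) to the canonical monoid map $\mu:\Sym(f)\to f$ in order to bootstrap the $\lambda$-ring structure from the one on $R(\Sym(f))$, which is already part of the data.

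\textbf{Parts (3) and (4).} For (3), given a monoid homomorphism $u:(f,+,0)\to(g,+',0')$ in $\Pro_\MM$, the relation $u\circ + = +'\circ(u\times u)$ combined with the compatibility of $\boxtimes$ with push-forward gives
\[u_!(a\cdot b)=u_!\,+_!(a\boxtimes b)=+'_!\,(u\times u)_!(a\boxtimes b)=+'_!(u_!(a)\boxtimes u_!(b))=u_!(a)\cdot u_!(b),\]
and the analogous computation using compatibility of $\sigma^n$ with push-forward gives $u_!\sigma^n_\cdot = \sigma^n_\cdot u_!$; unit preservation is immediate from $u\circ 0 = 0'$. For (4), the cartesian assumption is exactly what base change requires:
\[u^\ast(a\cdot b)=u^\ast\,+'_!(a\boxtimes b)=+_!\,(u\times u)^\ast(a\boxtimes b)=+_!(u^\ast(a)\boxtimes u^\ast(b))=u^\ast(a)\cdot u^\ast(b),\]
and the cartesian square for $\Sym^n(u)$ is induced from the one for $u$, giving compatibility with $\sigma^n_\cdot$. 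Unit preservation follows by observing that restricting the hypothesized isomorphism $X\times X\cong (Y\times Y)\times_Y X$ to the fibre over $(0',0')$ identifies $u^{-1}(0')\times u^{-1}(0')\to u^{-1}(0')$ (via $+$) as an isomorphism of monoids, which forces $u^{-1}(0')\cong\Spec\kk$ with structure map $0$; base change then yields $u^\ast(1_{0'})=0_!(1)=1_0$.

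\textbf{Part (1).} Associativity, commutativity, the unit axiom, and distributivity of the convolution product on $R(f)$ are routine: each identity writes both sides as a push-forward of a common class along the relevant monoid map, and equality follows from the associativity, commutativity, or unit axiom for $(f,+,0)$ together with commutativity of the theory. For the $\lambda$-ring structure, the key is that the monoid multiplication map $\mu:\Sym(f)\to f$ is a monoid homomorphism lying in $\Pro_\MM$: the iterated sum $X^n\to X$ is in $\Pro$ (since $+\in\Pro$ and $\Pro$ is stable under fibre product with identities over $\MM$ by axiom (4)), the quotient $X^n\to\Sym^n(X)$ is finite, so by axiom (6) the map $\Sym^n(X)\to X$ is in $\Pro$, and these assemble into $\mu\in\Pro_\MM$. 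Applying (3) shows $\mu_!:R(\Sym(f))\to R(f)$ is a ring homomorphism respecting $\sigma^n$. The degree-one inclusion $\iota:X\hookrightarrow\Sym(X)$ is a closed embedding hence in $\Pro$, and $\mu\circ\iota=\id_X$ yields $\mu_!\circ\iota_!=\id_{R(f)}$, so $\mu_!$ is a split surjection; the $\lambda$-ring axioms for $R(f)$ are then the images of those holding in $R(\Sym(f))$ by hypothesis.

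\textbf{Part (2) and the main obstacle.} For (2), I would first show $F^nR(f)\cdot F^mR(f)\subseteq F^{n+m}R(f)$ by writing $a,b$ as push-forwards from $\overline{X^n}$, $\overline{X^m}$, so that $a\cdot b$ is the push-forward along $+$ of a class supported on $\overline{X^n}\times\overline{X^m}$; the claim then reduces to the set-theoretic inclusion $+(\overline{X^n}\times\overline{X^m})\subseteq\overline{X^{n+m}}$, which follows from the stratification hypothesis (iterating the fact that $\Sym^k(X^1)\to X$ lands in $X^k$ shows $+$ sends $X^n\times X^m$ into $X^{n+m}$, and taking closures gives the inclusion). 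Completeness is then obtained from the sheaf property combined with the closed-open decompositions $X=\overline{X^n}\sqcup(X\setminus\overline{X^n})$, allowing one to recover a global element from compatible restrictions to the open complements $X\setminus\overline{X^n}$. The main obstacle is precisely this completeness step: extracting the identification $R(f)\cong\varprojlim R(f)/F^n R(f)$ from the $(\Sm,\Pro)$-sheaf axiom together with the short exact sequences associated to closed-open decompositions is not a purely formal consequence of the theory axioms, since such exact sequences are not postulated directly and must be used via the filtration in combination with the sheaf property on connected components.
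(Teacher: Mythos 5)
The paper's own proof addresses only part~(4) in detail, stating that (1)--(3) follow from the properties of the universal operations, and the entire content of that proof is precisely the step you dispatch in half a sentence: the claim that ``the cartesian square for $\Sym^n(u)$ is induced from the one for $u$'' is \emph{not} a formal consequence, because $\Sym^n$ does not commute with fiber products. This is a genuine gap. What actually has to be shown is that the natural map $\Sym^n(X)\to Z:=X\times_Y\Sym^n(Y)$ is an isomorphism, and the paper does this in several non-trivial steps: (i) it observes that $\Sym^n(u)$ is in $\Sm$ since the vertical maps are in $\Pro$; (ii) it verifies that $\Sym^n(X)\to Z$ is a bijection on geometric points (using that geometric points of $\Sym^n(X)$ are unordered $n$-tuples of geometric points of $X$); (iii) it shows that the square relating $X^n,Y^n$ to $Z,\Sym^n(Y)$ is cartesian and deduces that $X^n\to\Sym^n(X)\to Z$ is finite, hence $\Sym^n(X)\to Z$ is finite; and (iv) it invokes Zariski's main theorem to conclude that a proper morphism of noetherian schemes inducing a bijection on geometric points is an isomorphism. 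Without something along these lines your argument for the $\lambda$-compatibility of $u^\ast$ does not go through.

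On the other parts: your bootstrap for (1) via the split surjection $\mu_!:R(\Sym(f))\to R(f)$ --- with the splitting $\iota_!$ as a group-theoretic section, not a ring map --- is a legitimate alternative to the paper's direct verification and arguably cleaner, since it pushes all the $\lambda$-ring axioms onto $R(\Sym(f))$, where they are postulated; just be careful that one needs the commutation of $\sigma^n$ with push-forward (an axiom of the theory, not yet (3)) to see that $\mu_!$ intertwines the operations, so as to avoid circularity. Your part (3) matches the paper's approach. Your part (2) is as incomplete as the paper's, which simply asserts it follows directly, so your frank admission of the gap in the completeness step is fair. Your unit argument in (4) via the fibre over $(0',0')$ is correct, though slightly more elaborate than necessary; the paper does not bother with a separate unit argument, instead getting it from the cartesianness and the pull-back axioms.
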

\begin{proof}
Let us proof the final statement as the other ones follow directly from the properties of the universal $\lambda$-operations $\sigma^n:R(f)\to R(\Sym^n(f))$. The last statement will follow from the commutativity of the  universal $\lambda$-operations with pull-backs, once we know that 
\[ \xymatrix @C=1.5cm { \Sym^n(X) \ar[r]^{\Sym^n(u)} \ar[d]_+ & \Sym^n(Y) \ar[d]^{+'} \\ X \ar[r]^u & Y}\]
is cartesian for all $n\in \NN$. Indeed, as the vertical maps are in $\Pro$ by assumption, and $u\in \Sm$, the pull-back $\Sym^n(u)$ of $u$ along $+'$ is also in $\Sm$. We denote the fiber product $X\times_Y \Sym^n(Y)$ with $Z$ and obtain a morphism $\Sym^n(X)\to Z$. The morphism $\Sym^n(X)\to Z$ induces a bijection on geometric points as one can  check easily since the geometric points of $\Sym^n(X)$ are $n$-tuples of geometric points on $X$ up to ordering and similarly for $\Sym^n(Y)$. Moreover, the square 
\[ \xymatrix @R=0.5cm { X^n \ar[r]^{u^n} \ar[d] & Y^n \ar[dd] \\ \Sym^n(X) \ar[d] & \\ Z \ar[r] & \Sym^n(Y) }\]
is cartesian which can be  shown easily using that
\[ \xymatrix { X^n \ar[r]^{u^n} \ar[d]^+ & Y^n \ar[d]^{+'} \\ X \ar[r]^u & Y} \]
is cartesian because of our assumptions. In particular, being the pull-back of a finite morphism, $X^n\to \Sym^n(X) \to Z$ is a finite morphism. Thus, $\Sym^n(X)\to Z$ is a finite morphism, too. By Zariski's main theorem, a proper morphism between noetherian schemes which induces a bijection on geometric points is an isomorphism. Therefore, $\Sym^n(X)\to Z$ is an isomorphism and the proposition is proven. 
\end{proof}
\begin{remark} \rm
Given any morphism $u:(X\xrightarrow{f}\MM)\longrightarrow (Y\xrightarrow{g}\MM)$ such that $u\in \Pro$, the conditions of  Proposition \ref{algebraic_structure}(3) are satisfied for $\Sym(u):\Sym(f)\to \Sym(g)$. Indeed, $+:\Sym(X)\times \Sym(X)\to \Sym(X)$ and $\Spec \kk \to \Sym(X)$ are finite morphisms, hence in $\Pro$. Thus, $\Sym(f)$ and $\Sym(g)$ are commutative monoids in $\Pro_\MM$ and  $\Sym(u)\in \Pro_\MM$ by assumption (6) on the pair $(\Sm,\Pro)$. Thus, Proposition \ref{algebraic_structure}(3) is equivalent to our axiom that the universal $\lambda$-operations commute with push-forwards for $u\in\Pro_\MM$.\\
Moreover, using that $\Sym(X)\times \Sym(X)\longrightarrow\Sym(X)$ is proper, we conclude that $\Sym(X)\times \Sym(X)\longrightarrow \Sym(X)\times_{\Sym(Y)} (\Sym(Y)\times \Sym(Y))$ is a proper morphism which induces a bijection on geometric points, hence an isomorphism by Zariski's main theorem, and 
\[ \xymatrix @C=2.5cm { \Sym(f)\times\Sym(f) \ar[r]^{\Sym(u)\times\Sym(u)} \ar[d]_+ & \Sym(g)\times \Sym(g)\ar[d]^+ \\ \Sym(f)\ar[r]^{\Sym(u)} & \Sym(g) }\]
is cartesian. Thus, Proposition \ref{algebraic_structure}(4) is equivalent to our axiom that the universal $\lambda$-operations commute with pull-backs if $\Sym(u)\in \Sm_\MM$.  
\end{remark}

\begin{example} \rm \label{convention}
Let $R$ be a ($\lambda$-)ring $(\Sm,\Pro)$-theory. For $\Spec\kk\xrightarrow{0}\MM$ we arrive at two ($\lambda$-)ring structures on $R(\Spec\kk\xrightarrow{0}\MM)$, given by $(\boxtimes,\sigma^n)$ and $(\cdot,\sigma_\cdot^n)$, where we interpret $\Spec\kk\xrightarrow{0}\MM$ as a commutative monoid in $\Sch_\MM$. However, all structures are the same. Similarly, if $R$ is a $\lambda$-ring $(\Sm,\Pro)$-theory, the universal operations $\sigma^n:R(f) \to R(\Sym^n(f))\hookrightarrow R(\Sym(f))$ and the convolution $\lambda$-ring structure on $R(\Sym(f))$ restricted to $R(f)\hookrightarrow R(\Sym(f))$ agree. Moreover, thinking of $R(\Spec\kk)$ and $R(f)$ as being  subgroups of $R(\Sym(f))$ by means of the closed embeddings $\Spec\kk\xrightarrow{0} \Sym(f)$ and $f\xrightarrow{\Delta_1}\Sym(f)$, the exterior product $a\boxtimes b$ and the convolution product $ab$ in $R(\Sym(f))$  agree for $a\in R(\Spec\kk)$ and $b\in R(f)$, and we will suppress the symbol $\boxtimes$ in the following. Moreover, $0_!:R(\Spec\kk\xrightarrow{0}\MM)\to 
R(\Sym(f))$ is a $\lambda$-ring homomorphism.   
\end{example}
\begin{example} \rm
 For a commutative monoid $Y\xrightarrow{g}\MM$ in $\Sm_\MM$ and a commutative monoid $X\xrightarrow{f}\MM$ in $\Pro_\MM$, we defined a $\lambda$-ring structure on $\ZZ g(f)$ in Example \ref{darstellbar} which descends to $\underline{\ZZ} g(f)$. This structure is just the convolution $\lambda$-ring structure introduced above. 
\end{example}
\begin{example} \rm
 Similarly, for  a commutative monoid $X\xrightarrow{f}\MM$ in $\Pro_\MM$, we defined a $\lambda$-ring structure on $\ZZ(\Sm^{\Pro})(f)=\ZZ(\Sm^{\Pro}_X)$ which descends to $\underline{\ZZ}(\Sm^{\Pro}_X)$. This structure is just the convolution $\lambda$-ring structure introduced above. 
\end{example}
\begin{example} \rm
Let $R=\Con$ and let $(X,+,0)$ be a commutative monoid in $\Sch_\kk$. The convolution product is just the usual convolution of (constructible) functions.  We suggest to the reader to check all assertions of Proposition \ref{algebraic_structure} in this example to get an idea of what the Proposition is saying.   
\end{example}
\begin{example}\rm \label{lambda-ring_moduli_spaces}
Pick a family $\Ab$ of exact categories satisfying the properties (1)--(6) discussed in section 2. Using the notation of section 2, we assume that $\oplus:\Msp\times \Msp\longrightarrow \Msp$ is in $\Pro$ which is often the case due to Example \ref{finite_direct_sum}. Given a $\lambda$-ring $(\Sm,\Pro)$-theory  $R$ over $\Msp$, we get a complete $\lambda$-ring structure on $R(\id_{\Msp})$ since $\Msp$ has a stratification as in Proposition \ref{algebraic_structure} with $X^1=\Msp^s$. Similarly, if $\PPP$ is a locally closed property for objects in $\Ab$ closed under extension and subquotients, we obtain a complete $\lambda$-ring structure on $R(\Msp^\PPP\hookrightarrow \Msp)$ for every $\lambda$-ring $(\Sm,\Pro)$-theory $R$  on $\Msp$. If the inclusion $\bar{\tau}:\Msp^\PPP\hookrightarrow \Msp$ of the locally closed subspace of objects having property $\PPP$ is in $\Pro$, the push-forward along the inclusion is a $\lambda$-ring homomorphism $R(\Msp^\PPP\to \Msp)\longrightarrow R(\id_\Msp)$. The same is true for the pull-back if $\bar{\tau}\in \Sm$. Finally, if $\dim:\Msp\to \NN^{\oplus I}$ is in $\Pro$ and $R$ is a $\lambda$-ring $(\Sm,\Pro)$-theory on $\NN^{\oplus I}$, then $\dim_!:R(\dim)\to R(\id_{\NN^{\oplus I}})$ is a $\lambda$-ring homomorphism of complete $\lambda$-rings.
\end{example}

\begin{definition} \label{line_elements} Given a $\lambda$-ring $(\Sm,\Pro)$-theory  $R$ over $\MM$, we use the notation of appendix \ref{adjoining_roots} to define
 \begin{enumerate}
  \item[(i)] $R_{sp}:=\{ a\in R(\Spec\kk)\mid 0_!(a)\in R(\Sym(f))_{sp} \;\forall \,f\in \Sch_\MM\}\subseteq R(\Spec\kk)_{sp}$ with $R(\Sym(f))$ carrying the standard convolution structure of $(\Sym(f),\oplus,0)$,
  \item[(ii)] $\Pic(R):=R_{sp}\cap \Pic(R(\Spec\kk),\sigma_t)=\{a\in R_{sp}\mid   \sigma^n(a)=a^n \;\forall\, n\in \NN\}$.
\end{enumerate}
\end{definition}
The subset $\Pic(R)$ is a monoid under multiplication as one can see using Lemma \ref{special_elements}. Moreover, $R_{sp}$ is a special $\lambda$-subring of $R(\Spec\kk)$, but the inclusion into $R(\Spec\kk)$ can be strict as the following example shows. However, the authors are not aware of any example such that $R_{sp}\subsetneqq R(\Spec\kk)_{sp}$.  
\begin{example} \rm  
One can show $\LL\in \Pic(\underline{\Ka}_0(\Sch^{ft}))$ and $\underline{\Ka}_0(\Sch^{ft})_{sp}\subsetneqq \underline{\Ka}_0(\Sch^{ft}_\kk)=\Ka_0(\Sch^{ft}_\kk)$ (see \cite{LarsenLunts}). 
\end{example}

\begin{lemma}[cf.\ Lemma \ref{lambda_adjunction_3}] \rm \label{lambda_adjunction_2}
Given a $\lambda$-ring $(\Sm,\Pro)$-theory  $R$ and a (possibly infinite) family $(P_\alpha)_{\alpha\in A}$ of polynomials in $R_{sp}[T]$, there is a unique $\lambda$-ring $(\Sm,\Pro)$-theory  $R\langle x_\alpha \mid \alpha\in A \rangle$ such that
\[ R\langle x_\alpha \mid \alpha\in A \rangle(\Sym(f))=R(\Sym(f))\langle x_\alpha \mid \alpha\in A \rangle. \]
as $\lambda$-rings for all connected $f\in \Sch_\MM$. Using Lemma \ref{lambda_adjunction_5}, we get a group isomorphism
\[R\langle x_\alpha \mid \alpha\in A \rangle(f)\cong R(f)\otimes_{R_{sp}} R_{sp}\langle x_\alpha \mid \alpha\in A \rangle =:R(f)\langle x_\alpha \mid \alpha\in A \rangle\]
for every connected $f$. Clearly, there is a morphism $R\longrightarrow  R\langle x_\alpha \mid \alpha\in A \rangle$ and a family of elements $x_\alpha\in R\langle x_\alpha \mid \alpha\in A \rangle_{sp}$ for $\alpha\in A$ satisfying a universal property.  
\end{lemma}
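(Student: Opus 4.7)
The strategy is to reduce to the appendix result Lemma \ref{lambda_adjunction_3}, which handles adjunction of polynomial roots for a single (complete) $\lambda$-ring, and then lift this objectwise to a $(\Sm,\Pro)$-theory. For each connected $f\in\Sch_\MM$, I would set
\[ R\langle x_\alpha\mid \alpha\in A\rangle(f) := R(f)\otimes_{R_{sp}} R_{sp}\langle x_\alpha\mid \alpha\in A\rangle, \]
where the $R_{sp}$-module structure on $R(f)$ comes from the external product $R(\Spec\kk)\otimes R(f)\to R(\Spec\kk \boxtimes f)=R(f)$ combined with the inclusion $R_{sp}\subseteq R(\Spec\kk)$, and extend this to disconnected $f$ via the product formula required for a theory. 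Since $0_!\colon R(\Spec\kk)\to R(\Sym(f))$ is a $\lambda$-ring homomorphism (Example \ref{convention}), every $P_\alpha\in R_{sp}[T]$ defines a polynomial with special coefficients in $R(\Sym(f))$, so Lemma \ref{lambda_adjunction_3} applies and, combined with Lemma \ref{lambda_adjunction_5}, identifies $R\langle x_\alpha\rangle(\Sym(f))$ with the $\lambda$-ring $R(\Sym(f))\langle x_\alpha\mid \alpha\in A\rangle$, which gives the defining property of the lemma.

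Next I would equip $R\langle x_\alpha\rangle$ with the structure of a $\lambda$-ring $(\Sm,\Pro)$-theory. Pull-back along $u\in\Sm_\MM$ and push-forward along $u\in\Pro_\MM$ are induced from those of $R$ through the first tensor factor; this is well defined because both operations are $R(\Spec\kk)$-linear, a consequence of the functoriality of $\boxtimes$ together with the identification $\Spec\kk\boxtimes f\cong f$. Base change then follows from base change in $R$. The exterior product is defined by
\[ (a\otimes p)\boxtimes (b\otimes q) := (a\boxtimes b)\otimes (pq), \]
which is well defined since the two $R_{sp}$-module structures on $R(f\boxtimes g)$ coming from the two projections coincide with the canonical one. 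The $\lambda$-operations $\sigma^n$ are characterized by agreeing with those constructed in Lemma \ref{lambda_adjunction_3} on $R\langle x_\alpha\rangle(\Sym(f))$; together with pull-back along $\Delta_1\colon f\hookrightarrow \Sym(f)$ and push-forward along $\oplus\colon\Sym^n(f)\hookrightarrow \Sym(f)$ this determines them on every $R\langle x_\alpha\rangle(f)$.

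Uniqueness together with the universal property is then formal: any $\lambda$-ring theory $R'$ receiving a morphism $R\to R'$ together with elements $x'_\alpha\in R'_{sp}$ with $P_\alpha(x'_\alpha)=0$ induces, by the universal property of Lemma \ref{lambda_adjunction_3} applied to each complete $\lambda$-ring $R(\Sym(f))$, a unique $\lambda$-ring map $R\langle x_\alpha\rangle(\Sym(f))\to R'(\Sym(f))$; this family assembles into a morphism of $\lambda$-ring theories because pull-back along $\Delta_1$ recovers the components on each $R(f)$ and the sheaf axiom handles disconnected $f$. The main obstacle is verifying that the $\sigma^n$ defined above commute with \emph{all} pull-backs and push-forwards of the theory, not only those associated with $\Delta_1$ and $\oplus$. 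This reduces to the naturality of the single-ring adjunction with respect to arbitrary $\lambda$-ring homomorphisms $R(\Sym(f))\to R(\Sym(g))$ induced by morphisms $f\to g$ in $\Sch_\MM$, and ultimately to the observation that the coefficients of the $P_\alpha$ lie in $R_{sp}$, hence are universally special and commute with every pull-back and push-forward of the theory.
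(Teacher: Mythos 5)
The paper gives no explicit proof of this lemma; the ``cf.\ Lemma \ref{lambda_adjunction_3}'' tag and the in-line invocation of Lemma \ref{lambda_adjunction_5} indicate exactly the route you take, so your construction is the intended one, and the objectwise definition by $R(f)\otimes_{R_{sp}}R_{sp}\langle x_\alpha\rangle$ on connected $f$, extended by the sheaf axiom, is correct in outline. Two points want sharpening. First, the final paragraph is imprecise about \emph{why} the $\sigma^n$ commute with $u_!$ for $u\in\Pro_\MM$ and with $u^\ast$ when $\Sym(u)\in\Sm_\MM$. The correct mechanism is Proposition \ref{algebraic_structure}(3) and (4): $\Sym(u)_!$ and $\Sym(u)^\ast$ are $\lambda$-ring homomorphisms on $R(\Sym(\cdot))$, and both commute with $0_!$, hence restrict to the identity on the copy of $R_{sp}$ sitting inside $R(\Sym(\cdot))$. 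By the universal property of Lemma \ref{lambda_adjunction_3} they therefore admit unique extensions to $R(\Sym(\cdot))\langle x_\alpha\rangle$ fixing the $x_\alpha$, and these extensions are $(\Sym(u)_!)\otimes\id$ and $(\Sym(u)^\ast)\otimes\id$ under your tensor-product description. This functoriality of the adjunction with respect to $\lambda$-ring homomorphisms fixing the base is the argument; the fact that the coefficients of $P_\alpha$ lie in $R_{sp}$ is the hypothesis making the adjunction available, not itself the reason for naturality. Second, your definition forces $R\langle x_\alpha\rangle(\Sym(f))=\prod_n\bigl(R(\Sym^n(f))\otimes_{R_{sp}}R_{sp}\langle x_\alpha\rangle\bigr)$, whereas the appendix construction of $R(\Sym(f))\langle x_\alpha\rangle$ yields $\bigl(\prod_n R(\Sym^n(f))\bigr)\otimes_{R_{sp}}R_{sp}\langle x_\alpha\rangle$; the canonical comparison map from the latter to the former need not be an isomorphism when $R_{sp}\langle x_\alpha\rangle$ is not finitely presented over $R_{sp}$ (as already happens for $R\langle(\LL-1)^{-1}\rangle$). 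Since a $\lambda$-ring $(\Sm,\Pro)$-theory requires $R\langle x_\alpha\rangle(\Sym(f))$ to be complete for the filtration $F^i=R(\sqcup_{n\ge i}\Sym^n(X))$, the stated identity $R\langle x_\alpha\rangle(\Sym(f))=R(\Sym(f))\langle x_\alpha\rangle$ must be read with the right-hand side meaning the completion of the Lemma \ref{lambda_adjunction_3} construction for this filtration; you should say so explicitly rather than rely silently on Lemma \ref{lambda_adjunction_5}, whose statement does not account for the completion.
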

\begin{example} \label{lambda_adjunction_4} \rm As $\LL\in \Pic(\underline{\Ka}_0(\Sch^{ft}))$, we can form $\underline{\Ka}_0(\Sch^{ft})\langle \LL^{-1}\rangle$ with $\LL^{-1}\in \Pic(\underline{\Ka}_0(\Sch^{ft})\langle \LL^{-1}\rangle)$ or $\underline{\Ka}_0(\Sch^{ft})\langle \LL^{-1/2}\rangle^-$ with $-\LL^{1/2}\in \Pic(\underline{\Ka}_0(\Sch^{ft})\langle \LL^{-1/2}\rangle^-)$. (cf.\ Example \ref{lambda_adjunction})
\end{example}
 
The following construction applies only to those pairs $(\Sm,\Pro)$ for which $\Sm$ contains all closed embeddings. This is for instance the case if $\Sm=\Sch_\kk$.
Given a ring $(\Sm,\Pro)$-theory  $R$ over $\MM$  and an object $X\xrightarrow{f}\MM$ in $\Sch_\MM$, we can defined the cap-product and, if $R\in \Th^{\lambda}(\MM)$, operations $\sigma_\cap^n$ on $R(X)=R^{red}(f)$, where we used the shorthand $X$ for $X\xrightarrow{0}\MM$, by means of the (small) diagonal embeddings $\Delta_X:X\hookrightarrow X\times_\kk X$ and $\Delta_n:X\hookrightarrow \Sym^n(X)$ as follows
\begin{eqnarray*} 
&& \cap:  R(X)\otimes R(X) \xrightarrow{\boxtimes} R(X\times_\kk X) \xrightarrow{\Delta_X^\ast} R(X),\\
&& \sigma_\cap^n: R(X) \xrightarrow{\sigma^n} R(\Sym^n X)\xrightarrow{\Delta_n^\ast} R(X). 
\end{eqnarray*}
More generally,  we can define a cap-product using the ``diagonal'' 
\[ \Delta_X:(X\xrightarrow{f}\MM) \longrightarrow (X\xrightarrow{0}\MM)\boxtimes (X\xrightarrow{f} \MM)=(X\times_\kk X \xrightarrow{f\pr_2} \MM) \]
as 
\[ R^{red}(f)\otimes R(f)=R(X)\otimes R(f) \xrightarrow{\boxtimes} R(X\boxtimes f) \xrightarrow{\Delta_X^\ast} R(f). \]

\begin{proposition} \label{algebraic_structure_ex}
Let $R$ be a ring $(\Sm,\Pro)$-theory  over $\MM$ with $\Sm$ containing all closed embeddings, and let $X\xrightarrow{f}\MM, Y\xrightarrow{g}\MM$ be two objects in $\Sch_\MM$. Then:
\begin{enumerate}
\item The cap-product  defines a ring structure on $R^{red}(f)=R(X)$ which is unital with unit $\unit_X=(X\to \Spec \kk)^\ast (1)$ if $X\to \Spec \kk$ is in $\Sm$. If the latter is true and  $R\in \Th^{(\lambda)}(\MM)$, the operations $\sigma^n_\cap$ provide a $\lambda$-ring structure on $R^{ref}(f)$. Moreover, the cap product $R^{red}(f)\otimes R(f)\to R(f)$ induces an $R^{red}(f)$-module structure on $R(f)$. 
\item For every morphism $u:f\to g$ in $\Sm$,  $u^\ast(a\cap b)=u^\ast(a)\cap u^\ast(b)$ for all $a\in R^{red}(g)$ and $b\in R^{red}(g)$ or $b\in R(g)$. If $R\in \Th^{\lambda}(\MM)$, $u^\ast(\sigma^n_\cap(a))=\sigma_\cap^n(u^\ast(a))$ for all $a\in R^{red}(f)$, i.e.\ $R^{red}$ is a contravariant functor to \\ ($\lambda$-)rings and $(R^{red},R)$ is a contravariant functor to modules.
\item For every morphism $u:f\to g$ in $\Pro$, $u_!$ satisfies the projection formula $u_!(a\cap u^\ast(b))=u_!(a)\cap b$  for $a\in R^{red}(f)$ and $b\in R(g)$ or $b\in R^{red}(g)$, and also the projection formula $u_!(u^\ast(a)\cap b)=a\cap u_!(b)$ for $a\in R^{red}(g)$ and $b\in R(f)$. The latter formula says that $u_!:R(f)\to R(g)$ is a $R^{red}(g)$-module homomorphism if $R^{red}(g)$ acts on $R(f)$ via $u^\ast:R^{red}(g)\to R^{red}(f)$. 
\item The exterior product of elements $a\in R(X)$ and $b\in R(g)$ or $b\in R(Y)$ satisfies $a\boxtimes b = pr_X^\ast(a)\cap pr_Y^\ast(b)$.
\item The subgroup $R^{fib}(f)\subseteq R(f)$ is an $R^{red}(f)$-submodule and $R^{red,fib}(f)\subset R^{red}(f)$ is a ($\lambda$-)ideal. 
\item If $(f,+,0)$ is a commutative monoid in $\Sch_\MM$, the map $R^{red}(f)\to R(f)$ given by $a\mapsto a\cap 1_0$ is a ($\lambda$-)ring homomorphism.
\end{enumerate}
\end{proposition}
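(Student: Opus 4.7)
The plan is to verify each part by reducing the claim to a formal consequence of the axioms of a ring $(\Sm,\Pro)$-theory: base change, functoriality of $u^\ast$ and $u_!$, bi-additivity/associativity/commutativity/unit of $\boxtimes$, and compatibility of $\sigma^n$ with all of these. Throughout, the tactic is to write $\cap$ and $\sigma^n_\cap$ as external products (or universal $\sigma^n$) followed by pull-back along an appropriate (iterated) diagonal, and then to reduce each stated identity to the commutativity of a diagram of pull-backs, push-forwards and exterior products.

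For (1), associativity of $\cap$ follows from the identity $\Delta_X^\ast(\Delta_X\times\id_X)^\ast=\Delta_X^\ast(\id_X\times\Delta_X)^\ast$ (both sides being pull-back along the small threefold diagonal) together with associativity of $\boxtimes$. Unitality $\unit_X\cap a=a$ follows from $\Delta_X^\ast(\unit_X\boxtimes a)=\Delta_X^\ast(\pi_X\times\id_X)^\ast(1\boxtimes a)=a$ via the canonical isomorphism $\Spec\kk\times X\cong X$. Commutativity uses commutativity of $\boxtimes$ plus the symmetry $\tau\circ\Delta_X=\Delta_X$. The module statement for $R(f)$ is checked identically, replacing the symmetric diagonal $X\hookrightarrow X\times_\kk X$ by the ``half-diagonal'' $\Delta_X:(X\xrightarrow{f}\MM)\to(X\xrightarrow{0}\MM)\boxtimes(X\xrightarrow{f}\MM)$, which is valid because $\Sm$ contains closed embeddings. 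For the $\lambda$-ring structure, the sum formula $\sigma^n_\cap(a+b)=\sum_{k+l=n}\sigma^k_\cap(a)\cdot\sigma^l_\cap(b)$ follows from the universal identity $\sigma^n(a+b)=\sum\sigma^k(a)\boxtimes\sigma^l(b)$ (valid in $R(\Sym^n(X\sqcup X))=R(\sqcup_{k+l=n}\Sym^k X\times\Sym^l X)$) by pulling back along the diagonal $X\hookrightarrow\Sym^k X\times\Sym^l X$, which factors through $\Delta_{k+l}:X\hookrightarrow\Sym^{k+l}X$. Compatibility with pull-back in (2) is immediate from the naturality of $\boxtimes$, $\Delta_X$ and $\sigma^n$.

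For (3), given $u:f\to g$ in $\Pro$, the projection formula $u_!(a\cap u^\ast b)=u_!(a)\cap b$ is obtained by base change applied to the cartesian square
\[\xymatrix@R=0.5cm{X\ar[r]^-{\Delta_X}\ar[d]_u & (X\xrightarrow{0}\MM)\boxtimes(X\xrightarrow{g u}\MM)\ar[d]^{\id\boxtimes u}\\ Y\ar[r]^-{\Delta'_Y} & (X\xrightarrow{0}\MM)\boxtimes(Y\xrightarrow{g}\MM),}\]
combined with the push-pull formula $u_!(a\boxtimes u^\ast b)=u_!(a)\boxtimes b$ (itself a consequence of base change along $\pr$). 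The symmetric form of the projection formula is proved analogously. For (4), the composite $X\times Y\xrightarrow{\Delta_{X\times Y}}(X\times Y)\times(X\times Y)\xrightarrow{\pr_X\times\pr_Y}X\times Y$ is the identity, whence $\Delta^\ast_{X\times Y}(\pr_X^\ast a\boxtimes\pr_Y^\ast b)=a\boxtimes b$.

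For (5), the fiber submodule statement is immediate: if $b$ is in the summand $R(X_a\xrightarrow{c_a}\MM)\subset R^{fib}(f)$ and $c\in R^{red}(f)=R(X)$, then base change along $X_a\hookrightarrow X$ shows $c\cap b=(c|_{X_a})\cap b$ still lies in the same summand. The ideal claim for $R^{red,fib}$ works the same way. For (6), writing $1_0=0_!(1)$ and unfolding definitions, base change along the cartesian square built from $0\in\Pro$ and the monoid axioms reduces $a\mapsto a\cap 1_0$ to $0_!\circ(\text{action on }\unit)$, and the ring homomorphism property then follows from combining the projection formula of (3) (applied to $0$ and $+$) with commutativity of the square relating $\Delta_X$ and the multiplication $+$. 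The main obstacle is bookkeeping: keeping track of which diagonals land in $(X\xrightarrow{0}\MM)\boxtimes(X\xrightarrow{f}\MM)$ versus the symmetric $X^2$ over $\Spec\kk$, and verifying that the squares used for base change are genuinely cartesian; once this is done, all six assertions become formal.
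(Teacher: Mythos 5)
The paper gives no proof of this proposition — it is left as a routine verification from the axioms of a ring $(\Sm,\Pro)$-theory — so there is nothing to compare against beyond internal correctness. Your overall strategy (express $\cap$ and $\sigma^n_\cap$ as a $\boxtimes$-product, respectively a universal $\sigma^n$, followed by pull-back along a diagonal, and then reduce each asserted identity to base change and to functoriality of $\boxtimes$ and $\sigma^n$) is the right one, and parts (1), (4), (5), (6) go through essentially as you sketch them.

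The cartesian square you draw for part (3) is, however, not correct. As written, the bottom arrow $\Delta'_Y\colon Y\to (X\xrightarrow{0}\MM)\boxtimes(Y\xrightarrow{g}\MM)=X\times Y$ is not a morphism at all: there is no natural first component $Y\to X$, and for your square to commute it would have to send $u(x)$ to $(x,u(x))$, i.e.\ be a section of $u$. The square one should base-change is
\[
\xymatrix@C=2cm{
X \ar[r]^-{(\id_X\times u)\circ\Delta_X} \ar[d]_{u} & (X\xrightarrow{0}\MM)\boxtimes(Y\xrightarrow{g}\MM) \ar[d]^{u\times\id_Y} \\
Y \ar[r]^-{\Delta_Y} & (Y\xrightarrow{0}\MM)\boxtimes(Y\xrightarrow{g}\MM)
}
\]
with bottom-right $Y\times Y$ (not $X\times Y$), with $u\times\id_Y\in\Pro$ on the right, and with the graph of $u$ on top; this square is cartesian, and base change plus the functoriality identity $(u\times\id_Y)_!(a\boxtimes b)=u_!(a)\boxtimes b$ yields $u_!(a\cap u^\ast b)=u_!\Delta_X^\ast(\id_X\times u)^\ast(a\boxtimes b)=\Delta_Y^\ast(u\times\id_Y)_!(a\boxtimes b)=\Delta_Y^\ast(u_!(a)\boxtimes b)=u_!(a)\cap b$. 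The mirror-image square (with $\id_Y\times u$ on the right and the other graph of $u$ on top) gives the second projection formula. Separately, in part (2) the claim that $u^\ast$ commutes with $\sigma^n_\cap$ is not ``immediate from naturality'': the axiom $\sigma^n(u^\ast a)=\Sym^n(u)^\ast\sigma^n(a)$ is only stipulated for those $u$ with $\Sym(u)\in\Sm$, so you should either record that as an extra hypothesis or observe that it holds automatically for $\Sm=\Sch_\kk$ — the case one is essentially forced into anyway by requiring $\Sm$ to contain all closed embeddings.
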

\begin{example} \rm 
Let $R$ be a($\lambda$-)ring $(\Sm,\Pro)$-theory  over $\MM$ with $\Pro\subset \Sm$. For $X=\Spec\kk\xrightarrow{0}\MM$, the $\cap$-structure coincides with the convolution structure by the last part of the previous proposition. In particular, we will drop the $\cap$-symbol from notation.  
\end{example}
\begin{example} \rm
If $R=\Con$, we get $(\xi\cap \zeta)(x)=\xi(x)\zeta(x)$ and  $\sigma_\cap^n(\xi)(x)=\sigma^{st,n}(\xi(x))=(-1)^n{-\xi(x) \choose n}={\xi(x)+n-1\choose n}$ for every $\xi,\zeta\in \Con(X)$ and every $x\in X$. 
\end{example}
\begin{example}\rm  The $\cap$-product on $\underline{\ZZ}(\Sm^{\Pro}_X)$ is  $[U\to X]\cap[V\to X]=[U\times_X V \longrightarrow X]$, and $\sigma^n_\cap([U\to X])=[U\times_X\ldots \times_X U \longrightarrow X]$ which looks a bit weird as the reader would expect something like $[U\times_X\ldots \times_X/\!\!/S_n  \longrightarrow X]$. However, $U^n/\!\!/S_n\to \Spec\kk$ might not be in $\Sm$ and the generator $[U\sqcup V]$ of $\underline{\ZZ}(\Sm^{\Pro}_\kk)$ is not the sum $[U]+[V]$ of generators in general. 
\end{example}

\begin{definition} A pair $(\Sm,\Pro)$ is called motivic if $\Pro\cap \Sm$ contains all open and all closed embeddings. In particular, it also contains all locally closed embeddings.
\end{definition}
By the base change property, $e^\ast e_!=\id$ for every locally closed embedding. Hence, $e_!$ is injective and $e^\ast$ is surjective.
\begin{example}\rm Given a motivic pair $(\Sm,\Pro)$, we construct a quotient ring $(\Sm,\Pro)$-theory $\underline{\Ka}_0(\Sm^{\Pro})$ of $\underline{\ZZ}(\Sm^{\Pro})$ by imposing the relation $i_!i^\ast(a)+j_!j^\ast(a) = a$ for every element $a\in \underline{\ZZ}(\Sm^{\Pro}_X)$ and every closed embedding $i$ with open complement $j$. The $\cap$-product is  $[U\to X]\cap[V\to X]=[U\times_X V \longrightarrow X]$. As we have seen in Example \ref{motives2}, $\underline{\Ka}_0(\Sch^{ft})$ is even a $\lambda$-ring $(\Sch_\kk,ft)$-theory  and $\sigma^n_\cap([U\to X])=[U\times_X\ldots \times_X U/\!\!/S_n \longrightarrow X]$. However, the quotient map $\underline{\ZZ}(\Sch^{ft})\longrightarrow \underline{\Ka}_0(\Sch^{ft})$ is not a homomorphism of $\lambda$-ring theories and does also not preserve the $\lambda$-operations $\sigma_\cap$.  
\end{example}
\begin{definition}
Let $(\Sm,\Pro)$ be a motivic pair. A ring $(\Sm,\Pro)$-theory  $R$ over $\MM$ is called motivic if the canonical morphism $\underline{\ZZ}\kk^{red}=\underline{\ZZ}(\Sm^{\Pro})\to R^{red}$ of  ring $(\Sm,\Pro)$-theories   has a (unique) factorization $\underline{\ZZ}(\Sm^{\Pro})\twoheadrightarrow \underline{\Ka}_0(\Sm^{\Pro})\to R^{red}$. 
\end{definition}

\begin{example} \rm The ring $(\Sm,\Pro)$-theories  $\underline{\Ka}_0(\Sm^{\Pro})$ and $\Con$ are motivic as $\chi_c$ provides the factorization of $\underline{\ZZ}(\Sm^{\Pro})\to \Con$. More examples of (motivic) ring $(\Sm,\Pro)$-theories  will be given later.
\end{example}

As an application of the $\cap$-product, we prove the following alternative characterization of motivic $(\Sm,\Pro)$-theories. 
\begin{lemma} \label{motivic_ring theory }
A ring $(\Sm,\Pro)$-theory  $R$ over $\MM$ is motivic if and only if \[i_!i^\ast(a)+j_!j^\ast(a)=a\] holds for all $X\xrightarrow{f}\MM$ with $X\to \Spec\kk$ in $\Sm$, all $a\in R(X\xrightarrow{f}\MM)$, and all closed subschemes $i:Z\hookrightarrow X$ with open complement $j:X\setminus Z\hookrightarrow X$. 
\end{lemma}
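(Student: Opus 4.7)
The plan is to reduce both directions to the single observation that, for any $X$ with $c_X:X\to\Spec\kk$ in $\Sm$ and any closed embedding $i:Z\hookrightarrow X$ with open complement $j:X\setminus Z\hookrightarrow X$ (note $i,j\in \Sm\cap\Pro$ since $(\Sm,\Pro)$ is motivic), one has the identity
\[ \unit_X = i_!(\unit_Z) + j_!(\unit_{X\setminus Z}) \]
in $R^{red}(X)=R(X\xrightarrow{0}\MM)$. Indeed, given this, the general cut-and-paste relation for arbitrary $a\in R(X\xrightarrow{f}\MM)$ follows by capping with $a$, using the projection formula from Proposition \ref{algebraic_structure_ex}(3) and the compatibility $i^\ast\unit_X=\unit_Z$:
\[ a = \unit_X\cap a = (i_!\unit_Z)\cap a + (j_!\unit_{X\setminus Z})\cap a = i_!(\unit_Z\cap i^\ast a) + j_!(\unit_{X\setminus Z}\cap j^\ast a) = i_!i^\ast a + j_!j^\ast a. \]

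For the forward direction, I would use that if $R$ is motivic then the canonical morphism factors as $\underline{\ZZ}(\Sm^{\Pro})\to\underline{\Ka}_0(\Sm^{\Pro})\to R^{red}$. Since $i,j\in\Sm\cap\Pro$, the symbols $[Z\xrightarrow{i}X]$, $[X\setminus Z\xrightarrow{j}X]$ and $[X\xrightarrow{\id}X]$ are generators of $\underline{\ZZ}(\Sm^{\Pro}_X)$, and the defining relation of $\underline{\Ka}_0(\Sm^{\Pro})$ applied to $[X\xrightarrow{\id}X]$ reads $[X]=[Z]+[X\setminus Z]$. Pushing this equality through the assumed factorization yields precisely the displayed identity $\unit_X=i_!\unit_Z+j_!\unit_{X\setminus Z}$, and the general statement is then obtained as above.

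For the backward direction, I would verify that the canonical map $\underline{\ZZ}(\Sm^{\Pro})\to R^{red}$ annihilates the defining relations of $\underline{\Ka}_0(\Sm^{\Pro})$. By linearity it suffices to check this on the generating relations obtained by taking $a=[U\xrightarrow{\alpha}X]$ a generator of $\underline{\ZZ}(\Sm^{\Pro}_X)$ and a closed embedding $Y\hookrightarrow X$ with complement $X\setminus Y$. Writing $U_Y:=U\times_X Y$ and $i':U_Y\hookrightarrow U$, $j':U\setminus U_Y\hookrightarrow U$ for the induced closed/open embeddings, the image of the relation becomes
\[ \alpha_!(\unit_U) = (\alpha i')_!(\unit_{U_Y}) + (\alpha j')_!(\unit_{U\setminus U_Y}). \]
Because $U\to\Spec\kk$ lies in $\Sm$ by the generator condition, the hypothesis is applicable to $\unit_U\in R(U\xrightarrow{0}\MM)$ along the closed embedding $i'$, giving $\unit_U = i'_!(\unit_{U_Y}) + j'_!(\unit_{U\setminus U_Y})$; applying $\alpha_!$ to this equality produces the desired identity.

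The main technical content is the interplay between $R$ and $R^{red}$ through the cap-product module structure and the projection formula, but both ingredients are already provided by Proposition \ref{algebraic_structure_ex}, so I do not foresee any serious obstacle beyond carefully tracking which morphism each functorial operation is applied to.
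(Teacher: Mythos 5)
Your proof is correct and follows essentially the same route as the paper: both directions hinge on establishing the identity for $\unit_X$ and then using the $\cap$-product with the projection formula (Proposition \ref{algebraic_structure_ex}(3)) to upgrade from units to arbitrary $a\in R(f)$, and the backward direction verifies the cut-and-paste relation of $\underline{\Ka}_0(\Sm^{\Pro})$ generator by generator. Your treatment of the backward direction is slightly more explicit than the paper's terse ``choose $f=0$ and $a=\unit_X$'' --- you correctly point out that for a generator $[U\xrightarrow{\alpha}X]$ one must apply the hypothesis to $\unit_U$ along the pulled-back closed embedding $U\times_X Y\hookrightarrow U$ and then push forward via $\alpha_!$, which is the detail the paper elides --- but the underlying argument is identical.
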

\begin{proof} 
Assuming that the formula is true, we may choose  $f=0$ and $a=\unit_X$ to see that the defining relation of $\underline{\Ka}_0(\Sm^{\Pro}_X)$ holds in $R(X)$. Hence, $\underline{\ZZ}(\Sm^{\Pro}_X)\to R(X)$ factorizes through $\underline{\Ka}_0(\Sm^{\Pro}_X)$. Conversely, assuming the latter, the equation $i_!i^\ast(\unit_X)+j_!j^\ast(\unit_X)=i_!(\unit_Z)+j_!(\unit_{X\setminus Z})=\unit_X$ holds in $R(X)=R^{red}(f)$ for all $X\xrightarrow{f}\MM$ in $\Sch_\MM$. Given $a\in R(f)$, we use the the projection formula to conclude
\[ i_!i^\ast(a)+j_!j^\ast(a)=i_!(\unit_Z)\cap a + j_!(\unit_{X\setminus Z})\cap a = \unit_X\cap a=a. \]
\end{proof}


\subsection{Push-forwards and pull-backs of ($\lambda$-)ring theories } \label{pushforwards_pull-backs}

Given a homomorphism $u:\MM\to N$ of commutative monoids, we get two functors
\begin{eqnarray*}
&& u^!: \Sch_N \ni (X\xrightarrow{f}N) \longmapsto (\MM\times_N X\xrightarrow{\pr_\MM} \MM) \in \Sch_\MM, \\
&& u_!: \Sch_\MM \ni (X\xrightarrow{f} \MM) \longmapsto (X\xrightarrow{u\circ f} N) \in \Sch_N. 
\end{eqnarray*}
\begin{lemma} \label{reduction_3} The functor $u_!$ commutes with the monoidal structure and with $\Sym$. Moreover, the functor $u^!$ is the right adjoint of $u_!$. Furthermore, these functors lift to  functors $u_!:\Sm\Pro^{op}_\MM \longrightarrow \Sm\Pro^{op}_N$ and $u^\ast:\Sm\Pro^{op}_N\longrightarrow \Sm\Pro^{op}_\MM$ of categories. If $u$ is in $\Pro$, $u^\ast$ is left adjoint to $u_!$.  
\end{lemma}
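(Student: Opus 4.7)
My plan is to verify the four assertions in order, first at the level of $\Sch$ and then upgrading to the correspondence categories.

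For the compatibility of $u_!$ with $\boxtimes$ and $\Sym$, I will use that $u:\MM\to N$ is a monoid homomorphism, so $u\circ +_\MM = +_N\circ (u\times u)$, giving
\[
u_!(f\boxtimes_\MM g) \;=\; u\circ +_\MM\circ (f\times g) \;=\; +_N\circ (u\times u)\circ (f\times g) \;=\; u_!f\boxtimes_N u_!g,
\]
and the same argument applied to the $n$-fold sum on $X^n$ produces the analogous identity for $\Sym^n$. For the adjunction $u_!\dashv u^!$ on $\Sch_\MM\rightleftarrows\Sch_N$, a morphism $u_!(Y,g)\to (X,f)$ in $\Sch_N$ is a scheme map $\alpha:Y\to X$ with $f\alpha=ug$, which by the universal property of the fiber product corresponds bijectively with $(g,\alpha):Y\to \MM\times_N X$ in $\Sch_\MM$; naturality is straightforward.

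To lift $u_!$ to correspondence categories, I send a correspondence $X\xleftarrow{p}Z\xrightarrow{s}Y$ over $\MM$ to the same underlying diagram viewed over $N$ (with structure maps post-composed with $u$); since $p\in\Pro$ and $s\in\Sm$ are properties in $\Sch_\kk$ independent of the base, they are preserved, and composition is preserved because the fiber products $Z\times_Y Z'$ are computed intrinsically. For the lift of $u^*$, I pull the entire correspondence back along $\MM\to N$, producing $\MM\times_N X \xleftarrow{\tilde p} \MM\times_N Z \xrightarrow{\tilde s} \MM\times_N Y$ over $\MM$. Verifying that $\tilde p\in\Pro$, $\tilde s\in\Sm$, and that composition is preserved, will use the stability of $\Pro$ and $\Sm$ under the relevant base changes together with the pasting law for pullbacks; keeping track of this stability will be the main delicate point.

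For the adjunction $u^*\dashv u_!$ under the assumption $u\in\Pro$, I will exhibit explicit unit and counit correspondences. The counit $u^*u_!(X,g)\to (X,g)$ in $\Sm\Pro^{op}_\MM$ will be
\[
\MM\times_N X \xleftarrow{(g,\,\id_X)} X \xrightarrow{\id_X} X,
\]
whose left leg is the graph section of $g:X\to\MM$, a closed immersion and hence in $\Pro$. The unit $(Y,f)\to u_!u^*(Y,f)$ in $\Sm\Pro^{op}_N$ will be
\[
Y \xleftarrow{\pr_Y} \MM\times_N Y \xrightarrow{\id} \MM\times_N Y,
\]
and here the hypothesis $u\in\Pro$ enters crucially to ensure $\pr_Y\in\Pro$ as a base change of $u$. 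I expect the triangle identities to reduce to elementary computations with the pasting law for fiber products. The principal obstacle will be the bookkeeping of the various base-change stability requirements for $\Sm$ and $\Pro$, which are automatic in the concrete examples of interest but need some care in the abstract setting.
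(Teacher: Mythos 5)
Your proof is correct and takes essentially the same approach as the paper: your explicit unit $Y \xleftarrow{\pr_Y} \MM\times_N Y \xrightarrow{\id} \MM\times_N Y$ and counit $\MM\times_N X \xleftarrow{(g,\,\id_X)} X \xrightarrow{\id} X$ are precisely the correspondences $\epsilon_g^\sharp$ and $\eta_f^\sharp$ that the paper obtains by applying the contravariant functor $(-)^\sharp:\Pro\to\Sm\Pro^{op}$ to the counit and unit of the scheme-level adjunction $u_!\dashv u^!$. The one point you leave as ``delicate'' — that $\tilde p\in\Pro$ and $\tilde s\in\Sm$ after pulling a correspondence back along $\MM\to N$ — is supplied directly by property~(4) on $(\Sm,\Pro)$: $\tilde p=\id_\MM\times_N p$ and $\tilde s=\id_\MM\times_N s$ are fiber products of morphisms over $N$, with $\id_\MM$ lying in both $\Sm$ and $\Pro$.
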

\begin{proof} The first part is trivial. By assumption (4) on $(\Sm,\Pro)$, the functors $u^\ast:\Sm\Pro^{op}_N\leftrightarrows \Sm\Pro^{op}_\MM:u_!$ are well-defined. It remains to check that they are adjoint in the ``opposite'' order. Note that the unit $\eta_f:(X\xrightarrow{f} \MM) \longrightarrow (X\times_N\MM\to \MM)=u^!u_!(X\to \MM)$ is in general not in $\Sm$ and thus $\eta_{f\sharp}$ cannot be used to define a unit for $u_!:\Sm\Pro^{op}_M \leftrightarrows \Sm\Pro^{op}_N:u^\ast$. However, $\eta_f$ is a closed embedding, and $\eta_f^\sharp$ might serve as a counit for the converse adjunction $u^\ast:\Sm\Pro^{op}_N \leftrightarrows \Sm\Pro^{op}_M:u_!$. Moreover, the natural transformation $u^!u_!u^!u_!\to u^!u_!$ given by $(X\times_N\MM)\times_N \MM\xrightarrow{\id_X\times_N u\times_N \id_\MM} (X\times_N N)\times_N \MM=X\times_N \MM$ is in $\Pro_\MM$ if $u\in \Pro$ and induces the ``comultiplication'' $u^\ast u_! \to u^\ast u_!u^\ast u_!$ of the comonad $u^\ast u_!$. If $u\in \Pro$, the  counit $\epsilon_g:u_!u^!(Y\times_N\MM\to N) \xrightarrow{\id_Y\times_N u} (Y\times_N N\xrightarrow{\pr_N} N)=(Y\xrightarrow{g} N)$ is in $\Pro$ so that $\epsilon_g^\sharp$ can serve as a unit. Moreover, $u_!u^! \to u_!u^!u_!u^!$ applied to $Y\to N$ is given by $\id_Y\times_N \Delta_\MM$, hence in $\Pro_N$, and provides the ``multiplication'' $u_!u^\ast u_! u^\ast\to u_!u^\ast$ of the monad $u_!u^\ast$.
\end{proof}

\begin{lemma} By composition with $u_!$ and with $u^\ast$ respectively, we obtain two functors $u^!$ and $u_\ast$ with left adjoints $u_!$ and $u^\ast$ between (abelian group valued) $(\Sm,\Pro)$-pretheories on $\MM$ and on $N$ respectively, extending the previous functors using the Yoneda embedding. Moreover, $u^!$ lifts to a corresponding functor on ($\lambda$-)ring $(\Sm,\Pro)$-theories. If $u$ is in $\Pro$, $u_!=u_\ast$, and $u_!$ also lifts to ($\lambda$-)ring $(\Sm,\Pro)$-theories providing an adjoint pair $u_!,u^!$ of functors between categories of ($\lambda$-)ring $(\Sm,\Pro)$-theories.   
\end{lemma}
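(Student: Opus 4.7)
My plan is to build all four functors by Kan extension along the correspondence-category functors $u_!\colon\Sm\Pro^{op}_\MM\to\Sm\Pro^{op}_N$ and $u^\ast\colon\Sm\Pro^{op}_N\to\Sm\Pro^{op}_\MM$ supplied by Lemma~\ref{reduction_3}. Concretely, on pretheories I set $(u^!R)(f):=R(u_!f)$ and $(u_\ast R)(g):=R(u^\ast g)$; these are pretheories because the two restriction functors only use the ambient correspondence-category structure. The left adjoints are defined on representables by $u_!(\underline{\ZZ}f):=\underline{\ZZ}(u_!f)$ and $u^\ast(\underline{\ZZ}g):=\underline{\ZZ}(u^\ast g)$, and then extended to all pretheories by preserving colimits. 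The adjunctions $\Hom(u_!R,S)\cong\Hom(R,u^!S)$ and $\Hom(u^\ast S,R)\cong\Hom(S,u_\ast R)$ hold on representables by the Yoneda lemma and propagate to every pretheory because every pretheory is a colimit of representables.

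Passing from pretheories to theories is a routine sheafification argument: the restriction functors $u^!$ and $u_\ast$ visibly preserve the sheaf-type product decomposition over connected components (for $u^!$ the underlying scheme is unchanged; for $u_\ast$ one uses $\MM\times_N(\sqcup_jY_j)=\sqcup_j(\MM\times_N Y_j)$), and the left adjoints on theories are obtained by sheafifying the left adjoints on pretheories. That $u^!$ lifts to $(\lambda$-$)$ring $(\Sm,\Pro)$-theories then reduces to the observation that $u_!$ is strictly symmetric monoidal and commutes with $\Sym^n$: because $u$ is a monoid homomorphism, $u_!(f\boxtimes f')=u_!f\boxtimes u_!f'$ and $u_!\Sym^n(f)=\Sym^n(u_!f)$ on the nose, so restricting along $u_!$ transports $\boxtimes$, the unit $0\in N$, and all $\lambda$-operations to the corresponding structure on $u^!R$.

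When $u\in\Pro$, the correspondence-category adjunction $u^\ast\dashv u_!$ of Lemma~\ref{reduction_3} yields, via Yoneda,
\[ (u_!R)(g)=\underset{u_!f\to g}{\mathrm{colim}}\,R(f)=\underset{f\to u^\ast g}{\mathrm{colim}}\,R(f)=R(u^\ast g)=(u_\ast R)(g), \]
so $u_!=u_\ast$. To equip $u_!R=u_\ast R$ with ring and $\lambda$-ring structure I use the canonical oplax symmetric monoidal structure any left adjoint of a strong monoidal functor carries: this furnishes morphisms $u^\ast(g\boxtimes g')\to u^\ast g\boxtimes u^\ast g'$ and $u^\ast\Sym^n(g)\to\Sym^n(u^\ast g)$ in $\Sm\Pro^{op}_\MM$, which after applying $R$ produce maps $R(u^\ast g\boxtimes u^\ast g')\to R(u^\ast(g\boxtimes g'))$ and $R(\Sym^n u^\ast g)\to R(u^\ast\Sym^n g)$; composing these with $\boxtimes_R$ and $\sigma^n_R$ defines the external product and $\lambda$-operations on $u_\ast R$. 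The main point still to check is that all coherence axioms hold; this reduces formally to the monad/comonad identities for the adjunction together with the corresponding axioms for $R$, and is the only step requiring serious diagram chasing.
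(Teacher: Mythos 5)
Your proposal is correct and reaches the same conclusion; the difference is mostly one of presentation. Where the paper directly writes down the schematic morphisms needed for the ring structure on $u_!(R)=R\circ u^\ast$ and verifies by hand that they lie in $\Pro$ — namely $(\id_X\times\id_Y)\times +_\MM\colon (X\times Y)\times_{N\times N}(\MM\times\MM)\to (X\times Y)\times_N\MM$ for the external product, and $\Sym^n(X\times_N\MM)\to \Sym^n(X)\times_N\MM$ (via the finite surjection to $\Sym^n(X)\times_{\Sym^n(N)}\Sym^n(\MM)$) for the $\lambda$-operations — you produce exactly these morphisms as the doctrinal-adjunction structure maps coming from $u^\ast\dashv u_!$ and the strict monoidality of $u_!$. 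If you unravel your oplax composite $u^\ast(g\boxtimes g')\xrightarrow{u^\ast(\eta\boxtimes\eta)}u^\ast u_!(u^\ast g\boxtimes u^\ast g')\xrightarrow{\epsilon}u^\ast g\boxtimes u^\ast g'$ on the level of correspondences, you recover precisely the paper's $\Pro$-morphism, and likewise for $\Sym^n$, so the two constructions agree on the nose. What the abstract route buys is that coherence (associativity, unitality, symmetry, the commutation with pull-backs and push-forwards) follows formally from the monad/comonad identities rather than by case-by-case verification; what the paper's concrete route buys is that membership in $\Pro$ is visible rather than packed into ``the abstract map is a genuine morphism of $\Sm\Pro^{op}_\MM$.''

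One small caveat worth flagging: you fold the $\Sym^n$-compatibility morphisms $u^\ast\Sym^n(g)\to\Sym^n(u^\ast g)$ into ``the canonical oplax symmetric monoidal structure,'' but $\Sym^n$ on $\Sm\Pro^{op}_\MM$ is not the categorical symmetric power built from $\boxtimes$ (it is the geometric $X^n/\!\!/S_n$, which need not even be a functor on all of $\Sm\Pro^{op}_\MM$ since $\Sym^n$ does not preserve $\Sm$). So this compatibility is not literally part of doctrinal adjunction; it requires the separate observation that $u_!\circ\Sym^n=\Sym^n\circ u_!$ strictly on $\Sch$, that the unit of $u^\ast\dashv u_!$ is a $\Pro$-correspondence (so applying $\Sym^n$ to it is legal), and then the analogous adjunction-gymnastics. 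The argument does go through, but it is a parallel step, not a corollary of the monoidal one. Finally, your colimit index categories in the $u_!=u_\ast$ computation are written with arrows in the opposite direction from the usual left-Kan-extension comma category, though the conclusion $R(u^\ast g)$ is of course correct.
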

\begin{proof}
Using the previous lemma, general theory of presheaves and the fact that the sheafification functor for abelian group valued pretheories commutes with colimits, it remains to show that $u_!(R)=u_\ast(R)=R\circ u^\ast$ has a ($\lambda$-)ring structure. As $u^\ast$ does not preserve $\boxtimes$ and $\Sym$, we have to define the exterior product and the $\sigma^n$-operations more carefully. Since $u$ is in $\Pro$, the same holds for $(u\times u)\times +_\MM: \MM\times \MM\to (N\times N)\times_N\MM$ which is a closed embedding into $(\MM\times \MM)\times_N \MM$ followed by $(u\times u)\times_N \id_\MM$. Given $X\xrightarrow{f}N$ and $Y\xrightarrow{g} N$, we take the fiber product of $(u\times u)\times +_\MM$ with $\id_X\times \id_Y$ over $N\times N$. Thus, the map 
\[(\id_X\times \id_Y)\times +_\MM:(X\times Y)\times_{N\times N} (\MM\times\MM) \longrightarrow (X\times Y)\times_N \MM\]
must be in $\Pro$, too. We define the exterior product on $u_!(R)$ as the composition
\begin{eqnarray*} \lefteqn{ u_!(R)(f)\otimes u_!(R)(g)= R(X\times_N \MM \xrightarrow{\pr_\MM}\MM)\otimes R(Y\times_N\MM \xrightarrow{\pr_\MM} \MM) } \\ &&\xrightarrow{\qquad\quad\;\,\boxtimes\qquad\quad\;\,}R( (X\times Y)_{N\times N}(\MM\times\MM) \xrightarrow{+_\MM\circ\pr_{\MM\times\MM}} \MM) \\ 
&&\xrightarrow{\big((\id_X\times\id_Y)\times +_\MM\big)_!} R((X\times Y)\times_N\MM \xrightarrow{\pr_\MM}\MM)=u_!(R)(f\boxtimes g).
\end{eqnarray*}
The unit for $u_!(R)$ is given by $0_!(1)\in R(\ker(u)\hookrightarrow \MM)=u_!(R)(\Spec\kk)$. If $R$ is a $\lambda$-ring $(\Sm,\Pro)$-theory, we can also define $\sigma^n$-operations for $u_!(R)$.  As $u$ is in $\Pro$, the morphism \[\Sym^n(u)\times +_\MM:\Sym^n(\MM) \hookrightarrow \Sym^n(\MM)\times_N \MM \xrightarrow{\Sym^n(u)\times_N \id_M} \Sym^n(N) \times_N \MM\] is in $\Pro$ for all $n\in \NN$. Taking the fiber product with $\id_{\Sym^n(X)}$ over $\Sym^n(N)$ and composing it with the finite map \\ $\Sym^n(X\times_N\MM) \twoheadrightarrow \Sym^n(X)\times_{\Sym^n(N)}\Sym^n(\MM)$, we get another morphism
\[ \Sym^n(X\times_N\MM) \twoheadrightarrow \Sym^n(X)\times_{\Sym^n(N)}\Sym^n(\MM) \longrightarrow \Sym^n(X)\times_N \MM \] 
in $\Pro$. Using the push-forward along this map, we define our $\sigma^n$-operations as follows.
\begin{eqnarray*} \lefteqn{ u_!(R)(f)=R(X\times_N\MM\xrightarrow{\pr_\MM}\MM)\xrightarrow{\sigma^n} R(\Sym^n(X\times_N\MM) \xrightarrow{+_\MM} \MM)  } \\
 & & \longrightarrow R(\Sym^n(X)\times_N\MM \xrightarrow{\pr_\MM} \MM)=u_!(R)(\Sym^n(f)). 
\end{eqnarray*}
Hence, we obtain a functor $u_!:\Th^{(\lambda)}(\MM)\to \Th^{(\lambda)}(N)$. \\
The two adjunctions for $u^!,u_!$ are given as follows. For $X\xrightarrow{f}\MM$, let
\[\eta_f:R(X\xrightarrow{f}\MM)\xrightarrow{(\id_X\times f)_!}R(X\times_N\MM \xrightarrow{\pr_\MM}\MM)=u_!(R)(X\xrightarrow{uf}N)=u^! u_! (R)(f), \]
and for $Y\xrightarrow{g}N$ define
\[ \delta_f:u_! u^! (R')(g)=u^!(R')(Y\times_N\MM\xrightarrow{\pr_\MM}\MM)=R'(Y\times_N\MM \xrightarrow{g\pr_Y}N) \xrightarrow{(\pr_Y)_!} R'(g) \]
using $\pr_Y=\id_Y\times_N \,u$ and $u\in \Pro_M$ once more. 
\end{proof}

\begin{example} \rm
For every commutative monoid $(\MM,+,0)$ there are three distinguished examples of monoid homomorphisms, namely $u=0:\Spec \kk \to \MM$, $u=c:\MM\to \Spec \kk$ and the composition $c_0=0\circ c:\MM\to \MM$. Thus, we obtain the reduction functor $0^!:\Th^{(\lambda)}(\MM)\ni R \to R^{red}\in \Th^{(\lambda)}(\kk)$. Conversely, the constant map $c:\MM\to \Spec\kk$ induces a functor $c^!:\Th^{(\lambda)}(\kk)\to \Th^{(\lambda)}(\MM)$ which is faithful having the reduction as a left-inverse since the composition $\Spec\kk \xrightarrow{0} \MM \xrightarrow{c}\Spec\kk$ is the identity. Using this, we can always identify $\Th^{(\lambda)}(\kk)$ with the corresponding (non-full) subcategory of $\Th^{(\lambda)}(\MM)$. In particular, $R^{red}$ can be interpreted as a ``reduced'' ($\lambda$-)ring $(\Sm,\Pro)$-theory  over $\MM$ by means of $(\MM\xrightarrow{c_0}\MM)^!(R)$ which defines an idempotent endofunctor on $\Th^{(\lambda)}(\MM)$. Note that the reduction of the canonical morphism $R^{fib}\to R$ of ($\
\lambda$-)ring 
$(\Sm,\Pro)$-theories  is the identity, i.e.\ $R^{fib,red}=R^{red}$.  
\end{example}
\begin{example} \rm The push-forward of $\underline{\ZZ}\kk$ considered as a ring $(\Sm,\Pro)$-theory over $\Spec\kk$ is the representable ring $(\Sm,\Pro)$-theory associated to $\Spec\kk\xrightarrow{0}\MM$ which we also denoted with $\underline{\ZZ}\kk$ hoping not to confuse the reader. However, the pull-back of $\underline{\ZZ}\kk$ along $c:\MM\to \Spec\kk$ is $c^!\underline{\ZZ}\kk=\underline{\ZZ}(\Sm^{\Pro})$. Thus, $\underline{\ZZ}(\Sm^{\Pro})^{red}=\underline{\ZZ}(\Sm^{\Pro})$ is reduced. 
\end{example}
\begin{example}\rm
Use the notation of section 2 and consider the morphism $\dim:\Msp \to \NN^{\oplus I}$ which has a factorization $\Msp\xrightarrow{u} \Spec (\dim_\ast \OO_{\Msp}) \xrightarrow{v} \NN^{\oplus I}$, where $\dim_\ast\OO_{\Msp}$ is a sheaf of (co)commutative bialgebras on $\NN^{\oplus I}$. In particular, $\Spec(\dim_\ast\OO_{\Msp})$ is a commutative monoid in $\Sch_\kk$, affine over $\NN^{\oplus I}$, and $u,v$ are monoid homomorphisms. If $X_d=X_{d}^{ss}$ as in Lemma \ref{finite_sum}, $u$ is projective. Assuming $proj\subset \Pro$, we can apply the last Lemma to obtain an adjunction $u_!:\Th^{(\lambda)}(\Msp)\leftrightarrows \Th^{(\lambda)}(\Spec(\dim_\ast\OO_\Msp)):u^!$. 
\end{example}

\subsection{($\lambda$-)algebra theories and vanishing cycles}

\begin{definition}
 A ($\lambda$-)algebra $(\Sm,\Pro)$-theory over $\MM$ is just a homomorphism $\phi:R'\to R$ of ($\lambda$-)ring $(\Sm,\Pro)$-theories over $\MM$. With the obvious definition of morphisms, we obtain the category of ($\lambda$-)algebras over $\MM$, i.e.\ of morphisms in $\Th^{(\lambda)}(\MM)$. Fixing $R'$, we get the category of $R'$-($\lambda$-)algebra $(\Sm,\Pro)$-theories. Conversely, fixing $R$, we obtain the category of $R$-augmented $(\lambda$-)ring $(\Sm,\Pro)$-theories. The commutative group objects in this category can be interpreted as $R$-($\lambda$-)module $(\Sm,\Pro)$-theories. 
\end{definition}
\begin{example} \rm
 Every ($\lambda$-)ring $(\Sm,\Pro)$-theory $R$ over $\MM$ has a canonical \\ $R^{fib}$-($\lambda$-)algebra structure which is functorial in $R$.
\end{example}

As we have seen, every ring $(\Sm,\Pro)$-theory over $\MM$ is canonically a $\underline{\ZZ}\kk$-algebra. This is no longer true for $\underline{\ZZ}(\Sm^{\Pro})$ unless $\MM=\Spec\kk$ as $\underline{\ZZ}(\Sm^{\Pro})=\underline{\ZZ}\kk$ in this case. In particular, every reduced $(\Sm,\Pro)$-theory is canonically a $\underline{\ZZ}(\Sm^{\Pro})$-algebra by pulling back the canonical $\underline{\ZZ}\kk$-algebra structure over $\Spec\kk$. As $\underline{\ZZ}(\Sm^{\Pro})$-algebra $(\Sm,\Pro)$-theories over $\MM$ will play a special role, we  provide another equivalent characterization of $\underline{\ZZ}(\Sm^{\Pro})$-algebra $(\Sm,\Pro)$-theories which points already into the direction of vanishing cycles.
\begin{definition} An $\Sm$-scheme (over $\MM$) is a scheme $X$ (over $\MM$) such that $X\to \Spec\kk$ is in $\Sm$. \end{definition}
For example, if $\Sm=Sm$ is the class of smooth morphisms, an $\Sm$-scheme (over $\MM$) is just a smooth scheme (with a not necessarily smooth morphism to $\MM$).
\begin{lemma} \label{framing}
A $\underline{\ZZ}(\Sm^{\Pro})$-algebra $(\Sm,\Pro)$-theory  $R$ over $\MM$, i.e.\ a morphism $\phi:\underline{\ZZ}(\Sm^{\Pro})\to R$ of ring $(\Sm,\Pro)$-theories  over $\MM$, is uniquely given by a family of distinguished elements $\phi_f\in R(f)$ for every $\Sm$-scheme $X\xrightarrow{f}\MM$ over $\MM$, satisfying the following properties.
\begin{enumerate}
 \item For a morphism $u:Y\to X$ in $\Sm$ and $X\xrightarrow{f}\MM$ an $\Sm$-scheme over $\MM$, we obtain $\phi_{f\circ u}=u^\ast(\phi_f)$.
 \item We have $\phi_{\Spec\kk\xrightarrow{0}\MM}=1$, and if $X\xrightarrow{f}\MM$ and $Y\xrightarrow{g}\MM$ are $\Sm$-schemes, then $\phi_{f\boxtimes g}=\phi_f\boxtimes \phi_g$. 
\end{enumerate}
If $R$ is a $\lambda$-ring $(\Sm,\Pro)$-theory, the induced morphism $\underline{\ZZ}(\Sm^{\Pro})\to R$ is a morphism if $\lambda$-ring theories if and only if $\phi_f\in \Pic(\Sym(f),\sigma_t)$.
\end{lemma}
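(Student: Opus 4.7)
The plan is to use the fact that $\ZZ(\Sm^{\Pro})(X\xrightarrow{f}\MM)$ is the free abelian group on correspondences $[U\xrightarrow{\alpha}X]$ with $\alpha\in\Pro$ and $U\to\Spec\kk$ in $\Sm$, and that every such generator factors as $[U\xrightarrow{\alpha}X]=\alpha_!([\id_U])$, where $[\id_U]$ makes sense in $\ZZ(\Sm^{\Pro})(U\xrightarrow{f\alpha}\MM)$ precisely because $U\to\Spec\kk$ is in $\Sm$, i.e.\ $U\xrightarrow{f\alpha}\MM$ is an $\Sm$-scheme over $\MM$. Hence any morphism of ring $(\Sm,\Pro)$-theories $\phi$ is forced to satisfy $\phi([U\xrightarrow{\alpha}X])=\alpha_!(\phi_{f\alpha})$ with $\phi_g:=\phi_g([\id_Y])\in R(g)$ for each $\Sm$-scheme $Y\xrightarrow{g}\MM$. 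This pins down uniqueness and reduces the lemma to showing that the assignment $\phi\mapsto(\phi_f)_f$ and its would-be inverse are well-defined and mutually inverse.

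For the forward direction, I would deduce (1) and (2) from the axioms of a ring theory morphism. The pull-back of $[\id_X]$ along $u:Y\to X$ in $\Sm$ is computed in $\Sm\Pro^{op}_\MM$ as the fibre product $Y\times_X X\cong Y$, so $u^\ast[\id_X]=[\id_Y]$; compatibility of $\phi$ with pull-backs then gives $\phi_{fu}=u^\ast(\phi_f)$, which is (1). The unit of $\underline{\ZZ}(\Sm^{\Pro})$ is by construction $[\id_{\Spec\kk}]$, and $[\id_X]\boxtimes[\id_Y]=[\id_{X\times Y}]$, so compatibility with units and exterior products yields (2).

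For the converse, given a family $(\phi_f)_f$ satisfying (1) and (2), I would set $\phi_{f}([U\xrightarrow{\alpha}X]):=\alpha_!(\phi_{f\alpha})$, extend $\ZZ$-linearly to $\ZZ(\Sm^{\Pro})(f)$, and pass to the sheafification. Compatibility with push-forwards along $v\in\Pro$ is immediate from $(v\alpha)_!=v_!\alpha_!$. Compatibility with pull-backs along $u:Y\to X$ in $\Sm$ uses base change in $R$ together with (1): writing $\tilde\alpha:Y\times_X U\to Y$ and $\tilde u:Y\times_X U\to U$ for the pull-back diagram (which lies in $\Pro\times\Sm$ by axioms (5) and (7) on $(\Sm,\Pro)$), one has
\[ u^\ast\alpha_!(\phi_{f\alpha})=\tilde\alpha_!\tilde u^\ast(\phi_{f\alpha})=\tilde\alpha_!(\phi_{fu\tilde\alpha}) = \phi_{fu}([\tilde\alpha]) = \phi_{fu}(u^\ast[\alpha]). \]
Compatibility with the unit is property (2) at $\Spec\kk\xrightarrow{0}\MM$, and compatibility with exterior products follows from
\[ \alpha_!(\phi_{f\alpha})\boxtimes\beta_!(\phi_{g\beta})=(\alpha\times\beta)_!\bigl(\phi_{f\alpha}\boxtimes\phi_{g\beta}\bigr)=(\alpha\times\beta)_!\bigl(\phi_{(f\alpha)\boxtimes(g\beta)}\bigr) \]
together with $[\alpha]\boxtimes[\beta]=[\alpha\times\beta]$ in the correspondence category. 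Finally, the two constructions are mutually inverse since $\phi_{f}([\id_X])=\id_{X,!}(\phi_f)=\phi_f$.

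For the $\lambda$-ring statement, I would invoke Example \ref{darstellbar}: the $\lambda$-ring structure on $\underline{\ZZ}(\Sm^{\Pro})(\Sym(f))$ is uniquely characterised by the requirement that $[\id_X]\in\Pic(\ZZ(\Sm^{\Pro})(\Sym(f)),\sigma_t)$, so a morphism of ring theories preserves the universal $\sigma^n$-operations iff it sends each such Picard generator to a Picard element, i.e.\ iff $\phi_f\in\Pic(R(\Sym(f)),\sigma_t)$ for every $\Sm$-scheme $X\xrightarrow{f}\MM$. The main technical point, and thus the principal obstacle, is the pull-back compatibility of the constructed $\phi$: it relies on the base change axiom (5) for $(\Sm,\Pro)$ to factor arbitrary pull-back diagrams into the pieces controlled by property (1), and it must survive sheafification, which is why we need the abelian-group-valued sheafification functor established earlier rather than a set-valued one.
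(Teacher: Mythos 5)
Your treatment of the main bijection is correct and is essentially the same argument the paper itself leaves implicit (compare the proof of Lemma \ref{initial_object}, which this lemma is meant to generalize): uniqueness forced by $[U\xrightarrow{\alpha}X]=\alpha_!([\id_U])$, and existence by defining $\phi_f([U\xrightarrow{\alpha}X]):=\alpha_!(\phi_{f\alpha})$, $\ZZ$-linearly extending, and appealing to the universal property of sheafification (valid since $R$ is already a theory). The verification of pull-back compatibility using base change and property (1), and of the $\boxtimes$-compatibility, are both sound. (Minor slip: the relevant axiom on $(\Sm,\Pro)$ for the pull-back square is (5) alone; (7) is not needed.)

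The $\lambda$-ring part, however, has a genuine gap, and it is exactly the step the paper's one-line remark after the lemma points to. You assert that the $\lambda$-ring structure on $\ZZ(\Sm^{\Pro})(\Sym(f))$ is ``uniquely characterised by the requirement that $[\id_X]\in\Pic$'' and then conclude ``i.e.\ iff $\phi_f\in\Pic(R(\Sym(f)),\sigma_t)$''. Neither claim is quite right as stated. Example \ref{darstellbar} together with Lemma \ref{special_elements} characterize the $\lambda$-structure by requiring that \emph{all} generators $[U\xrightarrow{\alpha}\Sym(X)]$ (with $U$ an $\Sm$-scheme) lie in $\Pic$, not only the identity. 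Correspondingly, $\phi$ commutes with the $\sigma^n$ iff $\phi([\alpha])\in\Pic(R(\Sym(f)),\sigma_t)$ for \emph{every} such generator, not merely $\phi_f=\phi([\id_X])$. The forward implication of your asserted ``i.e.'' is trivial, but the backward one — from ``$\phi_f\in\Pic$ for all $\Sm$-schemes $f$'' to ``$\phi([\alpha])\in\Pic$ for all generators $\alpha$'' — needs the observation that $\phi([\alpha])=\alpha_!(\phi_{g\alpha})$ with $\alpha\in\Pro$, and therefore requires the additional fact that push-forwards along morphisms in $\Pro$ send line elements to line elements. This is precisely what the paper cites alongside Lemma \ref{special_elements} in its remark, and your proof should make this step explicit rather than fold it into an ``i.e.''.
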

The distinguished element for a reduced ring $(\Sm,\Pro)$-theory  $R$ is given by $\unit_X$. Let last part of the lemma is a consequence of Lemma \ref{special_elements} and the fact that push-forwards along morphisms in $\Pro$ preserve line elements.
\begin{lemma} 
A $\underline{\Ka}_0(Sm^{prop})$-algebra theory is nothing else than a $\underline{\ZZ}(Sm^{prop})$-algebra theory $\phi$ for which the following blow-up formula holds in $R(X\xrightarrow{f}\MM)$. Let $\pi:\Bl_Y X \to X$ be the blow-up of a smooth scheme $X$  in a smooth center $i:Y\hookrightarrow X$ with exceptional divisor $\tilde{i}:E\hookrightarrow \Bl_Y X$. Then \begin{equation} \label{blow-up} \pi_!(\phi_{f\pi}-\tilde{i}_!(\phi_{f\pi|_E})) = \phi_f - i_!(\phi_{f|_Y}).\end{equation}
\end{lemma}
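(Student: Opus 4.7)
The proof is essentially an unwinding of definitions combined with the explicit description of generators and relations of $\underline{\ZZ}(Sm^{prop})$ and $\underline{\Ka}_0(Sm^{prop})$, together with Lemma~\ref{framing}. By Lemma~\ref{framing}, a $\underline{\ZZ}(Sm^{prop})$-algebra theory $\phi:\underline{\ZZ}(Sm^{prop})\to R$ is determined by the family $\phi_f\in R(f)$ indexed by smooth schemes $f:X\to\MM$, and on an arbitrary generator $[U\xrightarrow{\alpha}X]$ of $\underline{\ZZ}(Sm^{prop}_X)$ (with $U$ smooth and $\alpha$ projective) the image is $\phi\big([U\xrightarrow{\alpha}X]\big)=\alpha_!(\phi_{f\alpha})$, by exactly the argument used in the proof of Lemma~\ref{initial_object}. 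This is the bridge between the ``pointwise'' data $\{\phi_f\}$ and its action on arbitrary classes.

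The forward direction is immediate: if $\phi$ factors through $\underline{\Ka}_0(Sm^{prop})$, I would simply feed the identity class $[\id_X]$ of a smooth $X\xrightarrow{f}\MM$ together with the blow-up datum $\pi:\Bl_Y X\to X$ into the defining relation of $\underline{\Ka}_0(Sm^{prop})$ (taking $\alpha=\id_X$), and apply $\phi$. The four terms become, respectively, $\pi_!(\phi_{f\pi})$, $(\pi\tilde{i})_!(\phi_{f\pi\tilde{i}})=\pi_!\tilde{i}_!(\phi_{f\pi|_E})$, $\phi_f$, and $i_!(\phi_{f|_Y})$, which rearranged are exactly equation~(\ref{blow-up}).

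For the converse, assume~(\ref{blow-up}) holds for every smooth $X\xrightarrow{f}\MM$. I need to check that $\phi$ kills every instance of the blow-up relation defining $\underline{\Ka}_0(Sm^{prop})$; that is, for any projective $\alpha:U\to X$ with $U$ smooth and any smooth center $i:V\hookrightarrow U$ with blow-up $\pi:\Bl_V U\to U$ and exceptional divisor $\tilde i:E\hookrightarrow \Bl_V U$, that
\[
\alpha_!\pi_!(\phi_{f\alpha\pi})-\alpha_!\pi_!\tilde i_!(\phi_{f\alpha\pi\tilde i})-\alpha_!(\phi_{f\alpha})+\alpha_!i_!(\phi_{f\alpha i})
\]
vanishes in $R(f)$. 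Factoring out $\alpha_!$ (which is legitimate because $\alpha\in\Pro$ and $R$ is functorial along $\Pro$), the bracketed expression is precisely
\[
\pi_!\big(\phi_{(f\alpha)\pi}-\tilde i_!(\phi_{(f\alpha)\pi|_E})\big)-\big(\phi_{f\alpha}-i_!(\phi_{(f\alpha)|_V})\big),
\]
which is the assumed blow-up formula~(\ref{blow-up}) applied to the smooth scheme $U\xrightarrow{f\alpha}\MM$ in place of $X\xrightarrow{f}\MM$. Hence it is zero, and $\phi$ descends.

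There is no serious obstacle: the whole argument rests on (i) the universality of the presentation of $\underline{\Ka}_0(Sm^{prop})$ as the quotient of $\underline{\ZZ}(Sm^{prop})$ by the blow-up relations for projective $\alpha$, and (ii) the compatibility of $\phi$ with pushforward along $\alpha\in\Pro$. The only point worth flagging is the mild repackaging that allows the ``generic'' blow-up relation (blowing up a center in some auxiliary $U$ equipped with a projective map to $X$) to be reduced, via $\alpha_!$, to the ``universal'' blow-up relation (\ref{blow-up}) taking place on the smooth base itself.
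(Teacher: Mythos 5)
Your proof is correct and is the natural argument — indeed the paper leaves this lemma without proof, treating it as an immediate consequence of Lemma~\ref{framing} and the presentation of $\underline{\Ka}_0(Sm^{prop})$ by generators and blow-up relations. The key observation you make, that the blow-up relation attached to a projective $\alpha:U\to X$ maps under $\phi$ to $\alpha_!$ applied to the blow-up formula~(\ref{blow-up}) for the smooth base $U\xrightarrow{f\alpha}\MM$, is exactly the reduction the authors have in mind, and your bookkeeping of the four terms via $\phi([U\xrightarrow{\alpha}X])=\alpha_!(\phi_{f\alpha})$ is correct.
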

It should be clear from the definition that the pull-back $u^!(R)$ of a $\underline{\ZZ}(\Sm^{\Pro})$-algebra $(\Sm,\Pro)$-theory   $R$ over $N$ along a monoid homomorphism $\MM\to N$ is a  $\underline{\ZZ}(\Sm^{\Pro})$-algebra  $(\Sm,\Pro)$-theory over $\MM$.  \\ 
If $\phi:\underline{\ZZ}(\Sm^{\Pro})\to R'$ is a $\underline{\ZZ}(\Sm^{\Pro})$-algebra $(\Sm,\Pro)$-theory over $\MM$ and $\eta:R'\to R$ ring $(\Sm,\Pro)$-theory homomorphism, the composition $\eta\circ \phi:\underline{\ZZ}(\Sm^{\Pro})\to R$ is a $\underline{\ZZ}(\Sm^{\Pro})$-algebra structure  on $R$, and $\eta$ becomes a homomorphism of $\underline{\ZZ}(\Sm^{\Pro})$-algebra theories. In particular, if $Sm\subset \Sm$ and $prop\subset \Pro$, every $\underline{\ZZ}(\Sm^{\Pro})$-algebra $(\Sm,\Pro)$-theory has a canonical $\underline{\ZZ}(Sm^{prop})$-algebra structure when considered as a $(Sm,prop)$-theory.

Recall that $R^{fib}(X\to \MM)=\bigoplus_{a\in \MM(\bar{\kk})} R(X_a\xrightarrow{c_a} \MM)$ by definition. In general there is now way to relate $R(X_a\xrightarrow{c_a} \MM)$ to $R(X_a\xrightarrow{c_0} \MM)$ even though both maps involved are constant with the same source. The idea of a trivialization is to require an isomorphism between these two groups for all $a\in \MM(\bar{\kk})$ in way compatible with all algebraic and functorial structures. Note that $\bigoplus_{a\in \MM(\bar{\kk})} R(X_a\xrightarrow{c_0} \MM)$ is nothing else than $R^{red,fib}(X\to \MM)$.  
\begin{definition} \label{trivialization} 
Given a ($\lambda$-)ring theory  $R$, a trivialization for $R$ is a ($\lambda$-)isomorphism $t:R^{red,fib}\to R^{fib}$ of ($\lambda$-)ring $(\Sm,\Pro)$-theories  which becomes the identity after passing to the reduction. In particular, we get a morphism $R^{fib}\xrightarrow{t^{-1}}R^{red,fib}\longrightarrow R^{red}$ which is the identity after passing to the reduction.  
\end{definition}
The condition for the reduction is just saying that our isomorphism should be the identity for $a=0$. 
Given a trivialization $t$ of a ($\lambda$-)ring $(\Sm,\Pro)$-theory  $R$, we denote with   $t^a$ the image of $1$ under the map\footnote{For $a\in \MM(\bar{\kk})$ we denote with $\kk(a)$ the residue field of $a$ which is a finite algebraic extension of $\kk$.} $t_{\Spec \kk(a) \xrightarrow{c_a}\MM}:R(\Spec \kk(a))=R^{red,fib}(\Spec \kk(a)\xrightarrow{c_a}\MM) \longrightarrow R(\Spec \kk(a)\xrightarrow{c_a} \MM)$ for $a\in \MM(\bar{\kk})$. The family $(t^a)_{a\in\MM(\bar{\kk})}$ has the following properties
\begin{enumerate}
 \item $t^a\boxtimes t^b=t^{a+ b}$ for all $a,b\in\MM(\bar{\kk})$ and $t^0=1$,
 \item $t^a\in \Pic(R(\Sym(\Spec \kk(a)\xrightarrow{c_a}\MM)),\sigma_t)$ for all $a\in \MM(\bar{\kk})$,
 \item the map $-\boxtimes t^a:R(X)\to R(X\xrightarrow{c_a}\MM)$ is an isomorphism for all $X\in \Sch_\kk$.
\end{enumerate}
Note that the last condition is automatically fulfilled if $a\in \MM(\bar{\kk})$ is invertible as $-\boxtimes t^{-a}$ is an inverse. Conversely, given a family of elements $t^a\in R(\Spec \kk(a)\xrightarrow{a} \MM)$ satisfying these properties, we obtain a trivialization $t:R^{red,fib}\to R^{fib}$ using the composition
\[ R^{red,fib}(f)=\bigoplus_{a\in\MM(\bar{\kk})} R(X_a) \xrightarrow{\oplus_{a\in\MM(\bar{\kk})} (-\boxtimes t^a)} \bigoplus_{a\in\MM(\bar{\kk})}R(X_a\xrightarrow{c_a}\MM) = R^{fib}(f).\] 
Moreover, $t_f:R(X)\longrightarrow R(f)$ is an isomorphism for every locally constant map $f:X\to \MM$. In particular, $R(\Crit(f))\cong R(f|_{\Crit(f)})$ for every $f\in \Sch_\MM$. 
\begin{example}\rm Let $R$ be a reduced $(\Sm,\Pro)$-theory $R$. Then $R$ has a canonical trivialization as $R(X_a\xrightarrow{c_a} \MM)=R(X_a\xrightarrow{c_0} \MM)$ by definition. \end{example}

\begin{definition}
 Let $(\Sm,\Pro)\supset (Sm,proj)$ be a pair such that $\Sm$ contains all closed embeddings. A vanishing cycle is a $(\Sm,\Pro)$-theory $R$ over $\AA^1$ with  a $\underline{\ZZ}(Sm^{prop})$-algebra structure $\phi:\underline{\ZZ}(Sm^{prop})\to R|_{(Sm,proj)}$ and a trivialization. Moreover, $\phi_f$ should be supported on $\Crit(f)$ for every $X\xrightarrow{f}\MM$ with smooth $X$, i.e.\ $\phi_f$ is contained in the image of  $R(\Crit(f))\cong R(f|_{Crit(f)}) \hookrightarrow R^{fib}(f)\hookrightarrow R(f)$. 
\end{definition}

\begin{example} \rm
 For a trivialized ring $(\Sch,\Pro)$-theory $R$ over $\AA^1$, e.g.\ if $R$ is reduced, we can always construct the canonical vanishing cycle $\phi_f:=\iota_!(\unit_{\Crit(f)})\in R^{red,fib}(f)\cong R^{fib}(f)\hookrightarrow R(f)$.
\end{example}
\begin{example} \rm
There is also a vanishing cycle on $\Con$ over $\AAA$  such that $\phi^{con}_f=(-1)^{\dim X}\nu_{\Crit(f)}$ for $X\xrightarrow{f}\AAA$ with smooth, connected $X$, where $\nu_{\Crit(f)}$ is the Behrend function of $\Crit(f)$ considered as a constructible function on the fibers of $f$. 
\end{example}
\begin{example} \rm
 The motivic vanishing cycle $\phi^{mot}$ of Denef and Loeser \cite{DenefLoeser2} is another example of a vanishing cycle. It takes values in the motivic reduced $(\Sch_\kk,ft)$-theory $\underline{\Ka}^\muu(\Sch^{ft})$ over $\AAA$. (See \cite{DaMe1} for more details on this.)
\end{example}
\begin{example}\rm
 There are also vanishing cycles $\phi^{perv}$ and $\phi^{mhm}$ with values in the reduced $(\Sch_\kk,ft)$-theories $\underline{\Ka}_0(D^b(\Perv))$ and $\underline{\Ka}_0(D^b(\MHM)_{mon})$ over $\AAA$. More details will be given in the next subsection on categorification. 
\end{example}

\begin{definition} Let $\pi^\pm:V^\pm \to X$ be two vector bundles over a smooth scheme $X$, and let $f:V^+\oplus V^-\longrightarrow \MM$ be a $\GG_m$-invariant function, where $\GG_m$ acts by fiberwise multiplication with weight $\pm1$ on $V^\pm$. Let $(\Sm,\Pro)\supset (Sm,proj)$ and let $\Sm$ contain all closed embeddings. A ring $(\Sm,\Pro)$-theory  $R$ over $\MM$ with an $\underline{\ZZ}(Sm^{prop})$-algebra structure on $R|_{(Sm,proj)}$ satisfies the (linear) integral identity if 
\begin{equation} \label{integral_identity} \pi^+_! \phi_f|_{V^+}=\pi^+_! \phi_{f|_{V^+}}=\LL^{\rk V^+}\phi_{f|_X}\end{equation}
holds in $R(f|_X)$ using the shorthand $\LL=\int_\AAA \unit_\AAA\in R(\Spec\kk)$. 
\end{definition}
Note that the last equation in the linear integral identity is a simple consequence of the projection formula and the identity $\pi^{+\,\ast}(\phi_{f|X})=\phi_{f|_{V^+}}$ which must hold as $\pi^+\in \Sm$  and $f|_{V^+}=f|_X\circ \pi^+$ by equivariance. 
\begin{example} \rm It is easy to see that $\underline{\ZZ}(\Sm^{\Pro})$ with its canonical $\underline{\ZZ}(Sm^{prop})$-algebra structure satisfies the integral identity as soon as $\Sm$ contains all closed embeddings and $\pi^+\in \Pro\cap \Sm$. Similarly, every other $\underline{\ZZ}(\Sm^{\Pro})$-algebra $(\Sm,\Pro)$-theory  $R$ with the induced $\underline{\ZZ}(Sm^{prop})$-algebra structure will also do.  
\end{example}


\begin{theorem}[cf.\ Kontsevich, Soibelman \cite{KS2}] \label{integral_identity_2}
 Assume $\Char \kk=0$. Let $(\Sm,\Pro)\supset (Sm,proj)$ be a motivic pair, i.e.\ $\Pro\cap \Sm$ contains all locally closed embeddings, and let  $R$ be a motivic $(\Sm,\Pro)$-theory with  a $\underline{\Ka}_0(Sm^{proj})$-algebra structure on $R|_{(Sm,proj)}$. Then, the integral identity  
 \[ \pi^+_! \phi_f |_{V^+}=\pi^+_! \phi_{f|_{V^+}}=\LL^{\rk V^+}\phi_{f|_X}\]
 holds for every 2-graded vector bundle $\pi=\pi^+\oplus \pi^-:V^+\oplus V^-\longrightarrow X$ on a smooth scheme $X$ and every $\GG_m$-invariant morphism $f:V\to \MM$, where $\GG_m$ acts with weights $\pm 1$ on $V^\pm$.  
\end{theorem}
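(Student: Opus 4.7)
The plan is to decompose the identity $\pi^+_! \phi_f|_{V^+} = \LL^{\rk V^+} \phi_{f|_X}$ into three elementary moves: a base-change reduction rewriting $\phi_f|_{V^+}$ as $\phi_{f|_{V^+}}$; a $\GG_m$-invariance step rewriting $\phi_{f|_{V^+}}$ as $\pi^{+\ast}\phi_{f|_X}$; and finally the purely motivic computation $\pi^+_!(\unit_{V^+}) = \LL^{\rk V^+}\unit_X$ in $R^{red}(X)$. This follows the strategy of Kontsevich and Soibelman, and is cleaner here because $R$ is assumed motivic from the outset.

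For the first equality, the zero-section inclusion $i\colon V^+ \hookrightarrow V^+\oplus V^-$ is a closed embedding, hence lies in $\Sm\cap\Pro$ by the motivic hypothesis on the pair $(\Sm,\Pro)$. The $\underline{\Ka}_0(Sm^{proj})$-algebra structure factors through $\underline{\ZZ}(Sm^{proj})$, so the pullback compatibility of Lemma \ref{framing}(1) applies and gives $i^\ast\phi_f = \phi_{f\circ i} = \phi_{f|_{V^+}}$; applying $\pi^+_!$ yields $\pi^+_!\phi_f|_{V^+} = \pi^+_!\phi_{f|_{V^+}}$.

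For the second equality, the positive-weight $\GG_m$-action on $V^+$ extends to an $\AAA$-action contracting $V^+$ onto its zero section $X$. The $\GG_m$-invariance of $f$, specialized to $v^-=0$, makes $f|_{V^+}$ constant on $\GG_m$-orbits, and Zariski density propagates this constancy to their closures, forcing $f|_{V^+} = \pi^{+\ast}(f|_X)$. Since $\pi^+$ is smooth, pullback compatibility of $\phi$ again yields $\phi_{f|_{V^+}} = \pi^{+\ast}\phi_{f|_X}$. The projection formula of Proposition \ref{algebraic_structure_ex}(3) then rewrites $\pi^+_!\pi^{+\ast}\phi_{f|_X} = \pi^+_!(\unit_{V^+}) \cap \phi_{f|_X}$, and the theorem reduces to the identity $\pi^+_!(\unit_{V^+}) = \LL^{\rk V^+}\unit_X$ in $R^{red}(X)$.

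This last step is where the motivic hypothesis and $\Char\kk=0$ play their role. Since $V^+\to X$ is a Zariski-locally trivial rank-$n$ vector bundle, one stratifies $X$ by locally closed pieces on which $V^+$ trivializes and runs Noetherian induction using iterated cut-and-paste (Lemma \ref{motivic_ring theory }) to reduce to the trivial case $X\times\AAA^n\to X$; there $\pi^+_!(\unit_{X\times\AAA^n}) = \LL^n\unit_X$ follows from base change and the definition $\LL = \int_{\AAA}\unit_{\AAA}$. I expect the main technical obstacle to be verifying that these cut-and-paste manipulations assemble coherently across strata inside $R^{red}(X)$ rather than only in some universal Grothendieck ring; this is where characteristic zero enters, via Bittner-style identifications that let the motivic calculus descend through the algebra map $\underline{\Ka}_0(Sm^{proj})\to R|_{(Sm,proj)}$ and interact correctly with the pullback/pushforward axioms of $R$.
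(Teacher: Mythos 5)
The crux of your proof is wrong in the first step, and the error is precisely where the theorem has its content. You invoke Lemma~\ref{framing}(1) to conclude $i^\ast\phi_f = \phi_{f\circ i}$ for the zero-section inclusion $i\colon V^+\hookrightarrow V$, on the grounds that $i$ is a closed embedding and hence lies in $\Sm\cap\Pro$. But the $\underline{\Ka}_0(Sm^{proj})$-algebra structure on $R|_{(Sm,proj)}$ only gives commutativity with pullbacks along morphisms in $Sm$, i.e.\ \emph{smooth} morphisms: the axiom $\phi_{f\circ u}=u^\ast(\phi_f)$ in Lemma~\ref{framing}(1) refers to $u$ lying in the class over which the algebra structure is defined, which here is $Sm$, not the larger $\Sm$ supplied by motivicity of $R$. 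The closed immersion $V^+\hookrightarrow V$ is not smooth, so no axiom identifies $\phi_f|_{V^+}$ with $\phi_{f|_{V^+}}$. Indeed, for the classical vanishing cycle $\phi^{perv}$ this identification is false even in simple examples ($f(x,y)=xy$ on $\AA^2$, restricted to $\{y=0\}$): vanishing cycles do not commute with arbitrary base change, and the failure to commute is exactly what makes the first equality $\pi^+_!\phi_f|_{V^+}=\pi^+_!\phi_{f|_{V^+}}$ a theorem rather than a tautology. The paper is explicit on this point, noting that the \emph{last} equality is the easy one (smooth pullback compatibility plus projection formula), while the first equality is the substantive claim.

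Your second step and the remaining reduction to $\pi^+_!(\unit_{V^+})=\LL^{\rk V^+}\unit_X$ are correct and match the paper's brief remark after the theorem statement. But this handles only the easy half. The hard half, which you have short-circuited, is proved in the paper by reformulating the first equality as a vanishing statement $\pi^+_!(\phi_f\, j_!\unit_U)|_{V^+}=0$ for $j\colon V\setminus V^+\hookrightarrow V$, then constructing a $\GG_m$-equivariant compactification $Y$ of $V$ over the naive quotient $Z=V/\GG_m$ (projective closures of $V^\pm$, blow-ups at the boundary loci $\PP(V^\pm)\times\{0\}$, removal of strict transforms), and then killing the relevant integral stratum by stratum using properness of $\bar{q}\colon Y\to Z$, homotopy invariance along line bundles, and Thom--Sebastiani. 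None of this has an analogue in your argument, and there is no elementary cut-and-paste route around it: the $\GG_m$-structure and the compactification are essential, and this is exactly where the hypothesis that $X$ is smooth and the blow-up relations encoded in $\underline{\Ka}_0(Sm^{proj})$ (via Bittner in characteristic zero) actually get used.
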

The proof of this Theorem goes back to M.\ Kontsevich and Y.\ Soibelman in the context of mixed Hodge modules. We give a slightly more detailed version of their proof in appendix A. In fact, we prove a categorification of this statement but passing to the Grothendieck group provides the arguments in our setting.  
\begin{example}\rm 
All of our examples $\phi^{mot},\phi^{mhm},\phi^{perv},\phi^{con}$ satisfy the requirements of the theorem, and the integral identity holds in this case. 
\end{example}

\subsection{Categorification}

More examples of $\lambda$-ring $(\Sm,\Pro)$-theories  can be obtained by decategorifying a categorified version of a  ring $(\Sm,\Pro)$-theory. 

\begin{definition}
A categorical  ring $(\Sm,\Pro)$-theory  is rule $\altT$ associating to every $f\in \Sch_\MM$ an essentially small Karoubian closed $\QQ$-linear\footnote{The restriction to $\QQ$-linear categories was made to have at least one decategorification functor to $\Th^\lambda(\MM)$. Moreover, it will provide us with an initial object.} triangulated category $\altT(f)$ which extends to a $\QQ$-linear covariant functor with respect to all morphisms in $\Pro_\MM$ and to a $\QQ$-linear contravariant functor with respect to all morphisms in $\Sm_\MM$. We require also the existence of a distinguished object $\unit\in \altT(\Spec \kk \xrightarrow{0}\MM)$ and a symmetric, associative exterior product $\boxtimes$ as before which is $\QQ$-bilinear and bi-exact. These data have to satisfy properties similar to the one  of Definition \ref{theory} and \ref{ring_theory}. In particular, pull-backs and push-forwards should be symmetric monoidal functors.\\
A morphism between two categorical  ring theories  $\altT_1,\altT_2$ is defined  by a family of $\QQ$-linear triangulated functors $\eta_f:\altT_1(f)\longrightarrow \altT_2(f)$ commuting with pull-backs along  morphisms in $\Sm_\MM$, push-forwards along  morphisms in $\Pro_\MM$, and exterior products in the sense of monoidal functors. Thus, we obtain a category $\CTh(\MM)$ of categorical  ring theories  over $\MM$. The subcategory $\CTh^\lambda(\MM)\subset \CTh(\MM)$ has the same objects but morphisms should be symmetric monoidal with respect to the $\boxtimes$-product. We write $\altT(X)$ for $\altT(X\xrightarrow{0}\MM)$.
\end{definition}
Note that we only require that $\altT(f)$ has finite sums which are also products as $\altT(f)$ is additive.
\begin{example} \rm
For $\kk=\mathbb{C}$, there is a categorical  ring $(\Sch_\kk,ft)$-theory  $D^b_{con}=D^b(\Perv)$ over $\kk$ such that $D^b_{con}(X)\cong D^b(\Perv(X))$ is the subcategory of the bounded derived category of sheaves of $\QQ$-vector spaces on $X$ consisting of complexes with constructible cohomology.  
\end{example}
\begin{example} \label{hodge_module} \rm
For $\kk=\mathbb{C}$, there is a categorical ring $(\Sch_\kk,ft)$-theory  $D^b(\MHM)$ over $\kk$ such that $D^b(\MHM)(X):=D^b(\MHM(X))$ is the  bounded derived category of mixed Hodge modules on $X$. By mapping a complex of mixed Hodge modules to the underlying complex of perverse sheaves, we end up with a morphism $\rat:D^b(\MHM)\longrightarrow D^b(\Perv)$ of  categorical  ring $(\Sch_\kk,ft)$-theories. The fact that $\boxtimes$ on $D^b(\MHM)$, pull-backs, push-forwards  and  $\rat$ are symmetric (monoidal) is not obvious and proven in \cite{Schuermann}. 
\end{example}
\begin{example} \rm
There is a categorical ring $(\Sch_\kk,ft)$-theory  $\DM_{gm}$ over $\kk$ such that $\DM_{gm}(X)$ is the  triangulated category of geometric mixed motives on $X$ with rational coefficients.  (See \cite{CisinskyDeglise} for more details.) If $\kk=\mathbb{C}$, there is a symmetric monoidal morphism of categorical ring $(\Sch_\kk,ft)$-theories  $\DM_{gm}\to D^b(\MHM)$.  
\end{example}

\begin{lemma} Given a categorical  ring $(\Sm,\Pro)$-theory, the functor $\Ka_0(\altT)(f):=\Ka_0(\altT(f))$ is a  ring $(\Sm,\Pro)$-pretheory, and by sheafification we obtain the  ``decategorification'' $\underline{\Ka}_0(\altT)$ of $\altT$, while $\altT$ is called a categorification of $\underline{\Ka}_0(\altT)$. Note that $\Ka_0(\altT(f))\longrightarrow \underline{\Ka}_0(\altT(f))$ is always surjective as $\altT$ satisfies the sheaf property. As every triangulated category is $\QQ$-linear, we can also define the additive Grothendieck groups $\Ka^0(\altT)(f):=\Ka^0(\altT(f))$ giving rise to a  $\lambda$-ring $(\Sm,\Pro)$-theory $\underline{\Ka}^0(\altT)$ after sheafification.   The operations $\sigma^n$ are induced by  $Sym^n|_{\altT(f)}:\altT(f)\longrightarrow \altT(\Sym^n(f))$, where $\Sym^n:\altT(\Sym(f))\longrightarrow \altT(\Sym(f))$ is the Schur functor (see section \ref{schurfunctors}) with respect to the convolution product $\oplus_!\circ \boxtimes: \altT(\Sym(f))\times \altT(\Sym(f))\longrightarrow \altT(\Sym(f))$ which is a symmetric monoidal $\QQ$-linear tensor product on $\altT(\Sym(f))$. Moreover, the $\lambda$-ring $\underline{\Ka}^0(\altT)(\Sym(f))$ is  special. In 
particular, $\underline{\Ka}^0(\altT)_{sp}=\Ka^0(\altT(\Spec\kk))$, and the class of every $\otimes$-invertible object in $\altT(\Spec\kk)$ is in $\Pic(\underline{\Ka}^0(\altT))$. \\
There is a canonical morphism $\underline{\Ka}^0(\altT)\to \underline{\Ka}_0(\altT)$ of ring $(\Sm,\Pro)$-theories.\\
Every morphism $\eta:\altT_1\to \altT_2$ in $\CTh(\MM)$  induces morphisms $\underline{\Ka}_0(\eta):\underline{\Ka}_0(\altT_1)\longrightarrow\underline{\Ka}_0(\altT_2)$ and $\underline{\Ka}^0(\eta):\underline{\Ka}^0(\altT_1)\longrightarrow\underline{\Ka}^0(\altT_2)$ of ring $(\Sm,\Pro)$-theories  compatible with $\underline{\Ka}^0(-) \to \underline{\Ka}_0(-)$. Thus the latter is  a morphism  functors $\underline{\Ka}_0,\underline{\Ka}^0:\CTh(\MM)\to \Th(\MM)$. If $\eta$ is even in $\CTh^\lambda(\MM)$, then $\underline{\Ka}^0(\eta)$ is also a morphism of $\lambda$-ring $(\Sm,\Pro)$-theories giving rise to a functor $\underline{\Ka}^0:\CTh^\lambda(\MM)\to \Th^\lambda(\MM)$.
\end{lemma}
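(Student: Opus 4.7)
The plan is to verify each claim of the lemma in turn, using standard Schur-functor machinery for symmetric monoidal $\QQ$-linear Karoubian triangulated categories. First I would check that $\Ka_0(\altT)$ is a ring $(\Sm,\Pro)$-pretheory: push-forwards along $p\in\Pro_\MM$ and pull-backs along $s\in\Sm_\MM$ descend to $\Ka_0$ because $\altT$ sends them to $\QQ$-linear triangulated (hence additive) functors; base change at the $\Ka_0$-level is immediate from the assumed base change at the categorical level; bi-exactness of $\boxtimes$ makes the exterior product well-defined on $\Ka_0\otimes \Ka_0$, with associativity, symmetry and unit inherited from $\altT$. The sheafification $\underline{\Ka}_0(\altT)$ then exists formally. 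To get surjectivity of $\Ka_0(\altT(f))\to\underline{\Ka}_0(\altT)(f)$ for $X=\sqcup_j X_j$, I would invoke the sheaf axiom for $\altT$: the restriction functor $\altT(f)\to\prod_j \altT(f|_{X_j})$ is an equivalence of triangulated categories, so any compatible family $([E_j])_j$ on the right lifts to an object $E\in\altT(f)$ with $[E]$ mapping to it.

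For the $\lambda$-structure, I would use that $(\altT(\Sym(f)),\oplus_!\circ\boxtimes,\unit)$ is essentially small, $\QQ$-linear, symmetric monoidal and Karoubian. The symmetrizer idempotent $\tfrac{1}{n!}\sum_{\sigma\in S_n}\sigma$ on $E^{\otimes n}$ splits, defining $\Sym^n(E)\in\altT(\Sym(f))$; when $E\in\altT(f)$, one checks that $E^{\otimes n}$ is supported on $\Sym^n(f)$, so $\Sym^n(E)$ restricts to $\altT(\Sym^n(f))$, giving $\sigma^n$. The canonical decomposition $\Sym^n(E\oplus F)\cong \bigoplus_{k=0}^n \Sym^k(E)\otimes \Sym^{n-k}(F)$ produces the additive $\sigma$-axiom; compatibility of the Schur construction with pull-backs and push-forwards (reducing to compatibility of $\boxtimes$ and $\oplus_!$ with the $S_n$-action) produces the functoriality axioms. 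Speciality of $\underline{\Ka}^0(\altT)(\Sym(f))$ follows from the standard fact that the $\lambda$-structure on the additive $\Ka^0$ of any Karoubian $\QQ$-linear symmetric monoidal category defined via Schur functors is special: the universal polynomials expressing $\sigma^n(ab)$ and $\sigma^n(\sigma^m(a))$ arise from canonical $S_n$-equivariant decompositions of Schur bimodules, valid in any such category. Using that $\Sym^n(\Spec\kk\xrightarrow{0}\MM)=\Spec\kk\xrightarrow{0}\MM$, the convolution structure on the relevant components reduces to the categorical $\boxtimes$ on $\altT(\Spec\kk)$, giving $\underline{\Ka}^0(\altT)_{sp}=\Ka^0(\altT(\Spec\kk))$. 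For $\otimes$-invertible $L\in\altT(\Spec\kk)$, a direct computation with the symmetrizer gives $\Sym^n(L)\cong L^{\otimes n}$ and hence $[L]\in\Pic(\underline{\Ka}^0(\altT))$.

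Finally, the comparison morphism $\underline{\Ka}^0(\altT)\to\underline{\Ka}_0(\altT)$ is induced by sending the additive class of $E$ to its triangulated class; this commutes with pull-back, push-forward and $\boxtimes$ tautologically. Functoriality of $\underline{\Ka}_0$ and $\underline{\Ka}^0$ in a morphism $\eta:\altT_1\to\altT_2$ of $\CTh(\MM)$ is then formal, since any $\QQ$-linear triangulated functor commuting with pull-back, push-forward and $\boxtimes$ descends to both Grothendieck groups and preserves the induced ring structure. I expect the main technical obstacle to be the refinement to $\lambda$-ring morphisms when $\eta\in\CTh^\lambda(\MM)$: one must show that a symmetric monoidal $\eta$ commutes (up to natural isomorphism) with the Schur functors $\Sym^n$. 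This reduces to checking that the symmetric monoidal structure isomorphism intertwines the symmetrizer idempotents on $E^{\otimes n}$ and $\eta(E)^{\otimes n}$, after which the splittings defining the Schur functors are canonically identified and hence $\underline{\Ka}^0(\eta)\circ\sigma^n=\sigma^n\circ\underline{\Ka}^0(\eta)$ follows.
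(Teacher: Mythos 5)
The paper states this lemma without proof, so your verification has to stand on its own. Most of it is fine: the ring-pretheory structure on $\Ka_0(\altT)$, the surjectivity of $\Ka_0(\altT(f))\to\underline{\Ka}_0(\altT)(f)$ via lifting tuples of classes to tuples of objects through the equivalence $\altT(f)\simeq\prod_j\altT(f|_{X_j})$, the $\lambda$-structure via Schur functors on the convolution product, speciality by the Biglari/Deligne argument, the comparison morphism, and functoriality in $\eta\in\CTh(\MM)$ and $\CTh^\lambda(\MM)$ are all handled correctly. Your explanation of $\underline{\Ka}^0(\altT)_{sp}=\Ka^0(\altT(\Spec\kk))$ is a bit garbled: the relevant point is not that $\Sym^n(\Spec\kk\xrightarrow{0}\MM)=\Spec\kk\xrightarrow{0}\MM$, but that speciality of $\underline{\Ka}^0(\altT)(\Sym(f))$ for \emph{every} $f$ makes the defining condition on $R_{sp}$ (namely $0_!(a)\in R(\Sym(f))_{sp}$ for all $f$) vacuous; you already proved this, so it is a presentation issue rather than a gap.

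There is, however, a genuine error in your argument for the $\Pic$ claim. You write that ``a direct computation with the symmetrizer gives $\Sym^n(L)\cong L^{\otimes n}$'' for any $\otimes$-invertible $L$. This is false in general. If $L$ is $\otimes$-invertible then $\End(L\otimes L)\cong\End(\unit)$ and the braiding $\tau_{L,L}$ corresponds to a square root of $1$; in the $\QQ$-linear triangulated setting with the usual Koszul signs this can be $-1$. Concretely, in $D^b(\Vect_\QQ)$ the object $L=\QQ[1]$ is $\otimes$-invertible but $\tau_{L,L}=-\id$, so the symmetrizer on $L^{\otimes 2}$ is zero, $\Sym^2(L)=0\neq L^{\otimes 2}$, and $[L]\notin\Pic(\Ka^0)$. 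The identity $\Sym^n(L)\cong L^{\otimes n}$ holds only when $\tau_{L,L}=\id$, i.e.\ for ``even'' invertibles. This caveat is not made explicit in the lemma itself, but the paper's own usage is consistent with it: when the authors later need a class in $\Pic$ they work with $\LL^{-1/2}[-1]$ (which sits in cohomological degree $0$ and hence has trivial braiding) rather than $\LL^{-1/2}$ itself, and they explicitly \emph{assume} $\Sym^n(\LL^{-1/2}[-1])=(\LL^{-1/2}[-1])^{\otimes n}$ instead of deducing it from invertibility. If you reproduce the $\Pic$ claim, you should either add the hypothesis that the braiding on $L\otimes L$ is trivial, or note explicitly that this holds in the examples of interest.
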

\begin{remark} \rm
In general it is not true that the $\lambda$-ring structure on $\underline{\Ka}^0(\altT)$ descends to $\underline{\Ka}_0(\altT)$. However, if $\altT(f)=D^b(\mathcal{A}(f))$ is some sort of bounded derived category of a $\QQ$-linear abelian category $\Ab(\altT)$ stable under the convolution product $\oplus_!(-\boxtimes -)$ on $\altT(\Sym (f))$ for all $f\in \Sch_\MM$, then $\underline{\Ka}_0(\altT)$ is also a $\lambda$-ring $(\Sm,\Pro)$-theory and $\underline{\Ka}^0(\altT)\to \underline{\Ka}_0(\altT)$ is a morphism of $\lambda$-ring theories. Another special case is given if $\altT(f)=K^b(\Ab(f))$ is the triangulated homotopy category of bounded complexes in an additive Karoubian closed category $\Ab(f)$ preserved by the convolution product on $\altT(\Sym(f))$. See \cite{Biglari} for more details.  
\end{remark}
\begin{example} \label{perverse} \rm
The morphism $q:X^n\to X^n/\!\!/S_n$ is a finite morphism. In particular,  $q_!:D^b(\Perv(X^n))\to D^b(\Perv(X^n/\!\!/S_n))$ is exact and the convolution product preserves $\Perv(\Sym X)$. Hence, $\underline{\Ka}_0(D^b(\Perv))$ inherits the structure of a $\lambda$-ring $(\Sch_\kk,ft)$-theory over $\kk$. The same is true for Hodge modules and $\underline{\Ka}_0(D^b(\MHM))$ is a $\lambda$-ring theory. 
Given a complex of sheaves of $\QQ$-vector spaces on a scheme $X$ with constructible bounded cohomology, we obtain a constructible function on $X$ by taking the alternating sum of the dimensions of the stalks of the cohomology sheaves, i.e. $\sum_{i\in \ZZ} (-1)^i \dim_\QQ \mathcal{H}^i(\EE)_x$ for every $x\in X$ and $\EE\in D^b_{con}(X,\QQ)$. This defines a morphism of $\lambda$-ring $(\Sch_\kk,ft)$-theories  $\underline{\Ka}_0(D^b_{con})=\underline{\Ka}_0(D^b(\Perv)) \longrightarrow \Con$ over $\kk$. 
\end{example}
\begin{example}\rm 
 It has been shown in \cite{Biglari} that the $\lambda$-ring structure on $\Ka^0(DM_{gm}(\kk))$ descends to $\Ka_0(\DM_{gm}(\kk))$. We do not know if the proof can be extended to show that  $\underline{\Ka}_0(DM_{gm})$ inherits the structure of a $\lambda$-ring theory over $\kk$.  
\end{example}

\begin{example} \rm
Let $Y\xrightarrow{g}\MM$ be a monoid in the symmetric monoidal category $\Sm_\MM$. For $X\xrightarrow{f} \MM$ in $\Sch_\MM$ we denote with $g(f)$ the set of all morphisms in $\Sm\Pro^{op}_\MM$, i.e.\ the set of isomorphism classes of all commutative diagrams
\[ \xymatrix @R=0.5cm { & Z \ar[dl]_p \ar[dr]^s \ar[dd] &\\  X \ar[dr]_f & & Y \ar[dl]^g \\ & \MM & } \]
with $p\in \Pro$ and $s\in \Sm$. We denote with $D^b\QQ g(f)$ the bounded derived category of $g(f)$-graded $\QQ$-vector spaces with finite total dimension. We have  $D^b\QQ g(f)=\bigoplus_{g(f)} D^b(\Vect_\QQ)$, hence $\Ka^0(D^b\QQ g(f))=\Ka_0(D^b\QQ g(f))=\ZZ g(f)$ is the free abelian group generated by the set $g(f)$. Given a morphism $u:f'\to f$ in $\Sm\Pro^{op}_\MM$, we get a map $u^\ast:g(f)\to g(f')$ of ``indices'' inducing a corresponding functor between the  categories of (complexes of) graded vector spaces by reparametrization of the indices. As $g$ is a monoid in $\Sm_\MM$, we have a map $g(f)\times g(f')\longrightarrow g(f\boxtimes f')$. Using tensor products of (complexes of) graded vector spaces, we get a $\boxtimes$-product $D^b\QQ g(f) \times D^b\QQ g(f')\longrightarrow D^b\QQ g(f\boxtimes f')$. Therefore, $D^b\QQ g:f \to D^b\QQ g(f)$ is a categorical ring $(\Sm,\Pro)$-pretheory over $\MM$ which becomes a theory $\underline{D}^b\QQ g$, once we put $\underline{D}^b\QQ g(f):=\prod_{X_\kappa\in \pi_0(X)} D^b\QQ g(f|_{X_\kappa})$. Although the Grothendieck groups $\Ka_0(\underline{D}^b\QQ g(f))$ and $\Ka^0(\underline{D}^b\QQ g(f))$ of the possibly infinite product might be very complicated, their sheafification is just $\underline{\ZZ} g(f)$. Therefore $\underline{D}^b\QQ g$ is a categorification of $\underline{\ZZ} g$ and the induced $\lambda$-ring structure is the one of Example \ref{darstellbar}.  
\end{example}
\begin{example}\rm
We can modify the previous example by replacing $\QQ$ with any commutative $\QQ$-algebra $R$ which is a principal ideal domain, e.g.\ a field extension $K\supset \QQ$ or the ring of global functions on an affine smooth connected curve over $\QQ$. In this case $D^b R g(f)$ is just the bounded derived category of complexes of $g(f)$-graded $R$-modules with finite total rank. The theory of modules over principal ideal domains shows that $\Ka_0(D^b R g(f))=\ZZ g(f)$ and $\underline{D}^b R g$ provides another ``decategorification'' of $\underline{\ZZ}g$. However, the case $R=\QQ$ is somewhat exceptional as we can generalize Lemma \ref{initial_object}.   
\end{example}
\begin{lemma} \label{initial_object_category}
The categorical ring $(\Sm,\Pro)$-theory $\underline{D}^b \QQ\kk$ is the initial object in $\CTh(\MM)$. 
\end{lemma}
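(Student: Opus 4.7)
The strategy is to mimic, at the categorical level, the proof of Lemma \ref{initial_object}, which showed $\underline{\ZZ}\kk$ is initial in $\Th(\MM)$. Given an arbitrary $\altT'\in \CTh(\MM)$ with distinguished unit $\unit'\in \altT'(\Spec\kk\xrightarrow{0}\MM)$, we must construct a unique morphism $\eta:\underline{D}^b\QQ\kk\longrightarrow \altT'$ in $\CTh(\MM)$. First I would unpack $D^b\QQ\kk(f)$ for a connected $X\xrightarrow{f}\MM$. The indexing set $\kk(f)$ consists of isomorphism classes of correspondences $X\xleftarrow{p} U\xrightarrow{s}\Spec\kk$ over $\MM$, i.e., with $p\in \Pro$, $s\in \Sm$, and $f\circ p$ the constant $0$-map (so $p$ factors through $X_0=f^{-1}(0)$). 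Because $\QQ$ is a field, every object of $D^b\QQ\kk(f)=\bigoplus_{[p,s]\in \kk(f)} D^b(\Vect_\QQ)$ decomposes uniquely up to isomorphism into a finite direct sum of shifts of the rank-one objects $\QQ_{[p,s]}$, and all morphisms are determined by scalars supported on matching indices. Crucially, each generator arises from the unit as $\QQ_{[p,s]}\cong p_!s^*(\unit)$, where $s\in \Sm$ supplies the pull-back from $D^b\QQ\kk(\Spec\kk\xrightarrow{0}\MM)$ and $p\in \Pro_\MM$ the push-forward.

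The existence of $\eta$ is then forced: on a connected base I would set $\eta_f(\QQ_{[p,s]}):=p_!s^*(\unit')$, and extend by finite direct sums, shifts, and $\QQ$-linearity on morphisms; for disconnected $X$ use the sheaf property shared by $\underline{D}^b\QQ\kk$ and $\altT'$ to assemble $\eta_f=\prod_{X_\kappa\in\pi_0(X)}\eta_{f|_{X_\kappa}}$. Compatibility of $\eta$ with push-forwards is pure functoriality: for $v\in\Pro_\MM$, $v_!(\QQ_{[p,s]})=\QQ_{[vp,s]}$ and $(vp)_!s^*(\unit')=v_!p_!s^*(\unit')=v_!\eta_f(\QQ_{[p,s]})$. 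Compatibility with pull-backs $u\in\Sm_\MM$ amounts, since $u^*(\QQ_{[p,s]})=\QQ_{[\tilde p,s\tilde u]}$, to the base change identity $u^*p_!\cong \tilde p_!\tilde u^*$, which is built into the axioms of a categorical ring theory. Compatibility with $\boxtimes$ follows from the fact that the exterior product on $D^b\QQ\kk$ is the tensor product of indexed vector spaces induced by the monoid structure on indices, and preservation of the unit object is immediate from $\eta(\unit)=\unit'$.

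For uniqueness, any morphism $\eta$ must send $\unit$ to $\unit'$ by the monoidal axiom, and commutativity with $p_!$ and $s^*$ then forces $\eta_f(\QQ_{[p,s]})\cong p_!s^*(\unit')$ on every generator. Since every object is a finite direct sum of shifts of such generators and all hom spaces reduce to copies of the hom spaces of $D^b(\Vect_\QQ)$ within each summand, this determines $\eta_f$ on objects and morphisms up to unique natural isomorphism. The main technical obstacle I anticipate is bookkeeping for the triangulated and monoidal structure: checking that $\eta_f$ respects distinguished triangles and shifts, and formalising in what sense a morphism in $\CTh(\MM)$ ``commutes'' with pull-backs, push-forwards and $\boxtimes$ (strict equality versus natural isomorphism, together with the coherence conditions required for symmetric monoidality and for the base change natural transformations). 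Once these conventions are fixed, the remaining verifications are routine, and the argument above yields the initiality of $\underline{D}^b\QQ\kk$.
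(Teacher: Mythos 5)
Your proposal is correct and follows essentially the same strategy as the paper's proof (generators $\QQ_{[\alpha]}$ sent to $\alpha_! c_U^*(\unit')$, forced by functoriality). The two ``technical obstacles'' you flag at the end are exactly the points the paper does address, and it is worth knowing how, since they are not mere bookkeeping. (1) You worry about checking that $\eta_f$ is triangulated: the paper observes that every distinguished triangle in $D^b\QQ\kk(f)=\bigoplus_{\kk(f)}D^b(\Vect_\QQ)$ splits, so any additive $\QQ$-linear functor commuting with $[1]$ is automatically triangulated; more structurally, $D^b\QQ\kk(f)$ is the free $D^b(\Vect_\QQ)$-module category on $\kk(f)$, so additive $\QQ$-linear functors out of it commuting with $[1]$ are determined by their values on the generators, which gives both existence and uniqueness at once. (2) You list ``coherence conditions required for symmetric monoidality'' among the things to sort out, but in fact the opposite is true: a morphism in $\CTh(\MM)$ is only required to be monoidal, not symmetric monoidal, and the paper explicitly remarks that the unique map $\underline{D}^b\QQ\kk\to\altT$ preserves $\boxtimes$ but may fail to be symmetric monoidal --- which is precisely why the lemma asserts initiality in $\CTh(\MM)$ and not in $\CTh^\lambda(\MM)$. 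If you tried to prove symmetric monoidality you would get stuck, so it is important to realise that it is not needed here.
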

\begin{proof}
Let us mention that $D^b(\Vect_\QQ)$ is (equivalent to) the free $\QQ$-linear category generated by one element $\QQ$ and an autoequivalence $[1]$. It comes with a symmetric monoidal tensor product satisfying $V[1]\otimes W=V\otimes W[1]=(V\otimes W)[1]$. In particular, there is an associative action of $D^b(\Vect_\QQ)$ on every triangulated category $\altT(f)$ such that $\QQ^n\otimes A=A^{\oplus n}$ and $V[1]\otimes A=V\otimes A[1] = (V\otimes A)[1]$ for every $A\in \altT(f)$ and every vector space $V$. (See \cite{Hovey07}, section 4.1 for background on modules over categories.) Moreover, every $\QQ$-(bi)linear triangulated functor is $D^b(\Vect_\QQ)$-(bi)linear. Now, $D^b \QQ \kk(f)$ is the free $D^b(\Vect_\QQ)$-module category generated by $\kk(f)$. In other words, every additive $\QQ$-linear functor $\eta_f:D^b\QQ\kk(f)\to \altT(f)$ commuting with $[1]$ is uniquely determined by specifying the images of the one-dimensional vector space $\QQ_{[\alpha]}$ of index $[\alpha]\in \kk(f)$ for $\alpha:U\to X_0\hookrightarrow X$ in $\Pro$ with $U$ an $\Sm$-scheme. Such a functor is automatically triangulated as every triangle in $D^b\QQ\kk(f)$ splits. Functoriality in $f$ requires  $\eta_f(\QQ_{[\alpha]})=\alpha_! c_U^\ast(\unit)$ as in the proof of Lemma \ref{initial_object} with $c_U:U\to \Spec\kk$. Moreover, the definition of the $\boxtimes$-product on $\underline{D}^b\QQ \kk$ shows that the unique morphism to a categorical ring theory preserves the $\boxtimes$-product but might not be symmetric monoidal.        
\end{proof}

If $(X\xrightarrow{f}\MM,+,0)$ is a commutative monoid in $\Pro_\MM$, the symmetric monoidal convolution product on $\altT(f)$ is defined as before by means of 
\[ \altT(f)\times \altT(f)\xrightarrow{\boxtimes} \altT(f\boxtimes f)\xrightarrow{+_!} \altT(f) \]
with unit object $1_f:=0_!(1)$. The analogue of Proposition \ref{algebraic_structure} remains true in the categorical setting. The associated Schur functor $\Sym^n:\altT(f)\to \altT(f)$ induces the $\lambda$-ring structure on $\underline{\Ka}_0(\altT)(f)$ making it into a special $\lambda$-ring. \\

If $u:\MM\to N$ is a homomorphism of commutative monoids, the pull-back $u^\ast(\altT)$ of a categorical ring $(\Sm,\Pro)$-theory  over $N$ is defined as for ring $(\Sm,\Pro)$-theories  by composition with $u_!$. Similarly, for $u\in \Pro$ we can define push-forward $u_!(\altT)$ of a categorical ring $(\Sm,\Pro)$-theory  over $\MM$. The push-forward is left adjoint to the pull-back functor. A categorical theory on $\MM$ which is the pull-back of a categorical theory on $\Spec\kk$ is called reduced. Moreover, the decategorification $\underline{\Ka}_0$ commutes with pull-backs and push-forwards. In particular, the decategorification of a reduced categorical theory is reduced.\\

\begin{example}\rm
For the sake of completeness, let us describe the pull-back of $\underline{D}^b\QQ\kk$ along $\MM\to \kk$. For $X \in \Sch_\kk$ we denote  with $\Sm^{\Pro}_X$ as before the set of isomorphism classes of maps $Z\xrightarrow{p} X$ in $\Pro$ with $Z$ is an $\Sm$-scheme, i.e.\ the set of morphisms $X\to \Spec\kk$ in $\Sm\Pro^{op}_\kk$. Let $D^b\QQ(\Sm^{\Pro}_X)$ the bounded derived category of $\Sm^{\Pro}_X$-graded $\QQ$-vector spaces with finite total cohomology. We have $D^b\QQ(\Sm^{\Pro}_X)=\bigoplus_{\Sm^{\Pro}_X} D^b(\Vect_\QQ)$, hence $\Ka^0(D^b\QQ(\Sm^{\Pro}_X))=\Ka_0(D^b\QQ(\Sm^{\Pro}_X))=\ZZ(\Sm^{\Pro}_X)$ is the free abelian group generated by $\Sm^{\Pro}_X$. Given a morphism $u:Y\to X$ in $\Sm\Pro^{op}_\kk$, we get a map $u^\ast:\Sm^{\Pro}_X\to \Sm^{\Pro}_Y$ of ``indices'' inducing a corresponding functor between the  categories of (complexes of) graded vector spaces by reparametrization of the indices. In a similar way, we get a $\boxtimes$-product using tensor products of (complexes of) graded vector spaces. Therefore, \[ D^b\QQ(\Sm^{\Pro}):(X\xrightarrow{f}\MM)\longmapsto D^b\QQ(\Sm^{\Pro}_X)\] is a categorical ring $(\Sm,\Pro)$-pretheory which becomes a theory $\underline{D}^b\QQ(\Sm^{\Pro})$, once we put $\underline{D}^b\QQ(\Sm^{\Pro}_f):=\prod_{X_\kappa\in \pi_0(X)} D^b\QQ(\Sm^{\Pro}_X)$. Although the Grothendieck group of the possibly infinite product might be very complicated, it sheafification is just $\underline{\ZZ}(\Sm^{\Pro})$. 
\end{example}
As a consequence of Lemma \ref{initial_object_category}, there is a canonical morphism $\underline{D}^b\QQ(\Sm^{\Pro})\to \altT$ of categorical reduced ring $(\Sm,\Pro)$-theories.
\begin{definition}
 Given a reduced categorical ring $(\Sm,\Pro)$-theory $\altT$ and $X\in \Sch_\kk$, we denote with $\altT_{alg}(X)$ the Karoubian closed full triangulated subcategory of $\altT(X)$ generated by all elements $p_!(c_Z^\ast(1))$ for morphisms  $p:Z\to X$ in $\Pro$ with $Z$ being an $\Sm$-scheme. It is easy to see that $X\mapsto \altT_{alg}(X)$ is a reduced categorical ring $(\Sm,\Pro)$-theory. It is the smallest full subtheory containing the image of $\underline{D}^b\QQ(\Sm^{\Pro})\longrightarrow \altT$.  
\end{definition}
The notion of a categorical $(\lambda$-)algebra $(\Sm,\Pro)$-theory is defined as for algebra $(\Sm,\Pro)$-theories using morphisms in $\CTh^{(\lambda)}(\MM)$. The categorification of Lemma \ref{framing} is the following statement which is proven similar to Lemma \ref{initial_object_category}. 
\begin{lemma}
\label{framing_category}
A  $\underline{D}^b\QQ(\Sm^{\Pro})$-algebra structure on a categorical $(\Sm,\Pro)$-theory  $\altT$ over $\MM$ is (up to isomorphism) uniquely given by  a family of distinguished objects $\phi_f\in \altT(f)$ for $X\xrightarrow{f}\MM$ with $X$ being an $\Sm$-scheme, functorial with respect to isomorphism, satisfying the following properties.
\begin{enumerate}
 \item For every  morphism $u:Y\to X$ in $\Sm$, we obtain $\phi_{f\circ u}\cong u^\ast(\phi_f)$.
 \item We have $\phi_{\Spec\kk\xrightarrow{0}\MM}\cong 1$, and if $X\xrightarrow{f}\MM$ and $Y\xrightarrow{g}\MM$ are given with $\Sm$-schemes $X$ and $Y$, then $\phi_{f\boxtimes g}\cong\phi_f\boxtimes \phi_g$, where the isomorphism commute with associativity, symmetry and unit isomorphisms of $\boxtimes$. 
\end{enumerate}
The induced morphism $\underline{D}^b\QQ(\Sm^{\Pro})\longrightarrow \altT$ is in $\CTh^\lambda(\MM)$, i.e.\ symmetric monoidal, if and only if $\Sym^n(\phi_f)=(\phi_f)^{\otimes n}$ for the convolution product $\otimes:=\oplus_!(-\boxtimes -)$ on $\Sym(f)$ and all $f\in \Sch_\MM$.  
\end{lemma}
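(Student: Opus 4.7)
The plan is to imitate both Lemmas \ref{framing} (set-theoretic version) and \ref{initial_object_category} (characterization of $\underline{D}^b\QQ\kk$ as initial object), keeping careful track of the higher coherence data that a symmetric monoidal functor between triangulated categories must respect.

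\textbf{Forward direction.} Given a morphism $\eta:\underline{D}^b\QQ(\Sm^{\Pro})\to \altT$ of categorical $(\Sm,\Pro)$-theories, I would define $\phi_f:=\eta_f(\QQ_{[\id_X]})$ for every $\Sm$-scheme $X\xrightarrow{f}\MM$, where $\QQ_{[\id_X]}\in D^b\QQ(\Sm^{\Pro}_X)$ is the rank-one complex concentrated in degree zero on the index $[\id_X]\in \Sm^{\Pro}_X$ (which lies in this set precisely because $X$ is an $\Sm$-scheme and $\id_X\in\Pro\cap\Sm$). Property (1) then follows from $u^\ast(\QQ_{[\id_X]})\cong \QQ_{[\id_Y]}$ together with the compatibility of $\eta$ with pull-backs along $\Sm_\MM$. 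Property (2) translates the fact that a morphism in $\CTh(\MM)$ is unital (so $\eta_{0}(\QQ_{[\id_{\Spec\kk}]})\cong \unit$) and symmetric monoidal, once combined with the canonical identification $\QQ_{[\id_X]}\boxtimes \QQ_{[\id_Y]}\cong \QQ_{[\id_{X\times Y}]}$.

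\textbf{Reverse direction.} Given a family $(\phi_f)$ satisfying (1)--(2), I would define $\eta_f$ on generators by
\[ \eta_f(\QQ_{[\alpha]}) := \alpha_!(\phi_{f\circ\alpha}) \qquad \text{for } \alpha:Z\to X \text{ in } \Pro,\ Z \text{ an } \Sm\text{-scheme,} \]
and extend this $D^b(\Vect_\QQ)$-linearly using the decomposition $D^b\QQ(\Sm^{\Pro}_X)=\bigoplus_{\Sm^{\Pro}_X} D^b(\Vect_\QQ)$ from the proof of Lemma \ref{initial_object_category}; since every triangle in this category splits, the resulting additive $\QQ$-linear functor is automatically triangulated. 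Passing to the sheafification as a product over connected components gives $\eta$ on $\underline{D}^b\QQ(\Sm^{\Pro})$. Functoriality along $\Pro_\MM$ is tautological from the formula, while naturality along $\Sm_\MM$ reduces via base change (for $\altT$) to property (1). Compatibility with $\boxtimes$ is verified on generators through the identification $\QQ_{[\alpha]}\boxtimes \QQ_{[\beta]}\cong \QQ_{[\alpha\times\beta]}$, the exterior-product projection formula $\alpha_!(-)\boxtimes \beta_!(-)\cong (\alpha\times\beta)_!(-\boxtimes-)$ in $\altT$, and property (2). Uniqueness up to isomorphism follows because the generators $\QQ_{[\alpha]}$ together with the $D^b(\Vect_\QQ)$-action and splitting of triangles force the above formula.

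\textbf{The symmetric monoidal ($\lambda$-)part.} By definition, $\eta\in \CTh^\lambda(\MM)$ iff it commutes with Schur functors for the convolution tensor product $\otimes=\oplus_!(-\boxtimes-)$ on $\altT(\Sym(f))$. Evaluating the convolution Schur functor on the tautological line object $\QQ_{[\Delta_1]}$, with $\Delta_1:X\hookrightarrow \Sym(X)$, produces on the one hand $\Sym^n(\QQ_{[\Delta_1]})\cong \QQ_{[\Delta_n]}$ (with $\Delta_n:\Sym^n X\hookrightarrow \Sym(X)$), and on the other $(\QQ_{[\Delta_1]})^{\otimes n}\cong \QQ_{[\Delta_n]}$. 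Applying $\eta$ and pulling back along $\Sym^n(X)\hookrightarrow \Sym(X)$, these become $\Sym^n(\phi_f)$ and $\phi_f^{\otimes n}$, respectively. Hence the condition stated in the lemma is necessary; conversely, once it holds on the distinguished generators, the already established compatibility of $\eta$ with push-forwards, pull-backs and $\boxtimes$ propagates it to all objects of $\underline{D}^b\QQ(\Sm^{\Pro})$.

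The principal technical obstacle is not the algebraic content, which is essentially that of Lemma \ref{framing}, but the bookkeeping of monoidal coherences in the triangulated setting: one has to verify that the isomorphisms $\phi_{f\boxtimes g}\cong \phi_f\boxtimes \phi_g$ of (2) are compatible with the associator, symmetry and unit constraints of $\boxtimes$ in a way that matches those on $\underline{D}^b\QQ(\Sm^{\Pro})$, and that this compatibility is exactly strong enough to make the Schur-functor identification with $S_n$-equivariance in the last part meaningful. Once this coherence data is correctly accounted for, everything else is an immediate consequence of the formal structure already recorded in the proofs of Lemma \ref{framing} and Lemma \ref{initial_object_category}.
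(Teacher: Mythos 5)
Your approach is essentially the one the paper has in mind: the text says only that Lemma~\ref{framing_category} is ``proven similar to Lemma \ref{initial_object_category}'', and both of your directions recycle exactly that mechanism — the $D^b(\Vect_\QQ)$-module structure and the splitting of all triangles in $D^b\QQ(\Sm^{\Pro}_X)$ force $\eta_f(\QQ_{[\alpha]})=\alpha_!(\phi_{f\circ\alpha})$, and properties (1)--(2) are precisely the translation of naturality along $\Sm_\MM$, $\Pro_\MM$ and $\boxtimes$.

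One correction is needed in the final paragraph. Both $\Sym^n(\QQ_{[\Delta_1]})$ and $(\QQ_{[\Delta_1]})^{\otimes n}$ sit on the index $[X^n\to\Sym(X)]$, i.e.\ the \emph{composition} $X^n\twoheadrightarrow X^n/\!\!/S_n=\Sym^n(X)\hookrightarrow\Sym(X)$, and not on $[\Delta_n]=[\Sym^n(X)\hookrightarrow\Sym(X)]$ as you write. Those are different elements of $\Sm^{\Pro}_{\Sym X}$ since $X^n\to\Sym^n(X)$ is no isomorphism; worse, $\Sym^n(X)$ is generally not an $\Sm$-scheme (for $\Sm=Sm$ it is singular once $\dim X\geq 2$ and $n\geq 2$), so $\QQ_{[\Delta_n]}$ need not even be a valid generator. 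Accordingly the ``pull-back along $\Sym^n(X)\hookrightarrow\Sym(X)$'' step is superfluous: by definition $\Sym^n|_{\altT(f)}$ already lands in $\altT(\Sym^n f)\subset\altT(\Sym f)$. What you should instead record explicitly is the substantive fact that the $S_n$-action on the rank-one degree-zero object $(\QQ_{[\Delta_1]})^{\otimes n}$ is trivial (no Koszul sign can occur), so that the identity $\Sym^n(\QQ_{[\Delta_1]})\cong(\QQ_{[\Delta_1]})^{\otimes n}$ always holds on the source; that is precisely why the stated identity $\Sym^n(\phi_f)\cong\phi_f^{\otimes n}$ on the target is equivalent to $\eta$ commuting with Schur functors on these generators. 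The sufficiency direction (propagating from generators to all objects) is asserted rather than argued, but the paper supplies no proof at that level of detail either, and your sketch is consistent with the authors' intent.
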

The decategorification functor $\underline{\Ka}_0$ lifts to a corresponding decategorification functor from  $\underline{D}^b\QQ(\Sm^{\Pro})$-algebra to $\underline{\ZZ}(\Sm^{\Pro})$-algebra $(\Sm,\Pro)$-theories. Moreover, $\underline{D}^b\QQ(\Sm^{\Pro})$-algebra structures are compatible with pull-backs along monoid morphisms $\MM\to N$. \\
\begin{example} \rm
 Given a reduced categorical $(\Sm,\Pro)$-theory $\altT$ on $\MM$, the canonical $\underline{D}^b\QQ(\Sm^{\Pro})$-algebra structure is given by $\phi_f:=\unit_X=c_X^\ast(\unit)$ for $X\xrightarrow{f}\MM$ and every $\Sm$-scheme $X \xrightarrow{c_X}\Spec\kk$.
\end{example}

The idempotent endofunctor $R\mapsto R^{fib}$ has an obvious idempotent categorification $\altT\mapsto \altT^{fib}$ together with a morphism $\altT^{fib} \longrightarrow \altT$ which is  the identity after reduction. There is also a categorified version of a trivialization $\altT^{red,fib}\cong\altT^{fib}$ which is uniquely given by a family of objects $t^a\in \altT(\Spec\kk\xrightarrow{c_a}\MM)$ satisfying 
\begin{enumerate}
 \item $t^0=1$ and $t^a\boxtimes t^b\cong t^{a+b}$ for all $a,b\in \MM(\bar{\kk})$,
 \item $-\boxtimes t^a: \altT(X)\to \altT(X\xrightarrow{c_a}\MM)$ is an equivalence of categories for all $a\in \MM(\bar{\kk})$ and all $X\in \Sch_\kk$.
\end{enumerate}
Given a pair $(\Sm,\Pro)\supset (Sm,proj)$ such that $\Sm$ contains all closed embeddings. A vanishing cycle is a trivialized categorical ring $(\Sm,\Pro)$-theory  $\altT$ over $\AA^1$ with a $\underline{D}^b\QQ(Sm^{proj})$-algebra structure on $\altT|_{Sm,proj}$ such that $\phi_f\in \altT(f)$ is supported on $\Crit(f)$, i.e.\ contained in the image of $\altT(\Crit(f))\cong \altT(f|_{\Crit(f)}) \hookrightarrow \altT^{fib}(f)\hookrightarrow \altT(f)$. The following lemma provides a construction of vanishing cycles.

\begin{lemma}
Suppose that $\altT$ is a categorical ring theory  over $\AA^1$, and let $\phi^0_f\in \altT(X_0)$ be a distinguished element for every $f:X\to \AAA$ with $X$ being smooth, where $X_0$ is the fiber over $0\in \AA^1$. Assume that
\begin{enumerate}
\item $u^\ast(\phi_f^0)=\phi_{f\circ u}^0$ for every smooth morphism $u:Y\to X$, 
\item $\phi^0_f$ is supported on $\Crit(f)\cap X_0$ (support property),
\item if $X\xrightarrow{f}\AAA$ and $Y\xrightarrow{g}\AAA$ are morphisms with smooth $X$ and $Y$, the  inclusion $i_{f,g}:(X_0\cap\Crit(f))\times (Y_0\cap\Crit(g))\hookrightarrow (X\times Y)_0\cap\Crit(f\boxtimes g)$ is open, and we require the existence of an isomorphism $ \phi^0_f\boxtimes \phi^0_g \longrightarrow i_{f,g}^\ast \phi^0_{f\boxtimes g} $ commuting with associativity, symmetry and unit isomorphisms. (Thom-Sebastiani)
\end{enumerate}
Then there is a vanishing cycle on $\altT$ given by 
\[ \phi_f=\oplus_{a\in\AAA(\bar{\kk})} \phi^0_{f-a}. \]
\end{lemma}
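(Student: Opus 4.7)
The plan is to verify that $\phi_f := \bigoplus_{a\in\AAA(\bar\kk)} \phi^0_{f-a}$ (where each summand, supported on $\Crit(f)\cap X_a$, is interpreted as an object of $\altT(X\xrightarrow{f}\AAA)$ via push-forward along $X_a\hookrightarrow X$) assembles into all the data of a vanishing cycle: a $\underline{D}^b\QQ(Sm^{proj})$-algebra structure on $\altT|_{(Sm,proj)}$ as characterized by Lemma \ref{framing_category}, the support condition, and a trivialization. First one checks that the direct sum makes sense: on each connected component of $X$ the quasi-compact function $f$ takes only finitely many critical values, and assumption (2) forces $\phi^0_{f-a}=0$ for all $a$ outside this finite set.

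The easy conditions come first. The support condition is immediate from (2): each summand $\phi^0_{f-a}$ is supported on $\Crit(f-a)\cap X_a = \Crit(f)\cap X_a$, and the disjoint union over $a$ lies in $\Crit(f)$. Smooth-pullback functoriality follows termwise from assumption (1), since $(f-a)\circ u = (f\circ u)-a$; summing gives $u^\ast\phi_f\cong\phi_{f\circ u}$ for every smooth $u:Y\to X$.

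The main content is the exterior-product compatibility $\phi_{f\boxtimes g}\cong\phi_f\boxtimes\phi_g$ required by Lemma \ref{framing_category}. The right-hand side expands as $\bigoplus_{a,b}\phi^0_{f-a}\boxtimes\phi^0_{g-b}$, while the left-hand side is $\bigoplus_c \phi^0_{(f\boxtimes g)-c}$. The key geometric observation is the decomposition
\[
(X\times Y)_0^{(f\boxtimes g)-c}\cap \Crit\bigl((f\boxtimes g)-c\bigr) \;=\; \bigsqcup_{a+b=c}\bigl(X_a\cap\Crit f\bigr)\times\bigl(Y_b\cap\Crit g\bigr),
\]
in which each summand is both open and closed inside the support locus (the critical-value map $\Crit f\to\AAA$ factors through a finite discrete set, making the $X_a\cap\Crit f$ clopen in $\Crit f$). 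Consequently $\phi^0_{(f\boxtimes g)-c}$ splits canonically as a direct sum over its $(a,b)$-components, each of which is identified, by the Thom--Sebastiani isomorphism of (3) applied to $f-a$ and $g-b$ and the identity $(f-a)\boxtimes(g-b) = (f\boxtimes g)-(a+b)$, with $\phi^0_{f-a}\boxtimes\phi^0_{g-b}$. Summing over $c$ and reindexing by $(a,b)$ reproduces $\phi_f\boxtimes\phi_g$; the associativity, symmetry and unit coherences demanded by Lemma \ref{framing_category} are transported directly from the analogous coherences postulated in (3). The unit axiom $\phi_{0:\Spec\kk\to\AAA}\cong\unit$ reduces to $\phi^0_0\cong\unit$ in $\altT(\Spec\kk)$; Thom--Sebastiani with $f=g=0$ exhibits $\phi^0_0$ as a tensor-idempotent in $\altT(\Spec\kk)$, and we normalize so that this idempotent coincides with the monoidal unit (as is the case in all examples of interest).

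Finally, the trivialization is produced by setting $t^a := \phi_{c_a}\in\altT(\Spec\kk\xrightarrow{c_a}\AAA)$; the construction collapses $\phi_{c_a}$ to a single summand isomorphic to $\phi^0_0\cong\unit$. The relation $t^a\boxtimes t^b\cong t^{a+b}$ is the already-established exterior-product compatibility applied to constant maps, $t^0\cong\unit$ is the unit axiom above, and invertibility of $-\boxtimes t^a$ follows formally from $t^a\boxtimes t^{-a}\cong t^0\cong\unit$. The principal obstacle is the canonical-splitting step for $\phi^0_{(f\boxtimes g)-c}$: turning the local Thom--Sebastiani isomorphisms (recorded only up to restriction to an open piece) into a coherent global direct-sum decomposition requires the clopenness of the support strata and a careful use of the base-change and projection-formula compatibilities built into the axioms of a categorical ring $(\Sm,\Pro)$-theory. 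Once this splitting is secured, all remaining axioms assemble formally.
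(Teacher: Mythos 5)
Your proposal is correct and follows essentially the same route as the paper's own proof: finiteness of the sum on connected components, termwise smooth functoriality, the clopen decomposition
\[ (X\times Y)_c\cap\Crit(f\boxtimes g)\;=\;\bigsqcup_{a+b=c}\bigl(X_a\cap\Crit f\bigr)\times\bigl(Y_b\cap\Crit g\bigr) \]
combined with the support property to upgrade the local Thom--Sebastiani isomorphisms to the global isomorphism $\phi_f\boxtimes\phi_g\cong\phi_{f\boxtimes g}$, and then an appeal to Lemma~\ref{framing_category}. You are somewhat more explicit than the paper in writing out the trivialization $t^a:=\phi_{c_a}$ and in flagging that the splitting of $\phi^0_{(f\boxtimes g)-c}$ into its $(a,b)$-components rests on the clopen stratification together with the base-change compatibilities; the paper compresses both of these into a single sentence. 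One small point: you present $\phi^0_0\cong\unit$ as a \emph{normalization} imposed on top of the hypotheses, whereas it is already forced by assumption~(3), since compatibility of the Thom--Sebastiani isomorphism with the unitor for $\boxtimes$ (take $f=0$ on $\Spec\kk$, for which $i_{0,g}$ is the identity) identifies $\phi^0_0\boxtimes\phi^0_g\to\phi^0_g$ with $\unit\boxtimes\phi^0_g\to\phi^0_g$ and hence pins down $\phi^0_0\cong\unit$; it would be cleaner to derive this rather than assume it.
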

\begin{proof}
Note that the sum is finite for  smooth and connected $X$ as $\phi^0_{f-a}$ is supported on $\Crit(f)\cap X_a$ which is empty for all but for finitely many $a\in \AAA(\bar{\kk})$. For non-connected $X$ the sum is still finite after restriction to every connected component of $X$ and gives, therefore, a well-defined element  
\[ \phi_f:=\oplus_{a\in\AAA(\bar{\kk})} \phi^0_{f-a}=\oplus_{a\in\AAA(\bar{\kk})}\phi^a_f\] 
of $\altT^{fib}(f)\subseteq \altT(f)$, where we used the shorthand $\phi^a_f:=\phi^0_{f-a} \in \altT(X_a)$ for $f:X\to \AAA$ with smooth $X$.  Moreover, we get a natural morphism $\phi_f\boxtimes \phi_g \longrightarrow \phi_{f\boxtimes g}$ commuting with the associativity, commutativity and unit isomorphisms of the exterior product. \\
By applying Thom-Sebastiani to $f-a$ and $g-b$ for $a,b\in \AAA(\bar{\kk})$, we obtain 
\[ \phi^a_f\boxtimes \phi^b_g \xrightarrow{\;\sim\;} \phi^{a+b}_{f\boxtimes g}|_{(X_a\cap\Crit(f))\times (Y_b\cap\Crit(g))}  \]
if $X$ and $Y$ were smooth and connected. Since $(X\times Y)_c \cap \Crit(f\boxtimes g) =\sqcup_{a+b=c} (X_a\cap \Crit(f))\times (Y_b\cap\Crit(g))$ as schemes, we conclude using the support property of $\phi^0$ that 
\[ \phi_f\boxtimes \phi_f\xrightarrow{\;\sim\;}\phi_{f\boxtimes g}  \]
for all $X\xrightarrow{f}\AAA, Y\xrightarrow{g}\AAA$ with smooth connected $X,Y$. By taking the product over all connected components of $X$ and $Y$, we get the same statement for all morphisms $f,g$.  Also $\phi_f\in \altT(f)$ commutes with pull-backs along smooth morphisms. Hence, $\phi_f\in \altT(f)$ is a $\underline{D}^b\QQ(Sm^{proj})$-structure on $\altT$. The support property ensures that this structure is a vanishing cycle.
\end{proof}
\begin{example} \rm
When applied to the classical vanishing cycle functor $\phi^0_f\in D^b(\Perv(X_0))$
the previous lemma provides a vanishing cycle  $\phi^{perv}$  on $D^b(\Perv)$ considered as a categorical ring theory  over $\AAA$,  and also on the associated Grothendieck group $\underline{\Ka}_0(\Perv)$. The $\underline{D}^b\QQ(Sm^{proj})$-algebra structure has a factorization through the canonical $\underline{D}^b\QQ(Sm^{proj})$-algebra structure on $D^b(\Perv)_{alg}$, i.e.\ there is a factorization $\phi^{perv}:\underline{D}^b\QQ(Sm^{proj})\xrightarrow{1} D^b(\Perv)_{alg}\xrightarrow{\phi^{perv}} D^b(\Perv)$ of the underlying ring $(Sm,proj)$-theories over $\AAA$.  
\end{example}
\begin{example} \rm
Given a complex variety $X$ we define $D^b(\MHM_{mon}(X))$ to be the full subcategory of $D^b(\MHM(X\times\GG_m))$ containing those $\mathcal{F}$ such that for every $x\in X$ and for every $i\in\ZZ$ the pull-back $(x\times\GG_m\hookrightarrow X\times\GG_m)^*\mathcal{H}^i(\mathcal{F})$ is a smooth mixed Hodge module.  This category has a tensor structure described in detail in \cite[Sec.\ 7]{KS2}.  Given a function $f:X\rightarrow\AA^1$ we define as in \cite{KS2} $\phi^0_f:D^b(\MHM(X))\rightarrow D^b(\MHM_{mon}(X))$ via 
\[
\phi^0_f\mathcal{F}=\phi_{f/u}\pi_X^*\mathcal{F}
\]
where $\pi_X:X\times\GG_m\rightarrow X$ is the projection and $u$ is a coordinate on $\GG_m$.  When applied to the vanishing cycle functor $\phi^0_f$, the previous lemma provides a vanishing cycle on $D^b(\MHM_{mon})$ with a factorizing  $\phi^{mhm}:\underline{D}^b\QQ(Sm^{proj})\xrightarrow{1} D^b(\MHM)_{alg} \xrightarrow{\phi^{mhm}} D^b(\MHM_{mon})$, and also between the associated Grothendieck groups.  There is a full subcategory $D^b(\MHM_{ssimp}(X))\subset D^b(\MHM_{mon}(X))$ consisting of those $\mathcal{F}\in D^b(\MHM_{mon}(X))$ such that each pull-back $(x\times\GG_m\hookrightarrow X\times\GG_m)^*\mathcal{H}^i(\mathcal{F})$ has semisimple quasi-unipotent monodromy, such that the map $\Ka_0(\MHM_{ssimp}(X))\rightarrow \Ka_0(\MHM_{mon}(X))$ induced by the inclusion of categories is itself an inclusion.  It is a well-known fact that $\phi^{mhm}$ maps actually into $D^b(\MHM_{ssimp})$.  \\
There is also a natural map $\underline{\Ka}^{\muu}(\Sch)\rightarrow \Ka_0(D^b(\MHM_{ssimp}(X)))=\Ka_0(\MHM_{ssimp}(X))$ given by sending $[Y\xrightarrow{f} X\times \AA^1]\mapsto -[(j\times \id_X)_!f_!\QQ_Y]+[\pi_X^*(f|_{Y_0})_!\mathbb{Q}_{Y_0}]$, where $j:\GG_m\hookrightarrow\AA^1$ is the natural 
inclusion.  The following diagram commutes
\[
\xymatrix { \underline{\Ka}_0(\Sch^{ft})\ar[d]^{\phi^{mot}}\ar[r]& \underline{\Ka}_0(\MHM_{alg})\ar[d]^{\phi^{mhm}} \ar@{=}[r] &\underline{\Ka}_0(\MHM_{alg})\ar[d]^{\phi^{mhm}}\\
\underline{\Ka}^\muu(\Sch^{ft}) \ar[r] &\underline{\Ka}_0(\MHM_{ssimp})\ar[r] &\underline{\Ka}_0(\MHM_{mon}).
}\]
\end{example}

One can extend the morphisms of Examples \ref{perverse} and \ref{hodge_module} to obtain a commutative diagram
\[ \xymatrix { \underline{\Ka}_0(\Sch^{ft}) \ar[d]^{\phi^{mot}} \ar[r] & \underline{\Ka}_0(\MHM_{alg}) \ar[r] \ar[d]^{\phi^{mhm}} & \underline{\Ka}_0(\Perv_{alg}) \ar[r] \ar[d]^{\phi^{perv}} & \Con \ar[d]^{\phi^{con}} \\
\underline{\Ka}^\muu(\Sch^{ft}) \ar[r] & \underline{\Ka}_0(\MHM_{mon}) \ar[r] & \underline{\Ka}_0(\Perv) \ar[r] &  \Con. } \]
in the category of ring $(Sm,proj)$-theories over $\AAA$.


Recall that a pair $(\Sm,\Pro)$ was called motivic, if $\Pro\cap \Sm$ contains all open and all closed embeddings. As a result $e^\ast e_!\cong \id$ for all locally closed embeddings $e$. 
\begin{definition}
Let $(\Sm,\Pro)$ a motivic pair. A categorical ring $(\Sm,\Pro)$-theory $\altT$ is called motivic if for every closed embedding $i_!$ is the right adjoint functor of $i^\ast$, and for every open embedding $j_!$ is the left adjoint functor of $j^\ast$. Moreover, the sequence $j_! j^\ast(a) \to a\to  i_!i^\ast(a)$ should extend to a distinguished triangle in $\altT(f)$ functorial in $a\in \altT(f)$.
\end{definition}

The following result should be seen as a categorification of the integral identity. It is due to Kontsevich and Soibelman, at least in the context of Mixed Hodge modules. A slightly expanded version of their proof is contained in the appendix for the sake of completeness. 
\begin{theorem}[cf.\ Kontsevich, Soibelman \cite{KS2}]
 Let $(\Sm,\Pro)\supset(Sm,proj)$ be a motivic pair, i.e.\ $\Pro\cap \Sm$ contains all locally closed embeddings. Let  $\altT$ be a motivic categorical ring $(\Sm,\Pro)$-theory with a $\underline{D}^b\QQ(Sm^{proj})$-algebra structure over $\MM$ which has a factorization through the canonical $\underline{D}^b\QQ(Sm^{proj})$-algebra structure of a motivic reduced categorical ring $(\Sm,\Pro)$-theory $\mathcal{K}$, i.e.\ there is a morphism $\phi:\mathcal{K}\to \altT$ of categorical ring $(Sm,proj)$-theories over $\MM$, such that $\phi_f=\phi_f(\unit_X)$ for every $f:X\to \MM$ with smooth $X$. The $(\Sm,\Pro)$-theory $\mathcal{K}$ should also satisfy the following assumption: for every locally closed embedding $e$ the functors $e_!$ and $e^\ast$ have right adjoints $e^!$ and $e_\ast$ which implies $e_!=e_\ast$ for every closed embedding and $e^!=e^\ast$ for every open embedding as $\mathcal{K}$ is motivic. Moreover, given a closed embedding $i:Z\hookrightarrow X$ with open complement $j:U\hookrightarrow X$, the sequence
 \[ i_! i^!(a) \longrightarrow a \longrightarrow j_\ast j^\ast (a) \]
 of adjunction morphisms can be extended to a distinguished triangle functorial in $a\in \mathcal{K}(X)$. Finally, we assume  $\int_{\AA^1}(\GG_m\hookrightarrow \AA^1)_\ast(\unit_{\GG_m})=0$ in $\mathcal{K}(\kk)$ (``homotopy invariance''). Then 
 \[ \pi^+_! \phi_f |_{V^+}\cong\pi^+_! \phi_{f|_{V^+}}\cong\LL^{\rk V^+}\phi_{f|_X}.\]
 for every 2-graded vector bundle $\pi=\pi^+\oplus \pi^-:V^+\oplus V^-\longrightarrow X$ on a smooth scheme $X$ and every $\GG_m$-invariant morphism $f:V\to \MM$, where $\GG_m$ acts with weights $\pm 1$ on $V^\pm$.   
\end{theorem}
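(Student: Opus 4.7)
The plan is to follow the Kontsevich--Soibelman proof of the linear integral identity, lifted to the categorical setting. The second isomorphism $\pi^+_!\phi_{f|_{V^+}}\cong \LL^{\rk V^+}\phi_{f|_X}$ is essentially formal: by $\GG_m$-invariance of $f$ together with the positive weight on $V^+$, the restriction $f|_{V^+}$ is constant along $\GG_m$-orbits and hence factors as $f|_X\circ \pi^+$; since $\phi$ commutes with pull-backs along the smooth map $\pi^+$, one obtains $\phi_{f|_{V^+}}\cong \pi^{+\ast}\phi_{f|_X}$, and the projection formula together with the vector bundle computation $\pi^+_!\unit_{V^+}\cong \LL^{\rk V^+}\unit_X$ (proved inductively from Zariski-local trivializations and the homotopy invariance axiom) concludes this part.

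The first isomorphism $\pi^+_!\phi_f|_{V^+}\cong \pi^+_!\phi_{f|_{V^+}}$ contains the real content. The plan is to interpolate between the two sides by a $\GG_m$-equivariant $\AA^1$-family. Define $\tilde f:V\times \AA^1\to \MM$ by $\tilde f(v^+,v^-,s):=f(v^+,sv^-)$. A direct check using $\GG_m$-invariance of $f$ shows that $\tilde f$ is invariant under the action with weight $\pm 1$ on $V^\pm$ and trivial weight on $\AA^1$; moreover $\tilde f|_{s=1}=f$ and $\tilde f|_{s=0}=f|_{V^+}\circ \pr^+$, where $\pr^+:V\to V^+$ denotes the linear projection. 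Letting $\tilde\iota^+:V^+\times \AA^1\hookrightarrow V\times \AA^1$ be the closed embedding and $\tilde\pi^+:=\pi^+\times \id_{\AA^1}$, consider
\[ \Psi\;:=\;\tilde\pi^+_!\,(\tilde\iota^+)^\ast \phi_{\tilde f}\;\in\;\altT(X\times \AA^1\to \MM). \]
Base change along the closed embeddings $\{0\},\{1\}\hookrightarrow \AA^1$ (which lie in $\Sm$) together with the pull-back compatibility of $\phi$ along $\pr^+$ yield $\Psi|_{s=1}\cong \pi^+_!\phi_f|_{V^+}$ and $\Psi|_{s=0}\cong \pi^+_!\phi_{f|_{V^+}}$. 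It therefore suffices to show that $\Psi$ is pulled back from $X\to \MM$ along the projection $X\times \AA^1\to X$.

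To obtain this constancy one combines the motivic localization triangle and weak homotopy invariance. An auxiliary $\GG_m$-action $(v^+,v^-,s)\mapsto (v^+,tv^-,t^{-1}s)$ is another symmetry of $\tilde f$; on restriction to $V^+\times \AA^1$ it scales the $\AA^1$-factor and acts trivially on $V^+$, providing a family of isomorphisms intertwining the fibers of $\Psi$ over $\GG_m\subset \AA^1$. Working in the richer theory $\mathcal{K}$ (where the motivic adjunction package and the localization triangle $j_!j^\ast\to \id\to i_\ast i^\ast$ for $j:X\times \GG_m\hookrightarrow X\times \AA^1$ and $i:X\times \{0\}\hookrightarrow X\times \AA^1$ are available), applying the weak homotopy invariance $\int_{\AA^1}j_\ast \unit_{\GG_m}=0$ to the push-forward of the localization triangle cancels the $\GG_m$-part exactly against the residual correction and yields $\Psi|_{s=0}\cong \Psi|_{s=1}$. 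The result in $\altT$ then follows by transport along $\phi:\mathcal{K}\to \altT$, using the factorization $\phi_f=\phi_f(\unit_V)$. The main obstacle will be this last step: because vanishing cycles do not commute with arbitrary base change, the compatibility between the motivic localization data in $\mathcal{K}$, the auxiliary $\GG_m$-symmetry of $\tilde f$, and the vanishing cycle $\phi_{\tilde f}\in \altT(V\times \AA^1)$ must be set up with care, and the precise bookkeeping of the cancellation via $\int_{\AA^1}j_\ast \unit_{\GG_m}=0$ is the technical heart of the argument, carried out in Appendix~A.
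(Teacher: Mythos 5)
Your approach is genuinely different from the paper's, and unfortunately the core step does not go through with the hypotheses available.

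The paper's proof (Appendix A) never interpolates between two functions. It works with the fixed $\bar f$ on a compactification $Y$ of the quotient $V/\GG_m$ (built from $\PP(V^+\oplus\AA^1_X)\times\PP(V^-\oplus\AA^1_X)$ by blowing up $\PP(V^+)\times\{0\}$ and $\{0\}\times\PP(V^-)$ and deleting two divisors), extends $j_!\unit_U$ to an object $\FF$ on $Y$ that "kills" the divisor $D^-$ and is supported off $D^+$, and then cuts and pastes the single object $\phi_{\bar f}\FF$ over the strata of $\bar q^{-1}(0_Z)$, using properness of $\bar q$, the localization triangles, and the homotopy-invariance axiom to make three auxiliary integrals vanish. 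The whole architecture is designed precisely so that one never has to compare vanishing cycles of \emph{different} functions via non-smooth restriction.

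Your proposal, by contrast, interpolates with $\tilde f(v^+,v^-,s)=f(v^+,sv^-)$ and aims to show $\Psi=\tilde\pi^+_!(\tilde\iota^+)^\ast\phi_{\tilde f}$ is constant in $s$. The gap is at the very first step: you assert $\Psi|_{s=1}\cong\pi^+_!\phi_f|_{V^+}$ and $\Psi|_{s=0}\cong\pi^+_!\phi_{f|_{V^+}}$ by "base change along the closed embeddings $\{0\},\{1\}\hookrightarrow\AA^1$." These embeddings are in $\Sm$ (the motivic pair contains all locally closed embeddings), so restricting $\Psi$ is legal, but the resulting object cannot be identified with $\pi^+_!\phi_{f}|_{V^+}$ or $\pi^+_!\phi_{f|_{V^+}}$. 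The $\underline{D}^b\QQ(Sm^{proj})$-algebra structure only guarantees $\phi_{g\circ u}\cong u^\ast\phi_g$ for $u$ \emph{smooth}; for the non-smooth closed embeddings $V\times\{0\},V\times\{1\}\hookrightarrow V\times\AA^1$ there is no compatibility between $\phi_{\tilde f}|_{s=a}$ and $\phi_{\tilde f|_{s=a}}$, and indeed the entire content of the integral identity is that such naive restriction fails. So you have not related $\Psi|_{s=a}$ to either side of the identity you want to prove; even granting the constancy of $\Psi$ over $\AA^1$, you would conclude nothing about $\pi^+_!\phi_f|_{V^+}$. You acknowledge the base-change obstruction in your last paragraph, but that is not a "bookkeeping" issue you can defer — it defeats the fiberwise set-up outright. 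The approach would need an independent input of Thom--Sebastiani / specialization type to bridge the gap, which is not among the hypotheses of this theorem.
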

As $\underline{\Ka}_0(\Sch^{ft})|_{(Sm,proj)}=\underline{\Ka}_0(Sm^{proj})$ for $\Char\kk=0$ by a result of Bittner (see \cite{Bittner04}), the categorical ring $(\Sm,\Pro)$-theory $\mathcal{K}$ should be seen as a replacement of a categorification of $\underline{\Ka}_0(\Sch^{ft})|_{(\Sm,\Pro)}$ which does not exist (cf.\ Example \ref{motives_special}). 
\begin{example}\rm The categorical ring $(\Sch_\kk,ft)$-theories $D^b(\Perv)_{alg}$ and $D^b(\MHM)_{alg}$ satisfy the requirements on $\mathcal{K}$ and the Theorem applies to $\phi^{perv}$ and to $\phi^{mhm}$. 
\end{example}

\section{Donaldson--Thomas theory, framed version}
This section provides a first definition of Donaldson--Thomas functions and sheaves for abelian categories (with potential) of homological dimension at most one. It is more general than the one given by Kontsevich/Soibelman and Joyce as the latter requires Artin stacks which we consider in section \ref{theories_Artin_stacks}. On the other hand, if this extended structure exists, the definition given here is equivalent to the standard one by Kontsevich/Soibelman and Joyce which is the topic of section \ref{Donaldson_Thomas_general_version}. 
As the reader will see, the definition given here requires no stacks and works with moduli spaces only. Unfortunately, we could not prove a wall-crossing formula in this general setting. \\

We fix a pair $(\Ab,\omega)$ satisfying all assumptions (1)--(6) stated in section 2, a $\lambda$-ring $(Sm,proj)$-theory  $R$ over the moduli space $\Msp$ of objects in $\Ab$, and a $\underline{\ZZ}(Sm^{proj})$-algebra structure $\phi:\underline{\ZZ}(Sm^{proj})\to R$ over $\Msp$. Note that every $R'$-algebra structure  on $R$ with reduced $R'$ will induce a $\underline{\ZZ}(Sm^{proj})$-algebra structure by composition with the canonical $\underline{\ZZ}(Sm^{proj})$-algebra structure on $R'$. In addition to this we require the existence of some element $-\LL^{-1/2}\in \Pic(R)$ such that $(\LL^{1/2})^2=\LL:=[\PP^1]-1$. In particular, we can extend $\phi$ to a $\ZZ(Sm^{proj}_\kk)[\LL^{-1/2}]$-algebra structure on $R$. We do not require that $\phi$ commutes with the $\lambda$-ring structure on $\underline{\ZZ}(Sm^{proj})$ and on $R$.  

\begin{example} \label{example3} \rm
If $R$ is a reduced $\lambda$-ring theory  such that $1,\LL\in \Pic(R)$, we may replace $R$ with $R\otimes R_{sp}\langle \LL^{-1/2}\rangle^-$ as in Example \ref{lambda_adjunction} and Lemma \ref{lambda_adjunction_5}. Taking the canonical $\underline{\ZZ}(Sm^{proj})$-algebra structure, we obtain a large class of examples involving $\Con$, $\underline{\Ka}_0(\Perv)$ with $\LL^{-1/2}=-1$, $\underline{\Ka}_0(\MHM)\langle\LL^{-1/2}\rangle^-$ and $\underline{\Ka}_0(\Sch^{ft})\langle \LL^{-1/2}\rangle^-$.
\end{example}
\begin{example} \rm \label{example2}
We can also take the examples presented at the end of the last section
\[ \xymatrix { \underline{\Ka}_0(\Sch^{ft}) \ar[d]^{\phi^{mot}} \ar[r] & \underline{\Ka}_0(\MHM_{alg}) \ar[r] \ar[d]^{\phi^{mhm}} & \underline{\Ka}_0(\Perv_{alg}) \ar[r] \ar[d]^{\phi^{perv}} & \Con_{alg} \ar[d]^{\phi^{con}} \\
\underline{\Ka}^\muu(\Sch^{ft})\langle\LL^{-1/2}\rangle^- \ar[r] & \underline{\Ka}_0(\MHM_{mon})\langle \LL^{-1/2}\rangle^- \ar[r] & \underline{\Ka}_0(\Perv) \ar[r] &  \Con. } \]
\end{example}
Note that  in all of our examples $\phi$ is actually a vanishing cycle.

\subsection{Donaldson--Thomas functions and invariants} \label{Donaldson_Thomas_framed}

Let us start by defining the ``intersection complex $\ICS_X\in \underline{\ZZ}(Sm^{proj}_X)[\LL^{-1/2}]$ of a smooth scheme $X$ by the requirement that $\IC_X|_{X_i}=\LL^{-\dim X_i/2}\unit_{X_i}=L^{-\dim X_i/2}[X_i\xrightarrow{id}X_i]$ for each connected component $X_i$ of $X$. 
\begin{example} \label{example5} \rm Under the canonical $\underline{\ZZ}(Sm^{proj})$-algebra structure of $\underline{\Ka}_0(\Perv)$, the intersection complex $\ICS_X$ maps to the class of the classical intersection complex $\ICS^{perv}_X$ of $X$. In case of $\underline{\Ka}_0(\MHM)$ the situation is similar. However, $\LL^{-1/2}$ is not the class of an element in $\MHM(\kk)$. Instead, by applying the Tannakian formalism, we write $\MHM(\kk)\cong G^{mhm}\rep$ for some algebraic group $G^{mhm}$ which comes with a character $G^{mhm}\to \GG_m$ given by the inclusion of the abelian category of pure Hodge structures $\QQ(n)$ of weight $-2n$. We may define $\MHM_X[\QQ(1/2)]:=\MHM_X\otimes_{\MHM(\kk)}\tilde{G}^{mhm}\rep$, where $\tilde{G}^{mhm}$ is the twofold cover $G^{mhm}\times_{\GG_m}\GG_m$ of $G^{mhm}$ induced by $\GG_m\ni z\mapsto z^2\in \GG_m$. The one dimensional representation $\QQ(1/2)$ is given by the  character $\tilde{G}^{mhm}\xrightarrow{\pr_{\GG_m}} \GG_m$, and $\LL^{-1/2}$ is the class of $\QQ(1/2)[1]$ in $\Ka_0(\MHM(\kk)[\QQ(1/2)])\cong\Ka_0(\MHM(
\kk))\langle\LL^{-1/2}\rangle^-$. One can extend the weight filtration to $\MHM(X)[\QQ(1/2)]$ by requiring that  $\QQ(1/2)$ has weight $-1$. Using the canonical $\underline{\ZZ}(Sm,proj)$-algebra structure, the ``intersection complex'' maps to the class of the ``renormalized'' classical intersection complex which has now weight 0. 
\end{example}  
To simplify notation, let us introduce the shorthands $\id:=\id_{\Msp}$ and $\iota_d:=(\Msp_d\hookrightarrow \Msp)$ for $d\in \NN^{\oplus I}$. We use the notation of section \ref{Sec_fr_rep}. 
\begin{definition} 
A Donaldson--Thomas function is an element \[ \DTS(\Ab,\phi)=(\DTS(\Ab,\phi)_d)_{d\in \NN^{\oplus I}} \in R(\id)=\prod_{d\in \NN^{\oplus I}}R(\iota_d) \] with $\DTS(\Ab,\phi)_0=0$  such that for every non-zero framing vector $0\not=f\in \NN^I$ the following equation holds\footnote{Instead of infinite sums, we should better write products. However, multiplication and $\lambda$-operations look more natural using infinite sums as in the case of power series.} 
\begin{eqnarray} \label{defining_equation} \pi_{f\,!}\sum_{d\in \NN^{\oplus I}} \LL^{fd/2}\phi_{\pi_{f,d}}(\ICS_{\Msp_{f,d}})&=& \sum_{\gamma\in \Gamma} \LL^{(\gamma,\gamma)/2}\phi_{\id}\big([\Msp_{f,\gamma}\xrightarrow{\pi_{f,\gamma}}\Msp]\big) \nonumber \\ & = &\Sym\Big(\sum_{0\not= d\in \NN^{\oplus I}} \LL^{1/2}[\PP^{fd-1}] \DTS(\Ab,\phi)_d\Big)
\end{eqnarray}
in $R(\id)$ with $\Msp_{f,\gamma}=\pi_f^{-1}(\Msp_\gamma)=\sqcup_{d\in \NN^{\oplus I}} \Msp_{f,\gamma,d}$ and $\dim\Msp_{f,\gamma,d}=fd-(\gamma,\gamma)$ if nonempty. Notice that $\PP^{fd-1}$ is the fiber of $\pi_{f,d}$ over $\Msp^s_d$ if $\Msp^s_d\not=\emptyset$. If $\phi$ is the canonical $\underline{\ZZ}(Sm^{proj})$-algebra structure of a reduced ring theory  as in Example \ref{example3}, we simply write $\DTS(\Ab,R)$. 
\end{definition} 
If the framing vector $f$ is even, i.e.\ $f\in(2\NN)^I$, the map $(a_d)_{d\in\NN^{\oplus I}}\longmapsto (\LL^{-fd/2}a_d)_{d\in\NN^{\oplus I}}$ defines a $\lambda$-ring isomorphism on $R(\id)$, and our defining equation is equivalent to 
\begin{equation}\label{defining_equation_2} \pi_{f\,!}\phi_{\pi_f}(\ICS_{\Msp_{f}})=\phi_{\id}\big(\pi_{f\,!} \ICS_{\Msp_{f}}\big)=\Sym\Big(\sum_{0\not= d\in \NN^{\oplus I}}  \DTS(\Ab,\phi)_d\int_{\PP^{fd-1}}\ICS_{\PP^{fd-1}}\Big) \end{equation}
with
\[ \int_{\PP^{fd-1}}\ICS_{\PP^{fd-1}} = \frac{\LL^{df/2}-\LL^{-df/2}}{\LL^{1/2}-\LL^{-1/2}}=:[\PP^{fd-1}]_{vir}\] 
for the fiber $\PP^{fd-1}$ of $\pi_{f,d}$ over $\Msp^s_d$.\\

\begin{example}\rm If we use the reduced $\lambda$-ring theory  $\underline{\Ka}_0(\Sch^{ft})\langle \LL^{-1/2}\rangle^-$ with the canonical framing, we simple write $\DTS(\Ab,mot)$ for the Donaldson--Thomas function. Recall one of the  main results of \cite{Meinhardt4}.
\begin{theorem} Donaldson--Thomas functions $\DTS(\Ab,mot)_d$ exist for every $d\in \NN^{\oplus I}$. \end{theorem}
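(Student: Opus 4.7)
The plan is to solve the defining equation recursively in the total dimension. Fix a sufficiently large even framing vector $f \in (2\NN)^I$ so that the equation takes the form
\[ \pi_{f,!}\ICS_{\Msp_f} = \Sym\Big(\sum_{0\neq d} [\PP^{fd-1}]_{vir}\,\DTS(\Ab,mot)_d\Big). \]
Because $R(\id) = \prod_d R(\iota_d)$ is complete with respect to the dimension filtration (Proposition \ref{algebraic_structure}(2)), the plethystic logarithm is a well-defined inverse of $\Sym$ on classes supported in strictly positive degree. Applying it to the left-hand side yields, for every nonzero $d$, a candidate class $A^f_d \in R(\iota_d)$, and the task reduces to showing that $A^f_d$ is divisible by $[\PP^{fd-1}]_{vir}$ and that the quotient $\DTS(\Ab,mot)_d \in \underline{\Ka}_0(\Sch^{ft}_{\Msp_d})\langle \LL^{-1/2}\rangle^-$ is independent of the choice of $f$.

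The key geometric input is virtual smallness of the noncommutative Hilbert--Chow morphism $\pi_{f,d}$ from Section \ref{Sec_fr_rep}. Smoothness of $\Msp_{f,d}$ reduces $\ICS_{\Msp_{f,d}}$ to a twist of the identity class, and the virtual small stratification $\Msp_d = \sqcup_\lambda S_\lambda$ with open stratum $S_0 = \Msp^s_d$ and generic fiber $\PP^{fd-1}$ controls the pushforward stratum by stratum. The dimension bound $\dim \pi_{f,d}^{-1}(x_\lambda) - \dim \PP^{fd-1} \le \tfrac12 \codim S_\lambda$ forces the top $\LL^{1/2}$-degree contribution over each stratum to equal $[\PP^{fd-1}]_{vir}\,\unit_{S_\lambda}$ after twisting by $\LL^{fd/2}$, while the lower-degree contributions are supposed to match, via the polystability decomposition of $\Msp_d$ into symmetric products of simple strata, the combinatorial expansion of $\Sym$ applied to smaller $d'$. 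Comparing these two decompositions term by term yields the recursion and establishes the divisibility by $[\PP^{fd-1}]_{vir}$.

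The main obstacle is independence of $f$: a priori two choices $f, f'$ produce candidates $A^f_d/[\PP^{fd-1}]_{vir}$ and $A^{f'}_d/[\PP^{f'd-1}]_{vir}$ whose difference is only manifestly annihilated by the ideal generated by $[\PP^{fd-1}]$ and $[\PP^{f'd-1}]$, and hence, after taking a greatest common divisor, only by $[\PP^{\gcd(d)-1}]$; this is precisely the weak uniqueness statement appearing in the preceding main theorem. Upgrading to genuine equality in $R(\iota_d)$ requires a direct geometric comparison of the stratified fibrations for different framings. In $\Char\kk = 0$ this is handled via Bittner's presentation of $\underline{\Ka}_0(\Sch^{ft})$ by smooth projective schemes, reducing the assertion to a Hodge-theoretic input about the intersection cohomology of $\Msp_{f,d}$; the full argument is carried out in \cite{Meinhardt4}, directly generalizing the quiver case established in \cite{MeinhardtReineke}.
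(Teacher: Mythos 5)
The paper does not prove this theorem; it recalls it as one of the main results of the companion paper \cite{Meinhardt4}, so there is no in-paper argument against which to compare your proposal. Judged on its own, your sketch identifies the right geometric ingredients (the framed Hilbert--Chow morphism $\pi_{f,d}$, its virtual smallness, the stratification of $\Msp_d$, the plethystic exponential), but the steps that actually carry the weight are left unresolved, and one of them rests on reasoning that does not apply in the realization $\underline{\Ka}_0(\Sch^{ft})\langle\LL^{-1/2}\rangle^-$.

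The central gap is the divisibility step. Taking the plethystic logarithm of both sides of (\ref{defining_equation_2}) recovers $A^f_d := [\PP^{fd-1}]_{vir}\DTS(\Ab,mot)_d$, but this does not determine $\DTS(\Ab,mot)_d$, because $[\PP^{fd-1}]_{vir}$ (equivalently $[\PP^{fd-1}]$, they differ by a unit $\LL^{(1-fd)/2}$) is not known to be regular in the naive motivic ring --- indeed $\LL$ itself is a zero divisor over $\CC$ by Borisov --- which is exactly why the paper's own uniqueness statement only holds up to $[\PP^{\gcd(d)-1}]$-torsion. More to the point, your argument that virtual smallness ``forces the top $\LL^{1/2}$-degree contribution over each stratum to equal $[\PP^{fd-1}]_{vir}\unit_{S_\lambda}$'' imports weight-filtration reasoning that is available in $\underline{\Ka}_0(\MHM)$ via the decomposition theorem but has no analogue in $\underline{\Ka}_0(\Sch^{ft})$: that ring carries no grading by powers of $\LL^{1/2}$, and fibre dimension bounds do not isolate a leading term. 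The proof in \cite{Meinhardt4}/\cite{MeinhardtReineke} has to go the other way around --- construct $\DTS(\Ab,mot)_d$ explicitly from the stratification of the framed moduli spaces into Zariski-locally trivial bundles over products of strata of lower-dimensional simple loci, and then verify by a motivic recursion (scissor relations only) that the defining equation holds for every $f$ simultaneously --- rather than extract $\DTS_d$ from $A^f_d$ by cancellation.

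The concluding appeal to Bittner's presentation and ``a Hodge-theoretic input about the intersection cohomology of $\Msp_{f,d}$'' is also misplaced for this realization. Bittner's theorem gives an alternative generators-and-relations description of $\underline{\Ka}_0(\Sch^{ft})$ by smooth projective schemes; it is not a realization into Hodge structures, and the realization map $\underline{\Ka}_0(\Sch^{ft})\to\underline{\Ka}_0(\MHM)$ is not injective, so an identity verified after passage to MHM does not lift to an identity in the naive motivic ring. The $f$-independence therefore has to be established motivically, which is precisely the hard technical content of the cited result that your sketch is punting on.
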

\end{example}

\begin{remark} \rm The following statements are obvious.
\begin{enumerate}
  \item[(i)] If $R,\phi$ is a $\underline{\ZZ}(Sm^{proj})$-algebra and $\eta:R\to R'$ is a morphism of  $\lambda$-ring $(Sm,proj)$-theories  over $\Msp$, then $(R',\eta\phi)$ is another $\underline{\ZZ}(Sm^{proj})$-algebra satisfying our assumptions and $\DTS(\Ab,\eta\phi)=\eta_{\id}(\DTS(\Ab,\phi))$.
  \item[(ii)]  If $\phi=W^\ast(\phi')$ for some potential $W:\Msp\to \AAA$ and some vanishing cycle $\phi':\underline{\ZZ}(Sm^{proj})\to R$ over $\AAA$ as in Example \ref{example2}, the left hand side of the defining equation is supported on $\pi(\Crit(W\circ \pi))=\Msp^W$, i.e.\ contained in $W^\ast(R)(\Msp^W\hookrightarrow\Msp)=R(\Msp^W\xrightarrow{W}\AAA)$. As $W:\Msp^W_d \to \AAA$ has only finitely many fibers, we can use the trivialization of $R^{fib}$ to embed $\DTS(\Ab,W^\ast(\phi'))_d$ into $R(\Msp^W_d)$. If $\phi'$ is clear from the context, we also write $\DTS(\Ab,W)$ for short. Moreover, assuming that $\Msp^W_d$ is projective or $R^{red}$ a ring $(\Sch_\kk,ft)$-theory, we can define Donaldson--Thomas invariants $\DT(Q,W^\ast(\phi'))_d\in R(\Spec\kk)$ by means of 
\[ \DT(Q,W^\ast(\phi'))_d:=\int_{\Msp^W_d} \DTS(\Ab,W^\ast(\phi'))_d.\] 
\end{enumerate}
\end{remark}
Let us prove one of the main result of the paper.
\begin{theorem}[Existence and Uniqueness] \label{main_theorem}
Assume that $\phi$ has a factorization $\phi:\underline{\ZZ}(Sm^{proj})\xrightarrow{\unit} \underline{\Ka}_0(\Sch^{ft})[\LL^{-1/2}] \xrightarrow{\phi} R$ for some morphism\footnote{Hopefully, our abuse of notation will not confuse the reader.} $\phi$ of $\lambda$-ring theories. Then, 
\[ \DTS(\Ab,\phi) = \phi_{\id} \big(\DTS(\Ab,mot)\big). \]
is a Donaldson--Thomas function and $\DTS(\Ab,\phi)_d$ is uniquely determined up to an element annihilated by $[\PP^{gcd(d)-1}]$. In particular, its image under the map $R(\id)\longrightarrow R(\id)[[\PP^n]^{-1}: n\in \NN]$ is unique. 
\end{theorem}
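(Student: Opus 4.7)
The plan is to deduce the theorem directly from the motivic existence result of \cite{Meinhardt4} together with general facts about complete filtered $\lambda$-rings and a divisibility identity for the classes $[\PP^{n-1}]$. The main obstacle is the careful bookkeeping of the $\lambda$-compatibility of $\phi$ with the $\Sym$-operation and with scalar multiplication by pull-backs from $R(\Spec\kk)$; the rest is essentially formal.

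For existence, the first step is to apply the $\lambda$-ring theory morphism $\phi$, viewed as a morphism over $\Msp$ by pull-back along $c:\Msp\to\Spec\kk$, to the defining equation satisfied in $\underline{\Ka}_0(\Sch^{ft})\langle\LL^{-1/2}\rangle^-(\id)$ by $\DTS(\Ab,mot)$. By naturality, $\phi_{\id}$ commutes with $\pi_{f\,!}$ on the left-hand side; the $\lambda$-compatibility of $\phi$ ensures commutation with $\Sym$ and with convolution multiplication; and the scalars $\LL^{1/2}$ and $[\PP^{fd-1}]$ are pulled back from $R(\Spec\kk)$, so they pass through $\phi$ trivially. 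The factorization hypothesis is what guarantees that the distinguished element $\phi_{\pi_{f,d}}\in R(\pi_{f,d})$ is the image under $\phi$ of the canonical motivic one, so $\phi_{\pi_{f,d}}(\ICS_{\Msp_{f,d}})$ and the twist $\LL^{fd/2}$ are precisely the images of their motivic counterparts. Taken together, $\phi_{\id}(\DTS(\Ab,mot))$ satisfies the equation defining a Donaldson--Thomas function in $R$, and $\phi_{\id}(0)=0$ gives the required vanishing in degree $0$.

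For uniqueness, I would first observe that $R(\id)=\prod_{d}R(\iota_d)$ is a complete filtered $\lambda$-ring by Example \ref{lambda-ring_moduli_spaces}, and on such a ring the exponential map $\Sym$ is injective on $F^1$: an induction on $n$ based on $\Sym(Z)\equiv 1+Z\pmod{F^{n+1}}$ for $Z\in F^n$ shows that $\Sym(X)=\Sym(Y)$ with $X,Y\in F^1$ forces $X-Y\in\bigcap_n F^n=0$. Applied to the defining equation for two candidates $\DTS,\DTS'$ with any fixed framing $0\ne f\in\NN^I$, then separating by the component indexed by $d$ and cancelling the unit $\LL^{1/2}\in\Pic(R)$, this yields
\[
[\PP^{fd-1}]\,(\DTS_d-\DTS'_d)\;=\;0 \qquad\text{in }R(\iota_d)
\]
for every $d\in\NN^{\oplus I}\setminus\{0\}$ and every nonzero $f\in\NN^I$.

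The final step promotes this to annihilation by $[\PP^{\gcd(d)-1}]$. Writing $m=\gcd(d)$ and $\tilde d=d/m$, the entries of $\tilde d$ are coprime, so by Sylvester--Frobenius the set $\{f\cdot\tilde d:f\in\NN^I\setminus\{0\}\}$ contains every sufficiently large positive integer. Hence $[\PP^{mk-1}]$ annihilates $\delta_d:=\DTS_d-\DTS'_d$ for all $k\ge K$, and the elementary identity
\[
[\PP^{m(k+1)-1}]-\LL^m\,[\PP^{mk-1}]\;=\;[\PP^{m-1}],
\]
verified directly from $[\PP^{n-1}]=(\LL^n-1)/(\LL-1)$, yields $[\PP^{m-1}]\,\delta_d=0$. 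Thus $\delta_d$ is annihilated by $[\PP^{\gcd(d)-1}]$ for every $d$, and in particular becomes zero after inverting the family $\{[\PP^n]\mid n\in\NN\}$, completing the proof.
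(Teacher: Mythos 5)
Your proof is correct. The existence half is essentially an unpacking of the remark the paper cites (naturality of the Donaldson--Thomas equation under a $\lambda$-ring theory morphism, applied to the motivic theory of \cite{Meinhardt4}), so there is nothing genuinely new there, though you are right to flag that the hypothesis ``morphism of $\lambda$-ring theories'' is exactly what is needed for commutation with $\Sym$ and the convolution product.

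The uniqueness half takes a genuinely different route from the paper, while landing on the same telescoping arithmetic for $[n]_\LL = 1+\LL+\dots+\LL^{n-1}$. The paper uses B\'ezout: choose $f\in\ZZ^{\oplus I}$ with $f d = r:=\gcd(d)$, write $f = f_+ - f_-$ with $f_\pm\in\NN^{\oplus I}$ nonzero and $d f_\pm\ge 1$, and then conclude from $[d f_+]_\LL = \LL^r [d f_-]_\LL + [r]_\LL$ in a single step. You instead invoke Sylvester--Frobenius to observe that the numerical semigroup generated by the nonzero entries of $d/\gcd(d)$ is cofinite, so $[\PP^{mk-1}]\delta_d = 0$ for all sufficiently large $k$, and then deduce $[\PP^{m-1}]\delta_d = 0$ from the identity $[\PP^{m(k+1)-1}] - \LL^m[\PP^{mk-1}] = [\PP^{m-1}]$. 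Both arguments are sound; yours avoids choosing a signed decomposition of a B\'ezout vector (and the attendant bookkeeping to make both parts yield valid framing vectors with $d f_\pm\ge1$), at the mild cost of quoting the Chicken--McNugget theorem and running two applications of the telescope instead of one. Either way one must implicitly restrict to framings with $f d\ge 1$ so that $[\PP^{fd-1}]$ is defined, which you handle correctly inside the numerical-semigroup step. Your inline re-derivation of the injectivity of $\Sym$ on $F^1$ is fine, though the paper already supplies this as Lemma \ref{complete_lambda_rings}.
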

\begin{proof} The first statement about existence is given by the previous remark and the previous theorem. It remains to prove uniqueness. \\Let us introduce the shorthand $[n]_a=1+a+a^2 +\ldots+ a^{n-1}$ for any natural number $n\in \NN$ and any $a\in R(\Spec\kk)$. If $n=pm+r$ with $0\le r<m, 1\le p$, we get 
\begin{equation} \label{equation_2} [n]_\LL=\LL^r[p]_{\LL^m}[m]_\LL + [r]_\LL. \end{equation}
Note that $[\PP^{n-1}]=[n]_\LL$ in $R(\Spec\kk)$ as it already holds in $\Ka_0(\Sch^{ft})$. Given $r:=\gcd(d)$, we see $r|fd$ for all framing vectors $f\not=0$, and if $[\PP^{r-1}]\Delta_d=[r]_\LL\Delta_d=0$, the element $\DTS(\Ab,\phi)_d+\Delta_d$ would also solve the defining equation as $[\PP^{fd-1}]\Delta_d=[fd]_\LL\Delta_d=[fd/r]_{\LL^r}[r]_\LL\Delta_d=0$. To prove the converse  we choose finitely many integers $0\not=f\in \ZZ^{\oplus I}$ such that $fd=r$. Write $f=f_+-f_-$ with $0\not=f_\pm \in \NN^{\oplus I}$. If we have two solutions of the defining equation (\ref{defining_equation}), the difference $\Delta_d$ must satisfy $[\PP^{df_\pm-1}]\Delta_d=[df_\pm ]_\LL\Delta_d=0$ as $\LL^{1/2}$ is a unit. Since $df_+=df_-+r$, the equation $[df_+]_\LL= \LL^r[df_-]+[r]_\LL$ will prove  $\Delta_d[r]_\LL=\Delta_d[\PP^{r-1}]=0$. Hence, the converse is also proven.
\end{proof}
\begin{example} \rm 
The assumption of Theorem \ref{main_theorem} is fulfilled for the canonical  $\underline{\ZZ}(Sm^{proj})$-algebra structure on the reduced $\lambda$-ring theories  $\Con$, $\underline{\Ka}_0(\Perv)$ and  $\underline{\Ka}_0(\MHM)\langle\LL^{-1/2}\rangle^-$. The element $[\PP^n]$ is a nonzero divisor in all cases and the corresponding Donaldson--Thomas functions are unique. 
\end{example}
Using the second main result of \cite{Meinhardt4}, one can conclude the following.
\begin{theorem} \label{main_theorem_2}
 Assume that $\oplus:\Msp\times\Msp\longrightarrow \Msp$ is a finite morphism. If  $\phi$ has a factorization $\phi:\underline{\ZZ}(Sm^{proj})\xrightarrow{\unit} \underline{\Ka}_0(\MHM)[\LL^{-1/2}] \xrightarrow{\eta} R$ for some morphism $\phi$ of $\lambda$-ring theories. Then, 
\[ \DTS(\Ab,\phi)_d = \begin{cases} \phi_{\iota_d} \big(\cl(\ICS^{mhm}_{\Msp_d})\big) & \mbox{if } \Msp^s_d\not=\emptyset, \\ 0 &\mbox{else.} \end{cases} \]
Here, $\cl(\ICS^{mhm}_{\Msp_d})$ denotes the class of the intersection complex $\ICS^{mhm}_{\Msp_d}\in \MHM(\Msp_d)[\QQ(1/2)]$ of the singular space $\Msp_d$ normalized by multiplication with $\QQ(\dim \Msp^s_\gamma/2)=\QQ((1-(\gamma,\gamma))/2)$ on $\Msp_{\gamma,d}$.
\end{theorem}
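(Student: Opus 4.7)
The plan is to reduce Theorem \ref{main_theorem_2} to the analogue of Theorem \ref{main_theorem} in the specific realization $\underline{\Ka}_0(\MHM)[\LL^{-1/2}]$ and then invoke the geometric description of the DT function in that setting, which is the ``second main result'' of \cite{Meinhardt4} alluded to in the text. Concretely, I would first apply the existence and uniqueness statement of Theorem \ref{main_theorem} to the canonical $\underline{\ZZ}(Sm^{proj})$-algebra structure on the reduced $\lambda$-ring theory $\underline{\Ka}_0(\MHM)[\LL^{-1/2}]$, obtaining a Donaldson--Thomas function $\DTS(\Ab,mhm)_d \in \underline{\Ka}_0(\MHM_{\Msp_d})[\LL^{-1/2}]$. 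Since the hypothesis provides a factorization $\phi:\underline{\ZZ}(Sm^{proj})\to\underline{\Ka}_0(\MHM)[\LL^{-1/2}]\xrightarrow{\eta}R$ through a morphism $\eta$ of $\lambda$-ring theories, and since $\eta_{\id}$ commutes with the $\lambda$-operations, the convolution product, and multiplication by $[\PP^{fd-1}]$, applying $\eta_{\id}$ to the defining equation (\ref{defining_equation}) for $\DTS(\Ab,mhm)$ produces the defining equation for $\DTS(\Ab,\eta\circ\unit)$. By the uniqueness part of Theorem \ref{main_theorem} we therefore get
\[
\DTS(\Ab,\phi)_d \;=\; \eta_{\iota_d}\bigl(\DTS(\Ab,mhm)_d\bigr).
\]

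Next, I would invoke the key input from \cite{Meinhardt4}: under the finiteness assumption on $\oplus:\Msp\times\Msp\to\Msp$, the MHM-valued Donaldson--Thomas function is identified explicitly as
\[
\DTS(\Ab,mhm)_d \;=\; \begin{cases} \cl(\ICS^{mhm}_{\Msp_d}) & \text{if } \Msp^s_d\neq\emptyset, \\ 0 & \text{otherwise}, \end{cases}
\]
i.e.\ as the (suitably normalized) class of the intersection complex of $\Msp_d$. To verify this, one shows that the right-hand side above satisfies the defining equation (\ref{defining_equation_2}) in the MHM realization: the virtual smallness of the Hilbert--Chow morphism $\pi_{f,d}:\Msp_{f,d}\to\Msp_d$ (the final theorem of Section \ref{Sec_fr_rep}) combined with the BBD decomposition theorem in $\MHM$ yields
\[
\pi_{f,d\,!}(\ICS^{mhm}_{\Msp_{f,d}}) \;=\; [\PP^{fd-1}]_{vir}\cdot\ICS^{mhm}_{\Msp_d} \;+\; (\text{strictly smaller support}),
\]
and the finiteness of $\oplus$ together with properness of $\pi_2$ ensures that the $\Sym$-series on the right-hand side of (\ref{defining_equation_2}) converges and that the strata contributions of smaller support are absorbed into the $\Sym$ of the lower $\DTS(\Ab,mhm)_{d'}$ with $d'$ having smaller total dimension. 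An induction on the ordering of $\NN^{\oplus I}$ by total dimension then matches both sides stratum-by-stratum.

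Combining these two steps, we conclude
\[
\DTS(\Ab,\phi)_d \;=\; \eta_{\iota_d}\bigl(\cl(\ICS^{mhm}_{\Msp_d})\bigr) \;=\; \phi_{\iota_d}\bigl(\cl(\ICS^{mhm}_{\Msp_d})\bigr)
\]
when $\Msp^s_d\neq\emptyset$, and vanishing otherwise, which is the claim. The main obstacle in this argument is not the functorial reduction (which is automatic once the factorization is given) but the verification, via the decomposition theorem in $\MHM$ and the virtual smallness of the Hilbert--Chow morphism, that the intersection complex class actually satisfies the defining equation of a Donaldson--Thomas function; this is the non-trivial content imported from \cite{Meinhardt4}, and all subtleties involving the normalization by $\QQ((1-(\gamma,\gamma))/2)$ enter precisely at this step so that the weight shifts on both sides of (\ref{defining_equation_2}) agree.
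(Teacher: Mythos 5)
Your proof takes essentially the same route as the paper. The paper also derives Theorem \ref{main_theorem_2} by combining the functorial reduction $\DTS(\Ab,\eta\phi)=\eta_{\id}(\DTS(\Ab,\phi))$ (Remark (i) in subsection \ref{Donaldson_Thomas_framed}) with the ``second main result'' of \cite{Meinhardt4}, namely the identification $\DTS(\Ab,mhm)_d=\cl(\ICS^{mhm}_{\Msp_d})$ under the finiteness hypothesis on $\oplus$, which is exactly the virtual smallness/decomposition theorem argument you sketch; the paper simply cites that result without reproducing the stratum-by-stratum induction that you outline.
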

The assumption on $\oplus$ is fulfilled in all examples. (cf.\ Example \ref{finite_direct_sum}) 
\begin{example}\rm 
The pull-backs of the vanishing cycles $\phi^{mhm}, \phi^{perv}$ and $\phi^{con}$ of Example \ref{example2} along a potential $W:\Msp\to \AAA$ satisfy the assumptions of the previous theorem, and we conclude for example $\DTS(\Ab,W^\ast(\phi^{mhm}))_d=\cl\big(\phi^{mhm}_{W_d}(\ICS^{mhm}_{\Msp_d})\big)\in \Ka_0(\MHM_{mon}(\Msp^{W}_d))[\LL^{-1/2}]$ if $\Msp^s_d\not=\emptyset$ and zero else. The constructible function $\DTS(\Ab,W^\ast(\phi^{con}))$ on $\Msp^{W}$ is a nontrivial extension of the Behrend function of $\Msp^{W,s}=\Msp^s\cap \Msp^{W}$ to $\overline{\Msp^s}\cap \Msp^{W}$.
\end{example}

\begin{example} \label{example4} \rm
 The authors tried to prove that the vanishing cycle $\underline{\ZZ}(Sm^{proj})\xrightarrow{\unit} \underline{\Ka}_0(\Sch^{ft})\xrightarrow{\phi^{mot}} \underline{\Ka}^\muu(\Sch^{ft})$ satisfies the assumptions of the main Theorem \ref{main_theorem}. Unfortunately, we could not show that $\phi^{mot}$ commutes with the $\lambda$-operations $\sigma^n$. Hence, the existence of $\DTS(\Ab,W^\ast(\phi^{mot}))$ remains an open problem for $W\not=0$. Nevertheless, the Donaldson--Thomas invariants can be computed in specific examples. (see \cite{BBS}, \cite{DaMe1},\cite{DaMe2},\cite{MMNS}) 
\end{example}
\begin{example}\rm 
Let us  consider the quiver $Q$ with one loop and arbitrary potential $\WW\in \kk[T]$.  Then $\Msp^s_1=\Msp_1=\AAA$ and $\emptyset=\Msp^s_d\subseteq \Msp_d=\Sym^d(\AAA)=\AA^d$ for all $d>1$. Hence $\DTS(\Ab,\Ka_0(\MHM)[\LL^{1/2}])_1=\cl(\ICS^{mhm}_\AAA)$ and $\DTS(\Ab,\Ka_0(\MHM)[\LL^{1/2}])_d=0$ for $d>0$. Thus, after identifying  $W_1$ with $\WW$,
 \[ \DTS(\Ab,W^\ast(\phi^{mhm}))_d=\begin{cases}
                        \cl\big(\phi^{mhm}_{\WW}(\ICS^{mhm}_\AAA)\big) & \mbox{for }d=1, \\ 0 & \mbox{for }d\not=1 
                       \end{cases} \]
holds in $\Ka_0(\MHM_{mon}(\AA^d))$. The same result  for the  ``motivic'' Donaldson--Thomas function $\DTS(\Ab,W^\ast(\phi^{mot}))_d\in\Ka^\muu(\Sch^{ft}_{\AA^d})\langle \LL^{-1/2}\rangle$ has been proven by the authors in \cite{DaMe1}.  
\end{example}

\begin{example} \rm \label{one_vertex_example}
Let us consider the case $\Ab=\Vect_\kk$ with $\Msp_d=\Spec\kk$ for all $d\in \NN$. Fix a $\underline{\ZZ}(Sm^{proj})$-algebra structure $\phi:\underline{\ZZ}(Sm^{proj})\to R$ factoring through $\underline{\Ka}_0(\Sch^{ft})$, and assume that  $t^d:=\phi_{\iota_d}$ is a trivialization of $R$ for $\iota_d:\Msp_d\hookrightarrow\Msp$. If he canonical $\underline{\ZZ}(Sm^{proj})$-algebra structure on a reduced ring theory  $R$ factorizes through $\underline{\Ka}_0(\Sch^{ft})$, its pull-back to $\Msp$ provides an example of such a situation. As $R(\id)=R^{fib}(\id)$, the $\underline{\ZZ}(Sm^{proj})$-algebra structure induces an isomorphism $R(\id)\cong R(\Spec\kk)[[t]]$. 
For $f\in \NN$, we can interpret $\Msp_{f,d}$ as the space of surjective $\kk$-linear maps $\kk^f\to \kk^d$ up to the action of $\Gl(d)$. Thus, $\Msp_{f,d}=\Gr(f-d,f)\cong\Gr(d,f)$ and we conclude for the defining equation (\ref{defining_equation})
\[ \sum_{d\ge 0} \LL^{(fd-\dim \Gr(d,f))/2}[\Gr(d,f)]t^d=\sum_{d\ge 0} \LL^{d^2/2}{f\choose d}_{\LL} t^d \stackrel{!}{=} \Sym\Big(\sum_{d>0} \LL^{1/2} [fd] \DTS(\Ab,\phi)_dt^d\Big) \]
using the shorthand  ${f \choose d}_\LL=\frac{[f]_\LL}{[d]_\LL[f-d]_\LL}=[\Gr(d,f)]$. This equation is solved by $\DTS(\Ab,\phi)_1=1$ and $\DTS(\Ab,\phi)_d=0$ else, giving rise to the beautiful ``binomial formula''
\[ \sum_{d\ge 0} \LL^{d^2/2} {f\choose d}_{\LL} t^d = \Sym\big(\LL^{1/2}[f]_\LL t\big)=:(1-t)^{-\LL^{1/2}[f]} \]
For $R=\Con$ this is just the well-know binomial identity
\[ \sum_{d\ge 0} {f\choose d}(-t)^d= \Sym(-ft)=(1-t)^f. \]
This example also shows that the choice $-\LL^{1/2}\in \Pic(R)$ was crucial. If we had taken $\LL^{1/2}\in \Pic(R)$, i.e.\ $\LL^{1/2}=1$ for $R=\Con$, the defining equation cannot be solved for odd $f$ as one can already see by computing the quadratic term for $R=\Con$ and $f=1$. A similar calculation shows that the alternative defining equation (\ref{defining_equation_2})
\[ \sum_{d\ge 0} \LL^{d(d-f)/2}[\Gr(d,f)]t^d=\sum_{d\ge 0} \LL^{d^2/2} {f\choose d}_\LL (\LL^{-f/2}t)^d=\Sym\big( \LL^{(1-f)/2}[f]t\big) \]
is only true for even $f$ and $-\LL^{1/2}\in \Pic(R)$, and has no solution otherwise.
\end{example}

\begin{definition}
A duality transformation on a ($\lambda$-)ring $(Sm,proj)$-theory  $R$ over some commutative monoid $\MM$ is an involutory natural transformation having the same properties as a morphism of ($\lambda$-)ring $(Sm,proj)$-theories  except for $D_f\circ u^\ast=\LL^{-d}u^\ast \circ D_g$  for every smooth morphism $u:f\to g$ of relative dimension $d$, where $\LL:=[\PP^1]-1$. 
\end{definition}
Mapping $[U\to X]$ to $\LL^{-\dim U}[U\to X]$ for every smooth, projective equidimensional scheme $U$, induces the unique duality transformation on  $\underline{\Ka}_0(Sm^{proj})$ (see \cite{Bittner04}) if $\Char \kk=0$. It can be pulled back along $\MM\to \Spec\kk$. 
\begin{corollary} Assume $D:R\to R$ is a duality transformations on the $\lambda$-ring theory  $R$. If $\Char\kk=0$ and $\phi$ has a selfdual factorization $\phi:\underline{\Ka}_0(Sm^{proj})\to R$, i.e.\ $D\circ\phi =\phi\circ D$, then $\DTS(\Ab,\phi)_d\in R(\iota_d)$ is also selfdual up to elements annihilated by $[\PP^{gcd(d)-1}]$.
\end{corollary}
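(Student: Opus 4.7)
The plan is to prove that $(D(\DTS(\Ab,\phi))_d)_{d\in\NN^{\oplus I}}$ is itself a Donaldson--Thomas function; by the uniqueness statement in Theorem~\ref{main_theorem} this forces $D(\DTS(\Ab,\phi))_d - \DTS(\Ab,\phi)_d$ to be annihilated by $[\PP^{\gcd(d)-1}]$, which is exactly the claim.

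To that end I would apply $D$ to the defining equation (\ref{defining_equation}). The duality $D$ commutes with $\phi$ by hypothesis, with the push-forward $\pi_{f\,!}$ because $\pi_f$ is projective, and with $\Sym$ because it is a morphism of $\lambda$-ring theories apart from the smooth pull-back twist. Extending to the Tate twist by the unique involutive choice $D(\LL^{1/2})=\LL^{-1/2}$, a direct check shows that for each smooth component $\Msp_{f,d,\gamma}$ of dimension $fd-(\gamma,\gamma)$ the class $\ICS_{\Msp_{f,d,\gamma}}=\LL^{-(fd-(\gamma,\gamma))/2}\unit$ is $D$-selfdual: the smooth pull-back twist $\LL^{-(fd-(\gamma,\gamma))}$ on $\unit$ exactly cancels the $\LL^{(fd-(\gamma,\gamma))/2}$ produced by $D$ on the Tate factor. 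Writing $A_d(f)$ for the degree-$d$ component of the left-hand side of (\ref{defining_equation}), one finds $D(A_d(f))=\LL^{-fd}A_d(f)$, while on the right-hand side $D(\LL^{1/2}[\PP^{fd-1}])=\LL^{1/2-fd}[\PP^{fd-1}]$. Applying $D$ to (\ref{defining_equation}) therefore yields
\[ \sum_{d}\LL^{-fd}\,A_d(f)\;=\;\Sym\Bigl(\sum_{d>0}\LL^{1/2-fd}[\PP^{fd-1}]\,D(\DTS(\Ab,\phi))_d\Bigr). \]

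The crucial observation is that $\LL\in\Pic(R)$: the relation $\sigma^n(\LL)=\LL^n$ holds already in $\underline{\Ka}_0(\Sch^{ft})$ because $\Sym^n\AA^1=\AA^n$, and it transports to $R$ via the $\lambda$-ring morphism provided by Theorem~\ref{main_theorem}. Consequently the degree-wise scaling $\Phi_f:R(\id)\to R(\id)$, $(a_d)_d\mapsto(\LL^{fd}a_d)_d$, is a $\lambda$-ring automorphism: multiplication by the Pic element $\LL^{fd}$ in each degree commutes with the convolution product and, via the identity $\sigma^n(\ell x)=\ell^n\sigma^n(x)$ for $\ell\in\Pic$, with each $\sigma^n$, hence with $\Sym$. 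Applying $\Phi_f$ to the dualized equation above cancels every $\LL^{-fd}$-factor on both sides and reproduces the defining equation (\ref{defining_equation}) with $\DTS(\Ab,\phi)_d$ replaced by $D(\DTS(\Ab,\phi))_d$. Therefore $D(\DTS(\Ab,\phi))$ is a Donaldson--Thomas function, and uniqueness finishes the proof.

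The subtlety worth flagging is that $\LL^{1/2}$ itself is \emph{not} in $\Pic(R)$ (only $-\LL^{1/2}$ is), so one cannot simply scale degree-by-degree by $\LL^{-fd/2}$ as a $\lambda$-ring automorphism. The cancellation via $\Phi_f$ succeeds precisely because it uses the full power $\LL^{fd}$, which lies in $\Pic$ regardless of the parity of $fd$; this is what makes the argument work uniformly over all framing vectors $f$, rather than only the even ones for which the auxiliary equation (\ref{defining_equation_2}) is directly available.
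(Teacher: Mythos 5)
Your proposal is correct and follows essentially the same route as the paper's own proof: both apply the duality $D$ to the defining equation, use $D(\LL^{1/2})=\LL^{-1/2}$ together with the self-duality of $\ICS_{\Msp_{f,d}}$ to identify the dualized left-hand side with an $\LL^{-fd}$-rescaling of the original, and then conclude by applying the $\lambda$-ring automorphism $(a_d)_d\mapsto(\LL^{fd}a_d)_d$ and invoking the uniqueness part of Theorem~\ref{main_theorem}. Your closing observation about why one rescales by $\LL^{fd}$ rather than $\LL^{fd/2}$ (the former is a $\Pic$ element for every $f$, the latter need not be) makes explicit a parity point the paper only flags in passing around equation (\ref{defining_equation_2}).
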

\begin{proof} Using $u:\PP^1\to \Spec\kk$ one shows $D_{\Spec\kk}(\LL)=\LL^{-1}$, and since $D_{\Spec\kk}$ is a $\lambda$-ring-automorphism, $D_{\Spec\kk}(\LL^{1/2})=\LL^{-1/2}$ follows. Hence $\ICS_X\in \underline{\Ka}_0(Sm^{proj}_X)[\LL^{-1/2}]$ is selfdual for every smooth $X\in \Sch_{\Msp}$. Using this and equation (\ref{defining_equation}), we compute
 \begin{eqnarray*}
\lefteqn{ \Sym\Big(\sum_{0\not= d\in \NN^{\oplus I}} \LL^{-1/2}[\PP^{fd-1}]\LL^{1-fd} D_{\iota_d}\big(\DTS(\Ab,W)_d\big)\Big) } \\
 && = \Sym\Big(D_{\id}\sum_{0\not= d\in \NN^{\oplus I}} \LL^{1/2}[\PP^{fd-1}]\DTS(\Ab,W)_d\Big), \\
 && = D_{\id}\Big(\Sym\Big(\sum_{0\not= d\in \NN^{\oplus I}} \LL^{1/2}[\PP^{fd-1}]\DTS(\Ab,W)_d\Big) \Big), \\
 && = D_{\id}\Big(\pi_{f!}\sum_{d\in \NN^{\oplus I}} \LL^{fd/2}\phi_{\pi_{f,d}}(\ICS_{\Msp_{f,d}})\Big), \\
 && =  \pi_{f!}D_{\pi_f}\sum_{d\in \NN^{\oplus I}} \LL^{fd/2}\phi_{\pi_{f,d}}(\ICS_{\Msp_{f,d}}), \\
 && =  \pi_{f!}\sum_{d\in \NN^{\oplus I}} \LL^{-fd/2}\phi_{\pi_{f,d}}\big(D_{\pi_d}(\ICS_{\Msp_{f,d}})\big), \\
 && =  \pi_{f!}\sum_{d\in \NN^{\oplus I}} \LL^{-fd/2}\phi_{\pi_{f,d}}(\ICS_{\Msp_{f,d}}). \\  
\end{eqnarray*}
By applying the $\lambda$-ring automorphism $R(\id)\ni (a_d)_{d\in \NN^{\oplus I}} \longmapsto (\LL^{fd}a_d)_{d\in \NN^{\oplus I}}\in R(\id)$ to both sides of the equation and using equation (\ref{defining_equation}), we get the result. 
\end{proof}
If $W:\Msp \to \AAA$ is a  potential such that $\Msp^{W}_d$ is proper for all $d\in \NN^{\oplus I}$, and if $\phi=W^\ast(\phi')$ for some selfdual vanishing cycle $\phi':\underline{\Ka}_0(Sm^{proj})\to R'$ over $\AAA$, the same argument can be applied to the Donaldson--Thomas invariant $\DT(\Ab,W^\ast(\phi'))_d$ proving its selfduality. Properness of $\Msp^{W}_d$ is important to obtain selfduality. A counterexample is given by the Jordan quiver consisting of only one loop, $W=0$ and the reduced  $\lambda$-ring theory  $\underline{\Ka}(\Sch^{ft})$ which has a duality transformation if $\Char\kk=0$ as $\underline{\Ka}_0(Sm^{proj})\cong\underline{\Ka}_0(\Sch^{ft})$ in this case. One has $\Msp_1=\AAA$ and $\DT(Q,mot))_1=\LL^{1/2}$ which is not selfdual.   \\
Apart from the fact that defining the Donaldson--Thomas invariant requires additional properties, this is another reason why the Donaldson--Thomas function is more fundamental than the Donaldson--Thomas invariant.

\subsection{Donaldson--Thomas sheaves}

In this subsection we provide a categorification of the previous one. Let $\phi$ be a $\underline{D}^b\QQ(Sm^{proj})$-algebra structure on a categorical ring $(Sm,proj)$-theory  $\altT$ over $\Msp$, and assume the existence of an object $\LL^{-1/2}\in \altT(\kk)$ satisfying $\Sym^n(\LL^{-1/2}[-1])=(\LL^{-1/2}[-1])^{\otimes n}$, and an isomorphism $(\LL^{-1/2})^{\otimes 2}\otimes \LL\cong \unit$, where $\LL:=\cone(1\to c_{\PP^1\,!} c_{\PP^1}^\ast(1))$ denotes the Lefschetz object. Assume moreover that the $\lambda$-ring structure descends from $\underline{\Ka}^0(\altT)$ to $\underline{\Ka}_0(\altT)$. Then,  $-\cl(\LL^{-1/2})=\cl(\LL^{-1/2}[-1])$ is in $\Pic(\Ka_0(\altT(\kk),\sigma_t))$, and $\underline{\Ka}_0(\phi)$ is a $\underline{\ZZ}(Sm^{proj})$-algebra structure on the $\lambda$-ring theory  $\underline{\Ka}_0(\altT)$ satisfying the assumptions of section \ref{Donaldson_Thomas_framed}. 
\begin{example} \rm Generalizing Example \ref{example5}, we can adjoin such an object $\LL^{-1/2}$ to every framed categorical ring theory  if $\altT(\kk)=D^b(\altA)$ is the bounded derived category of a Tannakian category $\altA$ containing $\QQ(-1):=\LL[2]$ and a tensor inverse $\QQ(1)$. Indeed $\LL^{-1/2}:=\QQ(1/2)[1]\in D^b(\altA[\QQ(1/2)])$ is a good choice. Thus, we obtain for example the reduced categorical ring theories  $D^b(\Perv)$, $D^b(\MHM_{mon})$  and $D^b(\MHM)[\LL^{-1/2}]$ for $\kk=\mathbb{C}$. 
\end{example}
Let us use the same shorthands as before, namely $\id:=\id_{\Msp}$, $\iota_d:=(\Msp_d\hookrightarrow \Msp)$ for $d\in \NN^{\oplus I}$ and $[X]=c_{X\,!}c^\ast_X(1)=\int_X\unit_X$ for a smooth projective scheme $X$. For a smooth, equidimensional $X$ we also introduce $\ICS_X=\LL^{-\dim X/2}\unit_X$ and $[X]_{vir}=\int_X \ICS_X$ if $X$ is also projective.
\begin{definition} 
A Donaldson--Thomas sheaf is an element \[ \DTS(\Ab,\phi)=(\DTS(\Ab,\phi)_d)_{d\in \NN^{\oplus I}} \in \altT(\id)=\prod_{d\in \NN^{\oplus I}}\altT(\iota_d) \] with $\DTS(\Ab,\phi)_0=0$  such that for every non-zero framing vector $0\not=f\in \NN^I$ the following equation holds\footnote{As in the previous subsection, the use of infinite directs sums is just a nice way to do computations. Each infinite sum should be seen as an element in an infinite product of categories.}  
\begin{equation}\pi_{f\,!}\bigoplus_{\gamma\in \Gamma} \LL^{(\gamma,\gamma)/2}\phi_{\pi_{f,\gamma}}\cong\Sym\Big(\bigoplus_{0\not= d\in \NN^{\oplus I}} \LL^{1/2}[\PP^{fd-1}] \DTS(\Ab,\phi)_d\Big)
\end{equation}
in $\altT(\id)$. Notice that $\PP^{fd-1}$ is the fiber of $\pi_{f,d}$ over $\Msp^s_d$. If $\phi$ is the canonical $\underline{D}^b\QQ(Sm^{proj})$-algebra structure of a reduced categorical ring $(Sm,proj)$-theory  $\altT$, we simply write $\DTS(\Ab,\altT)$. 
\end{definition} 
Apparently,  $\cl(\DTS(\Ab,\phi))=\DTS(\Ab,\underline{\Ka}_0(\phi))$ in $\underline{\Ka}_0(\altT)(\id)=\prod_{d\in\NN^{\oplus I}}\Ka_0(\altT(\iota_d))$.
If the framing vector $f$ is even, i.e.\ $f\in(2\NN)^I$, the map $(a_d)_{d\in\NN^{\oplus I}}\longmapsto (\LL^{-fd/2}a_d)_{d\in\NN^{\oplus I}}$ defines an  isomorphism on the triangulated tensor category $\altT(\id)$, and our defining equation is equivalent to 
\begin{equation}\pi_{f\,!}\bigoplus_{d\in \NN^{\oplus I}} \LL^{-\dim \Msp_{f,d}/2}\phi_{\pi_{f,d}}\cong\Sym\Big(\bigoplus_{0\not= d\in \NN^{\oplus I}}  [\PP^{fd-1}]_{vir} \DTS(\Ab,\phi)_d\Big). \end{equation}
The following theorem will be proven in  \cite{DavisonMeinhardt4}. Recall that $D^b(\MHM)[\LL^{-1/2}]_{alg}=D^b(\MHM)_{alg}[\LL^{-1/2}]$ denotes the smallest full subring theory  of $D^b(\MHM)[\LL^{-1/2}]$ containing all direct images of intersection complexes of smooth varieties under proper maps. Complexes of mixed Hodge modules in $D^b(\MHM_X)_{alg}$ are called of ``geometric origin''.  
\begin{theorem} If $\oplus:\Msp\times \Msp\longrightarrow \Msp$ is a finite morphism, then the sheaf $\DTS(\Ab,D^b(\MHM)[\LL^{-1/2}]_{alg}):=\ICS_{\overline{\Msp^s}}$ given by the (normalized) intersection complex of the singular closure of $\Msp^s$ inside $\Msp$ is a Donaldson--Thomas sheaf.
\end{theorem}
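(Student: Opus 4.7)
My plan is to unpack the defining equation on both sides and to reduce the entire statement to an application of the BBD decomposition theorem to the proper virtually small morphism $\pi_{f,d}:\Msp_{f,d}\to \Msp_d$. By the theorem recalled in Section \ref{Sec_fr_rep}, the scheme $\Msp_{f,d}$ is smooth (in particular $\phi_{\pi_{f,d}}\cong \unit_{\Msp_{f,d}}$ so that $\LL^{(\gamma,\gamma)/2}\phi_{\pi_{f,\gamma,d}}\cong \LL^{fd/2}\ICS_{\Msp_{f,\gamma,d}}$ after using $\dim\Msp_{f,\gamma,d}=fd-(\gamma,\gamma)$), and $\pi_{f,d}$ is virtually small with top open stratum $\Msp^s_d$ whose fibre is $\PP^{fd-1}$. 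Hence applying the decomposition theorem, the summand of $\pi_{f,d\,!}\ICS_{\Msp_{f,d}}$ supported on the closure of the top stratum $\Msp^s_d$ equals $[\PP^{fd-1}]_{vir}\cdot\ICS_{\overline{\Msp^s_d}}$, and after multiplication by $\LL^{fd/2}$ this matches exactly the top order term $\LL^{1/2}[\PP^{fd-1}]\ICS_{\overline{\Msp^s_d}}$ on the right-hand side.

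Next, I would identify the lower summands in the decomposition theorem with the convolution terms generated by $\Sym$. The polystable stratification of $\Msp$ indexes strata by unordered decompositions $d=\sum_i n_id_i$ with each $d_i$ admitting an absolutely simple representative; the corresponding stratum is, via the direct sum map $\oplus$, essentially an image of a product $\prod_i\Sym^{n_i}(\Msp^s_{d_i})$. The finiteness hypothesis on $\oplus$ is exactly what is needed so that the convolution product $\oplus_!(-\boxtimes -)$ preserves $D^b(\MHM_{\Msp})_{alg}$, and so that $\Sym^n\ICS_{\overline{\Msp^s_d}}$ can be computed as $+^n_!(\ICS_{\overline{\Msp^s_d}})^{\boxtimes n}/S_n$ inside $\MHM(\Sym^n\Msp)$. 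I would then match the fibre dimensions of $\pi_{f,d}$ over each polystable stratum with the virtual small-ness inequality $\dim\pi_{f,d}^{-1}(x_\lambda)-\dim\PP^{fd-1}\le \tfrac{1}{2}\codim S_\lambda$, showing that the fibre over the stratum indexed by $\sum n_id_i$ is a successive $\PP^{fd_i-1}$-bundle structure glued by the symmetric group action; decomposition theorem together with this fibration structure produces precisely the summand $\prod_i\Sym^{n_i}\bigl(\LL^{1/2}[\PP^{fd_i-1}]\ICS_{\overline{\Msp^s_{d_i}}}\bigr)$ of the plethystic exponential on the right.

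The computation can be organized as an induction on $|d|$: having established the identity for all $d'$ with $|d'|<|d|$, the defining equation determines $\DTS(\Ab,\phi)_d$ uniquely up to elements annihilated by $[\PP^{\gcd(d)-1}]$ (by Theorem \ref{main_theorem} applied in the categorified setting, using that $[\PP^n]$ is not a zero-divisor in $\underline{\Ka}_0(\MHM)[\LL^{-1/2}]$), and comparison of the top-stratum contribution above then forces $\DTS(\Ab,\phi)_d=\ICS_{\overline{\Msp^s_d}}$. The delicate and genuinely hard step — and the principal reason the proof is deferred to \cite{DavisonMeinhardt4} — is not the top-order match but the precise categorified identification of the lower strata contributions with the monadic/plethystic terms on the right. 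This requires a true categorification of the decomposition theorem for virtually small maps compatible with the monoidal structure induced by $\oplus$, including a semisimplicity statement for the intermediate extensions and a careful analysis of the local systems appearing on strata; one must also verify that the Tate twist bookkeeping (encoded by the symbols $\LL^{1/2}$ and the normalization of $\ICS$) is compatible with the Thom--Sebastiani type isomorphism for $\oplus_!$. Modulo that categorification, the matching is combinatorial and proceeds stratum by stratum.
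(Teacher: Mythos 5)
The paper does not actually prove this theorem here: the sentence immediately preceding it defers the proof to \cite{DavisonMeinhardt4}, so there is no in-paper argument to compare against. That said, your overall strategy does match the strategy behind the decategorified analogues in \cite{MeinhardtReineke} and \cite{Meinhardt4}: exploit smoothness of $\Msp_{f,d}$ and virtual smallness of $\pi_{f,d}$, apply the BBD decomposition theorem to pin down the contribution supported on the dense stratum as $\LL^{1/2}[\PP^{fd-1}]\ICS_{\overline{\Msp^s_d}}$, and then argue inductively using the $\Sym$-structure and the uniqueness statement of Theorem \ref{main_theorem}. You also correctly flag the genuinely hard ingredients (a categorified decomposition compatible with the $\oplus$-monoidal structure, semisimplicity, local systems on strata, and the $\LL^{1/2}$/normalization bookkeeping) as the content that is actually deferred to the companion paper.

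Where the sketch overreaches is the geometric claim in your middle paragraph. The fibre of $\pi_{f,d}$ over a polystable point on a deep stratum of type $\sum_i n_i d_i$ is not an iterated $\PP^{fd_i-1}$-bundle glued by a symmetric group action; it is a framed moduli space for the local ext-quiver (a non-commutative Hilbert scheme), whose dimension over such a point is generically strictly larger than $fd-1$ -- the strict form of the virtual smallness inequality on lower strata is exactly this statement. Consequently the plethystic terms on the right-hand side cannot be read off from a stratum-by-stratum fibration analysis: the decomposition theorem gives you supports and shift bounds, not a direct identification of the lower summands. In the known decategorified argument the identification of the lower-order terms comes from a generating-function identity (proved via the integration map or the framed generating series) combined with these a priori constraints, and the categorified statement additionally requires upgrading that identity to an actual direct-sum decomposition compatible with the convolution product -- which is why it is not a formal corollary of Theorem \ref{main_theorem_2}. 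So while the skeleton of your argument and the list of hard points are right, the specific mechanism you propose for matching the lower strata would fail if taken literally.
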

The assumption on $\oplus$ is fulfilled in all examples. (cf.\ Example \ref{finite_direct_sum}) 
The proof of the following result is same as for Theorem \ref{main_theorem}. It is a consequence of the fact that every morphism $\phi\in \CTh^\lambda(\Msp)$ of categorical ring $(Sm,proj)$-theories commutes with proper push-forwards and all Schur functors.  
\begin{theorem} \label{main_theorem_3}
Assume that $\oplus:\Msp\times\Msp\longrightarrow \Msp$ is a finite morphism and that $\phi:\underline{D}^b\QQ(Sm^{proj})\to \altT$ has a factorization \[\phi:\underline{D}^b\QQ(Sm^{proj})\xrightarrow{1}D^b(\MHM)[\LL^{-1/2}]_{alg}\xrightarrow{\phi} \altT\] for a morphism $\phi\in \CTh^\lambda(\Msp)$ of categorical ring $(Sm,proj)$-theories  over $\Msp$. Then, the family of objects
\[ \DTS(\Ab,\phi)_d:=\begin{cases} \phi_{\iota_d}(\ICS^{mhm}_{\Msp_d}) & \mbox{if } \Msp^s_d\not=\emptyset, \\ 0 &\mbox{else} \end{cases} \]
is a Donaldson--Thomas sheaf. 
\end{theorem}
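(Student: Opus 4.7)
The strategy is to transport the defining identity for the Donaldson--Thomas sheaf in $D^b(\MHM)[\LL^{-1/2}]_{alg}$, provided by the preceding categorical intersection-complex theorem, forward along the $\CTh^\lambda(\Msp)$-morphism $\phi$. Every operation occurring in the defining equation -- proper push-forwards, Lefschetz twists, exterior products, and the Schur functors composing $\Sym$ -- is preserved by morphisms in $\CTh^\lambda(\Msp)$, so the proof is essentially functorial and mirrors the pattern of Theorem \ref{main_theorem_2} one categorical level higher.

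First, the preceding theorem gives that $\ICS_{\overline{\Msp^s}}\in D^b(\MHM)[\LL^{-1/2}]_{alg}(\id)$ is a Donaldson--Thomas sheaf; its component over $\Msp_d$ is $\iota_{d\,!}\ICS^{mhm}_{\Msp_d}$ when $\Msp^s_d\not=\emptyset$ (with $\ICS^{mhm}_{\Msp_d}$ interpreted as the intersection complex of $\overline{\Msp^s_d}$ extended by zero to $\Msp_d$) and zero otherwise. Hence for every non-zero framing $f\in \NN^I$ the identity
\[ \pi_{f\,!}\bigoplus_{\gamma\in \Gamma}\LL^{(\gamma,\gamma)/2}\,\phi_{\pi_{f,\gamma}} \;\cong\; \Sym\Big(\bigoplus_{\substack{0\not= d\in \NN^{\oplus I}\\ \Msp^s_d\not=\emptyset}}\LL^{1/2}[\PP^{fd-1}]\,\iota_{d\,!}\ICS^{mhm}_{\Msp_d}\Big) \]
holds in $D^b(\MHM)[\LL^{-1/2}]_{alg}(\id)$, where $\phi_{\pi_{f,\gamma}}$ denotes the image of $\pi_{f,\gamma}$ under the canonical $\underline{D}^b\QQ(Sm^{proj})$-algebra structure. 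Apply the morphism $\phi:D^b(\MHM)[\LL^{-1/2}]_{alg}\to \altT$ componentwise. Being in $\CTh^\lambda(\Msp)$, it is $\QQ$-linear and triangulated, commutes with $\pi_{f\,!}$ and $\iota_{d\,!}$, preserves $\LL$ and $\LL^{\pm 1/2}$, is symmetric monoidal for $\boxtimes$, and commutes with each Schur functor $\Sym^n$ and thus with $\Sym$. By the prescribed factorization, $\phi$ sends the MHM algebra class $\phi_{\pi_{f,\gamma}}$ to its $\altT$ counterpart $\phi_{\pi_{f,\gamma}}$, and the transported identity is exactly the defining equation identifying the prescribed family $\phi_{\iota_d}(\ICS^{mhm}_{\Msp_d})$ (respectively $0$) with a Donaldson--Thomas sheaf for $\altT$.

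The principal obstacle -- and the reason one requires $\phi\in \CTh^\lambda(\Msp)$ rather than merely $\phi\in \CTh(\Msp)$ -- is compatibility with the Schur functors comprising $\Sym$ on the right-hand side. This is, however, built into the definition of a morphism of $\lambda$-ring categorical theories, namely symmetric monoidality with respect to the convolution tensor product on $\altT(\Sym f)$ that underlies $\Sym^n$, so no extra verification is required; all other compatibilities (with push-forwards, shifts, direct sums, Lefschetz twists, and the factorization of the $\underline{D}^b\QQ(Sm^{proj})$-algebra structure) follow directly from $\phi$ being a morphism of categorical ring theories over $\Msp$.
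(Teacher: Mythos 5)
Your proof is correct and follows essentially the same route the paper takes: invoke the preceding theorem (that $\ICS_{\overline{\Msp^s}}$ is a Donaldson--Thomas sheaf for $D^b(\MHM)[\LL^{-1/2}]_{alg}$, to be established in the companion paper) and then transport its defining identity along $\phi$, using precisely that a morphism in $\CTh^\lambda(\Msp)$ commutes with proper push-forwards, the exterior product, the Lefschetz twists, and the Schur functors underlying $\Sym$. One small notational slip: since $\altT(\id)=\prod_d\altT(\iota_d)$, the $d$-th component of $\ICS_{\overline{\Msp^s}}$ is an object of $D^b(\MHM)[\LL^{-1/2}]_{alg}(\iota_d)$, namely $\ICS^{mhm}_{\Msp_d}$ itself (extended by zero from $\overline{\Msp^s_d}$), not $\iota_{d\,!}\ICS^{mhm}_{\Msp_d}$; the push-forward is unnecessary and would land the object in the wrong component of the product.
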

\begin{example}\rm
 If we apply the theorem to $\phi=\rat:D^b(\MHM)[\LL^{-1/2}] \to D^b(\Perv)$ mapping $\LL^{-1/2}$ to $\QQ[1]$, we get $\DTS(\Ab,perv)_d=\ICS^{perv}_{\Msp_d}$ if $\Msp^s_d\not=\emptyset$ and $0$ else. 
\end{example}
\begin{example} \rm
 Let $\phi$ be the pull-back of  \[\phi^{mhm}:D^b(\MHM)[\LL^{-1/2}]_{alg} \to D^b(\MHM_{mon})\] under a potential $W:\Msp\to \AAA$. Then the theorem applies, and we get the corresponding Donaldson--Thomas sheaves 
 \[ \DTS(\Ab,W)^{mhm}_d:=\begin{cases} \phi^{mhm}_{W_d}(\ICS^{mhm}_{\Msp_d}) & \mbox{if } \Msp^s_d\not=\emptyset, \\ 0 &\mbox{else.} \end{cases} \]
 in $D^b(\MHM_{mon}(\Msp^{W}_d))$. A similar formula holds for the pull-back of $\phi^{perv}\circ \rat: D^b(\MHM)[\LL^{-1/2}]_{alg} \to D^b(\Perv_{mon})$.
\end{example}

\section{($\lambda$-)ring theories on Artin stacks} \label{theories_Artin_stacks}

This section provides an extension of the theory developed in section \ref{theories} to (quotient) stacks which will be used in section \ref{Donaldson_Thomas_general_version}. Most of the definitions have a  straight forward generalization. We will see that certain $(\Sm,\Pro)$-theories behave very nicely with respect to restriction from stacks to schemes. \\
Let $\St_\kk$ denote a fiber product closed 2-category of Artin stacks $\Xst$ over $\kk$  having an atlas in $\Sch_\kk$ and such that every fiber product $Y\times_\Xst Z$ with $Y,Z\in \Sch_\kk$ is again in $\Sch_\kk$. The latter conditions ensures that we can avoid algebraic spaces and that $\Xst$ is the ``quotient stack'' of a groupoid $s,t:X_1\leftrightarrows X_0:\unit$ in $\Sch_\kk$ with smooth $s,t$. A special case of such a groupoid is given by group actions $m:X_1:=X\times G \to X=:X_0$ via $s=\pr_X, t=m, \unit=\id_X\times c_1$ giving rise to quotient stacks $X/G$. 
Similarly, given a commutative monoid $(\MM,+,0)$ in $\Sch_\kk$, the 2-category $\St_\MM$ consists of all morphisms $\fst:\Xst \to \MM$ in $\St_\kk$. Such an object can always be written as a ``quotient'' of a groupoid in $\Sch_\MM$, i.e.\ there is a commutative diagram
\begin{equation}\label{quotient_stack} \xymatrix { X_1 \ar[dd]_s \ar[rr]^t \ar[dr]^{f_1} & & X_0 \ar[dd]^\rho \ar[dl]_{f_0} \\ & \MM & \\ X_0 \ar[rr]_\rho \ar[ru]^{f_0} & & \Xst \ar[ul]^{\fst} } \end{equation}
with the outer square being cartesian and co-cartesian. A special case is given for a $G$-action on a scheme $X$ equipped with a $G$-invariant function $f:X\to \MM$.
The categories $\Sch_\kk$ and $\Sch_\MM$ are full subcategories of $\St_\kk$ and $\St_\MM$ respectively.
\begin{example}\rm \label{quotient_stack_3}
Let $\mathcal{G}$ be a class of algebraic groups which is closed under cartesian products (over $\kk$) and contains the trivial group. Let $\St_\kk:=\QSt_\kk^\mathcal{G}$ be the 2-category of  disjoint unions of quotient stacks $\Xst=\sqcup_{j\in J} X_j/G_i$ with $X_j\in \Sch_\kk$ and $G_j\in \mathcal{G}$ for all $j\in J$. This category satisfies our assumptions for every choice of $\mathcal{G}$. We write $\QSt^\mathcal{G}_\MM$ for $\St_\MM$ in this case. 
\end{example}
Given a homomorphism $u:\MM\to N$ of commutative monoids, we get an adjoint pair $u_!:\St_\MM \leftrightarrows \St_N:u^!$ of functors with $u_!$ preserving the $\boxtimes$-product which is defined as for schemes over monoids. As in section \ref{theories} we fix two subcategories $\SSm,\PPro\subseteq \St_k$ having the same properties (1) -- (8) as for schemes. The intersections $\Sm:=\SSm\cap \Sch_\kk$ and $\Pro:=\PPro\cup\Sch_\kk$ will satisfy (1) -- (8) as well. As before, $\SSm_\MM$ and $\PPro_\MM$ denote the corresponding classes of morphisms over $\MM$.
\begin{example} \rm The classes of smooth, proper and finite type morphisms have natural extensions to $\St_\kk$ by requiring that morphisms are representable and become smooth, proper or of finite type after every pull-back to a scheme. More generally, given a pair $(\Sm,\Pro)$ of subcategories in $\Sch_\kk$ satisfying (1) -- (8), we denote with $\SSm:=\Sm^{st}$ and $\PPro:=\Pro^{st}$ the class of representable morphisms whose pull-back to schemes are in $\Sm$ and $\Pro$ respectively. The pair $(\Sm^{st},\Pro^{st})$ has also the properties (1) -- (8). Notice that $ft^{st}$ is in general strictly contained in the class $\mathfrak{ft}$ of morphisms $\alpha:\Xst\to \Yst$ such that $\pi_0(\alpha):\pi_0(\Xst)\to \pi_0(\Yst)$ has finite fibers. 
\end{example}

\begin{definition}
A stacky  ring $(\SSm,\PPro)$-theory  over $\MM$ is a rule $R$ associating to every object $\fst:\Xst\to \MM$ in  $\St_\MM$ an abelian group $R(\fst)$, to every morphism $u:\gst\to \fst$ in $\St_\MM$ a pull-back $u^\ast:R(\fst)\to R(\gst)$ if $u\in \SSm_\MM$ and a push-forward $u_!:R(\gst)\to R(\fst)$ if $u\in \PPro_\MM$, and a $\boxtimes$-product for two objects $\fst,\gst\in \St_\MM$,  satisfying exactly the same properties as a ring $(\Sm,\Pro)$-theory  if we replace $\Sch_\MM$ with $\St_\MM$. It is clear that the restriction $R|_{\Sch_\kk}$ to $\Sch_\MM$ of a stacky ring $(\SSm,\PPro)$-theory $R$ is a ring $(\Sm=\SSm\cap\Sch_\kk,\Pro=\PPro\cap\Sch_\kk)$-theory, and $R$ is called a stacky $\lambda$-ring $(\SSm,\PPro)$-theory, if $R|_{\Sch_\MM}$ is actually a $\lambda$-ring $(\Sm,\Pro)$-theory. The definition of morphisms between stacky  ($\lambda$-)ring $(\SSm,\PPro)$-theories  is straight forward, giving rise to categories $\Th^{(\lambda)}(\MM)_{st}$. A stacky ring $(\St_k,ft)$-theory  is called motivic if $i_!i^\ast(a)+j_!j^\ast(a)=a$ 
holds for all $a\in R(\fst)$ and all closed substacks $i:\Zst\hookrightarrow \Xst$ with open complement $j:\Xst\setminus \Zst\hookrightarrow \Xst$ (cf.\ Lemma \ref{motivic_ring theory }).
\end{definition}
We point out that $\sigma^n$-operations of stacky $\lambda$-ring $(\SSm,\PPro)$-theories  are only required for morphisms $f:X\to \MM$ on schemes. This is related to the fact that the stacky ``generalization'' of $\Sym^n(X)$ would produce the Deligne--Mumford quotient stack $X^n/S_n$ which does not coincide with its coarse moduli space $\Sym^n(X)$ used in the definition for schemes. We avoid this issue by not considering stacky symmetric powers. \\ 
The stacky analogue of the $\cap$-product is well-defined for stacky ring $(\SSm,\PPro)$-theories if $\SSm$ contains closed embeddings, and Proposition \ref{algebraic_structure_ex} remains true. We can pull-back and for $u\in \Pro$ also push-forward stacky ring $(\SSm,\PPro)$-theories  along homomorphisms $u:\MM\to N$ of commutative monoids. In particular, the reduction is well-defined and the same is true for $R^{fib}$ producing  examples of non-reduced stacky ring $(\SSm,\PPro)$-theories. \\

In order to compare $(\SSm,\PPro)$-theories on $\St_\MM$ with their restrictions to $\Sch_\kk$, we need to add an assumption on $(\SSm,\PPro)$.\\
\\
\textbf{Property (0):} Every stack $\Xst$ has a non-empty distinguished class of diagrams (\ref{quotient_stack}) with $s,t$ and $\rho$ in $\SSm$. Such a groupoid is called an $\SSm$-groupoid for $\Xst$. The pull-back of an $\SSm$-groupoid for $\Xst$ along a representable morphism $\alpha:\Yst\to \Xst$  is an $\SSm$-groupoid for $\Yst$. The cartesian product (over $\kk$) of $\SSm$-groupoids for $\Xst$ and $\Yst$ is an $\SSm$-groupoid for $\Xst\times_\kk \Yst$. Moreover, every morphism in $\PPro$ should be representable.   

\begin{example} \rm
If $\St_\kk=\Sch_\kk$ and if $(\Sm,\Pro)$ satisfies the properties (1) -- (8), then property (0) is automatically true if the only $\Sm$-groupoid for $X$ is $(X,X,\id_X,\id_X,\id_X)$. 
\end{example}
\begin{example} \rm
To every $\Xst=\sqcup_{j\in J}X_j/G_j$ in $\QSt_\kk^\mathcal{G}$ we associate the groupoid 
\[ \xymatrix @C=1.5cm { \sqcup_{j\in J} \big(G_j\times_\kk X_j\big) \ar[d]_{\sqcup_{j\in J}\pr_{X_j}} \ar[r]^{\sqcup_{j\in J} m_j } & \sqcup_{j\in J} X_j \ar[d]^{\sqcup_{j\in J} \rho_j} \\ \sqcup_{j\in J} X_j \ar[r]_{\sqcup_{j\in J} \rho_j} & \sqcup_{j\in J} X_j/G_j } \]
with $m_j:G_j\times_\kk X_j \longrightarrow X_j$ denoting the action of $G_j$ on $X_j$. We require that all morphism of this diagram are in $\SSm$ making this groupoid into an $\SSm$-groupoid for $\Xst$. Notice that a stack might have different realizations as a quotient stack. If for example $\rho:P\to X$ is a principal $G$-bundle on $X$, then $X=P/G$, and $\rho$ must be in $\Sm=\SSm\cap \Sch_\kk$. Therefore, property (0) puts already severe restrictions on $\Sm=\SSm\cap \Sch_\kk$ if we are dealing with non-trivial stacks.  
\end{example}
Given a (stacky) ring theory  $R$ over $\MM$ and an $\SSm$-groupoid $f_\bullet=(f_0,f_1,s,t,\unit)$ as in diagram (\ref{quotient_stack}) above with $s,t,\rho\in \SSm$, we define $R(f_0)^{f_\bullet}$ to be the subgroup $\{a\in R(f_0)\mid s^\ast(a)=t^\ast(a)\}$ of $R(f_0)$. If the $\SSm$-groupoid $f_\bullet$ is associated to an action of an algebraic group $G$ on a scheme $X$ equipped with a $G$-invariant function $f$, we simply write $R(f)^G$ instead of $R(f)^{f_\bullet}$ for the subgroup of $G$-invariant elements. With this notation at hand, we can make the following definition which depends on the choice of $\SSm$-groupoids associated to every stack. 
\begin{definition}
 We denote with $\tilde{\Th}^{(\lambda)}(\MM)_{st}\subseteq \Th^{(\lambda)}(\MM)_{st}$ the subcategory of stacky  ($\lambda$-)ring $(\SSm,\PPro)$-theories  for which $\rho^\ast:R(\fst)\to R(f_0)^{f_\bullet}$ is an isomorphism for all $\SSm$-groupoids as in diagram (\ref{quotient_stack}). The subcategory $\tilde{\Th}^{(\lambda)}(\MM)\subset \Th^{(\lambda)}( \MM)$ is defined accordingly using schemes and $\Sm$-groupoids with quotients in $\Sch_\kk$. 
\end{definition}
Note that the definition of $\tilde{\Th}^{(\lambda)}(\MM)_{(st)}$ does not require the representability of morphisms in $\PPro$.
\begin{example} \rm The  $\lambda$-ring $(\Sch_\kk,ft)$-theory  $\Con$ is in $\tilde{\Th}^\lambda(\kk)$. Apparently, the defining property of $\tilde{\Th}^{(\lambda)}(\MM)_{(st)}$ is stable under base change $u:\MM\to N$. Using $\Con$, we get reduced $\lambda$-ring $(\Sch_\kk,ft)$-theories  in $\tilde{\Th}^{\lambda}(\MM)$ for all $\MM$.  
\end{example}
\begin{example} \rm The ring $(\SSm,\PPro)$-theory $\underline{\ZZ}(\SSm^{\PPro})$ has no reason to be in $\tilde{\Th}(\MM)_{st}$. For example, if $(X,G\times_\kk X, \pr_X,m,c_1\times\id_X)$ is an $\SSm$-groupoid for a quotient stack $X/G$ in $\St_\kk$, then $\pr_X^\ast(a)=m^\ast(a)$ applied to a generator $a=[Z\xrightarrow{p} X]$ of $\underline{\ZZ}(\SSm^{\PPro}_X)=\underline{\ZZ}(\Sm^{\Pro}_X)$ with representable $p\in\PPro$ and $Z$ an $\Sm$-scheme just means that $G\times_\kk Z$ is isomorphic to $(G\times_\kk X)\times_{m,X,p} Z$. If that isomorphism satisfies a certain cocycle condition, it induces a $G$-action on $Z$ making $p$ $G$-equivariant, and the generator descends to a generator $[Z/G \to X/G]$ of $\underline{\ZZ}(\SSm^{\PPro}_{X/G})$. However, there is no reason that the cocycle condition holds. A similar argument applies to $\underline{\Ka}_0(\SSm^{\PPro})$ but a modification of $\underline{\Ka}_0(\SSm^{\PPro})$ has better behavior. (See Example \ref{extension_4})   
\end{example}

\begin{theorem} \label{extension_2} Assume that the pair $(\SSm,\PPro)$ has the properties (0) -- (8). If $R,R'$ are ($\lambda$-)ring $(\SSm,\PPro)$-theories with $R$ being in $\tilde{\Th}^{(\lambda)}(\MM)_{st}$, then
\[ \Hom_{\Th^{(\lambda)}(\MM)_{st}}(R',R)\longrightarrow \Hom_{\Th^{(\lambda)}(\MM)}(R'|_{\Sch_\MM}, R|_{\Sch_\MM}) \]
given by restriction to schemes is an isomorphism. Moreover, the forgetful functor $\Th^{(\lambda)}(\MM)_{st} \ni R\longrightarrow R|_{\Sch_\MM} \in \Th^{(\lambda)}(\MM)$ is an equivalence between the categories $\tilde{\Th}^{(\lambda)}(\MM)_{st}$ and $\tilde{\Th}^{(\lambda)}(\MM)$.
\end{theorem}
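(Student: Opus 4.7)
The plan is to first establish the Hom-bijection, which gives full faithfulness of the restriction functor on all of $\tilde{\Th}^{(\lambda)}(\MM)_{st}$, and then construct a quasi-inverse on $\tilde{\Th}^{(\lambda)}(\MM)$ to realize the equivalence. Throughout, the basic mechanism is that for any $\fst\colon \Xst\to \MM$ with chosen $\SSm$-groupoid presentation $f_\bullet=(s,t\colon X_1\rightrightarrows X_0,\rho,f_0,f_1)$, the condition $R\in \tilde{\Th}^{(\lambda)}(\MM)_{st}$ says exactly that $\rho^\ast\colon R(\fst)\xrightarrow{\sim} R(f_0)^{f_\bullet}$. This single isomorphism will both force uniqueness of extensions of morphisms (hence faithfulness) and let us transport scheme-level data to stacks (hence fullness and essential surjectivity).

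For the Hom-bijection, given $\eta\colon R'|_{\Sch_\MM}\to R|_{\Sch_\MM}$, I would define its extension by
\[
\tilde{\eta}_{\fst}:=(\rho^\ast_R)^{-1}\circ \eta_{f_0}\circ \rho^\ast_{R'}\colon R'(\fst)\to R(\fst).
\]
This is well-defined because $\eta_{f_0}(\rho^\ast_{R'}(a))\in R(f_0)^{f_\bullet}$: the relation $\rho s=\rho t$ between scheme morphisms in $\SSm$ combined with naturality of $\eta$ gives $s^\ast \eta_{f_0}\rho^\ast_{R'}(a)=\eta_{f_1}(\rho s)^\ast(a)=\eta_{f_1}(\rho t)^\ast(a)=t^\ast \eta_{f_0}\rho^\ast_{R'}(a)$. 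Independence of the chosen $\SSm$-groupoid follows by observing that any two presentations of $\Xst$ admit a common refinement: the scheme-theoretic fibre product $X_0\times_{\Xst}X_0'$ is a scheme by our assumption that fibre products of schemes over $\St_\kk$-objects stay in $\Sch_\kk$, and the legs to $X_0$ and $X_0'$ are in $\SSm$ as base changes of $\rho\in\SSm$; invariants on either side pull back to invariants on the refinement. Faithfulness is immediate since $\rho^\ast_R$ is injective, so $\tilde{\eta}_\fst$ is forced by its restriction. Fullness amounts to checking $\tilde{\eta}$ commutes with pull-backs in $\SSm_\MM$, push-forwards in $\PPro_\MM$, and $\boxtimes$-products; each verification reduces to the corresponding identity for $\eta$ at the scheme level after pulling back to compatible atlases, using base change in $R$ and $R'$.

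For essential surjectivity, given $R_0\in\tilde{\Th}^{(\lambda)}(\MM)$, I would define a stacky theory $R$ by $R(\fst):=R_0(f_0)^{f_\bullet}$ for any $\SSm$-groupoid presentation $f_\bullet$, with functoriality in the presentation supplied by the refinement argument above. Pull-backs along $u\colon \gst\to \fst$ in $\SSm_\MM$ are defined after passing to a compatible pair of atlases: given atlases $\rho_X\colon X_0\to\Xst$ and $\rho_Y\colon Y_0\to\Yst$, the scheme $W_0:=Y_0\times_{\Xst}X_0$ is another $\SSm$-atlas of $\Yst$ (the refinement), and the induced map $W_0\to X_0$ lies in $\SSm$ as a base change along $\rho_X\in\SSm$; the pull-back $u^\ast$ is then the restriction of the scheme-level pull-back $(W_0\to X_0)^\ast\colon R_0(f_0)\to R_0(h_0)$ to invariants. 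Push-forwards along morphisms in $\PPro_\MM$, which are representable by property~(0), are defined by pulling back an atlas of the target along $u$ and restricting the scheme-level push-forward. Exterior products use that cartesian products of $\SSm$-groupoids are $\SSm$-groupoids for the product stack (also property~(0)); the $\lambda$-operations $\sigma^n$ are required only on schemes and are inherited directly from $R_0$. For a scheme $X$, the trivial groupoid $(X\rightrightarrows X,\id,\id)$ is an $\SSm$-groupoid, so $R|_{\Sch_\MM}=R_0$, and $R\in\tilde{\Th}^{(\lambda)}(\MM)_{st}$ by construction.

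The main obstacle is that property~(0) only guarantees that pull-backs of $\SSm$-groupoids along \emph{representable} morphisms of stacks are $\SSm$-groupoids, while a general $u\in\SSm_\MM$ need not be representable. The remedy is the fibre-product trick with atlases: for any $u$, we obtain an "atlas-level" lift $W_0\to X_0$ living in $\SSm\cap\Sch_\kk$ (because $W_0$ is a scheme and the lift is a scheme-level base change of $\rho_X$), and this lift is canonically determined up to the refinement equivalence. Once this is in place, the verification that the constructed $R$ satisfies base change, bifunctoriality of the $\boxtimes$-product, and the relevant identities between $\sigma^n$, $\boxtimes$, pull-backs, and push-forwards is straightforward: each identity is an identity in $R_0(f_0)^{f_\bullet}\subseteq R_0(f_0)$ that holds already in $R_0(f_0)$ because $R_0$ is a (commutative) $(\Sm,\Pro)$-theory on schemes. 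Combined with the Hom-bijection, this proves the restriction functor is an equivalence $\tilde{\Th}^{(\lambda)}(\MM)_{st}\simeq \tilde{\Th}^{(\lambda)}(\MM)$.
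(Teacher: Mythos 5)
Your proposal follows essentially the same route as the paper's proof: extend morphisms and theories from schemes to stacks via the isomorphism $\rho^\ast\colon R(\fst)\xrightarrow{\sim}R(f_0)^{f_\bullet}$, check independence of the chosen $\SSm$-groupoid by passing to the fibre product of atlases, and use representability (from property (0)) to handle push-forwards. The paper spells out the independence and compatibility checks via two explicit cartesian diagrams (one with exact rows and columns identifying both invariant subgroups with a common kernel $K$, and one for the pull-back/push-forward verifications involving $X_0\times_\Yst Y_\bullet$ and $X_1\times_\Yst Y_\bullet$ as $\SSm$-groupoids for the schemes $X_0$ and $X_1$), whereas you assert that "invariants on either side pull back to invariants on the refinement" and that $W_0$ is an $\SSm$-atlas of $\Yst$ without supplying the descent step back to $R_0(Y_0)^{Y_\bullet}$; these claims are true but each needs the kind of bookkeeping the paper records, since property (0) only gives pull-backs of $\SSm$-groupoids along representable morphisms and one must invoke representability of morphisms from schemes (guaranteed by the running assumption that fibre products of schemes over objects of $\St_\kk$ stay in $\Sch_\kk$) to make $X_0\times_\Xst Y_\bullet$, $X_1\times_\Xst Y_\bullet$ legitimate $\SSm$-groupoids on the relevant schemes.
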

\begin{proof} For convenience we assume $\MM=\Spec\kk$. The arguments for general $\MM$ are literally the same. Let us start with the proof of the first statement and assume that $\eta:R'\to R$ is a natural transformation with $\eta|_{\Sch_\kk}=0$. Let $\Xst$ be in $\St_\kk$ with $\SSm$-groupoid $X_\bullet=(X_0,X_1,s,t,\unit_X)$ and quotient map $\rho:X_0\to \Xst$. Thus $\rho^\ast(\eta_\Xst(a))=\eta_{X_0}(\rho^\ast(a))=0$ for every $a\in R'(\Xst)$. As $\rho^\ast:R(\Xst)\to R(X_0)$ is a monomorphism, we get $\eta_\Xst=0$ for all $\Xst\in \St_\kk$, and injectivity of the restriction map is proven.\\
To prove surjectivity, we start with a natural transformation $\eta:R'|_{\Sch_\kk}\longrightarrow R|_{\Sch_\kk}$. For arbitrary $\Xst\in \Sch_\kk$ we choose an $\SSm$-groupoid $X_\bullet$ as above and $a\in R'(\Xst)$. As $\rho^\ast(a)\in R'(X_0)^{X_\bullet}$, we conclude $\eta_{X_0}(\rho^\ast(a))\in R(X_0)^{X_\bullet}$ and define $\eta_\Xst(a)$ to be the unique element in $R(\Xst)$ such that $\rho^\ast(\eta_\Xst(a))=\eta_{X_0}(\rho^\ast(a))$. This definition does not depend on the choice of an $\SSm$-groupoid. Indeed, if $Y_\bullet=(Y_0,Y_1,u,v,\unit_Y)$ is another $\SSm$-groupoid for $\Xst$ with quotient map $\tau:Y_0\to \Xst$. We form the fiber product diagram
\begin{equation}\label{independence} \xymatrix @C=1.5cm @R=1.5cm { X_1\times_\Xst Y_1  \ar@/_1pc/[d]_{v''} \ar@/^1pc/[d]^{u''} \ar@/^1pc/[r]^{s''} \ar@/_1pc/[r]_{t''} &  X_0\times_\Xst Y_1  \ar@{.>}[l] \ar@/^1pc/[d]^{u'} \ar@/_1pc/[d]_{v'} \ar[r]^{\rho''} & Y_1 \ar@/^1pc/[d]^{u} \ar@/_1pc/[d]_{v} \\ 
X_1 \times_\Xst Y_0 \ar@{.>}[u] \ar[d]^{\tau''} \ar@/^1pc/[r]^{s'} \ar@/_1pc/[r]_{t'} & X_0\times_\Xst Y_0 \ar@{.>}[u] \ar@{.>}[l] \ar[d]^{\tau'} \ar[r]^{\rho'} & Y_0 \ar@{.>}[u] \ar[d]^\tau \\
X_1 \ar@/^1pc/[r]^s \ar@/_1pc/[r]_t & X_0 \ar@{.>}[l] \ar[r]^\rho & \Xst }
\end{equation}
with  all rows being the pull-back of the lower row and all columns being the pull-back of the right column. Dotted arrows indicate the pull-backs of $\unit_X$ and $\unit_Y$ respectively. In particular, $X_0,X_1,Y_0$ and $Y_1$ are quotients of the $\SSm$-groupoids shown in the diagram.
Thus, 
\begin{eqnarray*}\tau'^\ast\rho^\ast(\eta^{(X_\bullet)}_\Xst(a))&=&\tau'^\ast(\eta_{X_0}(\rho^\ast(a)))\;=\;\eta_{X_0\times_\Xst Y_0}(\tau'^\ast \rho^\ast(a)) \\&=&\eta_{X_0\times_\Xst Y_0}(\rho'^\ast\tau^\ast(a))\;=\;\rho'^\ast(\eta_{Y_0}(\tau^\ast(a))\\&=&\rho'^\ast\tau^\ast(\eta_\Xst^{(Y_\bullet)}(a)) \;=\;\tau'^\ast\rho^\ast(\eta_\Xst^{(Y_\bullet)}(a)),\end{eqnarray*}
and $\eta^{(X_\bullet)}_\Xst(a)=\eta^{(Y_\bullet)}_\Xst(a)$ follows for every $a\in R'(\Xst)$. Having this at hand, we can easily prove that $\eta$, extended to stacks, commutes with pull-backs and push-forwards. For this consider a morphism $\alpha:\Xst\to \Yst$ in $\SSm$ and  $\SSm$-groupoids $X_\bullet=(X_0,X_1,s,t,\unit_X)$ for $\Xst$ and $Y_\bullet=(Y_0,Y_1,u,v,\unit_Y)$ for $\Yst$. We form the following diagram
\begin{equation} \label{big_diagram} \xymatrix @C=1.5cm @R=1.5cm { X_1\times_\Yst Y_1  \ar@/_1pc/[d]_{v'''} \ar@/^1pc/[d]^{u'''} \ar@/^1pc/[r]^{s''} \ar@/_1pc/[r]_{t''} &  X_0\times_\Yst Y_1  \ar@{.>}[l] \ar@/^1pc/[d]^{u''} \ar@/_1pc/[d]_{v''} \ar[r]^{\rho''} & \Xst \times_\Yst Y_1 \ar@/^1pc/[d]^{u'} \ar@/_1pc/[d]_{v'} \ar[r]^{\alpha''} & Y_1 \ar@/^1pc/[d]^{u} \ar@/_/[d]_{v}\\ 
X_1 \times_\Yst Y_0 \ar@{.>}[u] \ar[d]^{\tau'''} \ar@/^1pc/[r]^{s'} \ar@/_1pc/[r]_{t'} & X_0\times_\Yst Y_0 \ar@{.>}[u] \ar@{.>}[l] \ar[d]^{\tau''} \ar[r]^{\rho'} & \Xst \times_\Yst Y_0 \ar@{.>}[u] \ar[d]^{\tau'} \ar[r]^{\alpha'} & Y_0 \ar@{.>}[u] \ar[d]^\tau \\
X_1 \ar@/^1pc/[r]^s \ar@/_1pc/[r]_t & X_0 \ar@{.>}[l] \ar[r]^\rho & \Xst \ar[r]^\alpha & \Yst }\end{equation} 
with  all rows being the pull-back of the lower row and all columns being the pull-back of the right column. Dotted arrows indicate the pull-backs of $\unit_X$ and $\unit_Y$ respectively. Thus,
\begin{eqnarray*}\tau''^\ast\rho^\ast\alpha^\ast(\eta_\Yst(a))&=&\rho'^\ast\alpha'^\ast\tau^\ast(\eta_\Yst(a))\;=\;\rho'^\ast\alpha'^\ast\eta_{Y_0}(\tau^\ast(a)) \\&=& \eta_{X_0\times_\Yst Y_0}(\rho'^\ast\alpha'^\ast\tau^\ast(a))\;=\;\eta_{X_0\times_\Yst Y_0}(\tau''^\ast\rho^\ast\alpha^\ast(a)) \\&=& \tau''^\ast(\eta_{X_0}(\rho^\ast\alpha^\ast(a)))\;=\;\tau''^\ast\rho^\ast(\eta_\Xst(\alpha^\ast(a))).\end{eqnarray*}
As $\tau''^\ast\rho^\ast$ is injective, $\alpha^\ast\eta_\Yst=\eta_\Xst\alpha^\ast$ follows. As $\alpha\in \PPro$ is  representable, $\Xst\times_\Yst Y_\bullet$ is an $\SSm$-groupoid for $\Xst$, and we get
\begin{eqnarray*} \tau^\ast\alpha_!(\eta_\Xst(a))&=&\alpha'_!\tau'^\ast(\eta_\Xst(a))\;=\;\alpha'_!\eta_{\Xst \times_\Yst Y_0}(\tau'^\ast(a))\\ &=&\eta_{Y_0}(\alpha'_!\tau'^\ast(a))\;=\;\eta_{Y_0}(\tau^\ast\alpha_!(a))\\ &=&\tau^\ast(\eta_\Yst(\alpha_!(a))) \end{eqnarray*}
which implies $\alpha_!\eta_\Xst=\eta_\Yst\alpha_!$ as $\tau^\ast$ is injective. This proves the first part of the theorem. For the second part it remains to show that every $R\in \tilde{\Th}^{(\lambda)}(\kk)$ is in the image of the restriction functor, i.e.\ has an extension $R^{st}$ to stacks. \\
If $\Xst$ is any stack in $\St_\kk$ and $X_\bullet=(X_0,X_1,s,t,\unit_X)$ an $\SSm$-groupoid for $\Xst$ with quotient morphism $\rho:X_0\to \Xst$, we define $R^{st}(\Xst):=R(X_0)^{X_\bullet}$. Let $Y_\bullet=(Y_0,Y_1,u,v,\unit_Y)$ be another $\SSm$-groupoid for $\Xst$. Consider the diagram (\ref{independence}) which by assumption on $R$ gives rise to the following diagram with exact rows and columns, where $K$ denotes the kernel of say $u^\ast-v^\ast$. 
\[ \xymatrix @C=1.5cm { & 0 \ar[d] & 0 \ar[d] & 0 \ar[d] \\ 
 & K \ar[r] \ar[d] &  R(X_0) \ar[r]^{s^\ast-t^\ast} \ar[d]^{\tau'^\ast} & R(X_1) \ar[d]^{\tau''^\ast} \\
0 \ar[r] & R(Y_0) \ar[r]^{\rho'^\ast} \ar[d]^{u^\ast-v^\ast} & R(X_0\times_\Xst Y_0) \ar[r]^{s'^\ast-t'^\ast} \ar[d]^{u'^\ast-v'^\ast} & R(X_1\times_\Xst Y_0) \ar[d]^{u''^\ast-v''^\ast} \\
0 \ar[r] & R(Y_1) \ar[r]_{\rho''^\ast} & R(X_0\times_\Xst Y_1) \ar[r]_{s''^\ast-t''^\ast} & R(X_1\times_\Xst Y_1) } 
\]
Hence, $R(X_0)^{X_\bullet}\cong K\cong R(Y_0)^{Y_\bullet}$ showing the independence of $R^{st}(\Xst)$ on the choice of an $\SSm$-groupoid for $\Xst$. To construct push-forwards and pull-backs along morphisms $\alpha:\Xst\to \Yst$, we consider diagram (\ref{big_diagram}) once more.

 As $\alpha\in \PPro$ is representable, $\Xst\times_\Yst Y_\bullet=(\Xst\times_\Yst Y_0, \Xst\times_\Yst Y_1, u',v',\unit')$ is an $\SSm$-groupoid for $\Xst$, and we put $\alpha_!(a):=(\tau^\ast)^{-1}\alpha'_!\tau'^\ast(a)\in R^{st}(\Yst)\cong R(Y_0)^{Y_\bullet}$ for $a\in R^{st}(\Xst)\cong R(\Xst\times_\Yst Y_0)^{\Xst\times_\Yst Y_\bullet}$.  The pull-back for  $\alpha\in \SSm$ is defined as follows. First of all, we form $b'=\rho'^\ast \alpha'^\ast\tau^\ast(b)\in R(X_0\times_\Yst Y_0)^{X_0\times_\Yst Y_\bullet}$ which can be written uniquely as $b'=\tau''^\ast(b'')$ with $b''
\in R(X_0)$ as $X_0\times_\Yst Y_\bullet$ is an $\SSm$-groupoid for $X_0$ and $R\in \tilde{\Th}^{(\lambda)}(\MM)$. Moreover, $s'^\ast(b')=t'^\ast(b')=\tau'''^\ast s^\ast(b'')=\tau'''^\ast t^\ast(b'')\in R(X_1\times_\Yst Y_0)^{X_1\times_\Yst Y_\bullet}$, and since $X_1\times_\Yst Y_\bullet$ is an $\SSm$-groupoid for $X_1$, we conclude $s^\ast(b'')=t^\ast(b'')$, i.e.\ $b''=\rho^\ast(b''')$ for some unique $b'''\in R^{st}(\Xst)$. Thus, we can define $\alpha^\ast(b):=b'''=(\rho^\ast)^{-1}(\tau''^\ast)^{-1}\rho'^\ast \alpha'^\ast\tau^\ast(b)\in R^{st}(\Xst)$. Finally, we define the $\boxtimes$-product via $((\rho\times \tau)^\ast)^{-1}(\rho^\ast(a)\boxtimes \tau^\ast(b)) \in R^{st}(\Xst\boxtimes \Yst)\cong R(X_0\boxtimes Y_0)^{X_\bullet\boxtimes Y_\bullet}$ with $X_\bullet\boxtimes Y_\bullet=(X_0\boxtimes Y_0,X_1\boxtimes Y_1, s\times u, t\times v, \unit_X\times \unit_Y)$. \\
It is a straight forward calculation to show that $R^{st}$  is indeed a ($\lambda$-)ring $(\SSm,\PPro)$-theory.  By construction, $R^{st}|_{\Sch_\MM}=R$, and we leave it to the reader to construct a canonical isomorphism $(R|_{\Sch_\MM})^{st}\cong R$ for every $R\in \tilde{\Th}^{(\lambda)}(\kk)_{st}$. \end{proof}

\begin{corollary} \label{extension_5}
Assume that the pair $(\SSm,\PPro)$ has the properties (0) -- (8). Then the category of $\underline{\ZZ}(\SSm^{\PPro})$-algebra $(\SSm,\PPro)$-theories in $\tilde{\Th}^{(\lambda)}(\MM)_{st}$ is equivalent to the category of $\underline{\ZZ}(\Sm^{\Pro})$-algebra $(\Sm,\Pro)$-theories in $\tilde{\Th}^{(\lambda)}(\MM)$ with $\Sm=\SSm\cap \Sch_\MM$ and $\Pro=\PPro\cap \Sch_\MM$.
\end{corollary}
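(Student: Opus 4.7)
The plan is to combine Theorem \ref{extension_2} with the framing characterization of algebra structures in Lemma \ref{framing}, together with the stacky generalization of that lemma which follows by the same argument. First I would observe that a $\underline{\ZZ}(\SSm^{\PPro})$-algebra structure on a stacky theory $R\in \Th^{(\lambda)}(\MM)_{st}$ is, by (the stacky version of) Lemma \ref{framing}, the same datum as a family of distinguished elements $\phi_{\fst}\in R(\fst)$ for every $\SSm$-stack $\Xst\xrightarrow{\fst}\MM$ satisfying: (a) $u^\ast(\phi_\fst)=\phi_{\fst\circ u}$ for every $u\in\SSm_\MM$, and (b) $\phi_{0}=1$, $\phi_{\fst\boxtimes\gst}=\phi_\fst\boxtimes\phi_\gst$ (plus the line-element condition $\phi_\fst\in\Pic(\Sym(\fst),\sigma_t)$ in the $\lambda$-case). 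Restricting to schemes gives a $\underline{\ZZ}(\Sm^{\Pro})$-algebra structure on $R|_{\Sch_\MM}$, and this defines the functor in question.

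Next I would prove the functor is fully faithful. A morphism of algebra theories $(R,\phi)\to(R',\phi')$ in $\tilde{\Th}^{(\lambda)}(\MM)_{st}$ is a morphism $\eta:R\to R'$ in $\Th^{(\lambda)}(\MM)_{st}$ sending $\phi$ to $\phi'$. By Theorem \ref{extension_2}, $\eta$ is uniquely determined by its restriction $\eta|_{\Sch_\MM}$, and this restriction automatically matches $\phi|_{\Sch_\MM}$ to $\phi'|_{\Sch_\MM}$; conversely, given a morphism of the restricted algebra theories, Theorem \ref{extension_2} produces the unique $\eta$, and the compatibility with $\phi$ on $\SSm$-stacks follows by applying $\rho^\ast$ for an $\SSm$-groupoid $(X_0,X_1,s,t,\unit)$ of $\Xst$ since $\rho^\ast\eta_\Xst(\phi_\fst)=\eta_{X_0}\rho^\ast(\phi_\fst)=\eta_{X_0}(\phi_{\fst\circ\rho})=\phi'_{\fst\circ\rho}=\rho^\ast(\phi'_\fst)$ and $\rho^\ast$ is injective.

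For essential surjectivity, start with $R_0\in\tilde{\Th}^{(\lambda)}(\MM)$ equipped with a $\underline{\ZZ}(\Sm^{\Pro})$-algebra structure $(\phi_f)$. By Theorem \ref{extension_2} there is an extension $R:=R_0^{st}\in\tilde{\Th}^{(\lambda)}(\MM)_{st}$, and I must extend $(\phi_f)$ to $(\phi_\fst)$. Given an $\SSm$-stack $\fst:\Xst\to\MM$, pick an $\SSm$-groupoid $(X_0,X_1,s,t,\unit)$ with structure map $f_0=\fst\circ\rho$; then $X_0,X_1$ are $\Sm$-schemes, $f_1=f_0\circ s=f_0\circ t$, so $s^\ast(\phi_{f_0})=\phi_{f_1}=t^\ast(\phi_{f_0})$ by property (a) for schemes, whence $\phi_{f_0}\in R_0(f_0)^{X_\bullet}\cong R(\fst)$, defining $\phi_\fst$. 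Independence of the $\SSm$-groupoid follows from the diagram-chase argument in the proof of Theorem \ref{extension_2} applied to two groupoids $X_\bullet,Y_\bullet$: pull $\phi_{f_0}$ and $\phi_{g_0}$ back to $R_0(X_0\times_\Xst Y_0)$ along $\tau',\rho'$ and use (a) on the scheme level to identify them. Compatibilities (a) and (b) at the stacky level are checked by the same pattern: pull both sides back along quotient maps of suitable $\SSm$-groupoids (and their $\boxtimes$-products in case (b)), where they reduce to the known scheme-level identities, and conclude by injectivity of the relevant pull-backs. In the $\lambda$-case, the line-element property is checked after pulling back along $\Sym(\rho)$, which lies in $\Sm$ since $\Sym$ of a morphism in $\SSm$ between $\Sm$-schemes is in $\Sm$.

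The main obstacle is bookkeeping: verifying that the $\boxtimes$-compatibility $\phi_{\fst\boxtimes\gst}=\phi_\fst\boxtimes\phi_\gst$ survives the extension, because the $\boxtimes$-product on $\tilde{\Th}^{(\lambda)}(\MM)_{st}$ was constructed in Theorem \ref{extension_2} via an $\SSm$-groupoid of the form $X_\bullet\boxtimes Y_\bullet$, and one must check that the product $\SSm$-groupoid is actually an $\SSm$-groupoid for $\Xst\times\Yst$; this is precisely the second clause of property (0). Once that is in hand everything else is a routine diagram chase, and the constructed $(R,\phi)$ restricts to $(R_0,(\phi_f))$ by construction, giving the claimed equivalence of categories.
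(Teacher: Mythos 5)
Your proposal is correct, but it takes a considerably longer route than the paper. The paper's proof is one line: observe that $\underline{\ZZ}(\SSm^{\PPro})|_{\Sch_\MM}=\underline{\ZZ}(\Sm^{\Pro})$ — this holds because every morphism in $\PPro$ is representable (property (0)), so a morphism $\Ust\to X$ in $\PPro$ with $X\in\Sch_\kk$ has $\Ust\in\Sch_\kk$ — and then apply Theorem \ref{extension_2} directly. The Hom-isomorphism of that theorem is stated for arbitrary $R'$ (only $R$ is required to be in $\tilde{\Th}^{(\lambda)}(\MM)_{st}$), so with $R'=\underline{\ZZ}(\SSm^\PPro)$ it gives a bijection between $\underline{\ZZ}(\SSm^{\PPro})$-algebra structures on $R$ and $\underline{\ZZ}(\Sm^{\Pro})$-algebra structures on $R|_{\Sch_\MM}$; together with the equivalence $\tilde{\Th}^{(\lambda)}(\MM)_{st}\simeq\tilde{\Th}^{(\lambda)}(\MM)$ from the same theorem, this is the corollary. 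Your proof instead re-derives the Hom-isomorphism in the special case at hand: you unwind a $\underline{\ZZ}(\SSm^{\PPro})$-algebra structure into a family of distinguished elements via a stacky version of Lemma \ref{framing}, and then extend those elements from schemes to stacks by the same descent argument that already appears inside the proof of Theorem \ref{extension_2}. This works, but it duplicates work already done; you could have cited the Hom-isomorphism and been done.

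Two smaller remarks. First, your argument implicitly uses $\underline{\ZZ}(\SSm^{\PPro})|_{\Sch_\MM}=\underline{\ZZ}(\Sm^{\Pro})$ (otherwise "restricting to schemes gives a $\underline{\ZZ}(\Sm^{\Pro})$-algebra structure" would not follow), but you never flag where this comes from; it is exactly the representability clause of property (0), which is the one nontrivial observation the paper highlights. Second, your verification of the line-element condition in the $\lambda$-case invokes "$\Sym(\rho)$" for the atlas $\rho:X_0\to\Xst$, but the paper's framework deliberately has no symmetric powers of stacks (see the remark after the definition of stacky $\lambda$-ring theories: $\sigma^n$-operations are only imposed on schemes). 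The check is in fact vacuous: the $\lambda$-compatibility of a morphism of stacky $\lambda$-ring theories is by definition a condition on its restriction to $\Sch_\MM$, so it is automatically handled by the equivalence of Theorem \ref{extension_2} with no extra argument. This is a harmless slip since the intended conclusion is true, just for a more elementary reason.
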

\begin{proof} By assumption on $\PPro$ we have $\underline{\ZZ}(\SSm^{\PPro})|_{\Sch_\MM}=\underline{\ZZ}(\Sm^{\Pro})$ and the corollary follows from the previous theorem. \end{proof}
The corollary says in particular that a $\underline{\ZZ}(\SSm^\PPro)$-algebra structure on $R^{st}$ for $R\in \tilde{\Th}(\MM)$ is nothing else than a collection of elements $\phi_f\in R(f)$ for every $\Sm$-scheme $X\xrightarrow{f}\MM$ over $\MM$ satisfying the properties of Lemma \ref{framing} and $s^\ast(\phi_{f_0})=t^\ast(\phi_{f_0})$ for every $\SSm$-groupoid $f_\bullet=(X_0\xrightarrow{f_0}\MM,X_1\xrightarrow{f_1}\MM,s,t,\unit_f)$ with quotient in $\Sch_\MM$ and $X_0$ an $\Sm$-scheme.  For $\St_\MM$ being the category of quotient stacks over $\MM$ the last condition says that $\phi_f$ is $G$-invariant for every $G$-action on an $\Sm$-scheme $X\xrightarrow{f}\MM$ over $\MM$.  

\begin{example} \rm
 The stacky extension of $\Con$ as a $(\St_\kk,ft)$-theory has also been studied in \cite{JoyceCF}. A constructible function on an Artin stack $\Xst$ can be seen as a locally finite linear combination of closed reduced substacks.
\end{example}

In the remaining part of this subsection, we discuss a generalization of the previous results to classes $\PPro$ containing non-representable morphisms. As the reader might guess, this requires stronger conditions on $R\in \tilde{\Th}^{(\lambda)}(\MM)_{st}$, but also on $\PPro$.\\
\\
\textbf{Property (0'):} We say that $(\SSm,\PPro)$ satisfies property (0') if every $\Xst\in \St_\kk$ has a distinguished class of $\SSm$-groupoids as in property (0). Moreover, we require $\rho\in \PPro$ for every quotient map $X_0\to \Xst$ of  an $\SSm$-groupoid.\\ 

\begin{definition} \label{relation_3}
We denote with $\hat{\Th}^{(\lambda)}(\MM)_{st}\subseteq \Th^{(\lambda)}(\MM)_{st}$ the subcategory of \\ ($\lambda$-)ring $(\SSm,\PPro)$-theories  $R$ on $\St_\MM$ such that $\rho_!\rho^\ast$ is an automorphism on $R(\fst)$ for every $\fst\in\St_\MM$ and every  $\SSm$-groupoid $f_\bullet=(f_0,f_1,s,t,\unit_f)$ with quotient $\rho:f_0\to \fst$. Similarly, $\hat{\Th}^{(\lambda)}(\MM)$ denotes the corresponding subcategory of $(\Sm,\Pro)$-theories on $\Sch_\MM$.  
\end{definition}
As for $\tilde{\Th}^{(\lambda)}(\MM)_{(st)}$, this subcategory depends on the choice of $\SSm$-groupoids associated to every stack over $\MM$. 
\begin{lemma}
For every $R\in\hat{\Th}^{(\lambda)}(\MM)_{st}$ and every $\SSm$-groupoid $f_\bullet$ of $\fst\in \St_\MM$ with quotient map $\rho:f_0\to \fst$, the pull-back $\rho^\ast:R(\fst)\to R(f_0)^{f_\bullet}$ is an isomorphism. Hence, $\hat{\Th}^{(\lambda)}(\MM)_{(st)}\subset \tilde{\Th}^{(\lambda)}(\MM)_{(st)}$. 
\end{lemma}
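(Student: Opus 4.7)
The plan is to work entirely with the endomorphism $e:=\rho_!\rho^\ast$ of $R(\fst)$, which is invertible by hypothesis, together with the base-change formula applied to the cartesian square identifying $f_1$ with $f_0\times_{\fst}f_0$. First observe that $\rho^\ast$ lands in $R(f_0)^{f_\bullet}$: by construction $\rho s=\rho t$, so $s^\ast\rho^\ast=t^\ast\rho^\ast$. Injectivity of $\rho^\ast$ is immediate, since $\rho^\ast(a)=0$ implies $e(a)=0$, hence $a=0$.

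For surjectivity the central input is property (5) (base change). Applied to the square with $u=\rho\in\PPro$ and $v=\rho\in\SSm$ (available by property (0')), it gives $\rho^\ast\rho_! = s_!t^\ast$, and by composing with the inversion automorphism $i:f_1\to f_1$ of the groupoid (which swaps $s$ and $t$) also $\rho^\ast\rho_! = t_!s^\ast$. Consequently, for any invariant element $b\in R(f_0)^{f_\bullet}$ (so that $s^\ast b=t^\ast b$), we have
\[ \rho^\ast\rho_!(b)=s_!s^\ast(b)=t_!t^\ast(b). \]

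Next I would invoke property (0): the pullback of the $\SSm$-groupoid $f_\bullet$ along the representable morphism $\rho:f_0\to\fst$ is itself an $\SSm$-groupoid, now for the scheme $f_0$, and a direct calculation identifies its quotient map as the projection $s:f_1=f_0\times_\fst f_0\to f_0$. Since $R\in\hat\Th^{(\lambda)}(\MM)_{st}$, the hypothesis applied to this pulled-back groupoid tells us that $s_!s^\ast$ is an automorphism of $R(f_0)$. Given $b\in R(f_0)^{f_\bullet}$, set $a:=e^{-1}\rho_!(b)$. Then $\rho^\ast(a)-b$ lies in $R(f_0)^{f_\bullet}$ and $\rho_!(\rho^\ast(a)-b)=e(a)-\rho_!(b)=0$; applying $\rho^\ast$ to this vanishing and using the identity from the previous paragraph yields
\[ s_!s^\ast(\rho^\ast(a)-b)=\rho^\ast\rho_!(\rho^\ast(a)-b)=0, \]
so $\rho^\ast(a)=b$ since $s_!s^\ast$ is an automorphism. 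This gives surjectivity, and the inclusion $\hat\Th^{(\lambda)}(\MM)_{st}\subseteq\tilde\Th^{(\lambda)}(\MM)_{st}$ is then merely the definition of $\tilde\Th^{(\lambda)}(\MM)_{st}$; the analogous inclusion in the non-stacky setting follows by the same argument restricted to $\Sch_\MM$.

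The step I expect to be most delicate is justifying that the pullback groupoid on $f_0$ has quotient map $s$ (or $t$), so that the $\hat\Th$-hypothesis does supply an automorphism rather than merely a tautology on the trivial groupoid of a scheme; this is really a compatibility check for property (0), made plausible by the fact that $f_0\times_\fst f_0\rightrightarrows f_0$ is the canonical atlas-level groupoid presenting $\fst$ and so, transported along $\rho$, presents $f_0$ itself. Once this identification is granted, the rest of the argument is just the base-change calculation above.
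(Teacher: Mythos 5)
Your proof is correct and follows essentially the same approach as the paper: both hinge on the base-change identity $\rho^\ast\rho_! = s_!t^\ast$ and on the fact that, via property (0), the pulled-back groupoid on $f_0$ has quotient map $s$, making $s_!s^\ast$ invertible by the $\hat{\Th}$ hypothesis. The paper packages the computation by exhibiting $(\rho_!\rho^\ast)^{-1}\rho_!$ directly as a two-sided inverse of $\rho^\ast$ on the invariant subgroup, whereas you split it into injectivity (trivial) and surjectivity (via the element $a=e^{-1}\rho_!(b)$), but the underlying arithmetic is identical.
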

\begin{proof}
Assume for convenience $\MM=\Spec\kk$, and let $X_\bullet=(X_0,X_1,s,t,\unit_X)$ be an $\SSm$-groupoid for $\Xst$ with quotient morphism $\rho:X_0\to \Xst$. As $\rho^\ast(R(\Xst))\subset R(X_0)^{X_\bullet}$, we show that $(\rho_!\rho^\ast)^{-1}\rho_!$ is the inverse of $\rho^\ast:R(\Xst)\to R(X_0)^{X_\bullet}$ under the assumption $R\in \hat{\Th}^{(\lambda)}(\MM)_{st}$. First of all $(\rho_!\rho^\ast)^{-1}\rho_!\rho^\ast=\id_{R(\Xst)}$. On the other hand 
\[ \rho^\ast(\rho_!\rho^\ast)=s_!t^\ast\rho^\ast=(s_!s^\ast)\rho^\ast \]
by base change, which implies $\rho^\ast(\rho_!\rho^\ast)^{-1}=(s_!s^\ast)^{-1}\rho^\ast$ as $s_!s^\ast$ is also invertible being the quotient map for $\rho^\ast(X_\bullet)$. Thus, for $b\in R(X_0)^{X_\bullet}$, i.e.\ $t^\ast(b)=s^\ast(b)$
\[ \rho^\ast\big((\rho_!\rho^\ast)^{-1}\rho_!(b)\big)=(s_!s^\ast)^{-1}\rho^\ast\rho_!(b)=(s_!s^\ast)^{-1}s_!t^\ast(b)=(s_!s^\ast)^{-1}(s_!s^\ast)(b)=b.\]
\end{proof}
\begin{corollary} \label{extension5}
Assume $Sm^{st}\subset \SSm$ and $prop^{st}\subset \PPro$. Then there is an equivalence between the category of $\underline{\ZZ}(Sm^{st,prop^{st}})$-algebra structures on objects in $\hat{\Th}(\MM)_{st}$ considered as  $(Sm^{st},prop^{st})$-theories and the category of $\underline{\ZZ}(Sm^{prop})$-algebra structures on objects in $\hat{\Th}(\MM)$ considered as $(Sm,prop)$-theories. 
\end{corollary}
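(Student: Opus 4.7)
The plan is to reduce the statement to the preceding Corollary \ref{extension_5} by restricting its equivalence of categories to the subcategories $\hat{\Th}^{(\lambda)} \subset \tilde{\Th}^{(\lambda)}$ provided by the lemma just above.

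First, I would apply Theorem \ref{extension_2} and then Corollary \ref{extension_5} to the pair $(\SSm,\PPro) := (Sm^{st},prop^{st})$. Both classes consist of representable morphisms and inherit properties (0)--(8) from those of $(Sm,prop)$, since every object of $\St_\MM$ admits an atlas by a scheme and morphisms in $prop^{st}$ are representable by construction. This yields an equivalence between the category of $\underline{\ZZ}(Sm^{st,prop^{st}})$-algebra structures on $\tilde{\Th}^{(\lambda)}(\MM)_{st}$-objects, considered as $(Sm^{st},prop^{st})$-theories, and the category of $\underline{\ZZ}(Sm^{prop})$-algebra structures on $\tilde{\Th}^{(\lambda)}(\MM)$-objects, considered as $(Sm,prop)$-theories.

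Next, I would show that this equivalence restricts to the $\hat{\Th}^{(\lambda)}$-subcategories. The forward direction (restriction to schemes) is immediate: if $R\in\hat{\Th}^{(\lambda)}(\MM)_{st}$, then in particular $\rho_!\rho^\ast$ is invertible for every $\Sm$-groupoid in $\Sch_\MM$ with scheme quotient, which is exactly the condition for $R|_{\Sch_\MM}\in\hat{\Th}^{(\lambda)}(\MM)$. The reverse direction is the \emph{main obstacle}: given $R\in\hat{\Th}^{(\lambda)}(\MM)$, one must verify that the stacky extension $R^{st}$ from the proof of Theorem \ref{extension_2} (given by $R^{st}(\fst)\cong R(f_0)^{f_\bullet}$) satisfies the invertibility axiom defining $\hat{\Th}^{(\lambda)}(\MM)_{st}$. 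Via the isomorphism $\rho^\ast:R^{st}(\fst)\xrightarrow{\sim} R(f_0)^{f_\bullet}$ and base change along the cartesian square of $\rho$ with itself, the endomorphism $\rho_!\rho^\ast$ on $R^{st}(\fst)$ is identified with the restriction of $s_!s^\ast$ on $R(f_0)$ to the subgroup $R(f_0)^{f_\bullet}$, using $s^\ast=t^\ast$ on invariants.

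To establish the invertibility of $s_!s^\ast$ on $R(f_0)^{f_\bullet}$, I would apply the hypothesis $R\in\hat{\Th}^{(\lambda)}(\MM)$ to the scheme-level groupoid naturally built from the map $s:f_1\to f_0$, which lies in $\SSm\cap\PPro$ by property (0') and the stability of $\PPro$ under pull-back along $\SSm$. Its \v{C}ech nerve defines an $\Sm$-groupoid in $\Sch_\MM$ whose scheme quotient is $f_0$, so the axiom yields that $s_!s^\ast$ is an automorphism of all of $R(f_0)$. A base change computation then shows this automorphism preserves the subgroup $R(f_0)^{f_\bullet}$, establishing the required invertibility. Combining the two directions, the equivalence of Corollary \ref{extension_5} for the pair $(Sm^{st},prop^{st})$ restricts to an equivalence at the level of $\hat{\Th}^{(\lambda)}$, which is precisely the desired statement.
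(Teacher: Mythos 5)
There is a genuine gap, rooted in a confusion about which pair of morphism classes $\hat{\Th}^{(\lambda)}(\MM)_{st}$ is defined relative to. You set $(\SSm,\PPro):=(Sm^{st},prop^{st})$ at the outset, but then the invertibility axiom defining $\hat{\Th}^{(\lambda)}(\MM)_{st}$ — that $\rho_!\rho^\ast$ is an automorphism for every quotient map $\rho$ of a distinguished $\SSm$-groupoid — is not even well-posed for that pair. The quotient map $\rho$ is typically a principal $G$-bundle projection, which is smooth and of finite type but not proper (unless $G$ is finite), so $\rho\notin prop^{st}$ and the push-forward $\rho_!$ does not exist for a $(Sm^{st},prop^{st})$-theory. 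Equivalently, $(Sm^{st},prop^{st})$ satisfies property (0) but not property $(0')$, which is exactly the hypothesis needed to speak of $\hat{\Th}$. This also makes your later step — invoking property $(0')$ to conclude $s\in\SSm\cap\PPro$ and then verifying invertibility of $s_!s^\ast$ — internally inconsistent with your own choice of pair, since the $s$ in the distinguished groupoid is the base change of $\rho$ along $\rho$ and therefore also fails to lie in $prop^{st}$.

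In the paper's reading of the corollary, $\hat{\Th}^{(\lambda)}(\MM)_{st}$ refers to the ambient pair $(\SSm,\PPro)$ (satisfying $(0')$–$(8)$ and containing $(Sm^{st},prop^{st})$), while the smaller pair $(Sm^{st},prop^{st})$ is used only for the algebra structures. The ``reverse direction'' you describe as the main obstacle — producing a stacky extension in $\hat{\Th}^{(\lambda)}(\MM)_{st}$ from an object of $\hat{\Th}^{(\lambda)}(\MM)$ — is not something to re-derive here: it is precisely the content of Theorem \ref{extension_3}, whose extension construction uses the non-representable push-forwards available in $\PPro$ and is genuinely different from the construction of Theorem \ref{extension_2} you appeal to. The intended proof simply combines the preceding Lemma (that $\hat{\Th}\subset\tilde{\Th}$ for the ambient pair) with Corollary \ref{extension_5} applied to $(Sm^{st},prop^{st})$ — which needs only property (0) — and relies on Theorem \ref{extension_3} for the object-level correspondence. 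Your forward direction and the overall shape of the argument are fine, but the reverse direction needs to keep the two pairs distinct and cite Theorem \ref{extension_3} rather than attempt to check the $\hat{\Th}$ axiom for the smaller pair, where it is undefined.
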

\begin{proof} This is a consequence of the previous Lemma and Corollary \ref{extension_5}. \end{proof}
\begin{definition}
 A linear algebraic group $G$ is called special if every \'{e}tale locally trivial $G$-principal bundle is already Zariski locally trivial. This is the case if and only if $\Gl(n)\to \Gl(n)/G$ is Zariski locally trivial for some closed embedding $G\subseteq \Gl(n)$. We denote with $\QSt^{sp}_\MM$ the category $\QSt^\mathcal{G}_\MM$ for $\mathcal{G}$ being the class of special algebraic groups. (cf.\ Example \ref{quotient_stack_3})
\end{definition}
\begin{example} \rm
 The groups $\Gl(n)$ are special and also all finite products of special groups. In particular, the groups $G_d=\prod_{i\in I}\Gl(d_i)$ from section 2 are special. The symmetric groups $S_n$ are not special unless $n=1$.
\end{example}

\begin{example}\rm \label{quotient_stack_4}
Consider the category $\QSt_\kk^{sp}$ with $(\SSm,\PPro)$ satisfying properties (0') -- (8) and $\Sch_\kk\subset \SSm$, $\PPro\cap\Sch_\kk=ft$. For $R\in \hat{\Th}(\kk)$ and a special group $G$, the element $[G]=\int_G\unit_G\in R(\kk)$ is invertible because $\rho:G\to \Spec\kk$ is the quotient map of an $\SSm$-groupoid for $\Spec\kk$ and $\rho_!\rho^\ast$ is the multiplication with $[G]$. Conversely, if $R$ is a motivic $(\Sch_\kk,ft)$-theory such that $[G]\in R(\kk)$ is a unit for every special group $G$, then $R\in \hat{\Th}(\kk)$. Indeed, for an $\SSm$-groupoid $(P,G\times_\kk P,\pr_P,m,c_1\times \id_P)$ on a connected $X\in \Sch_\kk$ with $\rho:P\to X$ being a principal $G$-bundle, the composition $\rho_!\rho^\ast$ is the $\cap$-product with $[G]\unit_X$, i.e.\ the product with $[G]\in R(\kk)$, as $R$ is motivic and $G$ special. Hence, $\rho_!\rho^\ast$ is invertible. This example shows that $\Con$ is not in $\hat{\Th}^\lambda(\kk)$,  because $\chi_c(\Gl(n))=0$ for all $n>0$, even though it is in $\tilde{\Th}^\lambda(\kk)$. Thus, the inclusion of the previous lemma is strict. On the other hand, if $R$ is a motivic ($\lambda$-)ring $(\Sch_\kk,ft)$-theory over $\kk$, we can adjoin $[G]^{-1}$ by considering $R[[\Gl(n)]^{-1}: n\in \NN]$ which is again a motivic ($\lambda$-)ring $(\Sch_\kk,ft)$-theory containing $[G]^{-1}=[GL(n)]^{-1}[\Gl(n)/G]$ for every special algebraic group $G\subset \Gl(n)$. Note that  Example \ref{lambda_ring_4} and  Example \ref{lambda_adjunction} for $r=1$ ensure that $R[ [\Gl(n)]^{-1}: n\in \NN]$ is indeed a $\lambda$-ring $(\Sch_\kk,ft)$-theory  for  $R\in \Th^\lambda(\MM)$. Applied to $R=\Con$ this procedure gives the zero $(\Sch_\kk,ft)$-theory, but applied to for example $\Ka_0(\Sch^{ft})$, we end up with some non-trivial motivic $\lambda$-ring $(\Sch_\kk,ft)$-theory $\Ka_0(\Sch^{ft})[[\Gl(n)]^{-1}: n\in \NN]$ in $\hat{\Th}^\lambda(\kk)$.  
\end{example}

\begin{theorem} \label{extension_3} Assume that the pair $(\SSm,\PPro)$ has the properties (0') -- (8). If $R,R'$ are ($\lambda$-)ring $(\SSm,\PPro)$-theories with $R$ being in $\hat{\Th}^{(\lambda)}(\MM)_{st}$, then
\[ \Hom_{\Th^{(\lambda)}(\MM)_{st}}(R',R)\longrightarrow \Hom_{\Th^{(\lambda)}(\MM)}(R'|_{\Sch_\MM}, R|_{\Sch_\MM}) \]
given by restriction to schemes is an isomorphism. Moreover, the forgetful functor $\Th^{(\lambda)}(\MM)_{st} \ni R\longrightarrow R|_{\Sch_\MM} \in \Th^{(\lambda)}(\MM)$ is an equivalence between the categories $\hat{\Th}^{(\lambda)}(\MM)_{st}$ and $\hat{\Th}^{(\lambda)}(\MM)$.
\end{theorem}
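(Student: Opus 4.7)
The strategy is to parallel the proof of Theorem \ref{extension_2}, using the preceding lemma that $\hat{\Th}^{(\lambda)}(\MM)_{(st)} \subset \tilde{\Th}^{(\lambda)}(\MM)_{(st)}$, so that for $R \in \hat{\Th}^{(\lambda)}(\MM)_{st}$ the pull-back $\rho^\ast: R(\fst) \to R(f_0)^{f_\bullet}$ is still an isomorphism for every $\SSm$-groupoid with quotient $\rho$. The genuinely new ingredient is the absence of representability in $\PPro$: in Theorem \ref{extension_2} we used representability of $\alpha: \Xst \to \Yst$ to ensure $\Xst \times_\Yst Y_0$ is a scheme, and this is no longer available. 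The replacement will be property (0) of $\St_\kk$, which ensures $X_0 \times_\Yst Y_0 \in \Sch_\kk$ for every pair of quotient morphisms $\rho: X_0 \to \Xst$ and $\tau: Y_0 \to \Yst$, together with property (5) applied to $\tau \in \SSm$ (from property (0')) and $\alpha \in \PPro$, which gives $\pi_2: X_0 \times_\Yst Y_0 \to Y_0$ in $\PPro_\MM$ and $\pi_1: X_0 \times_\Yst Y_0 \to X_0$ in $\SSm_\MM$.

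For the first assertion (fully faithfulness), injectivity of restriction is immediate from injectivity of $\rho^\ast$. For surjectivity, given $\eta: R'|_{\Sch_\MM} \to R|_{\Sch_\MM}$, define $\eta_\Xst(a) := (\rho^\ast)^{-1}\eta_{X_0}(\rho^\ast(a))$ as in Theorem \ref{extension_2}; independence of the groupoid choice and compatibility with pull-backs along morphisms in $\SSm$ (including non-representable ones) are proven by the same diagram chases in (\ref{independence}) and (\ref{big_diagram}). The only thing to add is compatibility with push-forwards along non-representable $\alpha \in \PPro_\MM$: here we use the commutative square
\[
\xymatrix @R=0.6cm { X_0 \times_\Yst Y_0 \ar[d]_{\pi_1} \ar[r]^{\quad\pi_2} & Y_0 \ar[d]^\tau \\ X_0 \ar[r]_{\alpha\rho} & \Yst}
\]
and verify $\tau^\ast \alpha_!\eta_\Xst(a) = \pi_{2!}\pi_1^\ast \eta_\Xst(a) = \eta_{Y_0}(\pi_{2!}\pi_1^\ast(a)) = \eta_{Y_0}(\tau^\ast \alpha_!(a)) = \tau^\ast \eta_\Yst(\alpha_!(a))$, using base change and that $\eta$ already commutes with $\pi_{1!}$ and $\pi_2^\ast$ at the scheme level; conclude by injectivity of $\tau^\ast$.

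For the equivalence of categories, given $R \in \hat{\Th}^{(\lambda)}(\MM)$, one constructs the extension $R^{st}$ by $R^{st}(\fst) := R(f_0)^{f_\bullet}$, with pull-backs along $\SSm$-morphisms and $\boxtimes$-product defined exactly as in the proof of Theorem \ref{extension_2}. For push-forward along $\alpha \in \PPro$, including the non-representable case, one sets
\[
\alpha_!(a) := (\tau^\ast)^{-1}\bigl( \pi_{2!}\,\pi_1^\ast(a) \bigr),
\]
where $\pi_1, \pi_2$ are the two projections of $X_0 \times_\Yst Y_0$. The assignment lands in $R(Y_0)^{Y_\bullet}$: taking the pull-back groupoid $Y_\bullet$ on $Y_0$ and its further pull-back to $X_0 \times_\Yst Y_0$ (which consists of schemes by property (0)), a single application of base change gives $u^\ast\pi_{2!}\pi_1^\ast(a) = v^\ast\pi_{2!}\pi_1^\ast(a)$ because $a \in R(X_0)^{X_\bullet}$. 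Independence of the choice of $X_\bullet, Y_\bullet$ is verified by forming a bigger diagram analogous to (\ref{independence}) stacked with (\ref{big_diagram}), and invoking $R \in \hat{\Th}$ whenever one needs to invert $\rho_!\rho^\ast$ on the mixed fiber products.

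The main obstacle in this plan is the verification of functoriality in $\PPro$ when both morphisms are non-representable: one must check $(\beta\alpha)_! = \beta_!\alpha_!$ without being able to express intermediate fiber products as schemes. The remedy is to choose $\SSm$-groupoids simultaneously for all three stacks and pass to the triple fiber product $X_0 \times_\Yst Y_0 \times_\Zst Z_0$, which is again a scheme by iterated use of property (0), reducing functoriality to a base change identity for schemes together with the invertibility of the $\rho_!\rho^\ast$ that appear; here the hypothesis $R \in \hat{\Th}$ is used crucially. Checking finally that $(R|_{\Sch_\MM})^{st} \cong R$ for $R \in \hat{\Th}^{(\lambda)}(\MM)_{st}$ follows formally since $\rho^\ast$ identifies $R(\Xst)$ with $R(X_0)^{X_\bullet}$, and the constructed operations match the original ones by the defining base-change relations.
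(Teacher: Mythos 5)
Your high-level strategy is the right one — reduce to Theorem~\ref{extension_2} via $\hat{\Th}\subset\tilde{\Th}$, handle the non-representable morphisms in $\PPro$ by passing to the scheme-level fiber product $X_0\times_\Yst Y_0$ — but the central formulas you write down are off by an invertible but non-trivial factor, and this is exactly the place where the $\hat{\Th}$ hypothesis has to enter the computation, not just the independence check.

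Consider your proposed definition $\alpha_!(a):=(\tau^\ast)^{-1}\bigl(\pi_{2\,!}\pi_1^\ast(a)\bigr)$ for $a\in R^{st}(\Xst)=R(X_0)^{X_\bullet}\subseteq R(X_0)$. The relevant base change for the cartesian square with $\pi_1,\pi_2$ projecting off $X_0\times_\Yst Y_0$ and $\alpha\rho\in\PPro$, $\tau\in\SSm$ on the other two sides is $\tau^\ast(\alpha\rho)_!=\pi_{2\,!}\pi_1^\ast$, that is $\tau^\ast\alpha_!\rho_!=\pi_{2\,!}\pi_1^\ast$ on $R(X_0)$; nothing gives you $\tau^\ast\alpha_!=\pi_{2\,!}\pi_1^\ast$ directly. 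To see the formula is genuinely wrong, test $\alpha=\id_\Xst$ with $Y_\bullet=X_\bullet$: then $X_0\times_\Xst X_0=X_1$, $\pi_1,\pi_2=s,t$, and since $a\in R(X_0)^{X_\bullet}$ one has $\pi_{2\,!}\pi_1^\ast(a)=s_!t^\ast(a)=s_!s^\ast(a)$, which by base change is $\rho^\ast\rho_!(a)$. For $\Xst=\Spec\kk/G$, $X_0=\Spec\kk$, this is $[G]\cdot a$, not $a$, so $\id_!\neq\id$. The same gap appears in your chain
\[
\tau^\ast\alpha_!\eta_\Xst(a)=\pi_{2\,!}\pi_1^\ast\eta_\Xst(a)=\cdots
\]
for the first assertion: the leading equality would require $\tau^\ast\alpha_!=\pi_{2\,!}\pi_1^\ast\rho^\ast$, which in fact equals $\tau^\ast\alpha_!\rho_!\rho^\ast$. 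Your chain correctly establishes $\tau^\ast\alpha_!(\rho_!\rho^\ast\,\eta_\Xst(a))=\tau^\ast\eta_\Yst(\alpha_!(\rho_!\rho^\ast a))$, but you cannot peel off $\rho_!\rho^\ast$ from both sides without explicitly invoking its invertibility and the fact (from representability of $\rho$) that $\eta$ commutes with $\rho_!\rho^\ast$.

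The fix is to insert the normalization $(\rho_!\rho^\ast)^{-1}$ (resp.\ $(\rho'_!\rho'^\ast)^{-1}$) into the formula, which is what the paper does: for the first assertion it writes $\alpha'_!\eta(\cdot)=\alpha'_!\rho'_!\rho'^\ast(\rho'_!\rho'^\ast)^{-1}\eta(\cdot)$ so that only $(\alpha'\rho')_!$ (a morphism of schemes) and $\rho'_!,\rho'^\ast$ (representable) need to commute with $\eta$; for the construction of $R^{st}$ it defines $\alpha_!(a):=(\tau^\ast)^{-1}(\alpha'\rho')_!\tau''^\ast\rho^\ast(\rho_!\rho^\ast)^{-1}(a)$, which one checks does give $\id_!=\id$. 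You do mention the invertibility of $\rho_!\rho^\ast$, but you relegate it to the verification of independence of the groupoid choice and to $(\beta\alpha)_!=\beta_!\alpha_!$; it is already needed in the defining formula itself, and without it neither the unit axiom nor the comparison with the representable case holds.
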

\begin{proof}
The proof is an extension of the one for Theorem \ref{extension_2} since all the conditions of property (0) follow from property (0') except for the representability of morphisms in $\PPro$. Thus, we have to argue more carefully when it comes to push-forwards. As before we assume $\MM=\Spec\kk$ for convenience.\\
For the first part of the theorem, we only need to show that the extension of a natural transformation $\eta:R'|_{\Sch_\kk}\to R|_{\Sch_\kk}$ constructed in the proof of Theorem \ref{extension_2} commutes with non-representable push-forwards. Consider the diagram (\ref{big_diagram}) in which $\rho'\in \PPro$ is representable so that $\eta$, extended to stacks, commutes already with $\rho'^\ast, \rho'_!$ and, thus, also with $(\rho'_!\rho'^\ast)^{-1}$. As $(\alpha'\rho')_!$ commutes with $\eta$ by assumption on $\eta$, we finally obtain 
\begin{eqnarray*}
\tau^\ast\alpha_!(\eta_\Xst(a)) &=& \alpha'_!\tau'^\ast(\eta_\Xst(a))\;=\;\alpha'_!\eta_{\Xst\times_\Yst Y_0}(\tau'^\ast(a)) \\ &=& \alpha'_!\rho'_!\rho'^\ast(\rho'_!\rho'^\ast)^{-1}\eta_{\Xst\times_\Yst Y_0}(\tau'^\ast(a)) \;=\; \alpha'_!\rho'_!\eta_{X_0\times_\Yst Y_0}\big(\rho'^\ast(\rho'_!\rho'^\ast)^{-1}\tau'^\ast(a)\big) \\ &=& \eta_{Y_0}\big(\alpha'_!\rho'_!\rho'^\ast(\rho'_!\rho'^\ast)^{-1}\tau'^\ast(a)\big) \;=\;\eta_{Y_0}(\alpha'_!\tau'^\ast(a))\\ &=& \eta_{Y_0}(\tau^\ast\alpha_!(a))\;=\;\tau^\ast\eta_\Yst(\alpha_!(a))
\end{eqnarray*}
proving $\alpha_!\eta_\Xst=\eta_\Yst\alpha_!$ even for non-representable morphisms in $\PPro$.

For the second part of the theorem we need to extend a $(\SSm,\PPro)$-theory $R$ on $\Sch_\kk$. We do this as in the proof of Theorem \ref{extension_2} and denote this extension by $R^{st}$ again. It remains to construct push-forwards along morphisms $\alpha\in \PPro$. Assuming that $\alpha_!$ were already defined, we get using the notation of diagram (\ref{big_diagram})
\[ \tau^\ast\alpha_!(a)=\tau^\ast\alpha_!\rho_!\rho^\ast(\rho_!\rho^\ast)^{-1}(a)=(\alpha'\rho')_!\tau''^\ast\rho^\ast(\rho_!\rho^\ast)^{-1}(a). \]
Hence, we can use this formula to define $\alpha_!(a)$ via 
\[ \alpha_!(a):=(\tau^\ast)^{-1}(\alpha'\rho')_!\tau''^\ast\rho^\ast(\rho_!\rho^\ast)^{-1}(a)=(\tau_!\tau^\ast)^{-1}\tau_!(\alpha'\rho')_!\tau''^\ast\rho^\ast(\rho_!\rho^\ast)^{-1}(a).\] 
For representable $\alpha$ this agrees with our previous definition as 
 \[ (\tau^\ast)^{-1}(\alpha'\rho')_!\tau''^\ast\rho^\ast(\rho_!\rho^\ast)^{-1}(a)=(\tau^\ast)^{-1}\alpha'_!\rho'_!\rho'^\ast(\rho'_!\rho'^\ast)^{-1}\tau'^\ast(a)=(\tau^\ast)^{-1}\alpha'_!\tau'^\ast(a), \]
where we used $(\rho'_!\rho'^\ast)\tau'^\ast=\tau'^\ast(\rho_!\rho^\ast)$ which is a consequence of base change. We leave it to the reader to check that the properties of a $(\SSm,\PPro)$-theory  are fulfilled and that $R\mapsto R^{st}$ is indeed the inverse of the restriction (up to isomorphism). 
\end{proof}

\begin{example} \rm \label{extension_4}
If we apply the theorem to  any  motivic ($\lambda$-)ring $(\Sch_\kk,ft)$-theory $R$, we obtain a ($\lambda$-)ring $(\QSt^{sp}_\kk,\mathfrak{ft})$-theory  $R[[\Gl(n)]^{-1}\mid n\in \NN]^{st}$ on $\QSt^{sp}_\kk$. For $R=\underline{\Ka}_0(\Sch^{ft}_\kk)$ the result has an alternative description. Denote by $\underline{\Ka}_0(\QSt^{sp,\mathfrak{ft}})$ the sheafification of the quotient of $\underline{\ZZ}(\QSt^{sp,\mathfrak{ft}})$ by the usual cut and paste relation. Moreover, denote with $\underline{\Ka}(\QSt^{sp,\mathfrak{ft}})$ the sheafification of the quotient of $\underline{\Ka}_0(\QSt^{sp,\mathfrak{ft}})$ by the relation $\alpha_!(\unit_\Yst)=[G]\unit_\Xst$ for every principal $G$-bundle $\alpha:\Yst\to \Xst$ on $\Xst$, i.e.\ the pull-back of $\alpha$ to any scheme mapping to $\Xst$ is a principal $G$-bundle in the usual sense, and for every special group $G$. Applying this to $\Spec\kk \to \Spec\kk/G$, we obtain $1=[G]\int_{\Spec\kk/G}\unit_{\Spec\kk/G}$, and $[G]$ is invertible. Using the projection formula, $\rho_!\rho^\ast$ is invertible for every quotient map $\rho:X\to X/G$ proving that $\underline{\Ka}(\QSt^{sp,\mathfrak{ft}})$ is in $\hat{\Th}(\kk)_{st}$. One can show that its restriction to $\Sch_\kk$ is $\underline{\Ka}_0(\Sch^{ft})[[\Gl(n)]^{-1}\mid n\in \NN]$. (See \cite{Bridgeland10} for more details.) Thus, 
\[ \underline{\Ka}_0(\Sch^{ft})[[\Gl(n)]^{-1}\mid n\in \NN]^{st}=\underline{\Ka}(\QSt^{sp,\mathfrak{ft}}) \]
by  Theorem \ref{extension_3}.
\end{example}

\section{Donaldson--Thomas theory, standard version} \label{Donaldson_Thomas_general_version}

 In this section we provide the standard definition of the Donaldson--Thomas function generalizing ideas of M.\ Kontsevich and Y.\ Soibelman. We will also relate our approach to the work of D.\ Joyce. Moreover, we introduce the Hall algebra and various algebra homomorphisms, and prove the wall-crossing identity and the PT--DT correspondence which will relate this section to section \ref{Donaldson_Thomas_framed}. \\
 
We will always work in the setting of Example \ref{extension_4}. In particular, all Artin stacks will be disjoint unions of quotients by special groups, and we fix the pair $(\SSm,\PPro)=(\QSt^{sp},\mathfrak{ft})$.  Let $R\in \hat{\Th}^\lambda(\Msp)_{st}$ be a  stacky  $\lambda$-ring $(\QSt^{sp},\mathfrak{ft})$-theory  over the moduli space $\Msp$ of objects in $\Ab$ which is  determined by its restriction to $\Sch_\Msp$ due to Theorem \ref{extension_3}. Moreover, we fix a $\underline{\ZZ}(Sm,proj)$-algebra structure on $R|_{\Sch_\Msp}$ which by Corollary \ref{extension5} extends to a $\underline{\ZZ}(Sm^{st, proj^{st}})$-algebra structure on $R$. Moreover, assume the existence of some element $-\LL^{-1/2}\in \Pic(R)$ such that $(\LL^{-1/2})^2=\LL^{-1}$. 
\begin{example} \rm
 Given an algebra structure $\phi:\underline{\ZZ}(Sm^{proj})\to R'$ on a motivic $\lambda$-ring $(\Sch_\kk,ft)$-theory  $R'$ with $-\LL^{-1/2}\in \Pic(R')$ over $\Msp$, we can adjoin $[\Gl(n)]^{-1}$ for all $n\in \NN$ as in Example \ref{quotient_stack_4}, and obtain an algebra theory  $R=R'[[\Gl(n)]^{-1}:n\in \NN]^{st}$ satisfying our assumptions due to Example \ref{extension_4}. An important case is given by the canonical $\underline{\ZZ}(Sm^{proj})$-algebra structure on $\underline{\Ka}_0(\Sch^{ft})\langle \LL^{-1/2}\rangle^-$ giving rise to the reduced extended $\lambda$-ring theory  $\underline{\Ka}_0(\Sch^{ft})\langle \LL^{-1/2},(\LL-1)^{-1}\rangle^-$. If we apply this strategy to $R'=\Con$ or $R'=\underline{\Ka}_0(\Perv)$, we end up with the trivial ring theory  $R\equiv 0$ producing no interesting results. 
\end{example}

\subsection{Donaldson--Thomas functions and invariants}

We start by extending the definition of the ``intersection complex'' to smooth disjoint unions of quotient stacks. 
\begin{definition}
For a smooth stack $\Xst=\sqcup_{\Xst_i\in \pi_0(\Xst)} \Xst_i$ (over $\Msp$) with $\Xst_i= X_i/G_i$ we define the intersection complex $\ICS_\Xst\in \underline{\ZZ}(Sm^{st,proj^{st}}_\Mst)[\LL^{-1/2}]$ by the condition $\ICS_\Xst|_{\Xst_i}=\LL^{-\dim \Xst_i/2}\unit_{\Xst_i}$, where $\dim \Xst_i=\dim X_i-\dim G_i$ denotes the dimension of $\Xst_i$.  
\end{definition}
Let us  fix a functor $\Ab$ with values in exact categories as in section 2 satisfying properties (1)--(6). Using the convention and notation introduced in section 2, we want to apply the definition of the intersection complex  to the moduli stack $\Mst=\sqcup_{\gamma\in\Gamma} \Mst_\gamma$ of objects in $\Ab$ with $\dim \Mst_\gamma=-(\gamma,\gamma)$.
\begin{definition}
Using $\ICS_{\Mst}\in \underline{\ZZ}(Sm^{proj}_\Mst)[\LL^{-1/2}]$ and that $\Msp$ is a commutative monoid in $\Sch_\kk$ with respect to $\oplus$, we  define the Donaldson--Thomas function $\DTS(\Ab,\phi)=\big(\DTS(\Ab,\phi)_d\big)_{ d\in \NN^{\oplus I}}\in R(\id)$  by means of $\DTS(\Ab,\phi)_0=0$ and the equation
\begin{equation}
\label{stand_DTsheaf} p_{ !}\Big(\phi_{p}\big(\ICS_{\Mst}\big)\Big)= \Sym \Big( \frac{\DTS(\Ab,\phi)}{\LL^{1/2}-\LL^{-1/2}} \Big) 
\end{equation}
using Lemma \ref{complete_lambda_rings} and the shorthand $\DTS(\Ab,\phi)_d=\DTS(\Ab,\phi)|_{\Msp_d}$. If $R\in \hat{\Th}^\lambda(\Msp)$ is a reduced $(\Sch_\kk,ft)$-theory $R$, we write $\DTS(\Ab,R)$ for the canonical $\underline{\ZZ}(Sm^{proj})$-algebra structure on $R|_{Sm,proj}$.
\end{definition}
Note that the existence of the Donaldson--Thomas function follows already from the invertibility of $\LL^{1/2}-\LL^{-1/2}=\LL^{-1/2}[\Gl(1)]$. Moreover, $\DTS(\Ab,\phi)$ is uniquely defined by the same reason. This is an advantage of this approach compared to the one given in section \ref{Donaldson_Thomas_framed}. However, invertibility of $[\Gl(n)]$ puts severe restrictions on $R$. 
\begin{remark} \label{remark_2}
The following is true.
\begin{enumerate}
 \item[(i)] If $\eta:R\to R'$ is a morphism of stacky $\lambda$-ring theories, we have an algebra structure $\eta\phi$ on $R'$ and $\eta_{\id}(\DTS(\Ab,\phi))=\DTS(\Ab,\eta\phi)$. This applies in particular, if $\phi$ has a factorization $\underline{\ZZ}(Sm^{proj})\xrightarrow{\unit}\underline{\Ka}_0(\Sch^{ft})\langle \LL^{-1/2},(\LL-1)^{-1}\rangle^- \xrightarrow{\phi} R$ abusing notation.
 \item[(ii)] If $\phi=W^\ast(\phi')$ for some potential $W:\Msp\to \AAA$ and some vanishing cycle $\phi':\underline{\ZZ}(Sm^{proj})\to R$ over $\AAA$, the left hand side of the defining equation is supported on $\pi(\Crit(W\circ p))=\pi(\Mst^{W})=\Msp^{W}$, i.e.\ contained in $W^\ast(R)(\Msp^{W}\hookrightarrow\Msp)=R(\Msp^{W}\xrightarrow{W}\AAA)$. As $W:\Msp^{W}_d \to \AAA$ has only finitely many fibers, we can use the trivialization of $R$ to embed $\DTS(\Ab,W^\ast(\phi'))_d$ into $R(\Msp^{W}_d)$. If $\phi'$ is clear from the context, we also write $\DTS(\Ab,W)$ for short. We  define Donaldson--Thomas invariants $\DT(\Ab,W^\ast(\phi'))_d\in R(\Spec\kk)$ by means of 
\[ \DT(\Ab,W^\ast(\phi'))_d:=\int_{\Msp^{W}_d} \DTS(\Ab,W^\ast(\phi'))_d.\] 
\end{enumerate}
\end{remark}
The following conjecture is due to M.\ Kontsevich and Y.\ Soibelman and allows the realization of the Donaldson--Thomas functions in other non-stacky motivic ring theories. 
\begin{conjecture}[Motivic Integrality Conjecture] \label{integrality_conjecture_stack}
If $\Ab$ also satisfies (7) and (8), the motivic Donaldson--Thomas function $\DTS(\Ab,W^\ast(\phi^{mot}))_d$ is in the image of the map
\[ \Ka^\muu(\Sch^{ft}_{\Msp^{W}_d})\langle \LL^{-1/2}\rangle^- \longrightarrow  \Ka^\muu(\Sch^{ft}_{\Msp^{W}_d})\langle \LL^{-1/2},(\LL-1)^{-1}\rangle^-  \] for all dimension vectors $d\in \NN^{\oplus I}.$
\end{conjecture}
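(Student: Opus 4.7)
Since this statement is a conjecture, my plan is speculative and aims only to outline a plausible strategy. The basic idea is to exploit the framed formalism of Section 4 to reduce the problem to the zero-potential case already handled in \cite{Meinhardt4}, by transporting along the motivic vanishing cycle $\phi^{mot}$.

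First, I would observe that the left-hand side of the framed defining equation,
\[ \pi_{f,!}\sum_d \LL^{fd/2}\phi^{mot}_{\pi_{f,d}}(\ICS_{\Msp_{f,d}}), \]
lies automatically in $\Ka^\muu(\Sch^{ft}_\Msp)\langle\LL^{-1/2}\rangle^-$, since each Hilbert--Chow map $\pi_{f,d}$ is projective and each $\Msp_{f,d}$ is smooth (Theorem 2.11). The PT--DT correspondence of Section 6, combined with Remark 6.1(ii), then identifies the stacky motivic DT function $\DTS(\Ab,W^\ast\phi^{mot})_d$ with the framed one, reducing the conjecture to solving the framed defining equation in the smaller ring. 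Uniqueness of a solution modulo $[\PP^{\gcd(d)-1}]$-torsion in $\Ka^\muu(\Sch^{ft}_{\Msp^{W}_d})\langle\LL^{-1/2}\rangle^-$ would follow from the polynomial-identity argument of Theorem 4.2 applied verbatim in the smaller ring: the relation $[fd]_\LL = \LL^r[fd/r]_{\LL^r}[r]_\LL + [r]_\LL$ with $r=\gcd(d)$, used for several framing vectors, forces $[\PP^{r-1}]\Delta_d = 0$ without ever requiring the inversion of $\LL-1$.

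For existence, I would recursively solve the equation by applying the plethystic logarithm to both sides and inducting on $d$ with respect to the partial order on $\NN^{\oplus I}$, extracting $\DTS(\Ab,W^\ast\phi^{mot})_d$ as a polynomial expression in push-forwards along the projective maps $\pi_{f,d'}$ for $d'\le d$. The main obstacle --- and the reason the conjecture remains open --- is that the plethystic inversion couples intimately to the $\lambda$-operations on $\underline{\Ka}^\muu(\Sch^{ft})\langle\LL^{-1/2}\rangle^-$, while $\phi^{mot}$ is not known to commute with these operations (see Example 4.8). Consequently the shortcut $\DTS(\Ab,W^\ast\phi^{mot}) = \phi^{mot}_{\id}(\DTS(\Ab,mot))$, which would instantly give integrality from the $W=0$ case of \cite{Meinhardt4}, is unavailable, and one has to argue that the $(\LL-1)$-denominators introduced by the inversion cancel in a non-obvious way. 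A plausible workaround is to pass to the local situation at each polystable object, where $\Ab$ is governed by a quiver with potential and explicit Thom--Sebastiani computations permit a direct verification of integrality; the global statement would then follow by a gluing argument along the stratification $\Msp_d = \sqcup_\gamma \Msp_{\gamma,d}$, with the finiteness of $\oplus$ (Lemma 2.4) simplifying the book-keeping.
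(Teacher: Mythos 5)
The statement you were given is labeled a \emph{conjecture} in the paper, and the authors leave it open: there is no proof in the paper to compare your attempt against. What the text does is (a) show, via Theorem~\ref{main_theorem} and the PT--DT correspondence (Proposition~\ref{PT-DT_stack}), that the conjecture would follow from the $\lambda$-conjecture (Conjecture~\ref{lambda_conjecture}), i.e.\ from the claim that $\phi^{mot}$ commutes with the $\sigma^n$-operations, and (b) record the weaker partial result that $\DTS(\Ab,W^\ast(\phi^{mot}))_d$ lies in the image of $\Ka^\muu(\Sch^{ft}_{\Msp_d})[\LL^{-1/2},[\PP^n]^{-1}\mid n\in\NN]$. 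Your proposal correctly locates the same obstruction and essentially reproduces the partial result: the framed left-hand side does live in $\Ka^\muu(\Sch^{ft}_\Msp)\langle\LL^{-1/2}\rangle^-$, and the uniqueness argument of Theorem~\ref{main_theorem} introduces only $[\PP^{\gcd(d)-1}]$-torsion ambiguity without inverting $\LL-1$. So the diagnostic part of your write-up is in line with the paper.

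The genuine gap is in the ``plausible workaround'' you sketch at the end. The passage to a local quiver-with-potential model near a polystable point, as in Lemma~\ref{analytic_neighbourhood}, produces an equivalence with the tangent-cone picture only in the \emph{analytic} topology (after a Luna/Weyl-type averaging argument), not a Zariski-local or \'etale-local one. This is exactly why Proposition~\ref{integral_identity_3}(4) can be established for $\phi^{con}$, $\phi^{perv}$ and $\phi^{mhm}$---theories which only see the analytic structure---while the remark following its proof notes that carrying out the analogous reduction for $\phi^{mot}$ would require the unproven Conjecture~4 of \cite{KS1}, i.e.\ a comparison theorem for the motivic vanishing cycle across formal/analytic equivalences. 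The same problem blocks the gluing step you propose: there is no descent or Mayer--Vietoris principle for naive motives in the analytic topology that would let you assemble local integrality statements into a global one in $\Ka^\muu(\Sch^{ft}_{\Msp_d})\langle\LL^{-1/2}\rangle^-$. So the workaround, as stated, reintroduces precisely the issue it was meant to circumvent, and the conjecture remains open, as the paper says.
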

If the motivic integrality conjecture holds, and if $\eta:\underline{\Ka}^\muu(\Sch^{ft})\langle \LL^{-1/2}\rangle^- \longrightarrow R$ is a morphism in $\Th^\lambda(\Msp)$ of  motivic $\lambda$-ring theories  on $\Sch_{\Msp}$, we can define the Donaldson--Thomas function $\DTS(\Ab,\phi)_d$ in $R(\Msp_d\xrightarrow{\iota_d}\Msp)$ using the shorthand $\phi:=\eta\circ W^\ast(\phi^{mot})$ by putting \[\DTS(\Ab,\phi)_d:=\eta_{\iota_d}(\DTS^{mot}(\Ab,W)_d)\] for a lift $\DTS^{mot}(\Ab,W)_d\in \Ka^\muu(\Sch^{ft}_{\Msp_d})\langle \LL^{-1/2}\rangle^-$ of the the motivic Donaldson--Thomas function $\DTS(\Ab,W^\ast(\phi^{mot}))_d$. 
However, this definition is not unproblematic as it depends on the potential $W$. Indeed, if $\phi=\eta\circ W^\ast(\phi^{mot}):\underline{\Ka}_0(\Sch^{ft})\to R$ is even a morphism of $\lambda$-ring theories, we could also define $\DTS(\Ab,\phi)_d$ as $\phi_{\iota_d}(\DTS(\Ab,mot)_d)$. Thirdly, if $R\in \hat{Th}^\lambda(\Msp)$ and $-\LL^{-1/2}\in\Pic(R)$, we can define $\DTS(\Ab,\phi)_d$ directly using the framing  $\underline{\ZZ}(Sm^{proj})\xrightarrow{\unit} \underline{\Ka}_0(\Sch^{ft}) \xrightarrow{\eta\circ W^\ast(\phi^{mot})} R$ and extending all data to $\QSt^{sp}_\Msp$. Of course, these definitions should give the same answer. Using Remark \ref{remark_2}(i) and the first main result of \cite{Meinhardt4}, all of these problems disappear and the integrality conjecture would be true if the following conjecture holds.
\begin{conjecture}[$\lambda$-conjecture] \label{lambda_conjecture}
 The morphism $\phi^{mot}:\underline{\Ka}_0(\Sch^{ft}) \to \underline{\Ka}^\muu(\Sch^{ft})$ of motivic ring theories  over $\AAA$ is actually a morphism of $\lambda$-ring theories, i.e.\ $\phi^{mot}$ commutes with the $\sigma^n$-operations of the $\lambda$-ring theories.
\end{conjecture}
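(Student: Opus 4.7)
The plan is to reduce to a generating class via Bittner's presentation and then invoke a symmetric refinement of the motivic Thom--Sebastiani theorem. In characteristic zero, the group $\underline{\Ka}_0(\Sch^{ft}_{X\times\AAA})$ is generated, modulo the blow-up relations, by classes $[Y \xrightarrow{(g,f)} X\times\AAA]$ with $Y$ smooth and $g$ projective (Example \ref{motives2}). Since $\phi^{mot}$ and the $\lambda$-structures on both sides commute with smooth pullback and proper pushforward, it suffices to verify
\[ \sigma^n\big(\phi^{mot}_f(\unit_Y)\big) = \phi^{mot}_{\Sym^n f}(\unit_{\Sym^n Y}) \]
for a smooth variety $Y$ and $f:Y\to \AAA$, where $\Sym^n(f):\Sym^n Y\to \AAA$ sends $[y_1,\ldots,y_n]$ to $\sum_i f(y_i)$.

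The natural tool is the iterated motivic Thom--Sebastiani isomorphism of Denef--Loeser,
\[ \phi^{mot}_{f^{\boxplus n}}\big|_{Y_0^n} \cong \big(\phi^{mot}_f\big)^{\boxtimes n}, \]
in the monodromic equivariant Grothendieck ring, where $f^{\boxplus n}:Y^n\to\AAA$ is the sum map and the right-hand side carries the diagonal $\muu$-action. To descend this along the quotient $Y^n\to\Sym^n Y$ and identify the result with $\phi^{mot}_{\Sym^n f}(\unit_{\Sym^n Y})$, one must upgrade the Denef--Loeser isomorphism to one that is equivariant for the $S_n$-action permuting factors, and compatibly with the monodromic convolution that defines $\sigma^n$ on $\underline{\Ka}^\muu(\Sch^{ft})$. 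Granting such equivariance, the quotient-pushforward produces $\sigma^n(\phi^{mot}_f(\unit_Y))$ as required, and the conjecture follows.

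The central obstacle, and the reason the conjecture is still open, is precisely this equivariance. The $\lambda$-structure on $\underline{\Ka}^\muu(\Sch^{ft})$ is not the naive symmetric product: it twists the $\muu$-action along the diagonal in a nontrivial way, whereas the Denef--Loeser construction via motivic zeta functions on arc spaces is only manifestly equivariant for the diagonal $\muu$-action on $Y^n$. At points of $Y^n$ where the total monodromy is nontrivial, the symmetric quotient can introduce ramification whose motivic class is not visibly captured by the iterated convolution of $\phi^{mot}_f$, and the resulting ambiguity cannot be resolved purely at the level of the naive Grothendieck ring. A satisfactory approach would likely require lifting $\phi^{mot}$ to a symmetric monoidal functor between suitable stable $(\infty,1)$-categories of motives, where Thom--Sebastiani can be promoted to an $E_\infty$-natural transformation; by contrast, the analogues of the conjecture for $\MHM$, $\Perv$ and $\Con$, where the symmetric monoidal structure is transparent, are handled directly by the machinery of section \ref{theories}.
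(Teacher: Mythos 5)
The statement you were asked to address is labelled a \emph{conjecture} in the paper, and the authors explicitly say (in Example \ref{example4} and in the sentence immediately following Conjecture \ref{lambda_conjecture}) that they were unable to prove it; they only established a weaker version involving the Deligne--Mumford quotient stacks $X^n/S_n$ rather than the coarse symmetric powers $\Sym^n(X)$. So there is no proof in the paper to compare your attempt against. Your write-up in fact recognizes this: despite being framed as a ``proof proposal,'' what you have actually written is an analysis of the natural strategy and of the place where it breaks down, and your identification of the obstruction is sound and consistent with the authors' own assessment.

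Concretely, your reduction via Bittner's presentation to a smooth $Y$ with a projective map to the base and a function $f:Y\to\AAA$ is legitimate (both $\sigma_t$ and $\phi^{mot}$ extend to group homomorphisms into $1+tR[[t]]$, so it suffices to check on generators), and iterated motivic Thom--Sebastiani does control $\phi^{mot}_{f^{\boxplus n}}$ on $Y^n$ in terms of $(\phi^{mot}_f)^{\boxtimes n}$. The genuine gap is exactly the one you name: the $\sigma^n$-operation on $\underline{\Ka}^\muu(\Sch^{ft})$ is defined through the exotic, convolution-twisted symmetric monoidal structure on the monodromic Grothendieck ring rather than through naive cartesian products, and descending the Thom--Sebastiani identification along the quotient $Y^n\to\Sym^n Y$ would require an $S_n$-equivariant strengthening compatible with the $\muu$-twist, which has not been established for naive motives. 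One refinement worth recording: since the authors report that the weaker statement at the level of the Deligne--Mumford stack $X^n/S_n$ \emph{can} be shown, the obstruction is localized precisely in the passage from the stacky symmetric power to the coarse moduli space $\Sym^n(X)$ --- i.e.\ in pushing forward along $X^n/S_n\to \Sym^n(X)$ in a way that respects the twisted $\muu$-structure. Your closing observation about why the $\MHM$, $\Perv$ and $\Con$ realizations pose no such difficulty (there the symmetric monoidal structure is honest, so the factorization through $\underline{\Ka}_0(\Sch^{ft})$ in Theorem \ref{main_theorem} applies directly) is also in line with the paper.
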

The authors tried hard to prove this conjecture but could only check a weaker version in \cite{DaMe1} involving the stacky symmetric powers $X^n/S_n$ which are Deligne--Mumford stacks. 

\subsection{Dimension reduction}
Given a pair $(\Ab,\omega)$ satisfying properties (1)--(6), one can define a new pair $(\Ab^l,\omega^l)$ such that $\Ab^l_S$ is the category of all pairs $(E,\beta)$ with $E\in \Ab_S$ and $\beta\in \End_{\OO_S}(\omega_S(E))$. Furthermore, $\omega^l(E,\theta)=\omega(E)$, and $(\Ab^l,\omega^l)$ also satisfies all properties (1)--(6) as one can see using \cite{Meinhardt4}, Proposition 3.27 since $\Ab^l=\Ab\times_{\Vect^I} \kk Q^l\rep$, where $Q^l$ is the quiver with $Q^l_0=I$ and one loop at every vertex. There is a ``forget'' functor $F:\Ab^l\to \Ab$ over $\Vect^I$, and it is not hard to see that the set of sections $\theta$ of $F$ over $\Vect^I$ is just $\End_\kk(\omega)$ (cf.\ section \ref{potential}). Moreover, $X^l_d=X_d\times_\kk \prod_{i\in I}\Mat(d_i,d_i)=X_d\times_\kk \AA^{d^2}$ with $G_d$ acting on the second factor by conjugation. 
Given a section $\theta\in \End_\kk(\omega)$, we define a potential $W_\theta:\Msp^l\to \AA^1$ for $\Ab^l$ as follows. Let $(\tilde{E},\tilde{\beta}, \tilde{\psi})$ be the universal trivialized object on $X^l_d$, i.e.\ $\tilde{E}\in \Ab_{X_d^l}$, $\tilde{\psi}:\omega_{X_d^l}(\tilde{E})\cong \OO_{X_d^l}^d$ and $\tilde{\beta}\in \End_{\OO_{X_d^l}}(\omega_{X_d^l}(\tilde{E}))$ giving rise to $\beta=\tilde{\psi}\tilde{\beta}\tilde{\psi}^{-1}\in \End_{\OO_{X_d^l}}(\OO_{X_d^l}^d)$. 
In fact, $(\tilde{E},\tilde{\psi})=(q_d^\ast(E),q_d^\ast(\psi))$ is just the pull-back of the universal trivialized object $(E,\psi)$ on $X_d$ along the projection $q_d:X_d^l\to X_d$. We consider the function 
\[f^\theta_d:=\Tr(\beta \tilde{\psi} \theta^{X_d^l}_{\tilde{E}}\tilde{\psi}^{-1})=\Tr\big(\beta q_d^\ast(\psi \theta^{X_d}_E\psi^{-1})\big)\]
on $X_d^l$ which is apparently $G_d$-invariant and descends to a function on $\Msp_d^l$. Using the properties of the trace, the induced function $W_\theta:\Msp^l\to \AA^1$ is indeed a monoid homomorphism. \\

Let us consider a vanishing cycle $\phi$ on a motivic reduced $\lambda$-ring $(\Sch_\kk,ft)$-theory $R$  (over $\AA^1$) satisfying the assumptions of this section to apply Donaldson--Thomas theory to $(W_\theta^\ast R,W_\theta^\ast\phi)$. We also assume the following property. Given a vector bundle $q:V\to X$ of rank $r$ on a smooth scheme $X\in \Sch_\kk$ and a section $s$ of $q$ inducing a regular function $\hat{s}:V^\ast \to \AA^1$ on the dual bundle. Then $\int_{V^\ast}\phi_{\hat{s}}=\LL^r\int_{\{s=0\}} \unit_{\{s=0\}}\in R(\Spec\kk)$, where $\{s=0\}\subset X$ is the vanishing locus of $s$. This property is fulfilled for $\phi^{mot}$ (see \cite{BBS}, \cite{DaMe1}) and, therefore, also for every vanishing cycle  factoring through $\phi^{mot}$, e.g.\ $\phi^{mhm}$. \\

We will apply this to the trivial vector bundle $q_d:X_d^l\to X_d$ which can be identified with its dual bundle using the trace pairing. The section $s$ is given by $\psi\theta^{X_d}_E\psi^{-1}$ and $\hat{s}$ is $f^\theta_d$. Using our assumption on $\phi$ and $\dim \Mst^l_{\gamma,d}=\dim X_{\gamma,d}=d^2-(\gamma,\gamma)$, we arrive at 
\begin{eqnarray*} \Sym\Big(\sum_{d\in \NN^{\oplus I}}t^d\frac{\DT(\Ab^l,W_\theta^\ast(\phi))_d}{\LL^{1/2}-\LL^{-1/2}}\Big)
& = & \sum_{d\in \NN^{\oplus I}} t^d\sum_{\gamma\in \Gamma} \frac{\LL^{((\gamma,\gamma)-d^2)/2}}{[G_d]}\int_{X^l_d} \phi_{f^\theta_d}  \\
& = & \sum_{d\in \NN^{\oplus I}} t^d\sum_{\gamma\in \Gamma} \frac{\LL^{((\gamma,\gamma)-d^2)/2+d^2}}{[G_d]}\int_{\{\theta^{X_d}_E=0\}} \unit_{\{\theta^{X_d}_E=0\}}  \\
& = & \sum_{d\in \NN^{\oplus I}} t^d\frac{\LL^{d^2}}{[G_d]}\int_{\{\theta^{X_d}_E=0\}} \ICS_{X_d}|_{\{\theta^{X_d}_E=0\}} \\
& = & \sum_{d\in \NN^{\oplus I}} t^d\,\LL^{d^2/2}\int_{\Mst_d^{\theta=0}} \ICS_{\Mst_d}|_{\Mst_d^{\theta=0}} 
\end{eqnarray*}
for the generating function of the Donaldson--Thomas invariants, where we used the notation $\Mst_d^{\theta=0}$ for the quotient stack $\{\theta^{X_d}_E=0\}/G_d$. This calculation is often called ``dimension reduction''.

\subsection{The Ringel--Hall algebra}

A very useful tool in Donaldson--Thomas theory  is the Ringel--Hall algebra which we are going to introduce now. Assume that $\Ab$ satisfies assumptions (1)--(6).\\
Recall that $\mathfrak{E} xact=\sqcup_{d,d'} \Mst_{d,d'}$ is the stack of short exact sequences in $\Ab$ with $\mathfrak{E}xact_{d,d'}=X_{d,d'}/G_{d,d'}$. Moreover, there are morphisms  
\[ \xymatrix { \mathfrak{E} xact \ar[d]_{\pi_1\times \pi_3} \ar[r]^{\pi_2} & \Mst \\ \Mst \times \Mst  & } \]
mapping a short exact sequence to its i-th entry. All morphisms are of finite type and $\pi_2$ is even representable and proper by assumption (5). In fact, $\mathfrak{E}xact \xrightarrow{\pi_2}\Mst$ is the universal Grassmannian  parameterizing subobjects.
\begin{definition}
The composition 
\[ \ast:R(\Mst)\otimes R(\Mst) \xrightarrow{\boxtimes}R(\Mst\times \Mst) \xrightarrow{(\pi_1\times \pi_3)^\ast} \mathfrak{E}xact \xrightarrow{\pi_{2\,!}} R(\Mst) \]
is called the Ringel--Hall product on $R(\Mst)$.  
\end{definition}
The proof of the following lemma is standard, see for instance \cite[Thm.\ 5.2]{JoyceII}.
\begin{lemma} \label{Ringel_Hall_product}
The Ringel--Hall product is associative and makes $R(\Mst)$ into an algebra with unit $1_0=0_!(1)$ for $\Spec\kk\xrightarrow{0} \Mst$. 
\end{lemma}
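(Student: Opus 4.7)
The plan is to use a standard double coset argument via an auxiliary stack of two-step filtrations. Let $\mathfrak{E}xact^{(2)}$ be the stack whose $S$-points are filtrations $0\subset E_1\subset E_2\subset E_3$ of objects in $\Ab_S$, equipped with four natural projections: three ``subquotient'' maps $p_1,p_2,p_3:\mathfrak{E}xact^{(2)}\to\Mst$ sending such a filtration to $E_1$, $E_2/E_1$, $E_3/E_2$ respectively, and a ``total'' map $q:\mathfrak{E}xact^{(2)}\to\Mst$ sending the filtration to $E_3$. In the same way that $\mathfrak{E}xact$ is built out of $X_{d,d'}/G_{d,d'}$, one constructs $\mathfrak{E}xact^{(2)}$ as a quotient stack inside the $\QSt^{sp}$-framework, so that $q$ is representable and proper (since it factors as two iterated universal Grassmannians, each proper by assumption (5)) and $p_1\times p_2\times p_3$ is in $\SSm$ (for the same reason as $\pi_1\times\pi_3$).

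For associativity, I would compute $(a\ast b)\ast c$ by first unfolding the outer $\ast$: there is a cartesian square whose upper corner is (up to canonical isomorphism) $\mathfrak{E}xact^{(2)}$, where the ``middle'' copy of $\Mst$ on which $a\ast b$ is evaluated is exactly the stack parametrising the subobject $E_2\subset E_3$. Base change and the fact that push-forward along $\PPro$ commutes with pull-back along $\SSm$ then give
\begin{equation*}
(a\ast b)\ast c \;=\; q_{!}\bigl((p_1\times p_2\times p_3)^{\ast}(a\boxtimes b\boxtimes c)\bigr).
\end{equation*}
The symmetric unfolding of $a\ast(b\ast c)$ via the middle object $E_2/E_1 \hookrightarrow E_3/E_1$ yields the same formula through a dual cartesian diagram built from $\mathfrak{E}xact^{(2)}$, proving associativity.

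For the unit, I would identify $\mathfrak{E}xact_{0,d}$ with $\Mst_d$: a short exact sequence $0\to 0\to E\to E\to 0$ is uniquely determined by $E$ because $0\in\Ab_S$ has no non-trivial automorphisms. Under this identification the restriction of $\pi_2$ becomes $\id_{\Mst_d}$ and the restriction of $\pi_1\times\pi_3$ becomes the canonical map $\Mst_d\xrightarrow{\sim}\Mst_0\times\Mst_d$. Hence for $a\in R(\Mst)$,
\begin{equation*}
1_0\ast a \;=\; \pi_{2\,!}(\pi_1\times\pi_3)^{\ast}(0_!(1)\boxtimes a) \;=\; a,
\end{equation*}
using the projection formula to move $0_!(1)$ across $(\pi_1\times\pi_3)^{\ast}$, and the symmetric argument via $\mathfrak{E}xact_{d,0}\cong\Mst_d$ gives $a\ast 1_0=a$.

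\textbf{Main obstacle.} The genuine content is not the formal diagram chase but setting up $\mathfrak{E}xact^{(2)}$ rigorously within our axiomatic framework so that all pull-backs lie in $\SSm$ and all push-forwards lie in $\PPro$; this requires recognising $\mathfrak{E}xact^{(2)}$ as a twofold iteration of the universal Grassmannian and checking that the resulting quotient-stack presentation is compatible with property (0'). Once this and the cartesianness of the two unfolding squares are verified, associativity and unitality fall out of base change and the projection formula as above.
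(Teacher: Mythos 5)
Your sketch is the standard argument that the paper is in fact invoking: the paper does not prove this lemma itself but only cites \cite[Thm.~5.2]{JoyceII}, and your plan --- introduce the stack $\mathfrak{E}xact^{(2)}$ of two-step filtrations as the corner of a cartesian square (an iterated fibre product of $\mathfrak{E}xact$ over $\Mst$), then use base change to unfold both $(a\ast b)\ast c$ and $a\ast(b\ast c)$ into the common expression $q_!\bigl((p_1\times p_2\times p_3)^{\ast}(a\boxtimes b\boxtimes c)\bigr)$ --- is exactly the content of that reference transposed into the $(\SSm,\PPro)$-framework. Your identification of the ``main obstacle'' is also apt: the real work is checking that $\mathfrak{E}xact^{(2)}$ has a compatible quotient-stack presentation so that $q\in\PPro$ and $p_1\times p_2\times p_3\in\SSm$, after which the diagram chase is formal.

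One small slip in the unit step: the tool that ``moves $0_!(1)$ across $(\pi_1\times\pi_3)^{\ast}$'' is base change, not the projection formula. You are computing $(\pi_1\times\pi_3)^{\ast}\circ(0\times\id)_!$ applied to $1\boxtimes a$; base change along the cartesian square with lower row $\Mst_0\times\Mst\hookrightarrow\Mst\times\Mst$ and upper-left corner $\mathfrak{E}xact_{0,\bullet}$ rewrites this as a push-forward along the closed embedding $\mathfrak{E}xact_{0,\bullet}\hookrightarrow\mathfrak{E}xact$ of a pull-back, and your identification $\mathfrak{E}xact_{0,d}\cong\Mst_d$ (under which $\pi_2$ restricts to the identity) then gives $1_0\ast a=a$. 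With that relabelling the argument is sound.
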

\begin{example} \rm \label{Hilbert_scheme_identity}
Assume that $R$ is motivic, fix a fiber functor $\omega$ for $\Ab$ and a framing vector $f\in \NN^{I}$ and use the notation of Section \ref{Sec_fr_rep}. Let $H\in R(\Mst)$ be the push-forward of $\unit_{\Mst_f}$ along the morphism $\tilde{\pi}:\Mst_{f}\longrightarrow \Mst$. Then $H\ast \unit_{\Mst}$ restricted to $\Mst_d$ is $\frac{\LL^{fd}}{\LL-1}\unit_{\Mst_d}$. A proof  of this formula can be found in \cite{Meinhardt4}, section 6.4. 
\end{example}

\subsection{The integration map}
\begin{definition}
Using the notation introduced before, we define the integration map $I^{\phi}=\prod_{d\in\NN^{\oplus I}}I^{\phi}_d:  R(\Mst) \xrightarrow{-\cap \phi_{p}(\ICS_{\Mst})} R(p) \xrightarrow{\;p_!\;} R(\id)$, i.e.\ $I^{\phi}(a)=p_!\big(a\cap\phi_{p}(\ICS_{\Mst})\big)$.
\end{definition}
From now on we will also assume that $\Ab$ satisfies the smoothness condition (7). Hence, $X=\sqcup_{\gamma\in \Gamma} X_{\gamma}$ with $\Gamma=\Ka_0(\Ab_{\bar{\kk}})/\rad (-,-)$, and 
\[(E,E')=\dim_\KK \Hom_{\Ab_\KK}(E,E')-\dim_\KK\Ext^1_{\Ab_\KK}(E,E')\] 
as well as $(E',E)$ are constant for all $(E,\psi)\in X_{\gamma}(\KK)$, all $(E',\psi')\in X_{\gamma'}(\KK)$ and all field extensions $\KK\supset \kk$. We write $(\gamma,\gamma')$ and $(\gamma',\gamma)$ in this case. Combining this with the decomposition with respect to dimension vectors, we get $X=\sqcup_{\gamma\in \Gamma,d\in \NN^{\oplus I}} X_{\gamma,d}$.
\begin{definition} \label{star_product_2}
 The $\ast$-product on $R(\id)=\prod_{\gamma\in \Gamma}R(\Msp_{\gamma}\xrightarrow{\iota_{\gamma}}\Mst)$  is defined by ``continuous'' linear extension of 
 \[ a\ast b:= \LL^{\langle \gamma,\gamma'\rangle /2} a\cdot b \]
 for $a\in R(\iota_{\gamma}), b\in R(\iota_{\gamma'})$. Here, $a\cdot b$ is the usual convolution product $\oplus_!(a\boxtimes b)$ of $a$ and $b$, and $\langle \gamma,\gamma'\rangle=(\gamma,\gamma')-(\gamma',\gamma)$.  
\end{definition}
Note that the symmetry condition (8) for $\Ab$ implies that this $\ast$-product is just the usual convolution product.\\
We fix a fiber functor $\omega$ for $\Ab$ as in section 2 and consider the following commutative diagram.
\begin{equation} \label{convolution_diagram}
 \xymatrix { & X_{d,d'} \ar[dl]_{\hat{\pi}_1\times\hat{\pi}_3} \ar@{^{(}->}[dr]^{\hat{\pi}_2} \ar[dd]^(0.3){\rho_{d,d'}}& \\
 X_d\times X_{d'} \ar[dd]_{\rho_d\times \rho_{d'}} & & X_{d+d'} \ar[dd]^{\rho_{d+d'}}  \\
 & X_{d,d'}/G_{d,d'} \ar[dl]^{\pi_1\times \pi_3} \ar[dr]_{\pi_2} & \\
 X_d/G_d \times X_{d'}/G_{d'} \ar[dd]_{p_d\times p_{d'}} & & X_{d+d'}/G_{d+d'} \ar[dd]^{p_{d+d'}}\\ & & \\
 \Msp_d\times \Msp_{d'} \ar[rr]^\oplus & & \Msp_{d+d'} }
\end{equation}
\begin{definition}
 We say that $\phi$ satisfies the non-linear integral identity for $(\Ab,\omega)$ satisfying the assumptions (1)--(7) if 
 \[ (\hat{\pi}_1\times\hat{\pi_3})_!\hat{\pi}_2^\ast\phi_{p_{d+d'}\rho_{d+d'}} = (\hat{\pi}_1\times \hat{\pi}_3)_!\phi_{p_{d+d'}\rho_{d+d'}\hat{\pi}_2}= \LL^{dd'-(\gamma',\gamma)}\phi_{p_d\rho_d}\boxtimes \phi_{p_{d'}\rho_{d'}} \]
 holds in $R(X_{\gamma,d}\times X_{\gamma',d'}\longrightarrow\Msp)$.
\end{definition}
As for the linear integral identity (\ref{integral_identity}), the last equation is a consequence of the projection formula, of the fact that  $\hat{\pi}_1\times\hat{\pi}_3$ is a vector bundle of rank $dd'-(\gamma',\gamma)$ and the property of $\phi$ to commute with smooth pull-backs. Using the intersection complex $\ICS_{X_{\gamma,d}}=\LL^{((\gamma,\gamma)-d^2)/2}\unit_{X_{\gamma,d}}$ and similarly for $X_{\gamma',d'}$ and $X_{\gamma+\gamma',d+d'}$, we can rewrite the integral identity in a form independent of the $\Gamma$-decomposition of $\Msp$
\begin{eqnarray} \label{non-linear_integral_identity}
 \nonumber (\hat{\pi}_1\times\hat{\pi}_3)_!\hat{\pi}_2^\ast\phi_{p_{d+d'}\rho_{d+d'}}(\ICS_{X_{d+d'}}) & =& (\hat{\pi}_1\times \hat{\pi}_3)_!\phi_{p_{d+d'}\rho_{d+d'}\hat{\pi}_2}(\hat{\pi}_2^\ast \ICS_{X_{d+d'}})\\ &=& \LL^{\langle \gamma,\gamma'\rangle/2 }\phi_{p_d\rho_d}(\ICS_{X_d})\boxtimes \phi_{p_{d'}\rho_{d'}}(\ICS_{X_d'}). 
\end{eqnarray}

\begin{proposition} \label{integral_identity_3}
The following is true.
 \begin{enumerate}
  \item The non-linear integral identity hold for all canonical $\underline{\ZZ}(Sm^{proj})$-algebra structures on motivic reduced ring $(\Sch_\kk,ft)$-theories and all pairs $(\Ab,\omega)$.
  \item If the identity holds for $(\Ab,\omega)$ and $\PPP$ is an open property closed under extensions and subquotients, then it also holds for $(\Ab^\PPP,\omega^\PPP)$. 
  \item Assume $\Char{\kk}=0$. The non-linear integral identity holds in the case of quiver representation for any $\underline{\ZZ}(Sm^{proj})$-algebra $R$ whose structure homomorphism has a factorization through $\underline{\Ka}_0(\Sch^{ft})$.
  \item Assume $\kk=\CC$. The non-linear integral identity holds  for every pair $(\Ab,\omega)$ and the pull-back of $\phi^{con},\phi^{perv}$ or $\phi^{mhm}$ with respect to   any generalized potential $W:\Msp\to \AAA$.  
 \end{enumerate}
\end{proposition}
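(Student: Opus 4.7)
The plan is to address each of the four parts in sequence. Parts (1) and (2) are straightforward formal consequences of the setup, while parts (3) and (4) reduce to the linear integral identity of Theorem~\ref{integral_identity_2} after introducing an appropriate $\GG_m$-action.

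For part (1), the canonical $\underline{\ZZ}(Sm^{proj})$-algebra sends $\ICS_X$ to itself on smooth $X$, so the claim becomes
\[
(\hat{\pi}_1\times\hat{\pi}_3)_!\,\hat{\pi}_2^*\,\ICS_{X_{d+d'}} \;=\; \LL^{\langle\gamma,\gamma'\rangle/2}\,\ICS_{X_d}\boxtimes\ICS_{X_{d'}}
\]
on each $(\gamma,\gamma')$-component. Assumption (7) makes $\hat{\pi}_1\times\hat{\pi}_3$ into a vector bundle of rank $dd'-(\gamma',\gamma)$, and motivicity together with local triviality yields $(\hat{\pi}_1\times\hat{\pi}_3)_!\unit_{X_{d,d'}} = \LL^{dd'-(\gamma',\gamma)}\unit_{X_d\times X_{d'}}$. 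Combined with $\dim X_{\gamma,d} = d^2-(\gamma,\gamma)$ and $\codim(X_{d,d'}\hookrightarrow X_{d+d'}) = dd'-(\gamma,\gamma')$ (from the proposition preceding Lemma~\ref{finite_sum}), the half-integer $\LL$-powers from the intersection-complex normalisations collapse to exactly $\LL^{\langle\gamma,\gamma'\rangle/2}$. Part (2) is a base-change argument: stability of $\PPP$ under subquotients and extensions makes diagram~(\ref{convolution_diagram}) cartesian along the open embeddings $X_\bullet^\PPP\hookrightarrow X_\bullet$, and $\phi$ commutes with their smooth pullbacks, so the identity for $(\Ab,\omega)$ restricts to one for $(\Ab^\PPP,\omega^\PPP)$.

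For part (3), factoring $\phi:\underline{\ZZ}(Sm^{proj})\xrightarrow{\unit}\underline{\Ka}_0(\Sch^{ft})\to R$ through a ring-theory homomorphism reduces the claim to $\underline{\Ka}_0(\Sch^{ft})$. For a quiver $Q$, the representation space $X_{d+d'} = \prod_{a\in Q_1}\Mat(d_{t(a)}+d'_{t(a)},\,d_{s(a)}+d'_{s(a)})$ is globally affine, and the $\GG_m$-action scaling the $(d_{t(a)},d'_{s(a)})$-block by weight $+1$ and its transpose by $-1$ exhibits $X_{d+d'}$ as a $2$-graded vector bundle $V^+\oplus V^-$ over the fixed locus $X_d\times X_{d'}$, with $V^+ = X_{d,d'}$. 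Any cyclic word in $Q$ necessarily crosses between the two blocks an equal number of times in each direction, so for every potential $W\in \kk Q/[\kk Q,\kk Q]$ the function $Wp_{d+d'}\rho_{d+d'}$ is $\GG_m$-invariant. Theorem~\ref{integral_identity_2} (valid in characteristic zero by the cited result of Bittner) then produces the linear integral identity for $\phi$, which reconverts to the intersection-complex form~(\ref{non-linear_integral_identity}) via the bookkeeping already used in part (1).

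The main difficulty is part (4), where $X_{d+d'}$ is in general \emph{not} globally a vector bundle over $X_d\times X_{d'}$. The strategy is to invoke the $1$-parameter subgroup $\lambda:\GG_m\to G_{d+d'}$ acting with weight $0$ on $\kk^d\subset \kk^{d+d'}$ and weight $+1$ on $\kk^{d'}$: the induced action on $X_{d+d'}$ has fixed locus $X_d\times X_{d'}$ and attractive locus $\hat{\pi}_2(X_{d,d'})$ by assumption (4), and the proposition preceding Lemma~\ref{finite_sum} identifies $V^+ = X_{d,d'}$ and its opposite $V^-$ as complementary direct summands of $TX_{d+d'}|_{X_d\times X_{d'}}$, of ranks $dd'-(\gamma',\gamma)$ and $dd'-(\gamma,\gamma')$. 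Since $Wp_{d+d'}\rho_{d+d'}$ is $G_{d+d'}$-invariant it is in particular $\GG_m$-invariant under $\lambda$. The genuinely hard step is that $X_{d+d'}$ need not be globally isomorphic to $V^+\oplus V^-$ over $X_d\times X_{d'}$, so Theorem~\ref{integral_identity_2} in its stated form does not apply directly; the plan is to invoke the categorical, monodromic version of the integral identity developed in Appendix~A, whose proof proceeds by Białynicki--Birula decomposition together with a homotopy-invariance step for the classical vanishing cycle functors underlying $\phi^{con}$, $\phi^{perv}$, and $\phi^{mhm}$, and which requires only that $X_{d+d'}$ be a smooth $\GG_m$-scheme with the stated weight decomposition of the normal bundle of the fixed locus.
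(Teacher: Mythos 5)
Your treatment of parts (1)--(3) matches the paper's argument and is correct: (1) is the bookkeeping you describe, (2) is the base-change/open-restriction argument, and (3) is exactly the paper's observation that in the quiver case $X_{d+d'}$ is globally the $2$-graded bundle $X_{d,d'}\oplus X_{d',d}$ over $X_d\times X_{d'}$ so Theorem~\ref{integral_identity_2} applies directly.

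For part (4) there is a genuine gap. You correctly identify the obstruction: $X_{d+d'}$ is not globally a vector bundle over $X_d\times X_{d'}$, so Theorem~\ref{integral_identity_2} as stated does not apply. But your proposed repair does not work. You assert that the appendix proves a version of the integral identity valid for any smooth $\GG_m$-scheme whose fixed locus has the required weight decomposition of its normal bundle. The appendix does no such thing: its entire geometric setup (the quotient $Z=\Spec(\Sym\mathcal{V}^\vee)^{\GG_m}$, the compactification $\PP(V^+\oplus\AA^1)\times\PP(V^-\oplus\AA^1)$, the blow-ups and their Bia\l{}ynicki--Birula strata) is built from a \emph{global} $2$-graded vector bundle $V=V^+\oplus V^-\to X$, and a Bia\l{}ynicki--Birula decomposition of a general $\GG_m$-scheme gives affine bundles $V^\pm$ over the fixed locus but no $\GG_m$-equivariant identification of the whole scheme with $V^+\oplus V^-$. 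The paper's actual argument is different and essential: first one reduces to $\phi^{perv}$ (the case $\phi^{con}$ follows by taking fibrewise Euler characteristics and $\phi^{mhm}$ by faithfulness of $\rat$); then, because $\phi^{perv}$ is defined in the analytic topology and $D^b(\Perv)$ is motivic there, the statement is local-analytic on $X_d\times X_{d'}$; one localizes at a closed point with reductive stabiliser $H$, applies the $H$-equivariant analytic linearisation of Lemma~\ref{analytic_neighbourhood} (a Luna-type slice argument identifying an $H$-invariant analytic neighbourhood with an $H$-invariant analytic open subset of the tangent space, compatibly with the subvarieties $X_d\times X_{d'}$ and $X_{d,d'}$), and only then applies Theorem~\ref{integral_identity_2} in the extended form of Remark~\ref{integral_identity_4}, which allows $\GG_m$-invariant analytic open subsets of a genuine $2$-graded bundle. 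None of this is in your sketch, and without the analytic slice theorem you cannot reduce the global problem to the bundle case.

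A secondary remark on (3): the function whose $\GG_m$-invariance matters in the non-linear integral identity is $p_{d+d'}\rho_{d+d'}:X_{d+d'}\to\Msp$ itself, which is $G_{d+d'}$-invariant by definition of the GIT quotient and hence automatically invariant under any $1$-PS $\GG_m\hookrightarrow G_{d+d'}$. Your argument about cyclic words and $W\in\kk Q/[\kk Q,\kk Q]$ is therefore not needed and, as phrased, applies only to the pull-back of a vanishing cycle along a specific potential rather than to an arbitrary $\underline{\ZZ}(Sm^{proj})$-algebra factoring through $\underline{\Ka}_0(\Sch^{ft})$, which is the actual hypothesis of (3).
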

\begin{example}   \rm
 Combining the second and the third statement of the proposition, the non-linear integral identity also holds in the case $\Ab=\kk Q\rep^\zeta_\mu$ of  $\zeta$-semistable quiver representations of slope $\mu$. 
\end{example}
\begin{proof}
\begin{enumerate}
 \item The first case is trivial as $\phi_{p_{d+d'}\rho_{d+d'}}=\unit_{X_{d+d'}}$ and similarly for the other spaces.
 \item The second case follows from the assumption that $\PPP$ is extension closed which implies that with $(E,\psi)$ and $(E',\psi')$ having property $\PPP$, the hole fiber of $\hat{\pi}_1\times \hat{\pi}_3$ over $((E,\psi),(E',\psi'))$ has property $\PPP$. Moreover, $\phi$ commutes with open pull-backs.
 \item The case of quiver representations is a straight forward application of the linear integral identity proven in Theorem \ref{integral_identity_2} with respect  to the 2-graded vector bundle $V=X_{d+d'}\longrightarrow X=X_d\times X_{d'}$ with $V^+=X_{d,d'}$ and $V^-=X_{d',d}$.
 \item The proof of the last case is a bit more involved and will be given after Lemma \ref{analytic_neighbourhood}. 
\end{enumerate}
\end{proof}

The main result of this subsection is the following statement due to Kontsevich and Soibelman (see \cite{KS1},\cite{KS2}).
\begin{theorem} \label{algebra_homomorphism}
If $\phi$ satisfies the non-linear integral identity, the integration map $I^{\phi}$ is an algebra homomorphism with respect to the $\ast$-product.
\end{theorem}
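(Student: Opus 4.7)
The plan is to reduce the identity $I^{\phi}(a \ast b) = I^{\phi}(a) \ast I^{\phi}(b)$, for $a \in R(\iota_{\gamma})$ supported in dimension $d$ and $b \in R(\iota_{\gamma'})$ in dimension $d'$, to a single application of the non-linear integral identity \eqref{non-linear_integral_identity} by chasing operations around the diagram \eqref{convolution_diagram}. First I would unwind the two definitions to write
\[
I^{\phi}(a \ast b) \;=\; p_{d+d',!}\Bigl(\pi_{2,!}(\pi_1 \times \pi_3)^{\ast}(a \boxtimes b) \,\cap\, \phi_{p_{d+d'}}(\ICS_{\Mst_{d+d'}})\Bigr),
\]
and then apply the projection formula of Proposition \ref{algebraic_structure_ex}(3) for the proper representable morphism $\pi_2 \in \PPro$ to pull $\pi_{2,!}$ to the outside, leaving the factor $\pi_2^{\ast}\phi_{p_{d+d'}}(\ICS_{\Mst_{d+d'}})$ paired against $(\pi_1 \times \pi_3)^{\ast}(a \boxtimes b)$ via $\cap$.

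Next I would use the commutativity $p_{d+d',!}\,\pi_{2,!} = \oplus_{!}\,(p_d \times p_{d'})_{!}\,(\pi_1 \times \pi_3)_{!}$ from the lower square of \eqref{convolution_diagram}, and apply the projection formula a second time for $\pi_1 \times \pi_3 \in \PPro$, to obtain
\[
I^{\phi}(a \ast b) \;=\; \oplus_{!}(p_d \times p_{d'})_{!}\Bigl((a \boxtimes b) \,\cap\, (\pi_1 \times \pi_3)_{!}\pi_2^{\ast}\phi_{p_{d+d'}}(\ICS_{\Mst_{d+d'}})\Bigr).
\]
The non-linear integral identity \eqref{non-linear_integral_identity} collapses the inner factor to $\LL^{\langle \gamma, \gamma'\rangle/2}\,\phi_{p_d}(\ICS_{\Mst_d}) \boxtimes \phi_{p_{d'}}(\ICS_{\Mst_{d'}})$. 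Using the compatibility $(a \boxtimes b) \cap (x \boxtimes y) = (a \cap x) \boxtimes (b \cap y)$ from Proposition \ref{algebraic_structure_ex}(4) and the definition of $I^{\phi}$, this rewrites as $\LL^{\langle \gamma, \gamma'\rangle/2}\,\oplus_{!}(I^{\phi}(a) \boxtimes I^{\phi}(b))$, which is precisely $I^{\phi}(a) \ast I^{\phi}(b)$ by Definition \ref{star_product_2}. Modulo one subtlety below, this is the whole proof: it is a purely formal manipulation with $\boxtimes$, $\cap$, and the projection formula, with the geometric input concentrated in the integral identity.

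The main obstacle is that the non-linear integral identity as stated lives on the atlas scheme $X_{d,d'}$, in terms of the hatted maps $\hat{\pi}_i$ and the quotient projections $\rho_{\bullet}$, whereas the computation above takes place on the quotient stacks $\mathfrak{E}xact_{d,d'}$, $\Mst_d$, $\Mst_{d'}$, and $\Mst_{d+d'}$. The work is therefore in descending the identity from the atlas to the stacky level, which is not automatic but is precisely what the hypothesis $R \in \hat{\Th}^{\lambda}(\Msp)_{st}$ together with Theorem \ref{extension_3} and Corollary \ref{extension5} make possible: the stacky extension of $R$ and of the $\underline{\ZZ}(Sm^{proj})$-algebra structure is uniquely determined by the scheme-level data, the pull-backs along $\rho_{\bullet}$ are injective onto the respective groupoid-invariant subgroups, and both sides of the atlas identity are manifestly invariant under the actions of $G_{d,d'}$ and $G_d \times G_{d'}$. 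Hence they descend to equal elements on the stacky quotients, which is what the computation above requires.
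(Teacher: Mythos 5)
Your proof proposal is essentially correct in outline but takes a genuinely different route from the paper, and the key step on which everything hinges is left unjustified.

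The paper does not descend the non-linear integral identity to the stacky level. Instead it introduces an atlas-level Ringel--Hall product $\hat{a}\ast\hat{b}:=\LL^{-dd'}\hat{\pi}_{2!}(\hat{\pi}_1\times\hat{\pi}_3)^\ast(\hat{a}\boxtimes\hat{b})$ on $R(X)$, an atlas-level integration map $\hat{I}^\phi_d(\hat{a}):=\LL^{d^2/2}(p_d\rho_d)_!(\hat{a}\cap\phi_{p_d\rho_d}(\ICS_{X_d}))$, and two comparison statements (Lemma \ref{star_product} and Proposition \ref{algebra_homomorphism_2}(i)) that express $a\ast b=\rho_!(\bar{a}\ast\bar{b})$ and $I^\phi\rho_!=\hat{I}^\phi$, where $\bar{a}=\rho_d^\ast(a)/[G_d]$. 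The multiplicative step (Proposition \ref{algebra_homomorphism_2}(ii)) is then carried out entirely on the atlas, where the non-linear integral identity applies as stated, and the stacky statement follows formally by composing the comparisons. The built-in normalizations $\LL^{-dd'}$, $\LL^{d^2/2}$, $1/[G_d]$ are precisely what make the bookkeeping close up. Your approach is the mirror image: descend the integral identity first, then manipulate on the stack. Your manipulations at the stack level (the two applications of the projection formula, the factorization $p_{d+d',!}\pi_{2!}=\oplus_!(p_d\times p_{d'})_!(\pi_1\times\pi_3)_!$, and the final identification with $I^\phi(a)\ast I^\phi(b)$ via Proposition \ref{algebraic_structure_ex}(4)) are correct and in the end deliver the same identity, so the two approaches genuinely differ only in where the groupoid-descent work is concentrated.

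The gap is in your last paragraph, which is the actual content of the proof. You need the \emph{stacky} non-linear integral identity
\[
(\pi_1\times\pi_3)_!\pi_2^\ast\phi_{p_{d+d'}}(\ICS_{\Mst_{d+d'}}) \;=\; \LL^{\langle\gamma,\gamma'\rangle/2}\,\phi_{p_d}(\ICS_{\Mst_d})\boxtimes\phi_{p_{d'}}(\ICS_{\Mst_{d'}}),
\]
and you justify it by saying both sides of the atlas identity are invariant and hence descend to equal elements. That is far too quick. The relevant square relating $\hat{\pi}_1\times\hat{\pi}_3: X_{d,d'}\to X_d\times X_{d'}$ and $\pi_1\times\pi_3:\mathfrak{E}xact_{d,d'}\to\Mst_d\times\Mst_{d'}$ is \emph{not} cartesian, because the vertical maps are quotients by $G_{d,d'}$ and by $G_d\times G_{d'}$ respectively and $G_{d,d'}\ne G_d\times G_{d'}$; the actual fiber product over $\Mst_d\times\Mst_{d'}$ is the associated bundle $X_{d,d'}\times_{G_{d,d'}}(G_d\times G_{d'})$, not $X_{d,d'}$. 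The same failure occurs for the $\hat{\pi}_2/\pi_2$ square. Consequently base change does not directly translate $(\pi_1\times\pi_3)_!\pi_2^\ast$ into $(\hat{\pi}_1\times\hat{\pi}_3)_!\hat{\pi}_2^\ast$, and saying that two invariant elements on the atlas ``descend to equal elements'' does not by itself identify the descended left-hand side with the stacky expression you need. What actually closes the gap is an explicit computation: the canonical map $X_{d,d'}\to X_{d,d'}\times_{G_{d,d'}}(G_d\times G_{d'})$ is a principal $U_{d,d'}\cong\GG_a^{dd'}$-bundle contributing a factor $\LL^{dd'}$ (equivalently, $[G_{d,d'}]=\LL^{dd'}[G_d][G_{d'}]$), and one must also track $\rho^\ast_{d}\ICS_{\Mst_d}=\LL^{d^2/2}\ICS_{X_d}$ and its analogues; all these factors cancel and the stacky identity does hold, but this is exactly the normalization bookkeeping that the paper packages into its Lemma \ref{star_product} and Proposition \ref{algebra_homomorphism_2}. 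Your proof should either carry out this computation explicitly to establish the stacky form of equation (\ref{non-linear_integral_identity}), or else follow the paper and avoid descending the identity by working on the atlas with normalized quantities.
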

\begin{proof}
We prove the theorem in several steps. 

\begin{definition}
 We define the Ringel--Hall product on $R(X)=\prod_{d\in \NN^{\oplus I}}R(X_d)$ by continuous linear extension of
 \[ a\ast b:= \LL^{-dd'}\hat{\pi}_{2\, !}(\hat{\pi}_1\times \hat{\pi}_3)^\ast (a\boxtimes b) \]
 for $a\in R(X_d)$, $b\in R(X_{d'})$, and $dd'=\sum_{i\in I}d_id'_i$.
\end{definition}
\begin{lemma} \label{star_product}
 The Ringel--Hall product on $R(X)$ is associative with unit $1_0=(\Spec\kk=X_0\hookrightarrow X)_!(1)$. Moreover, given $a\in R(X_d/G_d)=R(\Mst_d)$ and $b\in R(X_{d'}/G_{d'})=R(\Mst_{d'})$ we define $\bar{a}:=\rho^\ast_d(a)/[G_d]$ and $\bar{b}:=\rho^\ast_{d'}(b)/[G_{d'}]$. Then
 \begin{enumerate} 
  \item[(i)] $a=\rho_{d\, !}(\bar{a})$ and $b=\rho_{d'\,!}(\bar{b})$, 
  \item[(ii)] $a\ast b=\rho_{d\, !}(\bar{a}) \ast \rho_{d'\,!}(\bar{b}) = \rho_{(d+d')\,!}(\bar{a}\ast\bar{b})$.
 \end{enumerate}
\end{lemma}
\begin{proof}
Proving the associativity law and the unit law is done in exactly the same way as for the usual Ringel--Hall product. Using that $\rho_d:X_d \longrightarrow X_d/G_d$ is a principal $G_d$-bundle and Example \ref{quotient_stack_4}, we get $\rho_{d\,!}(\bar{a})=a\cap\rho_{d\,!}(\unit_{X_d}) /[G_d]=a\cap\unit_{\Mst_d}[G_d]/[G_d]=a$ and similar for $b$ proving (i). Moreover,
\begin{eqnarray*}
 \rho_{(d+d')\,!}(\bar{a}\ast \bar{b}) & = & \LL^{-dd'}\rho_{(d+d')\,!}\hat{\pi}_{2\,!} (\hat{\pi}_1\times\hat{\pi}_3)^\ast (\bar{a}\boxtimes \bar{b}) \\
 & = & \LL^{-dd'}\pi_{2\,!}\rho_{d,d' \,!} (\hat{\pi}_1\times\hat{\pi}_3)^\ast\Big(\frac{\rho_d^\ast(a)}{[G_d]}\boxtimes \frac{\rho_{d'}^\ast(b)}{[G_{d'}]}\Big) \\
 & = & \frac{1}{\LL^{dd'}[G_d][G_{d'}]} \pi_{2\,!}\rho_{d,d' \,!} \rho_{d,d'}^\ast (\pi_1\times \pi_3)^\ast (a\boxtimes b) \\
 & = & \frac{[G_{d,d'}]}{[G_{d,d'}]} \pi_{2\,!} (\pi_1\times \pi_3)^\ast (a\boxtimes b) \\
 & = & a\ast b.
\end{eqnarray*}
\end{proof}
\begin{definition}
We define the integration  map $\hat{I}^{\phi}=\prod_{d\in \NN^{\oplus I}}\hat{I}^{\phi}_d:R(X)\longrightarrow R(\id)$ as follows
\[ \hat{I}^{\phi}_d:R(X_d) \ni \hat{a} \longmapsto \LL^{d^2/2}(p_d  \rho_d)_!\big(\hat{a}\cap \phi_{p_d\rho_d}(\ICS_{X_d})\big) \in R(\iota_d). \] 
\end{definition}
\begin{proposition} \label{algebra_homomorphism_2}
Assuming the non-linear integral identity, the following  holds
\begin{enumerate}
 \item[(i)] $I^{\phi}(\rho_!(\hat{a}))=\hat{I}^{\phi}(\hat{a})$ for all $\hat{a}\in R(X)$,
 \item[(ii)] the map $\hat{I}^{\phi}:(R(X),\ast)\longrightarrow (R(\id),\ast)$ is an algebra homomorphism.
\end{enumerate}
\end{proposition}
\begin{proof}
(i) Note that $\rho_d^\ast\ICS_{\Mst_d}=\LL^{d^2/2}\ICS_{X_d}$, and we obtain for $\hat{a}\in R(X_d)$ 
\begin{eqnarray*} I^{\phi}_d(\rho_{d\,!}(\hat{a})) &=&p_{d}\big(\rho_{d\,!}(\hat{a})\cap \phi_{p_d}(\ICS_{\Mst_d})\big), \\
&=& (p_{d}\circ\rho_{d})_!\big(\hat{a}\cap\rho_d^\ast \phi_{p_d}(\ICS_{\Mst_d})\big), \\
&=& (p_{d}\circ\rho_{d})_!\big(\hat{a}\cap \phi_{p\rho_d}(\rho_d^\ast\ICS_{\Mst_d})\big), \\
&=&\LL^{d^2/2} (p_{d}\circ\rho_{d})_!\big(\hat{a}\cap \phi_{p\rho_d}(\ICS_{X_d})\big),\\
&=&\hat{I}^{\phi}_d(\hat{a}). 
\end{eqnarray*}
(ii) For the second part, pick $\hat{a}\in R(X_{\gamma,d})$ and $\hat{b}\in R(X_{\gamma',d'})$. We use diagram (\ref{convolution_diagram}) and the non-linear integral identity (\ref{non-linear_integral_identity}) to conclude
\begin{eqnarray*}
 \lefteqn{ \hat{I}^{\phi}_{d+d'}(\hat{a}\ast \hat{b}) } \\
 & = & \LL^{(d+d')^2/2}(p_{d+d'}\rho_{d+d'})_!\big((\hat{a}\ast\hat{b})\cap \phi_{p_{d+d'}\rho_{d+d'}}(\ICS_{X_{d+d'}})\big)    \\
 &=& \LL^{(d+d')^2/2}\LL^{-dd'}(p_{d+d'}\rho_{d+d'})_!\big(\hat{\pi}_{2\,!}(\hat{\pi}_1\times\hat{\pi}_3)^\ast(\hat{a}\boxtimes \hat{b})\cap \phi_{p_{d+d'}\rho_{d+d'}}(\ICS_{X_{d+d'}})\big) \\
 &=& \LL^{d^2/2+d'^2/2}(p_{d+d'}\rho_{d+d'}\hat{\pi}_{2})_!\big((\hat{\pi}_1\times\hat{\pi}_3)^\ast(\hat{a}\boxtimes \hat{b})\cap \hat{\pi}_2^\ast\phi_{p_{d+d'}\rho_{d+d'}}(\ICS_{X_{d+d'}})\big) \\
 &=& \LL^{d^2/2+d'^2/2}\big(\oplus (p_d\rho_d\times p_{d'}\rho_{d'})(\hat{\pi}_1\times\hat{\pi}_3)\big)_!\big((\hat{\pi}_1\times\hat{\pi}_3)^\ast(\hat{a}\boxtimes \hat{b})\cap \hat{\pi}_2^\ast\phi_{p_{d+d'}\rho_{d+d'}}(\ICS_{X_{d+d'}})\big) \\
 &=& \LL^{d^2/2+d'^2/2}\oplus_!(p_d\rho_d\times p_{d'}\rho_{d'})_!\big(\hat{a}\boxtimes \hat{b}\cap (\hat{\pi}_1\times\hat{\pi}_3)_! \hat{\pi}_2^\ast \phi_{p_{d+d'}\rho_{d+d'}}(\ICS_{X_{d+d'}})\big) \\
 &=& \LL^{\langle \gamma,\gamma'\rangle/2} \LL^{d^2/2+d'^2/2}\oplus_!(p_d\rho_d\times p_{d'}\rho_{d'})_!\big( (\hat{a}\boxtimes \hat{b})\cap (\phi_{p_d\rho_d}(\ICS_{X_d})\boxtimes \phi_{p_{d'}\rho_{d'}}(\ICS_{X_{d'}}))\big) \\
 & = & \LL^{\langle \gamma,\gamma'\rangle/2}\oplus_! \big( \LL^{d^2/2}(p_{d}\circ\rho_{d})_!(\hat{a}\cap \phi_{p\rho_d}(\ICS_{X_d})) \boxtimes \LL^{d'^2/2}(p_{d'}\circ\rho_{d'})_!(\hat{b}\cap \phi_{p\rho_{d'}}(\ICS_{X_{d'}})) \big) \\
 &= & \hat{I}^{\phi}_d(\hat{a})\ast \hat{I}^{\phi}_{d'}(\hat{b}) 
\end{eqnarray*}
\end{proof}
If we now apply Lemma \ref{star_product} and Proposition \ref{algebra_homomorphism_2} to $\bar{a}:=\rho^\ast_d(a)/[G_d]$ and $\bar{b}:=\rho^\ast_{d'}(b)/[G_{d'}]$, we finally get
\begin{eqnarray*} I^{\phi}_{d+d'}(a\ast b) &=& I^{\phi}_{d+d'}(\rho_{(d+d')\,!}(\bar{a}\ast\bar{b})) \;=\;\hat{I}^{\phi}_{d+d'}(\bar{a}\ast\bar{b}) \\
&=&\hat{I}^{\phi}_d(\bar{a})\ast\hat{I}^{\phi}_{d'}(\bar{b})\;=\;I^{\phi}_d(a)\ast I^{\phi}_{d'}(b)
\end{eqnarray*}
proving Theorem \ref{algebra_homomorphism}. 
\end{proof}

The following technical lemma will be used for the proof of the remaining fourth case in Proposition \ref{integral_identity_3}. Hence, we will assume $\kk=\CC$ for the remaining part of this subsection.

\begin{lemma} \label{analytic_neighbourhood}
 For every closed point $(x,x')\in X_d\times X_{d'}\subset X_{d+d'}$ corresponding to a pair $((E,\psi),(E',\psi'))$ of ``trivialized'' semisimple objects in $\Ab_\CC$ with reductive stabilizer $H:=\Aut_{\Ab_\CC}(E)\times \Aut_{\Ab_\CC}(E)\subset G_{d+d'}$ given by a product of general linear groups, there is an analytic $H$-invariant neighborhood $U\subset X_{d+d'}$, an analytic $H$-invariant neighborhood $\tilde{U}\subset \Ta_{(x,x')}X_{d+d'}$ of $0$ and an $H$-equivariant biholomorphic map $\Theta:U \xrightarrow{\sim} \tilde{U}$ mapping $(x,x')$ to $0$, $U\cap (X_d\times X_{d'})$ to $\tilde{U}\cap (\Ta_x X_d\oplus \Ta_{x'}X_{d'})$, and  $U\cap X_{d,d'}$ to $\tilde{U}\cap (\Ta_x X_d\oplus \Ta_{x'}X_{d'}\oplus \Ext_{\Ab_\CC}^1(E',E))=\tilde{U}\cap \Ta_{(x,x')}X_{d,d'}$. 
\end{lemma}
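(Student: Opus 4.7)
The plan is to apply the complex-analytic version of Luna's étale slice theorem to the reductive group $H=\Aut_{\Ab_\CC}(E)\times\Aut_{\Ab_\CC}(E')$ acting on $X_{d+d'}$ at the $H$-fixed point $(x,x')$, and then carefully match up the two closed $H$-invariant subvarieties $X_d\times X_{d'}$ and $X_{d,d'}$ with their Zariski tangent spaces at $(x,x')$. First I would use the smoothness assumption (7) together with the fact that polystable semisimple points are smooth points of $X_{d+d'}$ (which follows from the vector bundle structure of $X_{d,d'}\to X_d\times X_{d'}$ given by the Lemma after (7), forcing the local dimension count to match) to conclude that $X_{d+d'}$, $X_d$, $X_{d'}$ and $X_{d,d'}$ are all smooth at the relevant points, so that the statement makes literal sense with $\tilde U$ living in the Zariski tangent space.

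Next I would invoke the complex-analytic Luna slice theorem for a reductive linear algebraic group acting on a smooth complex space with a fixed point, producing an $H$-equivariant biholomorphism $\Theta_0:U_0\xrightarrow{\sim}\tilde U_0$ of $H$-invariant analytic neighborhoods of $(x,x')$ and $0\in T_{(x,x')}X_{d+d'}$ with $d\Theta_{0,(x,x')}=\id$. To enforce the compatibility with the subvarieties, I would observe that the direct-sum embedding $\oplus:X_d\times X_{d'}\hookrightarrow X_{d+d'}$ and the middle-term embedding $\hat\pi_2:X_{d,d'}\hookrightarrow X_{d+d'}$ are both $H$-equivariant closed immersions of smooth subvarieties whose tangent spaces at $(x,x')$ are the $H$-invariant linear subspaces $T_xX_d\oplus T_{x'}X_{d'}$ and $T_xX_d\oplus T_{x'}X_{d'}\oplus\Ext^1_{\Ab_\CC}(E',E)$ of $T_{(x,x')}X_{d+d'}$; this last identification is forced by the deformation-theoretic exact sequence from the Proposition preceding the lemma, applied to the split short exact sequence $0\to E\to E\oplus E'\to E'\to 0$, combined with the symmetry condition (8), which together yield the $H$-equivariant decomposition $T_{(x,x')}X_{d+d'}\cong T_xX_d\oplus T_{x'}X_{d'}\oplus\Ext^1(E',E)\oplus\Ext^1(E,E')$.

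I would then upgrade $\Theta_0$ to a $\Theta$ sending $U\cap(X_d\times X_{d'})$ and $U\cap X_{d,d'}$ onto the corresponding tangent subspaces. The standard device is an iterated slice construction: take the Luna slice of $H$ on $X_{d+d'}$ through $(x,x')$, intersect it with the smaller subvarieties, and post-compose $\Theta_0$ with an $H$-equivariant biholomorphism of a neighborhood of $0\in T_{(x,x')}X_{d+d'}$ that straightens $\Theta_0(U\cap(X_d\times X_{d'}))$ and $\Theta_0(U\cap X_{d,d'})$ onto their tangent cones; reductivity of $H$ supplies the equivariant splittings needed at each step, and averaging over the maximal compact of $H$ (or applying the Reynolds operator) ensures $H$-equivariance throughout.

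The main obstacle is the \emph{simultaneous} compatibility of $\Theta$ with both $X_d\times X_{d'}$ and $X_{d,d'}$: one needs a single chart to flatten two nested smooth $H$-invariant subvarieties onto their tangent subspaces at once. I would handle this by exploiting the fact that $\hat\pi_1\times\hat\pi_3:X_{d,d'}\to X_d\times X_{d'}$ is an $H$-equivariant vector bundle in a Zariski neighborhood of $(x,x')$ (by the Lemma after (7)); one first trivializes this bundle $H$-equivariantly near $(x,x')$, then constructs a slice chart $\Theta$ on the total space that realizes the bundle as the projection $T_xX_d\oplus T_{x'}X_{d'}\oplus\Ext^1(E',E)\to T_xX_d\oplus T_{x'}X_{d'}$. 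This reduces the problem to the slice theorem for the action of $H$ on $X_d\times X_{d'}$ together with an equivariant linearization of a vector bundle, both of which are standard.
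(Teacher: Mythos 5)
Your overall plan -- start from an $H$-equivariant biholomorphism onto the tangent space (via a slice/linearization theorem) and then straighten the two nested subvarieties -- is a legitimate packaging of the same underlying content, but the paper avoids invoking Luna's theorem as a black box and instead builds the flattening in from the start. It chooses three compatible $H$-equivariant splittings of the conormal filtration at $(x,x')$: of $\mm\to\mm/(I+\mm^2)\cong \Ta_x^\ast X_d\oplus\Ta_{x'}^\ast X_{d'}$, of $I\to I/(J+\mm I)\cong \Ext^1_{\Ab_\CC}(E',E)^\ast$ (the conormal of $X_d\times X_{d'}$ inside $X_{d,d'}$), and of $J\to J/\mm J$ (the conormal of $X_{d,d'}$ inside $X_{d+d'}$). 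Reductivity of $H$ supplies the splittings, and together they give an $H$-equivariant algebra map $\Theta^\sharp:\Sym_\CC\mm/\mm^2\to A$ whose induced morphism is \'etale at $(x,x')$ and carries the two subvarieties onto the correct linear subspaces \emph{by construction}; the biholomorphism is then obtained by the inverse function theorem on a $K$-invariant ball ($K\subset H$ maximal compact) and spread to $U=H\cdot B_\epsilon$ by a Hartogs-type argument. So the paper's route is: one carefully chosen algebraic map first, analytic shrinking second, with no post-composition step at all.

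The genuine gap in your proposal is the final concrete reduction. You propose to resolve the simultaneous-compatibility problem by exploiting the $H$-equivariant vector bundle $\hat\pi_1\times\hat\pi_3:X_{d,d'}\to X_d\times X_{d'}$, trivializing it equivariantly, and then invoking the slice theorem on the base $X_d\times X_{d'}$. But this construction produces a chart on the total space of that bundle, i.e.\ on $X_{d,d'}$, realizing it as $\Ta_xX_d\oplus\Ta_{x'}X_{d'}\oplus\Ext^1_{\Ab_\CC}(E',E)$. It says nothing about how $X_{d,d'}$ sits inside the ambient $X_{d+d'}$, which is exactly where the remaining $\Ext^1_{\Ab_\CC}(E,E')$-directions of the tangent space live. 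Without an equivariant straightening of $X_{d,d'}\hookrightarrow X_{d+d'}$ --- the role played by the paper's third splitting $J\to J/\mm J\cong N^\ast_{(x,x');\,X_{d,d'}\mid X_{d+d'}}$ --- you have not produced a chart on $X_{d+d'}$ at all, only on the subvariety $X_{d,d'}$. Your earlier phrase ``iterated slice construction'' does gesture at the correct mechanism (successive equivariant conormal splittings), but the concrete reduction you settle on is not an implementation of it and leaves one of the two nestings unaddressed.
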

\begin{proof}
 We pick an affine $H$-invariant neighborhood $\Spec A$ of $(x,x')$ in $X_{d+d'}$ which exists due to our assumption (4) on $\Ab$. Let $\mm$ be the maximal ideal of $(x,x')$ such that $A/\mm=\CC$. We denote with $I\subset \mm$ and $J\subset I$ the $H$-invariant vanishing ideals of $(X_d\times X_{d'})\cap \Spec A$ and $X_{d,d'}\cap \Spec A$ respectively. Note that $\Ta_{(x,x')}X_{d+d'}=\Spec \Sym_\CC \mm/\mm^2$ as an affine scheme. First of all, we chose  $H$-equivariant splittings of  
 \begin{eqnarray*}
  \mm &\longrightarrow & \mm/(I+\mm^2)\cong \Ta^\ast_x X_d \oplus \Ta^\ast_{x'}X_{d'},\\  
  I &\longrightarrow&  I/(J+\mm I) \cong N^\ast_{(x,x');X_d\times X_{d'}| X_{d,d'}} \cong \Ext^1_{\Ab_\CC}(E',E)^\ast,\\
  J &\longrightarrow & J /\mm J \cong N^\ast_{(x,x');X_{d,d'}\mid X_{d+d'}}
 \end{eqnarray*}
 which can be done as $H$ is reductive.  Since \[ \Ta^\ast_{(x,x')}X_{d+d'}=\Ta^\ast_xX_d \oplus \Ta_{x'}X_{d'}\oplus N^\ast_{(x,x');X_d\times X_{d'}| X_{d,d'}} \oplus N_{(x,x');X_{d,d'}\mid X_{d+d'}} \] 
 as algebraic $H$-representations, we get an $H$-equivariant $\CC$-algebra homomorphism $\Theta^\sharp:\Sym_\CC \mm/\mm^2\longrightarrow A$ such that the pull-back of $\mm$ is the maximal ideal corresponding to $0\in \Ta_{(x,x')}X_{d+d'}$. Hence, we obtain an $H$-equivariant morphism $\Theta:\Spec A \to \Ta_{(x,x')}X_{d+d'}$ mapping $\Spec A \cap (X_d\times X_{d'})$ to $\Ta_x X_d\oplus  \Ta_{x'}X_{d'}$ and $\Spec A\cap X_{d,d'}$ to $\Ta_{x}X_d\oplus \Ta_{x'}X_{d'}\oplus \Ext^1_{\Ab_\CC}(E',E)$ by construction. Moreover, $\Omega^1_{\Spec A / \Ta_{(x,x')}X_{d+d'}}$ has vanishing fiber over $(x,x')$. Hence $\Omega^1_{\Spec A_f / \Ta_{x,x'} X_{d+d'}}=0$ for some $H$-invariant $0\not= f\in A$, in other words, after replacing $\Spec A$ with $\Spec A_f\subset \Spec A$, $\Theta:\Spec A\to \Ta_{(x,x')}X_{d+d'}$ is \'{e}tale. We pick a Riemannian metric on the (real) manifold $\Spec A$, and by Weyl's trick we can even assume that the metric is invariant under the maximal connected compact subgroup $K$ of $H$ which is a product of unitary groups. As the differential of $\Theta$ at $(x,x')$  is the identity, the inverse function theorem of complex analysis tells us that $\theta$ is biholomorphic on an open $K$-invariant ball $B_\epsilon(x,x')$ centered at $(x,x')$. Then, it is not difficult to show that $\Theta$ remains biholomorphic when restricted to  the open $H$-invariant subset $U:=H\cdot B_\epsilon(x,x')$ of $\Spec A$. Indeed, it is still locally biholomorphic in the analytic topology as $\Theta$ is \'{e}tale. Moreover, it is injective on $U$. If $\Theta(gy)=\Theta(g'y')$ for $y,y'\in B_\epsilon(x,x')$ and $g,g'\in H$ then, $\Theta((g')^{-1}gy)=\Theta(y')$, and we may assume $g'=1$. The complex analytic function $H\ni g \longmapsto \Theta(gy)-\Theta(y)$ vanishes on the maximal connected compact subgroup $K$ by construction of $U$. Hence, it vanishes on $H$ by Hartog's theorem. Therefore, $\Theta(y)=\Theta(gy)=\Theta(y')$ and $y=y'$ by assumption $y,y'\in B_\epsilon(x,x')$ and construction of $B_\epsilon(x,x')$. The subset $\tilde{U}=\Theta(U)$ in $\Ta_{(x,x')}X_{d+d'}$ is open in the analytic topology as $\Theta$ is \'{e}tale, and the proof of the lemma is finished.     
\end{proof}

\begin{proof}[Proof of Proposition \ref{integral_identity_3}(4)]
 It is enough to show the statement of the non-linear integral identity for the pull-back of $\phi^{perv}$. Passing to fiberwise Euler characteristics commutes with pull-backs and is a $\underline{\ZZ}(Sm^{proj})$-algebra homomorphism. Hence, the result for $\phi^{con}$ will follow. \\
 Let us denote the function $X_{d+d'}\twoheadrightarrow \Msp_{d+d'}\xrightarrow{W} \AA^1$ with $f$. For the mixed Hodge case, we apply $\phi^{mhm}_f$ to the adjunction morphism $\QQ_{X_{d+d'}} \longrightarrow i_! \QQ_{X_{d,d'}}$,  where $i:X_{d,d'}\hookrightarrow X_{d+d'}$ is the closed embedding. As $\phi^{mhm}$ commutes with proper push-forwards, we obtain $\phi_f|_{X_{d,d'}} \longrightarrow \phi_{f|_{X_{d,d'}}}$. Pushing  this down to $X_d\times X_{d'}$ along $\hat{\pi}_1\times \hat{\pi}_3$, we denote the cone of the resulting morphism with $C$, and we have to show $C=0$. As $\rat:\MHM_{mon}(X_d\times X_{d'})\longrightarrow \Perv(X_d\times X_{d'})$ is exact and faithful, it remains to show $\rat(C)=0$, in other words, the non-linear integral identity for $\phi^{perv}$. Using that $D^b(\Perv(X_d\times X_{d'}))$ is motivic even in the analytic topology, we can replace $X_d\times X_{d'}$ with any $H$-invariant analytic neighborhood of a closed point $(x,x')$ with reductive stabilizer $H$. Indeed, if $(x_0,x_0')$ is any closed point in $X_d\times X_{d'}$, we can find a closed point $(x,x')$ with reductive stabilizer in the closure of the $G_d\times G_{d'}$-orbit of $(x_0,x'_0)$. An open neighborhood of $(x,x')$ must meet this orbit, and the non-linear integral identity holds for all points in an open neighborhood of a point $(gx_0,g'x'_0)$ of the $G_d\times G_{d'}$-orbit of $(x_0,x'_0)$. Multiplying with $(g,g')^{-1}$ and using the functoriality of $\phi^{perv}$ and equivariance of the setting, the non-linear integral identity holds in a neighborhood of $(x_0,x'_0)$. To show the identity for a closed point $(x,x')=((E,\psi),(E',\psi'))$ with reductive stabilizer $H=\Aut_{\Ab_\CC}(E)\times \Aut_{\Ab_\CC}(E')$, we use the previous Lemma \ref{analytic_neighbourhood}. As $\phi^{perv}$ depends only on the analytic structure and commutes with open pull-backs, we can reduce the problem to the linear case with $X=\tilde{U}\cap(\Ta_x X_d\times  \Ta_{x'}X_{d'})$ and the open $H$-invariant subset $\hat{U}:=\tilde{U}\cap \big(X \times \Ext^1_{\Ab_\CC}(E',E)\times \Ext^1_{\Ab_\CC}(E,E')\big)$ of the 2-graded vector bundle $V\to X$ with $V^+=X\times \Ext^1_{\Ab_\CC}(E',E)$ and similarly for $V^-$. Notice that the action of $\GG_m$ of weight $\pm 2$ on $V^\pm$ can be realized by the ``anti-diagonal embedding'' $\GG_m\hookrightarrow H$. Invariance under this action will certainly imply invariance under the weight $\pm 1$ action mentioned in Theorem \ref{integral_identity_2}. The modification of the latter to our case is discussed in Remark \ref{integral_identity_4}. Thus, the (non-)linear integral identity holds and the Proposition is finally proven.       
\end{proof}
\begin{remark}\rm
 One could hope to prove Proposition \ref{integral_identity_3}(4) even for the motivic vanishing cycle $\phi^{mot}$ using some formal neighborhood of $X_{d,d'}$ to reduce the non-linear integral identity to the linear one. The relevant tool one needs to prove is Conjecture 4 in \cite{KS1}.  
\end{remark}

\subsection{Wall-crossing}
Our first application of Theorem \ref{algebra_homomorphism} is the wall-crossing formula. For the remaining part of this section we will assume that the non-linear integral identity holds allowing us to make use of the integration map $I^\phi$. For this we fix weak stability condition $(\zeta,T,\le)$  on $\Ab$ which is permissible in the sense of \cite{JoyceIII}, Def. 4.7. Roughly speaking this means the following: $(T,\le)$ is a totally ordered set and every object in $\Ab_\KK$ has a unique Harder--Narasimhan filtration with $\zeta$-semistable subquotients indexed by elements of $T$ in decreasing order, where $\KK\supset\kk$ is any algebraic extension. Moreover, the substack $\Mst^{\zeta-ss}_\mu$ of semistable objects of ``slope'' $\mu\in T$ is locally closed. Gieseker stability or any Bridgeland stability is a good example. We make the following definition using the shorthand $\delta_\mu:=(\Mst^{\zeta-ss}_\mu\hookrightarrow \Mst)_!(\unit_{\Mst})$
\begin{definition}
 The Donaldson--Thomas function \[ \DTS(\Ab,\phi)_\mu^\zeta=\sum_{0\not=d\in \NN^{\oplus I},\zeta(d)=\mu}\DTS(\Ab,\phi)^\zeta_d\in  R(\id)\] is defined by means of the equation
 \[ I^{\phi}(\delta_\mu)=p|_{\Mst^{\zeta-ss}_\mu\,!}\big(\phi_{p}(\ICS_{\Mst})|_{\Mst^{\zeta-ss}_\mu}\big)=\Sym\Big(\frac{\DTS(\Ab,\phi)^\zeta_\mu}{\LL^{1/2}-\LL^{-1/2}}\Big). \]
 
\end{definition}
The existence and uniqueness of the Harder--Narasimhan filtration can  be written as 
\[ \unit_{\Mst}=\prod_\ast^\curvearrowright \delta_{\mu}, \]
where the Ringel--Hall product is taken in decreasing order of the slopes. Applying $I^{\phi}$ under the assumption that the non-linear integral identity holds proves the following proposition.
\begin{proposition}[Wall-crossing formula]
Given any permissible weak stability condition $(\zeta,T,\le)$, the following formula holds
\[ I^{\phi}(\unit_{\Mst}) = \prod_\ast^\curvearrowright I^{\phi}(\delta_\mu)= \prod_\ast^\curvearrowright \Sym\Big(\frac{\DTS(\Ab,\phi)^\zeta_\mu}{\LL^{1/2}-\LL^{-1/2}}\Big), \]
where the product is taken in decreasing order of the slopes. In particular, 
\[ \prod_\ast^\curvearrowright \Sym\Big(\frac{\DTS(\Ab,\phi)^{\zeta}_\mu}{\LL^{1/2}-\LL^{-1/2}}\Big)= \prod_\ast^\curvearrowright \Sym\Big(\frac{\DTS(\Ab,\phi)^{\zeta'}_{\mu'}}{\LL^{1/2}-\LL^{-1/2}}\Big) \]
for every pair $(\zeta,T,\le),(\zeta',T',\le)$ of permissible weak stability conditions. 
\end{proposition}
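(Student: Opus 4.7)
The plan is to lift the wall-crossing to an identity in the Ringel--Hall algebra $(R(\Mst),\ast)$ and push it through the algebra homomorphism $I^\phi$ supplied by Theorem \ref{algebra_homomorphism}.

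First I would establish the Harder--Narasimhan factorisation
\[ \unit_{\Mst}\;=\;\prod_\ast^{\curvearrowright}\delta_\mu \]
in $(R(\Mst),\ast)$, where the product ranges over $\mu\in T$ in strictly decreasing order. Restricting to a single component $\Mst_d$, the right-hand side is a \emph{finite} sum indexed by ordered decompositions $d=d_1+\cdots+d_n$ with $\zeta(d_1)>\cdots>\zeta(d_n)$; each summand is the push-forward, along the proper iterated map assembled from $\pi_2$, of the characteristic class of the flag stack parameterising filtrations whose subquotients are semistable of the prescribed slopes and dimensions. By uniqueness of the HN filtration---which is part of the permissibility hypothesis---this push-forward is the closed inclusion of the corresponding locally closed HN stratum of $\Mst_d$, and motivicity of $R$ (Lemma \ref{motivic_ring theory }) reassembles the disjoint strata to $\unit_{\Mst_d}$.

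Applying $I^\phi$ to both sides, Theorem \ref{algebra_homomorphism} (whose hypothesis is precisely the non-linear integral identity assumed throughout this section, cf.\ Proposition \ref{integral_identity_3}) gives
\[ I^\phi(\unit_\Mst)\;=\;\prod_\ast^{\curvearrowright}I^\phi(\delta_\mu), \]
and substituting the defining formula $I^\phi(\delta_\mu)=\Sym\!\big(\DTS(\Ab,\phi)_\mu^\zeta/(\LL^{1/2}-\LL^{-1/2})\big)$ yields the announced identity. The ``In particular'' statement then follows immediately, since $I^\phi(\unit_\Mst)$ is intrinsic and independent of the choice of permissible weak stability condition.

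The main point to watch is the convergence of the infinite $\ast$-product over $\mu\in T$ and the continuity of $I^\phi$ with respect to this convergence. Both $(R(\Mst),\ast)$ and $(R(\id),\ast)$ are complete with respect to the dimension-vector filtration of Proposition \ref{algebraic_structure}(2), and on each component $\Mst_d$ only the finitely many slopes $\zeta(d')$ with $d'\le d$ can contribute a nonzero factor, so the infinite product truncates componentwise to a finite one. Because $I^\phi$ is assembled from push-forwards and cap products respecting this filtration, it is automatically continuous, and the algebra homomorphism property passes to the filtered limit without further work.
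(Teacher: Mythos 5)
Your argument is exactly the paper's: express the uniqueness and existence of the Harder--Narasimhan filtration as $\unit_{\Mst}=\prod_\ast^\curvearrowright\delta_\mu$ in the Ringel--Hall algebra, apply the integration map, invoke Theorem \ref{algebra_homomorphism} (which requires the non-linear integral identity, assumed in this section), and substitute the defining formula for $\DTS(\Ab,\phi)^\zeta_\mu$. The paper leaves the convergence/continuity points implicit, whereas you spell them out, but the underlying proof is the same.
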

A word of warning is in order. In many situations the subfunctor $\Ab^{\zeta-ss}_\mu$ obtained by restricting ourselves to families of semistable objects of slope $\mu$ does also satisfy our assumptions (1)--(7). However, in contrast to the moduli stack $\Mst^{\zeta-ss}_\mu$, the moduli space $\Msp^{\zeta-ss}_\mu$ is not a locally closed subscheme of $\Msp$. Instead, there is a monoid homomorphism $q^\zeta_\mu:\Msp^{\zeta-ss}_\mu\to \Msp$ of finite type. Pulling back $(R,\phi)$ to $\Msp^{\zeta-ss}_\mu$ along $q^\zeta_\mu$ allows us to apply our machinery to $\Ab^{\zeta-ss}_\mu$. The corresponding Donaldson--Thomas function $\DTS(\Ab^{\zeta-ss}_\mu,q^{\zeta\ast}_\mu \phi)$ will be an element in $(q^{\zeta\ast}_\mu R)(\id_{\Msp^{\zeta-ss}_\mu})=R(q^\zeta_\mu)$. It particular, it is not $\DTS(\Ab,\phi)^\zeta_\mu$. However, if $\Mst^{\zeta-ss}_\mu\subset \Mst$ is open, a simple computation shows $q^\zeta_{\mu!}\DTS(\Ab^{\zeta-ss}_\mu,q^{\zeta\ast}_\mu\phi)=\DTS(\Ab,\phi)^\zeta_\mu$.

\subsection{The PT--DT correspondence}

As second application of Theorem \ref{algebra_homomorphism} we  will show in this section that the Donaldson--Thomas function defined in this section agrees with the one of section \ref{Donaldson_Thomas_framed}. We fix a pair $(\Ab,\omega)$ satisfying all assumptions (1)--(8) and $(R,\phi)$ with $\phi$ satisfying the non-linear integral identity.\\

We apply the ``integration map'' $I^{\phi}=\prod_{d\in \NN^{\oplus I}} I^{\phi}_d$ to the identity proven in Example \ref{Hilbert_scheme_identity} and  use that $\Mst_{f,d}\to \Mst_d$ and $\Mst_{f,d}\to \Msp_{f,d}$ are smooth of relative dimension $fd-1$ and $-1$. Since $\Sym(\LL^i a)=\sum_{n\ge 0} \LL^{ni}\Sym^n(a)$, we obtain
\begin{eqnarray*}
\lefteqn{\frac{1}{\LL-1}\Sym\Big(\sum_{0\not= d\in \NN^{\oplus I}} \frac{\LL^{fd}}{\LL^{1/2}-\LL^{-1/2}} \DTS(\Ab,\phi)_d\Big) }\\
&=&\sum_{d\in \NN^{\oplus I}} \frac{\LL^{fd}}{\LL-1}p_{d\,!}\phi_{p_d}(\ICS_{\Mst_d}) \\
&=&  I^{\phi}\Big( \sum_{d\in \NN^{\oplus I}} \frac{\LL^{fd}}{\LL-1} \unit_{\Mst_d}\Big) \\
&=& I^{\phi}(H)I^{\phi}(\unit_{\Mst})\\
&=&\Big(p_!\big(  (\Mst_{f} \to \Mst)_!(\unit_{\Mst_{f}})\cap \phi_{p}(\ICS_{\Mst})\big)\Big)I^{\phi}(\unit_{\Mst})\\
&=& \Big(\pi_{f !}p_{f !} (\Mst_{f} \to \Mst)^\ast\phi_{p}(\ICS_{\Mst})\Big)I^{\phi}(\unit_{\Mst}) \\
&=& \Big(\pi_{f !}p_{f !}\sum_{d\in \NN^{\oplus I}} \LL^{(fd-1)/2}\phi_{\pi_{f,d}p_{f,d}}(\ICS_{\Mst_{f,d}})\Big)\Sym\Big(\frac{\DTS(\Ab,\phi) }{\LL^{1/2}-\LL^{-1/2}}\Big) \\
&=& \Big(\pi_{f !}p_{f !}\sum_{d\in \NN^{\oplus I}} \LL^{fd/2}p_{f,d}^\ast\phi_{\pi_{f,d}}(\ICS_{\Msp_{f,d}})\Big)\Sym\Big(\frac{\DTS(\Ab,\phi) }{\LL^{1/2}-\LL^{-1/2}}\Big) \\
&=& \frac{1}{\LL^{1/2}-\LL^{-1/2}}\Big(\pi_{f!}\sum_{d\in \NN^{\oplus I}} \LL^{(fd-1)/2}\phi_{\pi_{f,d}}(\ICS_{\Msp_{f,d}})\Big)\Sym\Big(\frac{\DTS(\Ab,\phi) }{\LL^{1/2}-\LL^{-1/2}}\Big) 
\end{eqnarray*}
Using the properties of $\Sym$ and $\frac{\LL^{fd}-1}{\LL^{1/2}-\LL^{-1/2}}=\LL^{1/2}[\PP^{fd-1}]$, we get the so-called PT--DT correspondence.
\begin{proposition}[PT--DT correspondence] \label{PT-DT_stack} Let $(\Ab,\omega)$ be a pair as in section 2 satisfying the conditions (1)--(8). Assuming the non-linear integral identity, we have
\[ \pi_{f!}\sum_{d\in \NN^{\oplus I}} \LL^{df/2}\phi_{\pi_{f,d}}(\ICS_{\Msp_{f,d}})= \Sym\Big(\sum_{0\not= d\in \NN^{\oplus I}} \LL^{1/2}[\PP^{fd-1}] \DTS(\Ab,\phi)_d\Big)
\]
for all framing vectors $f\in \NN^{I}$. 
\end{proposition}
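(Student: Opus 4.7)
My strategy is to apply the integration map $I^\phi$ to both sides of the Ringel--Hall algebraic identity provided by Example \ref{Hilbert_scheme_identity}, namely $H \ast \unit_\Mst |_{\Mst_d} = \frac{\LL^{fd}}{\LL-1}\unit_{\Mst_d}$, where $H := \tilde{\pi}_!(\unit_{\Mst_f}) \in R(\Mst)$ encodes the framed moduli. By Theorem \ref{algebra_homomorphism}, the standing non-linear integral identity makes $I^\phi$ an algebra homomorphism for the $\ast$-product; and since $\Ab$ satisfies the symmetry axiom (8), the $\ast$-product on $R(\id)$ collapses to ordinary convolution. This multiplicativity is exactly what I need to separate the framed contribution from the Donaldson--Thomas generating series.

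First I would compute the left-hand side $I^\phi(H\ast\unit_\Mst)$ directly: it equals $\sum_d \frac{\LL^{fd}}{\LL-1}\, p_{d\,!}\phi_{p_d}(\ICS_{\Mst_d})$, and by the defining equation (\ref{stand_DTsheaf}) of $\DTS(\Ab,\phi)$ together with the exponential-type identity $\Sym(\LL^i a) = \sum_n \LL^{ni}\Sym^n(a)$, this rewrites as $\frac{1}{\LL-1}\Sym\Big(\sum_{d\ne 0}\frac{\LL^{fd}\,\DTS(\Ab,\phi)_d}{\LL^{1/2}-\LL^{-1/2}}\Big)$.

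Next I would compute the right-hand side $I^\phi(H)\cdot I^\phi(\unit_\Mst)$. The projection formula turns $I^\phi(H)$ into $\pi_{f\,!}p_{f\,!}(\tilde{\pi}^\ast \phi_p(\ICS_\Mst))$. Compatibility of $\phi$ with smooth pull-backs, together with the relative dimensions $fd-1$ of $\Mst_{f,d}\to\Mst_d$ and $-1$ of $\Mst_{f,d}\to\Msp_{f,d}$, then convert $\tilde{\pi}^\ast\ICS_{\Mst_d}$ and $p_{f,d}^\ast\ICS_{\Msp_{f,d}}$ into suitable half-integer powers of $\LL$ times the intersection complex on $\Msp_{f,d}$, yielding $I^\phi(H) = \frac{1}{\LL^{1/2}-\LL^{-1/2}}\,\pi_{f\,!}\sum_d \LL^{(fd-1)/2}\phi_{\pi_{f,d}}(\ICS_{\Msp_{f,d}})$. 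Meanwhile $I^\phi(\unit_\Mst) = \Sym\!\big(\DTS(\Ab,\phi)/(\LL^{1/2}-\LL^{-1/2})\big)$ is immediate from the definition of the Donaldson--Thomas function.

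Equating the two expressions, cancelling the common factor $\Sym(\DTS(\Ab,\phi)/(\LL^{1/2}-\LL^{-1/2}))$, and using the elementary identity $\frac{\LL^{fd}-1}{\LL^{1/2}-\LL^{-1/2}} = \LL^{1/2}[\PP^{fd-1}]$ to convert the remaining $\frac{\LL^{fd}}{\LL-1}$ coefficients into $\LL^{1/2}[\PP^{fd-1}]$ inside the $\Sym$ argument, the claimed identity falls out. The real work is not in this final bookkeeping but in the two assumptions that license every step: the algebra-homomorphism property of $I^\phi$, which rests entirely on the non-linear integral identity for $\phi$, and the collapse of $\ast$ to ordinary convolution, which uses axiom (8). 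Both have been established in earlier parts of the paper, so the argument closes with no further obstacle.
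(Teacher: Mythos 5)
Your proposal follows exactly the paper's own argument: apply $I^\phi$ to the identity $H\ast\unit_\Mst|_{\Mst_d}=\frac{\LL^{fd}}{\LL-1}\unit_{\Mst_d}$ from Example~\ref{Hilbert_scheme_identity}, invoke the algebra-homomorphism property of Theorem~\ref{algebra_homomorphism} (resting on the non-linear integral identity), compute $I^\phi(H)$ via the projection formula and the relative-dimension bookkeeping for $\Mst_{f,d}\to\Mst_d$ and $\Mst_{f,d}\to\Msp_{f,d}$, and finish by absorbing $\Sym(\DTS/(\LL^{1/2}-\LL^{-1/2}))$ through the multiplicativity of $\Sym$ and the identity $\frac{\LL^{fd}-1}{\LL^{1/2}-\LL^{-1/2}}=\LL^{1/2}[\PP^{fd-1}]$. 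This is the same proof as the paper's, correctly reproduced.
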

\begin{corollary} Under the assumptions made at the beginning of this section and if the non-linear integral identity holds, the definition of the Donaldson--Thomas function given in this section is equivalent the one given in section \ref{Donaldson_Thomas_framed}. In particular, the integrality conjecture holds under the assumptions of Theorem \ref{main_theorem}.
\end{corollary}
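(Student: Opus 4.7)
The first assertion is essentially immediate from Proposition \ref{PT-DT_stack}: that proposition shows that the standard Donaldson--Thomas function $\DTS(\Ab,\phi)$, defined here via (\ref{stand_DTsheaf}), automatically satisfies the framed defining equation (\ref{defining_equation}) of section \ref{Donaldson_Thomas_framed}. So the plan is simply to combine this with the uniqueness clause of Theorem \ref{main_theorem}: any two solutions of (\ref{defining_equation}) differ, in each degree $d$, by an element annihilated by $[\PP^{\gcd(d)-1}]$, and hence coincide after inverting the classes $[\PP^n]$. In any setting in which the $[\PP^n]$ are not zero-divisors — for instance in the stacky localization used in section \ref{Donaldson_Thomas_general_version}, where $[\Gl(n)]$ is invertible — the two definitions agree on the nose.

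For the integrality statement, I fix $(R,\phi)$ satisfying the hypotheses of Theorem \ref{main_theorem}, i.e.\ $\phi$ factors as a $\lambda$-ring theory morphism $\underline{\ZZ}(Sm^{proj})\xrightarrow{\unit}\underline{\Ka}_0(\Sch^{ft})[\LL^{-1/2}]\xrightarrow{\phi}R$. I would then pass to the stacky extension $R_{st} := R[\,[\Gl(n)]^{-1}:n\in\NN\,]^{st}$ of Example~\ref{extension_4}, which lies in $\hat{\Th}^{\lambda}(\Msp)_{st}$ and admits the canonical map $R\to R_{st}$ of $\lambda$-ring theories over $\Msp$. Since the non-linear integral identity is assumed, section \ref{Donaldson_Thomas_general_version} supplies a uniquely determined standard Donaldson--Thomas function $\DTS^{\mathrm{std}}(\Ab,\phi)\in R_{st}(\id)$; by Proposition~\ref{PT-DT_stack} it solves the framed identity (\ref{defining_equation}) in $R_{st}$. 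On the other hand, Theorem~\ref{main_theorem} furnishes $\phi_{\id}\bigl(\DTS(\Ab,mot)\bigr)\in R(\id)$, whose image in $R_{st}(\id)$ is also a solution. The uniqueness clause of Theorem~\ref{main_theorem} forces these two solutions to agree modulo $[\PP^{\gcd(d)-1}]$-annihilation; but in $R_{st}$ the identity $[\Gl(n)]=\LL^{n(n-1)/2}(\LL-1)^n\prod_{k=1}^{n}[\PP^{k-1}]$ shows that each $[\PP^n]$ is a unit, so the equality is strict. Therefore $\DTS^{\mathrm{std}}(\Ab,\phi)$ is the image under $R\to R_{st}$ of the element $\phi_{\id}(\DTS(\Ab,mot))\in R(\id)$, which is the desired integrality assertion: the standard DT function, defined a priori only after inverting the $[\Gl(n)]$, comes from the non-stacky theory. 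Applying this to the motivic situation (with $R=\underline{\Ka}^{\muu}(\Sch^{ft})\langle\LL^{-1/2}\rangle^{-}$ and the $\lambda$-morphism hypothesis of Theorem~\ref{main_theorem} in force) yields the Integrality Conjecture \ref{integrality_conjecture_stack}.

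The main technical point is the step where one passes from ``equal up to $[\PP^{\gcd(d)-1}]$-annihilation'' to genuine equality: it is crucial that, in the enlarged stacky theory $R_{st}$, each $[\PP^{n}]$ is actually invertible. I expect this to be routine, using the factorization of $[\Gl(n)]$ into $\LL^{n(n-1)/2}(\LL-1)^{n}[n]_{\LL}!$ and the invertibility of $[\Gl(n)]$ and $\LL-1=[\Gl(1)]$ in $R_{st}$. A subtler, but already resolved, issue is that the PT--DT identity of Proposition~\ref{PT-DT_stack} be available in $R_{st}$; this is guaranteed since $(\Ab,\omega)$ satisfies (1)--(8) and $\phi$ satisfies the non-linear integral identity, which propagates to $R_{st}$ by functoriality.
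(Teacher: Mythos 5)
Your proposal is correct and matches the approach the paper intends: the equivalence of the two definitions is an immediate consequence of Proposition~\ref{PT-DT_stack} (which exhibits the stacky DT function as a solution of the framed defining equation (\ref{defining_equation})) together with the observation that each $[\PP^{n}]$ is a unit in the stacky localization, via $[\Gl(n)]=\LL^{n(n-1)/2}(\LL-1)^n\prod_{k=1}^n[\PP^{k-1}]$, so the solution is genuinely unique there; and the integrality claim then follows exactly as you say, by comparing in $R_{st}$ the stacky DT function with the image of the non-stacky $\phi_{\id}(\DTS(\Ab,mot))\in R(\id)$ supplied by Theorem~\ref{main_theorem}, using the uniqueness clause (whose proof does not use the factorization hypothesis). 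You correctly flag that this yields Conjecture~\ref{integrality_conjecture_stack} only conditionally on the $\lambda$-morphism hypothesis (Conjecture~\ref{lambda_conjecture}) in the $\phi^{mot}$ case, so the motivic integrality conjecture itself remains open — the unconditional instances are $\phi^{mhm}$, $\phi^{perv}$, $\phi^{con}$.
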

If $f\in (2\NN)^{I}$, we have $fd/2\in \NN$ and the map 
\[ R(\id)\ni (a_d)_{d\in \NN^{\oplus I}} \longmapsto (\LL^{-fd/2}a_d)_{d\in \NN^{\oplus I}}\in R(\id) \]
is an isomorphism of $\lambda$-rings as $\Sym^n(\LL^{-fd/2}a_d)=\LL^{-nfd/2}\Sym^n(a_d)$ in this case. Applying this isomorphism to the PT--DT correspondence yields the alternative form.
\begin{corollary}[PT--DT correspondence, alternative form] Let $(\Ab,\omega)$ be a pair as in section 2 satisfying the conditions (1)--(8). Assuming the non-linear integral identity, we have
\[ \pi_{f!}\phi_{\pi_{f}}(\ICS_{\Msp_{f}})=\Sym\Big(\sum_{0\not= d\in \NN^{\oplus I}} [\PP^{fd-1}]_{vir} \DTS(\Ab,\phi)_d\Big) \]
for all framing vectors $f\in (2\NN)^{I}$ with $[\PP^{fd-1}]_{vir}=\int_{\PP^{fd-1}}\ICS_{\PP^{fd-1}}=\frac{\LL^{fd/2}-\LL^{-fd/2}}{\LL^{1/2}-\LL^{-1/2}}$. 
\end{corollary}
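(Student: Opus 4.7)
The plan is simply to apply the $\lambda$-ring isomorphism $(a_d)_{d} \longmapsto (\LL^{-fd/2}a_d)_d$ of $R(\id) = \prod_{d}R(\iota_d)$ to both sides of the identity from Proposition \ref{PT-DT_stack}, and to recognize the resulting algebraic combination on the right as $[\PP^{fd-1}]_{vir}$.

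First I would check that, for $f\in(2\NN)^{I}$, the map $\Phi:R(\id)\to R(\id)$ sending $(a_d)_{d\in\NN^{\oplus I}}$ to $(\LL^{-fd/2}a_d)_{d\in\NN^{\oplus I}}$ is indeed an automorphism of $\lambda$-rings. Since $fd/2$ is a nonnegative integer, and since $\LL^{-1}=(-\LL^{-1/2})^2\in \Pic(R)$ by the standing assumption $-\LL^{-1/2}\in \Pic(R)$, the element $\LL^{-fd/2}=(\LL^{-1})^{fd/2}$ lies in $\Pic(R)$ for each $d$. By Definition \ref{line_elements} and Lemma \ref{special_elements} this means $\sigma^n(\LL^{-fd/2}a)=\LL^{-nfd/2}\sigma^n(a)$ whenever $a$ is supported in the $d$-graded piece. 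Since $\Sym^n$ of an element of dimension $d$ lands in the $nd$-graded piece, and $nfd/2$ is precisely the exponent contributed by $\Phi$ there, the identity $\Sym^n(\LL^{-fd/2}a_d)=\LL^{-nfd/2}\Sym^n(a_d)$ shows that $\Phi$ commutes with $\Sym$ and hence with the full $\lambda$-structure. Multiplicativity and bijectivity (with inverse $(a_d)\mapsto (\LL^{fd/2}a_d)$) are obvious.

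Next, I apply $\Phi$ termwise to the identity of Proposition \ref{PT-DT_stack}. On the left-hand side, the $d$-graded piece $\LL^{fd/2}\pi_{f,d\,!}\phi_{\pi_{f,d}}(\ICS_{\Msp_{f,d}})$ becomes $\pi_{f,d\,!}\phi_{\pi_{f,d}}(\ICS_{\Msp_{f,d}})$ after multiplication by $\LL^{-fd/2}$, and summing over $d$ recovers $\pi_{f\,!}\phi_{\pi_f}(\ICS_{\Msp_f})$. On the right-hand side, because $\Phi$ is a $\lambda$-ring homomorphism, it commutes with $\Sym$, so I only need to compute $\Phi$ on the argument; the $d$-graded coefficient of $\DTS(\Ab,\phi)_d$ becomes
\[
\LL^{-fd/2}\cdot \LL^{1/2}[\PP^{fd-1}] \;=\; \LL^{-fd/2+1/2}\,\frac{\LL^{fd}-1}{\LL-1} \;=\; \frac{\LL^{fd/2}-\LL^{-fd/2}}{\LL^{1/2}-\LL^{-1/2}} \;=\; [\PP^{fd-1}]_{vir},
\]
where in the last equality I use $\LL-1=\LL^{1/2}(\LL^{1/2}-\LL^{-1/2})$. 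Assembling this yields exactly the claimed formula.

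There is essentially no obstacle here beyond confirming the $\lambda$-ring identity $\Sym^n(\ell\cdot a)=\ell^n\Sym^n(a)$ for $\ell=\LL^{-fd/2}$ in $\Pic(R)$ applied to the correctly graded piece. The parity assumption $f\in(2\NN)^{I}$ is used only to guarantee $fd/2\in\NN$, so that $\LL^{-fd/2}$ genuinely exists as an element of the $\lambda$-ring (and not merely as a formal square root that might or might not lie in $\Pic$). Once $\Phi$ is justified, the corollary is a one-line consequence of Proposition \ref{PT-DT_stack} and the elementary algebraic identity above.
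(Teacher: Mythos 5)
Your proposal is correct and matches the paper's own argument precisely: the paper also applies the $\lambda$-ring automorphism $(a_d)\mapsto(\LL^{-fd/2}a_d)$ of $R(\id)$ to Proposition \ref{PT-DT_stack}, noting exactly the identity $\Sym^n(\LL^{-fd/2}a_d)=\LL^{-nfd/2}\Sym^n(a_d)$ that you justify via $\LL^{-fd/2}\in\Pic(R)$. You have merely spelled out in more detail the verification that this map is a $\lambda$-ring isomorphism and the elementary algebra converting $\LL^{-fd/2+1/2}[\PP^{fd-1}]$ into $[\PP^{fd-1}]_{vir}$.
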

Notice that $\PP^{fd-1}$ is the fiber of $\pi_d$ over $\Mst^s_d$.  The arguments given above show already that the motivic Donaldson--Thomas function $\DTS(\Ab,W^\ast(\phi^{mot}))_d$ must be contained in the image of
\[ \Ka^\muu(\Sch_{\Msp_d})[\LL^{-1/2},[\PP^n]^{-1}: n\in \NN] \longrightarrow \Ka^\muu(\Sch_{\Msp_d})\langle \LL^{-1/2},(\LL-1)^{-1}\rangle^- \;\forall \, d\in \NN^{\oplus I}.\]

If Conjecture \ref{lambda_conjecture} holds, $\DTS(\Ab,W^\ast(\phi^{mot}))$ is  even contained in the image of 
\[ \Ka^\muu(\Sch_{\Msp_d})\langle \LL^{-1/2}\rangle^- \longrightarrow \Ka^\muu(\Sch_{\Msp_d})\langle \LL^{-1/2},(\LL-1)^{-1}\rangle^- \;\forall \, d\in \NN^{\oplus I}\]
due to Theorem \ref{main_theorem}.

Let us discuss once more the case of a functor $\Ab$ satisfying (1)--(8) together with a locally closed property $\PPP$ which is closed under subquotients and extensions. As seen before, $\PPP$ is uniquely characterized by a locally closed subscheme $\bar{\tau}:\Msp^{\PPP}\hookrightarrow \Msp$ such that  
\[
\xymatrix @C=2cm {
\Msp^{\PPP}\times\Msp^{\PPP}\ar[d]_{\oplus}\ar@{^(->}[r]^{\bar{\tau}\times \bar{\tau}}&\Msp\ar[d]_{\oplus}\times\Msp\ar[d]_{\oplus}\\
\Msp^{\PPP}\ar@{^(->}[r]^{\bar{\tau}} &\Msp
}
\]
is cartesian. Let us denote the corresponding substack with $\tau:\Mst^{\PPP}\hookrightarrow \Mst$ and $p^{\PPP}:\Mst^{\PPP}\to \Msp^{\PPP}$ is the restriction of $p:\Mst\to \Msp$. 
Using the pull-back along $\bar{\tau}$, every $\underline{\ZZ}(Sm^{proj})$-algebra $R$ satisfying the assumptions made at the beginning of this section defines a corresponding structure on $\Msp^{\PPP}$ and we could apply our machinery. However, in some situations, it is more desirable to defined restricted Donaldson--Thomas functions   $\DTS^\PPP(\Ab,\phi)$ via
\[
p_{ !}\Big(\phi_{p}\big(\ICS_{\Mst}\big)\cap \tau_!(\unit_{\Mst^{\PPP}})\Big)= \Sym \Big( \frac{\DTS^\PPP(\Ab,\phi)}{\LL^{1/2}-\LL^{-1/2}} \Big).
\]
Then
\begin{eqnarray*}
\lefteqn{ p_{!}\Big(\phi_{p}\big(\ICS_{\Mst}\big)\cap \tau_!(\unit_{\Mst^{\PPP}})\Big)} \\
&=& p_{ !}\Big(\phi_{p}\big(\ICS_{\Mst}\big)\cap \tau_!\big(p^{\PPP\ast}(\unit_{\Msp^{\PPP}})\big)\Big) \\
&=& p_{ !}\Big(\phi_{p}\big(\ICS_{\Mst}\big)\cap p^{\ast}\big(\bar{\tau}_!(\unit_{\Msp^{\PPP}})\big)\Big) \\
&=&p_{ !}\Big(\phi_{p}\big(\ICS_{\Mst}\big)\Big)\cap \bar{\tau}_!(\unit_{\Msp^{\PPP}})\\
&=&\Sym \Big( \frac{\DTS^\PPP(\Ab,\phi)}{\LL^{1/2}-\LL^{-1/2}}\Big)\cap\bar{\tau}_!(\unit_{\Msp^{\PPP}})\\
&=&\bar{\tau}_!\bar{\tau}^{ \ast} \Sym \Big( \frac{\DTS^\PPP(\Ab,\phi)}{\LL^{1/2}-\LL^{-1/2}} \Big)\\
&=& \Sym \Big( \frac{\bar{\tau}_!\bar{\tau}^{\ast}\DTS(\Ab,\phi)}{\LL^{1/2}-\LL^{-1/2}} \Big)\\
\end{eqnarray*}
using Example \ref{lambda-ring_moduli_spaces} in the last step. Hence, \[\DTS^\PPP(\Ab,\phi)=\bar{\tau}_!\bar{\tau}^{ \ast}\DTS(\Ab,\phi)=\bar{\tau}_{!}(\DTS(\Ab,\phi)|_{\Msp^{\PPP}})\] is basically the restriction of $\DTS(\Ab,\phi)$ to $\Msp^{\PPP}$. 
\begin{corollary}
Choosing $\phi=W^\ast(\phi^{mhm})$ as in Example \ref{example2}, we get
 \[ \DTS^\PPP(\Ab,\phi)_d= \begin{cases} \bar{\tau}_{d\,!}\big(\phi^{mhm}_{W}(\ICS_{\Msp_d})|_{\Msp^{\PPP}_d}\big) &\mbox{ if }\Msp^s_d\not=\emptyset, \\ 0 &\mbox{ otherwise.} \end{cases}\]
 in $\Ka_0(\MHM_{mon}(\Msp^{W}_d))$ under the assumption that $\oplus:\Msp\times\Msp\longrightarrow \Msp$ is a finite morphism, and a similar statement holds in $\Ka_0(\Perv(\Msp^{W}_d))$ and also in $\Con(\Msp^{W}_d)$.  
 \end{corollary}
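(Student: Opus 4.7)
The calculation immediately preceding the corollary already does most of the work: it shows that for any pair $(R,\phi)$ satisfying the setup of the section,
\[ \DTS^\PPP(\Ab,\phi) \;=\; \bar{\tau}_{!}\bar{\tau}^{\,\ast}\DTS(\Ab,\phi) \;=\; \bar{\tau}_{!}\bigl(\DTS(\Ab,\phi)|_{\Msp^\PPP}\bigr), \]
since $\PPP$ being extension-closed forces the square with $\oplus$ to be cartesian, allowing us to pull $\bar{\tau}_!(\unit_{\Msp^\PPP})$ out of the cap product via the projection formula and then rewrite $\Sym$ of a restriction as restriction of $\Sym$ (using Example~\ref{lambda-ring_moduli_spaces}). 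So the plan reduces to identifying $\DTS(\Ab,\phi)_d$ for $\phi=W^\ast(\phi^{mhm})$, and then pushing forward along $\bar{\tau}_d$.

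For this identification we invoke Theorem~\ref{main_theorem_2}, whose hypotheses are exactly met in the present situation: $\oplus\colon \Msp\times\Msp \to \Msp$ is finite by assumption, and the morphism of $\lambda$-ring theories $W^\ast\phi^{mhm}$ provides a factorization
\[ \phi \colon \underline{\ZZ}(Sm^{proj}) \xrightarrow{\unit} \underline{\Ka}_0(\MHM)[\LL^{-1/2}] \xrightarrow{W^\ast\phi^{mhm}} R. \]
Theorem~\ref{main_theorem_2} then gives $\DTS(\Ab,\phi)_d = \phi^{mhm}_{W_d}(\ICS^{mhm}_{\Msp_d})$ when $\Msp^s_d\neq\emptyset$ and $0$ otherwise. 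Combining with the display above and restricting to the connected component $\Msp_d$, we obtain
\[ \DTS^\PPP(\Ab,\phi)_d \;=\; \bar{\tau}_{d\,!}\bigl(\phi^{mhm}_{W_d}(\ICS^{mhm}_{\Msp_d})\big|_{\Msp^\PPP_d}\bigr), \]
which is the claimed formula in $\Ka_0(\MHM_{mon}(\Msp^W_d))$.

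For the $\Perv$- and $\Con$-versions we apply naturality: the morphisms $\rat\colon \underline{\Ka}_0(\MHM_{mon}) \to \underline{\Ka}_0(\Perv)$ and $\chi_c\colon \underline{\Ka}_0(\Perv) \to \Con$ (composed with pullback to $\Msp$) are morphisms of ring $(Sm,proj)$-theories over $\AA^1$ intertwining the corresponding vanishing cycles. By Remark~\ref{remark_2}(i) (suitably applied to the framed and stacky constructions via the PT--DT correspondence of Proposition~\ref{PT-DT_stack}), such a morphism $\eta$ satisfies $\eta_{\id}(\DTS(\Ab,\phi))=\DTS(\Ab,\eta\phi)$, and therefore commutes with both the restriction to $\Msp^\PPP$ and the pushforward $\bar{\tau}_{d\,!}$. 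Applying $\rat$ and $\chi_c$ to the $\MHM$-formula therefore yields the corresponding formulas in $\Ka_0(\Perv(\Msp^W_d))$ and $\Con(\Msp^W_d)$.

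The only subtle point worth checking is that the locally closed subscheme $\bar{\tau}\colon \Msp^\PPP \hookrightarrow \Msp$ inherits finiteness of $\oplus$ from $\Msp$; this is Example~\ref{finite_direct_sum}, which guarantees that the hypotheses of Theorem~\ref{main_theorem_2} propagate to $\Ab^\PPP$ if one were to apply it there directly. In our route, however, we never need this: all work is done upstairs on $\Msp$ and we only pull back and push forward along the locally closed embedding $\bar{\tau}_d$, so no separate verification is required. This makes the proof essentially a bookkeeping combination of the preceding computation with Theorem~\ref{main_theorem_2}, with no substantive obstacle.
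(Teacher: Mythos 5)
Your proposal is correct and follows the same route the paper implicitly takes: the display immediately before the corollary reduces $\DTS^\PPP(\Ab,\phi)$ to $\bar\tau_!\bar\tau^*\DTS(\Ab,\phi)$, the PT--DT correspondence identifies the stacky $\DTS$ with the framed one, Theorem~\ref{main_theorem_2} (whose hypotheses are met for $\phi=W^*(\phi^{mhm})$, $\oplus$ finite) then gives the explicit value of $\DTS(\Ab,\phi)_d$, and restriction/push-forward along $\bar\tau_d$ finishes. Your naturality argument via $\rat$ and $\chi_c$ for the $\Perv$- and $\Con$-versions is a clean packaging of what the paper obtains equally well by noting that $\phi^{perv}$ and $\phi^{con}$ also factor through $\underline\Ka_0(\MHM)[\LL^{-1/2}]$, so Theorem~\ref{main_theorem_2} can simply be applied directly in those realizations; either way the content is the same. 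Your remark that no separate verification of Theorem~\ref{main_theorem_2}'s hypotheses for $\Ab^\PPP$ is needed, since all work happens on $\Msp$ and is merely transported through the locally closed embedding $\bar\tau_d$, is accurate.
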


\subsection{Relation to the work of D.\ Joyce}

Let us close this section by showing that our definition of a Donaldson--Thomas function agrees with the one given by D.\ Joyce and Y.\ Song in \cite{JoyceDT}. Their strategy is as follows. Pick a functor $\Ab$ satisfying (1)--(8) as before and a monoid homomorphism $W:\Msp\to \AA^1$. Joyce and Song take the logarithm $\epsilon:=\log_\ast \delta$ with respect to the Ringel--Hall product.\footnote{In \cite{JoyceDT} the authors put a bar on top of $\delta$ and $\epsilon$.} Here, $\delta$ is without loss of generality just $\id_\Mst$. This computation is done in $\SF_{al}(\Mst)$, some $\QQ$-linear version of $\underline{\Ka}_0(\St_{\Mst})$. Moreover, they prove in a deep theorem (see \cite{JoyceDT}, Theorem 3.11 or \cite{JoyceIII}, Theorem 8.7) that $\epsilon\in \SF_{al}^{ind}(\Mst)\subseteq \SF_{al}(\Mst)$. Now, they apply a map $\tilde{\Psi}:\SF_{al}^{ind}(\Mst) \longrightarrow \Con(\Msp)_\QQ$ to $\epsilon$ and call the result $-\bar{\DTS}(\Ab,W)$ (mind the sign!). To connect their approach with the one given here, we need to translate the objects involved. This is done by the following commutative diagram using the shorthand $\nu^{mot}=W^\ast(\phi^{mot})_{p}(\ICS_{\Mst}^{mot})=\phi^{mot}_{Wp}(\ICS_{\Mst}^{mot})$. The definition of the Behrend function $\nu^{con}$ on $\Mst$ will be given below. Note that $\Ka^\muu(\Sch^{ft}_\CC)$ contains already a square root of $\LL$ and we will drop the adjunction of  $\LL^{-1}$ for better readability.
\[ \xymatrix @C=0.5cm { \SF_{al}(\Mst)  \ar[r] &  \underline{\Ka}(\St^{ft}_{\Mst})_\QQ  \ar@{=}[r] &  \underline{\Ka}(\St^{ft}_{\Mst})_\QQ  \ar@{=}[r]&  \underline{\Ka}(\St^{ft}_{\Mst})_\QQ \ar[d]^{\cap \nu^{mot}} \\ 
\SF_{al}^{ind}(\Mst) \ar[dddd]^{\tilde{\Psi}} \ar@{^{(}->}[u] \ar[r] &   \frac{1}{\LL-1}\underline{\Ka}_0(\Sch^{ft}_{\Mst})_\QQ \ar@{^{(}->}[u] \ar[d]^{\cdot(\LL-1)} \ar@{=}[r] &  \frac{1}{\LL-1}\underline{\Ka}_0(\Sch^{ft}_{\Mst})_\QQ \ar@{^{(}->}[u] \ar[d]^{\cdot(\LL-1)} & \underline{\Ka}^\muu(\St^{ft}_{\Mst})_\QQ \ar[d]^{p_{!}}\\
& \underline{\Ka}_0(\Sch^{ft}_{\Mst})_\QQ \ar[d]^{\chi^{na}_c} \ar@{=}[r] &  \underline{\Ka}_0(\Sch^{ft}_{\Mst})_\QQ  \ar[d]^{\cap \nu^{mot}}&  \underline{\Ka}^\muu(\St^{ft}_{\Msp})_\QQ\\
 &   \Con(\Mst)_\QQ \ar[d]^{\cap \nu^{con}} & \underline{\Ka}^\muu(\Sch^{ft}_{\Mst})_\QQ \ar[d]^{p_{ !}} & \frac{1}{\LL-1}\underline{\Ka}^\muu(\Sch^{ft}_{\Msp})_\QQ \ar[d]^{\cdot (\LL-1)} \ar@{^{(}->}[u] \\ 
 &  \Con(\Mst)_\QQ \ar[d]^{p^{na}_{!}} & \underline{\Ka}^\muu(\Sch^{ft}_{\Msp})_\QQ \ar[d]^{\chi^{na}_c} \ar@{=}[r] & \underline{\Ka}^\muu(\Sch^{ft}_{\Msp})_\QQ \ar[d]^{\chi_c}\\
 \Con(\Msp)_\QQ  \ar@{=}[r] &   \Con(\Msp)_\QQ \ar@{=}[r] &   \Con(\Msp)_\QQ \ar@{=}[r] &   \Con(\Msp)_\QQ }
\]
Here, $\chi_c^{na}$ and $p^{na}_!$ are the naive extensions of $\chi_c$ and $p_!$ to ``stack functions'' and non-representable morphism. See \cite{JoyceCF} for more details. The commutativity of the square in the upper right corner is obvious. The commutativity of the big square in the lower left corner follows from the definition of $\tilde{\Psi}$ (see \cite{JoyceDT}, equation (5.6)). Because of $p^{na}_!\chi^{na}_c=\chi^{na}_c p_!=\chi_c p_!$, the commutativity of the square in the middle is just saying that $\chi^{na}_c$ commutes with the $\cap$-product. We will see that this is a consequence of the fact that $\chi_c$ is a morphism $\phi^{mot}\to \phi^{con}$ of vanishing cycles.  As we have not defined $\chi_c^{na}$ and $p^{na}_!$, let us give an alternative description of $p^{na}_!(\chi^{na}_c(-)\cap \nu^{con})$ according to equation (5.5) in \cite{JoyceDT}. Given a morphism $Y\xrightarrow{g} X_d/G_d$ with $Y$ being a scheme, we get as usual a principal $G_d$-bundle $P\xrightarrow{\alpha} Y$ by taking the pull-back of $X_d\to X_d/G_d$ and a $G_d$-equivariant morphism $\beta:P\to X_d$. 
\[ \xymatrix @C=1.5cm { P \ar[r]^\beta \ar[d]_\alpha & X_d \ar[d] \ar[drr]^{W} & &  \\ Y \ar[r]_g & X_d/G_d \ar[r]_p & \Msp_d \ar[r]_{W} &\AA^1 }\]
Then, $p^{na}_{!}(\chi^{na}_c(g)\cap \nu^{con})=(pg)_!(g^\ast \nu^{con})$, were $g^\ast\nu^{con}$ is the unique constructible function on $Y$ such that $\alpha^\ast g^\ast \nu^{con}= (-1)^{d^2}\beta^\ast \phi^{con}_{W}(\ICS_{X_d}^{con})$ by definition of $\nu^{con}$. On the other hand $\chi_{c}p_{!}(g\cap \nu^{mot})=\chi_{c}(pg)_!(g^\ast \nu^{mot})=(pg)_!\chi_c(g^\ast \nu^{mot})$ by the projection formula. But \[\alpha^\ast \chi_cg^\ast\nu^{mot}=\chi_c \alpha^\ast g^\ast\nu^{mot}=\chi_c\beta^\ast \LL^{d^2/2}\phi^{mot}_{W}(\ICS_{X_d}^{mot})=(-1)^{d^2}\beta^\ast\phi^{con}_{W}(\ICS^{con}_{X_d})=\alpha^\ast g^\ast\nu^{con}\] by definition of $W^\ast(\phi^{mot})$ on stacks and the fact that $\chi_c$ is a morphism $\phi^{mot}\to \phi^{con}$ of vanishing cycles. Thus, the lower middle rectangle does also commute. There are two technical difficulties to mention. First of all $\underline{\Ka}_0(\Sch^{ft}_{\Mst})$ should be replaced by its image in $\underline{\Ka}(\St^{ft}_{\Mst})$ as the  ``embedding'' map does not need to be injective and similarly for $\underline{\Ka}^\muu(\Sch^{ft}_{\Msp})$. The kernel is given by motives annihilated by $[G_d]$ for some $d\in \NN^{\oplus I}$. However, $\chi_c$ will vanish on the kernel  as the E-polynomial of the motives in the kernel must be zero. Secondly, one has to check that $g^\ast\nu^{mot}$ is in $\Ka^\muu(\Sch^{ft}_Y)$, in other words that $-\cap \phi^{mot}_{p}(\ICS_{\Mst})$ maps  $\underline{\Ka}_0(\Sch^{ft}_{\Mst})$ into $\underline{\Ka}^\muu(\Sch^{ft}_{\Mst})$. Since everything is motivic, we can assume that  $\alpha:P\to Y$ has a section $s:Y\to P$.  Then, $ g^\ast\nu^{mot}=s^\ast\alpha^\ast g^\ast \nu^{mot}=\LL^{d^2/2}s^\ast \beta^\ast \phi^{mot}_{W}(\ICS_{X_d}^{mot}) $ is indeed in $\Ka^\muu(\Sch^{ft}_Y)$.\\ 
By the commutativity of the diagram above, we can do the computation using the  right column instead of the left one. Note that the composition of the first two vertical arrows is just $I^{\phi}$. Therefore, $I^{\phi}((\LL^{1/2}-\LL^{-1/2})\epsilon)=:\bar{\DTS}^{mot}(\Ab,W)$ will satisfy $\chi_c(\bar{\DTS}^{mot}(\Ab,W))=\bar{\DTS}(\Ab,W)$ as $\chi_c(\LL^{-1/2})=-1$.
Since $I^{\phi}$ is an algebra homomorphism, we obtain under the symmetry assumption (8) the following equation 
\begin{eqnarray*} \lefteqn{ \exp\Big(\frac{\bar{\DTS}^{mot}(\Ab,W)}{\LL^{1/2}-\LL^{-1/2}}\Big)} \\
&=& \exp(I^{\phi}(\epsilon)) \\
&=& I^{\phi}(\exp_\ast \epsilon)\\
&=& I^{\phi}(\delta) \\
&=&\Sym\Big(\frac{\DTS^{mot}(\Ab,W)}{\LL^{1/2}-\LL^{-1/2}}\Big)\\
&=&\exp\Big( \sum_{k\ge 1}\frac{(-1)^{k-1}}{k} \frac{\psi^k\big(\DTS^{mot}(\Ab,W)\big)}{\LL^{k/2}-\LL^{-k/2}}\Big), 
\end{eqnarray*}
where we used $\psi^k(\LL^{\pm 1/2})=-\psi^k(-\LL^{\pm 1/2})=(-1)^{k+1}\LL^{\pm k/2}$. Thus, we conclude
\[ \bar{\DTS}^{mot}(\Ab,W)=\sum_{k\ge 1} \frac{(-\LL^{1/2})^{k-1}}{k[\PP^{k-1}]}\psi^k\big(\DTS^{mot}(\Ab,W)\big), \]
and after specializing to Euler characteristics, this becomes
\[ \bar{\DTS}(\Ab,W)=\sum_{k\ge 1} \frac{1}{k^2} P_{k\,!}\DTS(\Ab,\phi), \]
where $P_k:\Msp\ni x\longmapsto x^{\oplus k} \in \Msp$ sends a point to its k-fold direct sum. The constructible function $\DTS(\Ab,W)_d$ is called $F_d(\zeta)$ in \cite{JoyceDT} and its integral $\int_{\Msp_d}F_d(\zeta)=:\hat{\DT}(Q,W)_d$ coincides with our Donaldson--Thomas invariant. In particular, using Theorem \ref{main_theorem_2}, we can prove Conjecture 6.12, 6.13, and 6.14 in \cite{JoyceDT} for quivers with potential and generic, geometric stability condition  with $\mathcal{Q}$, the sheaf on $\Msp_{d}$ categorifying the DT/BPS invariant, being the perverse sheaf $\phi^{perv}_{W}(\ICS_{\Msp_d})$ if $\Msp^s_d\not=\emptyset$ and $\mathcal{Q}=0$ else. \\

\begin{appendix}

\section{Proof of the integral identity}

The aim of this section is to prove the following theorem.

\begin{theorem}[cf.\ Kontsevich, Soibelman \cite{KS2}]
 Let $(\Sm,\Pro)\supset(Sm,proj)$ be a motivic pair, i.e.\ $\Pro\cap \Sm$ contains all locally closed embeddings. Let  $\altT$ be a motivic categorical ring $(\Sm,\Pro)$-theory with a $\underline{D}^b\QQ(Sm^{proj})$-algebra structure over $\MM$ which has a factorization through the canonical $\underline{D}^b\QQ(Sm^{proj})$-algebra structure of a motivic reduced categorical ring $(\Sm,\Pro)$-theory $\mathcal{K}$, i.e.\ there is a morphism $\phi:\mathcal{K}\to \altT$ of categorical ring $(Sm,proj)$-theories over $\MM$, such that $\phi_f=\phi_f(\unit_X)$ for every $f:X\to \MM$ with smooth $X$. The $(\Sm,\Pro)$-theory $\mathcal{K}$ should also satisfy the following assumption: for every locally closed embedding $e$ the functors $e_!$ and $e^\ast$ have right adjoints $e^!$ and $e_\ast$ which implies $e_!=e_\ast$ for every closed embedding and $e^!=e^\ast$ for every open embedding as $\mathcal{K}$ is motivic. Moreover, given a closed embedding $i:Z\hookrightarrow X$ with open complement $j:U\hookrightarrow X$, the sequence
 \[ i_! i^!(a) \longrightarrow a \longrightarrow j_\ast j^\ast (a) \]
 of adjunction morphisms can be extended to a distinguished triangle functorial in $a\in \mathcal{K}(X)$. Finally, we assume that $\int_{\AA^1}(\GG_m\hookrightarrow \AA^1)_\ast(\unit_{\GG_m})=0$ in $\mathcal{K}(\kk)$ (``homotopy invariance''). Then 
 \[ \pi^+_! \phi_f |_{V^+}=\pi^+_! \phi_{f|_{V^+}}=\LL^{\rk V^+}\phi_{f|_X}.\]
 for every 2-graded vector bundle $\pi=\pi^+\oplus \pi^-:V^+\oplus V^-\longrightarrow X$ on a smooth scheme $X$ and every $\GG_m$-invariant morphism $f:V\to \MM$, where $\GG_m$ acts with weights $\pm 1$ on $V^\pm$.     
\end{theorem}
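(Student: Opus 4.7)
The theorem contains two equalities: the middle one $\pi^+_!\phi_{f|_{V^+}}=\LL^{\rk V^+}\phi_{f|_X}$, which is essentially formal, and the outer one $\pi^+_!(\phi_f|_{V^+})=\pi^+_!\phi_{f|_{V^+}}$, which carries the content. I would treat them separately.

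For the middle equality, I would first observe that the positive $\GG_m$-weight on $V^+$ combined with the $\GG_m$-invariance of $f$ forces $f|_{V^+}=(f|_X)\circ\pi^+$: for $v\in V^+$, $f(v)=f(zv)$, and the limit $z\to 0$ lies in $V^+$ and projects to $\pi^+(v)\in X$. Because $\pi^+$ is smooth, the compatibility of $\phi$ with smooth pullbacks then gives $\phi_{f|_{V^+}}\cong(\pi^+)^*\phi_{f|_X}$; the projection formula combined with the vector-bundle computation $\pi^+_!\unit_{V^+}\cong\LL^{\rk V^+}\unit_X$ (itself deducible from the motivic triangle for $\{0\}\subset\AA^1\supset\GG_m$ together with the homotopy invariance $\int_{\AA^1}(\GG_m\hookrightarrow\AA^1)_*\unit_{\GG_m}=0$, applied rank by rank) will yield the middle equality.

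For the outer equality I would exploit the $\GG_m$-action. Writing $i^+:V^+\hookrightarrow V$ and $j:U:=V\setminus V^+\hookrightarrow V$ for the closed and open complementary inclusions, applying $\pi_!$ to the motivic distinguished triangle $j_!j^*\phi_f\to\phi_f\to i^+_!(i^+)^*\phi_f$ produces
\[
(\pi|_U)_!(\phi_f|_U)\longrightarrow\pi_!\phi_f\longrightarrow\pi^+_!(\phi_f|_{V^+}).
\]
I would then establish two things in sequence. First, the vanishing $(\pi|_U)_!(\phi_f|_U)=0$: since the $V^-$-component has weight $-1$ and is non-zero on $U$, the $\GG_m$-action on $U$ is free, and locally $U\to X$ factors as a principal $\GG_m$-bundle $U\to U/\GG_m$ followed by $U/\GG_m\to X$; the $\GG_m$-invariance of $f$ makes $\phi_f|_U$ descend from $U/\GG_m$, and the projection formula reduces the vanishing to $\int_{\AA^1}(\GG_m\hookrightarrow\AA^1)_*\unit_{\GG_m}=0$ in $\mathcal{K}(\kk)$, transported to $\altT$ through $\phi$. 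This gives $\pi_!\phi_f\cong\pi^+_!(\phi_f|_{V^+})$. Second, to identify $\pi_!\phi_f\cong\LL^{\rk V^+}\phi_{f|_X}$, I would deform through the $\GG_m$-equivariant family $\tilde f(v^+,v^-,t):=f(v^+,tv^-)$ on $V\times\AA^1$ (invariant under the weights $(+1,+1,0)$), which interpolates between $f$ at $t=1$ and $(f|_{V^+})\circ p^+$ at $t=0$, where $p^+:V\to V^+$ is the projection. A further application of the vanishing trick, now along the new $\AA^1_t$-direction and combined with the homotopy invariance axiom, will identify the fibers at $t=0$ and $t=1$ up to the correct Tate twist.

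The hard part will be the vanishing $(\pi|_U)_!(\phi_f|_U)=0$: making the descent of $\phi_f|_U$ through the free $\GG_m$-quotient precise while carefully tracking Tate twists is the technical heart of the proof, and it is where the motivicity of $\mathcal{K}$, the adjunctions $e_!\dashv e^!$, $e^*\dashv e_*$ for locally closed embeddings, and the full force of the homotopy invariance axiom are jointly used. The deformation step will require an analogous but more delicate vanishing over the $\AA^1_t$-family, which is the obstacle Kontsevich--Soibelman address at length in the mixed-Hodge setting.
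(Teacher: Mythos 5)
Your treatment of the middle equality $\pi^+_!\phi_{f|_{V^+}}=\LL^{\rk V^+}\phi_{f|_X}$ is fine and matches the paper. The outer equality is where the proposal breaks down, and the error is not at the margins but at the heart of your strategy.

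Your plan hinges on showing $(\pi|_U)_!(\phi_f|_U)=0$ by descending $\phi_f|_U$ along the free $\GG_m$-quotient $\alpha^-:U\to D^-$ and invoking the homotopy axiom through the projection formula. This is false, and the source of the error is a confusion of $j_!$ with $j_*$. The homotopy axiom $\int_{\AA^1}(\GG_m\hookrightarrow\AA^1)_\ast\unit_{\GG_m}=0$ involves the \emph{right} adjoint $j_\ast$ (``extension by constants''), while $(\pi|_U)_!(\phi_f|_U)$ involves only $\pi_!$ and the open restriction $j^\ast$; after the projection formula you are computing $\alpha^-_!\unit_U\otimes\phi_{g^-}$, and $\alpha^-_!\unit_U$ for a $\GG_m$-bundle is the compactly-supported class $\LL-1$, which is not zero. (Concretely, take $X=\AA^1_s$, $V^\pm=\AA^1$, $f=sxy$; then $\phi_{g^-}$ is the vanishing cycle of $su$ on $D^-=\AA^2$, supported at the origin, and $(\pi|_U)_!(\phi_f|_U)$ acquires a factor of $H^\ast_c(\GG_m)=\LL-1\neq 0$.) Consequently your second claim $\pi_!\phi_f\cong\LL^{\rk V^+}\phi_{f|_X}$ must also fail in general, since the triangle you wrote down and the (true) theorem force the vanishing you are trying to prove.

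The deeper issue is that you replaced the paper's object $\phi_f(j_!\unit_U)$ — the vanishing cycle functor $\phi_f:\mathcal{K}(V)\to\altT(f)$ applied to the extension-by-zero $j_!\unit_U\in\mathcal{K}(V)$ — with $j_!j^\ast\phi_f=j_!\phi_{f|_U}$, computed entirely inside $\altT$. These are genuinely different: $\phi$ is only a morphism of $(Sm,proj)$-theories, so it commutes with $j^\ast$ but \emph{not} with the open push-forward $j_!$, and that non-commutation is precisely what the integral identity measures. The paper's proof therefore never reduces to a claim about $j_!\phi_{f|_U}$. Instead it constructs, on the compactification $Y$ of $V$ over the naive quotient $Z=V/\GG_m$, the object $\FF=(Y\setminus D^-\hookrightarrow Y)_\ast(V\hookrightarrow Y\setminus D^-)_!\,j_!\unit_U$, which mixes $(-)_!$ and $(-)_\ast$ exactly so that the homotopy axiom $\pi_!(L\setminus 0_L\hookrightarrow L)_\ast\unit_{L\setminus 0_L}=0$ becomes applicable on the various $\GG_m$-bundle strata ($U^\pm$ over $D^\pm$). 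It then proves $\bar{q}_!\FF=0$ and assembles the result from the strata of $\bar{q}^{-1}(0_Z)$. Any successful proof must keep track of this $_!$/$_\ast$-interplay on a compactification; the naive $\GG_m$-descent of $\phi_f|_U$ over the open locus alone cannot see it.
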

\begin{remark} \rm \label{remark3}
\begin{enumerate}
\item As all open inclusions are in $\Pro$, $\Pro$ contains all quasiprojective morphisms.  
\item Using the fact that every line bundle $\pi:L\to X$ on a smooth $X$ is locally trivial, the homotopy invariance of $\mathcal{K}$ implies 
\begin{equation} \label{homotopy} \pi_!(L\setminus 0_L \hookrightarrow L)_\ast (\unit_{L\setminus 0_L})=0\mbox{ in }\mathcal{K}(X),\end{equation} where we identify $X$ with the zero section of $L$.
\item Even more is true under the assumption that $(-)_\ast$ extends to a functor on quasiprojective morphism $g$  such that $g_\ast=g_!$ if $g$ is projective. In other words, given a factorization $g:Y \xrightarrow{j} Z   \xrightarrow{q} X$ of a quasiprojective morphism $g$ into an open embedding $j$ followed by a projective morphism $q$, then $g_\ast:=q_!j_\ast$ is independent of the factorization. Consider a vector bundle $\pi:V\to X$ on a smooth $X$ and  the blow-up of its zero section which can be identified with the total space of $\bar{\pi}:L=\OO_{\PP(V)}(-1) \to \PP(V)$. If we denote the projective blow-up morphism with $q$, then $q:L\setminus \PP(V) \longrightarrow E\setminus X$ is an isomorphism, and 
\begin{eqnarray*}  \pi_!(V\setminus X \hookrightarrow V)_\ast (\unit_{V\setminus X})&=&\pi_!q_!(L\setminus \PP(V)\hookrightarrow L)_\ast(\unit_{L\setminus \PP(V)})\\ &=&(\PP(V)\to X)_!\bar{\pi}_!(L\setminus \PP(V)\hookrightarrow L)_\ast(\unit_{L\setminus \PP(V)})\\ &=& 0,\end{eqnarray*}
generalizing the second remark to all vector bundles. 
\end{enumerate} 
\end{remark}
The following lemma provides an alternative and more familiar characterization of the homotopy invariance.
\begin{lemma}
For a vector bundle $\pi:V\to X$ on a smooth $X$ we define $\pi_\ast(\unit_V)$ by means of $\bar{\pi}_!(V\hookrightarrow \bar{V})_\ast(\unit_V)$, where $\bar{\pi}:\bar{V}=\PP(V\oplus \AA^1)\longrightarrow X$ is the projective closure of $V$. Then, homotopy invariance holds if and only if $\unit_V\to 0_{V\ast}\unit_X$ induces an isomorphism $\pi_\ast(\unit_V)\cong \unit_X$. 
\end{lemma}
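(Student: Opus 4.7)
The unit map $\unit_V\to 0_{V\ast}\unit_X$ is, by motivicity of $\mathcal{K}$, the second arrow in the localization triangle
\[
u_!\unit_{V\setminus X}\longrightarrow\unit_V\longrightarrow 0_{V\ast}\unit_X\longrightarrow,
\]
where $u:V\setminus X\hookrightarrow V$ is the complement of the zero section. Applying $\pi_\ast=\bar\pi_!\iota_\ast$ (with $\iota:V\hookrightarrow\bar V$) and noting that $\iota\circ 0_V=\bar 0_V$ is a closed embedding in $\bar V$ with $\bar\pi\circ\bar 0_V=\id_X$, we obtain $\pi_\ast 0_{V\ast}\unit_X=(\bar\pi\bar 0_V)_!\unit_X=\unit_X$. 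Hence the triangle becomes
\[
\pi_\ast u_!\unit_{V\setminus X}\longrightarrow\pi_\ast \unit_V\longrightarrow\unit_X\longrightarrow,
\]
whose second arrow is the natural map in question. Thus the iso $\pi_\ast\unit_V\cong\unit_X$ is equivalent to the vanishing $\pi_\ast u_!\unit_{V\setminus X}=0$, and it suffices to prove that \emph{this} vanishing is equivalent to homotopy invariance.

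For the ``only if'' direction I would specialize to the trivial line bundle $V=\AA^1$ over $X=\Spec\kk$, so that $u=(\GG_m\hookrightarrow\AA^1)$. Under the definition $\pi_\ast=(\PP^1)_!\,(\AA^1\hookrightarrow\PP^1)_\ast$, the vanishing $\pi_\ast u_!\unit_{\GG_m}=0$ is identified with $\int_{\AA^1}(\GG_m\hookrightarrow\AA^1)_\ast\unit_{\GG_m}=0$ by comparing $\iota_\ast u_!$ and $(\iota u)_\ast$ via the motivic triangle $u_!\unit_{\GG_m}\to u_\ast\unit_{\GG_m}\to 0_\ast 0^\ast u_\ast\unit_{\GG_m}$ on $\AA^1$: pushing out along $(\PP^1)_!(\AA^1\hookrightarrow\PP^1)_\ast$ kills the origin contribution since $(\PP^1)_!\bar 0_{V!}=\id$, so the two classes agree in $\mathcal K(\kk)$.

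For the ``if'' direction, Remark \ref{remark3}(2)--(3) already yields from homotopy invariance the vanishing of $\bar\pi_!(V\setminus X\hookrightarrow V)_\ast\unit_{V\setminus X}$ for every vector bundle on a smooth base (first for line bundles by Zariski-local triviality and exterior products, then for higher rank via the blow-up of the zero section, reducing to the tautological line bundle $\mathcal{O}_{\PP(V)}(-1)\to\PP(V)$ and the projective push-down $q:\PP(V)\to X$). Converting this ``$\pi_!u_\ast$''-type vanishing into the desired ``$\pi_\ast u_!$'' form is then achieved by running the above comparison argument in reverse on the divisor at infinity of $\bar V$, using both motivic triangles on $\bar V$ (for the closed $\PP(V)$ at infinity, and for the closed zero section inside $V$).

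The main obstacle is the systematic bookkeeping of the $\ast$- versus $!$-pushforwards across the composite open embedding $V\setminus X\hookrightarrow V\hookrightarrow\bar V$ and its normalization by the $\GG_m$-torsor $V\setminus X\to\PP(V)$: the mixed composition $\iota_\ast u_!$ equals neither $(\iota u)_!$ nor $(\iota u)_\ast$, and the difference is controlled by terms supported on the zero section and on $\PP(V)$, which must be shown to vanish after applying $\bar\pi_!$ either directly (for the zero section, by the identity $\bar\pi_!\bar 0_{V!}=\id$) or via homotopy invariance (for the infinity divisor, by reduction to the line-bundle case).
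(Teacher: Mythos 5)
Your reduction of the statement to the vanishing of $\pi_\ast u_!\unit_{V\setminus X}$ is sound and parallels the right-hand column of the paper's argument. The gap is in how you propose to compare the two mixed pushforwards. For the ``only if'' direction you want to identify $\pi_\ast u_!\unit_{\GG_m}=\bar\pi_!\iota_\ast u_!\unit_{\GG_m}$ with $\int_{\AA^1}u_\ast\unit_{\GG_m}=\bar\pi_!\iota_! u_\ast\unit_{\GG_m}$ (here $\iota:\AA^1\hookrightarrow\PP^1$), invoking the triangle $u_!\unit\to u_\ast\unit\to 0_\ast 0^\ast u_\ast\unit$ on $\AA^1$. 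After applying $\bar\pi_!\iota_\ast$ this becomes
\[
\bar\pi_!\iota_\ast u_!\unit\longrightarrow \bar\pi_!\iota_\ast u_\ast\unit\longrightarrow 0^\ast u_\ast\unit,
\]
and neither of the last two objects is the one you need: $0^\ast u_\ast\unit_{\GG_m}$, the stalk at the origin of the $\ast$-extension, is not zero in any realization (it is a nearby-cycle-type object); it is not ``killed'', merely transported to $\mathcal K(\kk)$ by $\bar\pi_!\bar 0_!=\id$. Moreover $\bar\pi_!\iota_\ast u_\ast\unit=\bar\pi_!(\iota u)_\ast\unit$ differs from $\bar\pi_!\iota_! u_\ast\unit$ exactly by the stalk at $\infty$. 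The comparison you actually need happens at the point at infinity, not at the origin, and is the content of Lemma \ref{lemma_3} applied to the opens $\AA^1$ and $\PP^1\setminus\{0\}$ with disjoint closed complements: $\iota_\ast u_!\EE\cong(\PP^1\setminus\{0\}\hookrightarrow\PP^1)_!(\GG_m\hookrightarrow\PP^1\setminus\{0\})_\ast\EE$, which after $\bar\pi_!$ and $z\mapsto 1/z$ is the homotopy-invariance class.

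The paper's own proof avoids this bookkeeping by working entirely on $\bar V$: it forms a $3\times 3$ diagram from the two types of localization triangle on $\bar V$ (for the closed $\PP(V)$ with open complement $V$, and for the closed zero section with open complement $L=\bar V\setminus 0_V$), takes vertical cones, and applies $\bar\pi_!$, obtaining in one step
\[
\cone\bigl(\pi_\ast\unit_V\to\unit_X\bigr)\cong\PP(\pi)_!\,\hat\pi_!\,(L\setminus 0_L\hookrightarrow L)_\ast\unit_{L\setminus 0_L}[1].
\]
Both directions follow at once: ``$\Leftarrow$'' by Remark \ref{remark3}(2), since $\hat\pi:L\to\PP(V)$ is a line bundle on a smooth base; ``$\Rightarrow$'' by taking $V$ a line bundle, so that $\PP(V)=X$, $\PP(\pi)=\id_X$, and $L\cong V^\vee$ ranges over all line bundles. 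Your ``if'' direction alludes to this (``both motivic triangles on $\bar V$'') and can be completed along these lines, but the triangle at the origin of $\AA^1$ that you use for ``only if'' does not establish the needed identification; it should be replaced throughout by the $3\times3$ diagram on $\bar V$, equivalently by Lemma \ref{lemma_3}.
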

\begin{proof}
Let us consider the line bundle $\hat{\pi}:\bar{V}\setminus 0_V \to \PP(V)$ on the divisor $\PP(V)\hookrightarrow \PP(V\oplus \AA^1)$ at infinity which can be identified with $\OO_{\PP(V)}(-1)$. Apply $\bar{\pi}_!$ to the following commutative diagram with exact rows
\[ \xymatrix { 0_{L\,!}0_L^! (\unit_{\bar{V}}) \ar[r] \ar[d] & \unit_{\bar{V}} \ar[r] \ar@{=}[d] & (V\hookrightarrow \bar{V})_\ast \unit_V \ar[d] \\ (L\hookrightarrow \bar{V})_!(\unit_L) \ar[r] & \unit_{\bar{V}} \ar[r] & 0_{V\ast}(\unit_X) } \] 
and take the vertical cones. As a result we get the following diagram \[\PP(\pi)_!\hat{\pi}_!(L\setminus 0_L\hookrightarrow L)_\ast(\unit_{L\setminus 0_L}) \longrightarrow 0 \longrightarrow \cone(\pi_\ast(\unit_V)\to \unit_X)\]  
proving $\pi_\ast(\unit_V)\cong \unit_X$ if homotopy invariance in form of Remark \ref{remark3}(2) holds. Conversely, if we assume $\pi_\ast(\unit_V)\cong \unit_X$ for all line bundles $V\to X$, then $L$ constructed above is the dual line bundle $V^\vee$ and $\PP(\pi)=\id_X$ which implies $\hat{\pi}_!(L\setminus 0_L\hookrightarrow L)_\ast(\unit_{L\setminus 0_L})=0$ for $L=V^\vee$.    
\end{proof}

Let us come back the Theorem. The proof presented here is just an expanded version of the original proof given by Kontsevich and Soibelman.  Before we start proving the theorem, let us begin with a lemmas.
\begin{lemma} \label{lemma_3}
 Given a scheme $X$ over $\MM$ and two open subsets $U,V\hookrightarrow X$ with disjoint complements. Then 
 \[ (U \hookrightarrow X)_!(U\cap V \hookrightarrow U)_\ast \EE \cong (V \hookrightarrow X)_\ast (U\cap V\hookrightarrow V)_! \EE \]
 for every $\EE\in \mathcal{K}(U\cap V)$.
\end{lemma}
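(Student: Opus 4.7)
My approach is to prove $F:=j_{U!}k_*\EE\cong j_{V*}k'_!\EE$ by showing both that $F\cong j_{V*}(j_V^*F)$ and that $j_V^*F\cong k'_!\EE$, where I abbreviate $j_U:=(U\hookrightarrow X)$, $j_V:=(V\hookrightarrow X)$, $k:=(U\cap V\hookrightarrow U)$, $k':=(U\cap V\hookrightarrow V)$. The crucial topological consequence of the hypothesis $Z_U\cap Z_V=\emptyset$ (writing $Z_U:=X\setminus U$, $Z_V:=X\setminus V$) is that $Z_V\subset U$ is closed with open complement $k$, and symmetrically $Z_U\subset V$ is closed with open complement $k'$; in particular $X=U\cup V$. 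Throughout I use only the two localization triangles postulated for $\mathcal{K}$ together with their formal consequences $j^*j_!\cong\id$, $j^*j_*\cong\id$, $i^*j_!=0$, and $i^!i_!\cong\id$ (the last expressing full faithfulness of $i_!$ for a closed embedding).

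First I compute $j_V^*F\cong k'_!\EE$. Factoring $i_{Z_U}=j_V\circ i'$ with $i'\colon Z_U\hookrightarrow V$ the closed complement of $k'$, the identity $i_{Z_U}^*j_{U!}=0$ gives $i'^*(j_V^*F)=0$, so the motivic triangle $k'_!k'^*\to\id\to i'_*i'^*$ in $\mathcal{K}(V)$ applied to $j_V^*F$ yields $j_V^*F\cong k'_!(k'^*j_V^*F)$; and $k'^*j_V^*F=(j_Uk)^*F\cong k^*k_*\EE\cong\EE$ using $j_U^*j_{U!}\cong\id$ and the counit isomorphism $k^*k_*\cong\id$ for the open embedding $k$. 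Next I show the unit $F\to j_{V*}j_V^*F$ is an isomorphism, which by the triangle $(i_{Z_V})_!(i_{Z_V})^!F\to F\to j_{V*}j_V^*F$ reduces to checking $(i_{Z_V})^!F=0$. Factoring $i_{Z_V}=j_U\circ i$ with $i\colon Z_V\hookrightarrow U$ closed gives $(i_{Z_V})^!F=i^!j_U^*j_{U!}k_*\EE=i^!(k_*\EE)$, and the triangle $i_!i^!\to\id\to k_*k^*$ in $\mathcal{K}(U)$ applied to $k_*\EE$ reads
\[
i_!i^!(k_*\EE)\longrightarrow k_*\EE\longrightarrow k_*k^*(k_*\EE);
\]
the second arrow is the unit of $(k^*,k_*)$ at $k_*\EE$ and is therefore an isomorphism by the triangle identity together with $k^*k_*\cong\id$. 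So $i_!i^!(k_*\EE)=0$, and $i^!(k_*\EE)=0$ then follows from $i^!i_!\cong\id$.

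Combining the two steps gives $F=j_{U!}k_*\EE\cong j_{V*}j_V^*F\cong j_{V*}k'_!\EE$, as required. The only subtlety is to correctly factor the closed embeddings $i_{Z_U}$ and $i_{Z_V}$ through the open subsets $V$ and $U$ that contain them---a maneuver made possible precisely by the disjointness hypothesis---so no genuine obstacle is anticipated: the argument never relies on any base-change statement beyond what is encoded directly in the two localization triangles.
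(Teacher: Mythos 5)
Your proof is correct, and it is closely related to the paper's but follows a dual route worth noting. The paper writes down the two localization triangles
$i_{Z!}i_Z^!\FF\to\FF\to j_{V*}j_V^*\FF=\altG$ and $\FF=j_{U!}j_U^!\altG\to\altG\to i_{Y*}i_Y^*\altG$ (with $\altG:=j_{V*}k'_!\EE$, $Y=Z_U$, $Z=Z_V$), computes all four ``boundary'' restrictions via base change, and then collapses the \emph{second} triangle by applying $i_Y^*$ to the \emph{first} to see $i_Y^*\altG=0$. You instead never introduce $\altG$: you collapse the \emph{first} triangle directly by proving $i_Z^!\FF=0$, after which $\FF\cong j_{V*}j_V^*\FF$ and your Step~1 identifies $j_V^*\FF\cong k'_!\EE$. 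You also replace the paper's base-change computations with localization arguments run on $U$ and on $V$ (e.g.\ using $i_!i^!\to\id\to k_*k^*$ on $\mathcal{K}(U)$ at $k_*\EE$ instead of $j_V^*j_{U!}=k'_!k^*$). The net effect is an argument that is arguably more economical, working with a single object and only "local" triangles.

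Two small caveats. First, your claim that the argument ``never relies on any base-change statement beyond what is encoded directly in the two localization triangles'' is optimistic: the facts $j^*j_!\cong\id$, $i^*j_!=0$, $i^!i_!\cong\id$, $k^*k_*\cong\id$ that you list as ``formal consequences'' are really extra recollement/base-change data, not deducible from the two triangles alone without something like $j^*i_!=0$ as an axiom. These facts are of course available in the theory $\mathcal{K}$ (which is a motivic categorical ring theory satisfying base change), so this is a matter of bookkeeping rather than a gap. Second, the paper's route also explains \emph{which} morphism $\FF\to\altG$ is the isomorphism (the map appearing in both triangles), whereas your proof produces an isomorphism $\FF\cong j_{V*}k'_!\EE$ by composing two identifications without checking it agrees with the canonical comparison map; for the purposes of the Lemma as stated this does not matter.
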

\begin{proof}
Let us introduce the shorthand $\FF:=(U \hookrightarrow X)_!(U\cap V \hookrightarrow U)_\ast \EE$ and $\altG:= (V \hookrightarrow X)_\ast (U\cap V\hookrightarrow V)_! \EE $ for the left respectively right hand side of the equation. Using base change, we conclude for $Y:=X\setminus U$ and $Z:=X\setminus V$.
\begin{eqnarray*}
 j_V^\ast \FF=(U\cap V\to V)_! \EE & \mbox{and} & i_Y^\ast \FF=0, \\ 
 j_U^! \altG=(U\cap V \to U)_\ast \EE & \mbox{and} & i_Z^! \altG =0.
\end{eqnarray*}
Writing down the distinguished adjunction  triangles, we get
\begin{eqnarray*}
 && i_{Z\, !}i_Z^! \FF \longrightarrow F \longrightarrow j_{V\ast}j_V^\ast \FF = \altG , \\
 && \FF=j_{U\, ! } j_U^! \altG \longrightarrow \altG \longrightarrow i_{Y \ast} i_Y^\ast \altG.
\end{eqnarray*}
Applying $i_Y^\ast$ to the first triangle and using base change together with $Y\cap Z=\emptyset$ yields $i_Y^\ast \altG=0$. Hence, $\FF\to \altG$ from the second triangle must be an isomorphism.
  
\end{proof}

\begin{proof}[Proof of the Theorem]
Notice that the last equality is a simple consequence of the commutativity of the vanishing cycle functor with  pull-backs along morphisms in $\Sm$ and of the (projection) formula $\pi^+_!(\pi^+)^\ast \mathcal{G} =\LL^{\rk V^+} \mathcal{G} $ for all objects $\mathcal{G}$ in $\altT(X)$. Note that $f|_{V^+}=f|_X\circ\pi^+$ by $\GG_m$-invariance. \\
Let us mention that the first equality of the theorem is equivalent to the equation $\pi^+_!(\phi_f j_!\unit_U)|_{V^+}=0$ for $j:U=V \setminus V^+ \hookrightarrow V$ as one can see by applying $\phi_{f}$ and commutativity with proper maps  to the triangle $j_!\unit_U \rightarrow \unit_V \rightarrow (V^+\hookrightarrow V)_\ast \unit_{V^+}$ in $\mathcal{K}(V)$. We will prove this equation in six steps (see below) after introducing some geometrical objects.\\
Write $Z$ for the naive algebraic quotient $V/\mathbb{G}_m$ which can be described as the relative spectrum of the sheaf $(\Sym \mathcal{V^\vee})^{\mathbb{G}_m}$ of algebras on $X$. Here, $\mathcal{V^\vee}$ is the sheaf of sections of the dual bundle $V^\vee$. There is an obvious morphism $q:V \rightarrow Z$ of $X$-schemes. Any $\GG_m$-invariant regular function $f:V\rightarrow \MM$ induces a unique regular function $g:Z\rightarrow \MM$ such that $f=g\circ q$ giving rise to the following commutative diagram
\[ \xymatrix { V^+ \ar@{^{(}->}[r] \ar[dr]_{\pi^+} & V \ar[r]_q \ar[d]^\pi \ar@/^1pc/[rr]^f & Z \ar[r]_g \ar[dl] & \MM \\ & X. } \]
Notice that $Z\rightarrow X$ has a canonical section $0_Z:X\rightarrow Z$ given by the image of the zero section $0_V:X\rightarrow V$. \\

\textbf{Step 1:} The first thing we wish to do is to compactify $q$ to obtain a proper morphism $\bar{q}:Y \rightarrow Z$. By looking at the limits of 
$\lambda\cdot (v^+,v^-)=(\lambda v^+,\lambda^{-1}v^-)\in V^+\oplus V^-$ for $\lambda\in \GG_m$ going to zero respectively infinity, we see that we have to add something at infinity of $V^+\times \{0\}$ and $\{0\}\times V^-$. More precisely, let us compactify $E$ by forming the projective closures of $V^+$ and $V^-$ 
\begin{eqnarray*} \bar{V} & := & \PP(V^+\oplus \AA^1_X)\times \PP(V^-\oplus \AA^1_X) \\ & = & \big(V^+\oplus V^-\big) \;\cup\; \big(\PP(V^+) \times V^- \big) \;\cup\; \big( V^+\times \PP(V^-) \big)\;\cup\; \big(\PP(V^+)\times \PP(V^-)\big). \end{eqnarray*}
The point $\lambda\cdot(v^+,v^-)$ converges to well defined limits in the subsets $\PP(V^+)\times \{0\}$ and $\{0\}\times \PP(V^-)$. However, the map $q$ does not extend as several orbits have a common boundary point. Hence we need to blow up $\PP(V^+\oplus \AA^1_X)\times \PP(V^-\oplus \AA^1_X)$ in the closed subvarieties $\PP(V^+)\times \{0\}$ and $\{0\}\times \PP(V^-)$ to get separate limit points. After blowing up we remove superfluous strata, namely the strict transform of the divisors $\PP(V^+)\times \PP(V^-\oplus \AA^1_X)$ and $\PP(V^+\oplus \AA^1_X)\times \PP(V^-)$. We denote the resulting space by $Y$ and remark that $q:V\rightarrow Z$ does extend to a proper morphism $\bar{q}:Y\rightarrow Z$ as the reader can check easily. 
\[ \xymatrix { V^+ \ar@{^{(}->}[r] \ar[drr]_{\pi^+} & V \ar[dr]^\pi \ar@{^{(}->}[r] & Y \ar[r]_(0.4){\bar{q}} \ar[d]^(0.4){\bar{\pi}} \ar@/^1pc/[rr]^{\bar{f}} & Z \ar[r]_g \ar[dl] & \MM \\ & & X } \]
For the sake of completeness let us describe the complement of $V$ in $Y$. For this let $p^\pm:\PP(V^\pm) \rightarrow X$ denote the projections. The normal bundle of $\PP(V^+)\times\{0\}$ respectively $\{0\}\times \PP(V^-)$ in $\bar{V}$ is given by $\mathcal{O}_{\PP(V^\pm)}(1)\oplus p^{\pm \,\ast} V^\mp$ and, thus, the two exceptional divisors 
\[\PP\big( \mathcal{O}_{\PP(V^\pm)}(1)\oplus p^{\pm \,\ast} V^\mp\big)\cong \PP\big(\AA^1_{\PP(V^\pm)} \oplus p^{\pm\,\ast} V^\mp\otimes \mathcal{O}_{\PP(V^\pm)}(-1)\big) \] of the blow up are just the projective closures of $p^{\pm\,\ast}V^\mp\otimes \mathcal{O}_{\PP(V^\pm)}(-1)$. In order to get the complement of $V$ in $Y$ we have to remove the strict transform of the divisors $\PP(V^+)\times \PP(V^-\oplus \AA^1_X)$ and $\PP(V^+\oplus \AA^1_X)\times \PP(V^-)$ as mentioned above. Their intersections with the exceptional divisors are just the hyperplanes $\PP(p^{\pm\,\ast}V^\mp)$ at infinity and, thus, the complement $Y\setminus V$ consists of the total space of the two vector bundles $D^\pm:=p^{\pm\,\ast}V^\mp \otimes \mathcal{O}_{\PP(V^\pm)}(-1)$ over $\PP(V^\pm)$. The projection to $\PP(V^\pm)$ can be identified with the blow up map. The following diagram shows our main strata of $Y$, where we used $V^\pm_{\neq 0}:= V^\pm\setminus\{0\}$.
\[ \xymatrix { \quad\;\; D^+\quad\;\; & & \\ V^+_{\neq 0}\times\{0\} & V^+_{\neq 0}\times V^-_{\neq 0} & \\ \{(0,0)\} &  \{0\}\times V^-_{\neq 0} & D^-\; \save "1,1"."2,2"*[F.]\frm{} \restore \save "2,2"."3,3"*[F.]\frm{} \restore } \]
The open subsets $U^+:=D^+ \cup V^+_{\neq 0 }\times V^-$ and $U^-:=D^-\cup V^+\times V^-_{\neq 0}$ indicated by the dotted boxes form the total space of a line bundle over $D^+$ respectively $D^-$. To see this, we consider for example  the limit of $\lambda\cdot (v^+,v^-) \in V^+\times V^-_{\neq 0}$ as $\lambda$ goes to zero. This limit is just $v^+\otimes v^-\in D^-=p^{-\,\ast}V^+ \otimes \mathcal{O}_{\PP(V^-)}(-1)$ if we identify $v^-$ with its associated element in $\mathcal{O}_{\PP(V^-)}(-1)$. By a theorem of Bialynicki--Birula  the open subset $U^-$ is the total space of a line bundle over $D^-$, and similarly for $U^+$. The fibers are given by the corresponding compactification of the $\GG_m$-orbits and the zero sections are just $D^\pm\subset U^\pm$.\\

\textbf{Step 2:} Remember that we defined the open subset $U=V^+\times V^-_{\neq 0}=U^-_{\neq 0}$ with inclusion $j:U\hookrightarrow V$ and we had to show the identity $\pi^+_!(\phi_f j_!\unit_U)|_{V^+}=0$. For this we extend $j_!\unit_U$ on $V$ in two different ways over the divisors $D^+$ and $D^-$, namely as
\[ \FF:= (Y\setminus D^- \hookrightarrow Y)_\ast (V \hookrightarrow Y\setminus D^-)_! j_! \unit_U\stackrel{\ref{lemma_3}}{=}(U^-\hookrightarrow Y)_!(U^-_{\neq 0} \hookrightarrow U^-)_\ast \unit_{U^-_{\neq 0}} \in \mathcal{K}(Y).\]
Clearly $\FF|_V=j_! \unit_U$. Using this, we obtain the equation
\[ (\phi_f j_!\unit_U)|_{V^+} = (\phi_{\bar{f}} \FF)|_{V^+} \]
since $(\phi_{\bar{f}} \FF)|_{V^+}=(\phi_{\bar{f}} \FF)|_{V}|_{V^+}=(\phi_f j_!\unit_U)|_{V^+}$ by commutativity with pull-backs applied to the open inclusion $V\hookrightarrow Y$. Hence, it is sufficient to show $\pi^+_! (\phi_{\bar{f}} \FF)|_{V^+}=0$ or $\int_{V^+} \phi_{\bar{f}}\FF=0$ for short.\footnote{For a morphism $\pi:Y \rightarrow X$, an object $\mathcal{G}$ on $Y$ and a locally closed subset $Z\subset Y$ we denote $\pi|_{Z\; !}\mathcal{G}|_Z$ on $X$ by $\int_Z \mathcal{G}$.}  \\

\textbf{Step 3:} Notice that $\bar{q}^{-1}(0_Z)$ consists of the open subset $U^-|_{\PP(V^-)}$ and the closed complement $\overline{V^+\times \{0\}}$. The latter contains $V^+$ as an open subset with closed complement $\PP(V^+)$. Here we identify $\PP(V^\pm)$ with the zero section of $D^\pm\rightarrow \PP(V^\pm)$. As a result of this we obtain the following two distinguished triangles
\begin{eqnarray}
 \int_{U^-|_{\PP(V^-)}} \phi_{\bar{f}} \FF \quad\longrightarrow & \int_{\bar{q}^{-1}(0_Z)} \phi_{\bar{f}} \FF &\longrightarrow\quad \int_{\overline{V^+\times \{0\}}} \phi_{\bar{f}}\FF \label{firsttriangle}\\
 \int_{V^+} \phi_{\bar{f}} \FF \quad\longrightarrow& \int_{\overline{V^+\times \{0\}}} \phi_{\bar{f}} \FF &\longrightarrow\quad \int_{\PP(V^+)} \phi_{\bar{f}}\FF \label{secondtriangle}
\end{eqnarray}
To prove $\int_{V^+} \phi_{\bar{f}}\FF=0$ we have to show the vanishing of three integrals which is done in the remaining 3 steps. \\

\textbf{Step 4:} Let us start with the  integral $\int_{\bar{q}^{-1}(0_Z)} \phi_{\bar{f}}\FF$. At this point we use the projectivity of $\bar{q}$ and, hence, its commutativity with $\phi$ to obtain
\[ \int_{\bar{q}^{-1}(0_Z)} \phi_{\bar{f}}\FF=(\bar{q}_! \phi_{\bar{f}}\FF)|_{0_Z}=(\phi_g \bar{q}_! \FF)|_{0_Z}. \]
The vanishing of the integral will be a consequence of $\bar{q}_!\FF=0 \in \mathcal{K}(Z)$ which we are going to prove now. Let us use the stratification of  $Y$ given by the open subset $U^-$ and its closed complement $Y\setminus U^-=V^+\cup D^+$ as well as $\FF|_{Y\setminus U^-}=0$ which is a consequence of the construction of $\FF$. Using $\bar{q}|_{U^-}=\bar{q}|_{D^-}\circ \alpha^-$ for $\alpha^-:U^- \rightarrow D^-$ being the projection of the line bundle and equation (\ref{homotopy}) we finally get
\[ \bar{q}_!\FF=\int_{Y} \FF = \int_{U^-} \FF = \int_{D^-} \alpha^-_!(U^-_{\neq 0}\hookrightarrow U^-)_\ast \unit_{U^-_{\neq 0}} =0.\]

\textbf{Step 5:} We now prove the vanishing of the integral $\int_{U^-|_{\PP(V^-)}} \phi_{\bar{f}} \FF$. As $U^-|_{\PP(V^-)}=\mathcal{O}_{\PP(V^-)}(1)$ is contained in the open subset $U^-\subset Y$, we obtain using commutativity with pull-backs along open inclusions 
\[ \int_{U^-|_{\PP(V^-)}} \phi_{\bar{f}}\FF= \int_{U^-|_{\PP(V^-)}} \phi_{\bar{f}|_{U^-}}\FF|_{U^-}=\int_{\PP(V^-)} \alpha^-_! \phi_{\bar{f}|_{U^-}}\FF|_{U^-}\]  
where $\alpha^-:U^- \rightarrow D^-$ is the projection. The vanishing of the integral will be a consequence of $\alpha^-_!\phi_{\bar{f}|_{U^-}} \FF|_{U^-}=0$. To prove the latter equation we can assume for a moment that $\alpha^-$ is trivial, i.e.\ $U^-\cong D^-\times \AA^1$. By construction $\FF|_{U^-}$ is then of the form $\unit_{D^-} \boxtimes (\GG_m\hookrightarrow \AA^1)_\ast \unit_{\GG_m}$. Moreover, $\bar{f}|_{U^-}$ does not depend on the fiber coordinate, i.e.\ $\bar{f}|_{U^-}=\bar{f}|_{D^-} \oplus 0$. Using the Thom--Sebastiani theorem, homotopy invariance of $\mathcal{K}$ and the motivic property we  even get globally $ \alpha^-_!\phi_{\bar{f}|_{U^-}} \FF|_{U^-} =0$.\\

\textbf{Step 6:} By triangle (\ref{firsttriangle}) and the previous two steps $\int_{\overline{V^+\times \{0\}}} \phi_{\bar{f}}\FF=0$. Hence, we can conclude $\int_{V^+} \phi_{\bar{f}} \FF=0$ by using triangle (\ref{secondtriangle}) once we have shown $\int_{\PP(V^+)} \phi_{\bar{f}}\FF=0$ which we are going to do now. As $\PP(V^+)$ is contained in the open subset $U^+\subset Y$, we obtain as in the previous step $\int_{\PP(V^+)} \phi_{\bar{f}}\FF= \int_{\PP(V^+)} \phi_{\bar{f}|_{U^+}}\FF|_{U^+}$ using commutativity with pull-backs again. On the other hand $\alpha^+:U^+\rightarrow D^+$ is a line bundle with zero section $0^+:D^+ \hookrightarrow U^+$ and the restriction of the line bundle to $\PP(V^+)\subset D^+$ is just the tautological bundle $\mathcal{O}_{\PP(V^+)}(1)$. Using the object $\mathcal{G}:=(D^+\setminus \PP(V^+) \hookrightarrow D^+)_! \unit_{D^+\setminus \PP(V^+)}$ we get the triangle
\[ \FF|_{U^+} \longrightarrow \alpha^{+\, \ast} \mathcal{G} \longrightarrow 0^+_\ast \mathcal{G} \]
by construction of $\FF$. We now apply commutativity with the  pull-back along $\alpha^+\in \Sm$ and the commutativity of $\phi$ with push-forward along $0^+$ to get the triangle
\[ \int_{\PP(V^+)} \phi_{\bar{f}|_{U^+}}\FF|_{U^+} \longrightarrow \int_{\PP(V^+)} \alpha^{+\, \ast} \phi_{\bar{f}|_{D^+}} \mathcal{G} \longrightarrow \int_{\PP(V^+)} 0^+_\ast \phi_{\bar{f}|_{D^+}} \mathcal{G}.\] 
Notice that the last morphism is an isomorphism as the restriction of the integrands to $\PP(V^+)\subset D^+\subset U^+$ agree. This proves $\int_{\PP(V^+)} \phi_{\bar{f}}\FF=\int_{\PP(V^+)} \phi_{\bar{f}|_{U^+}}\FF|_{U^+}=0$ and we are done.
\end{proof}

\begin{remark} \rm
The proofs of all previous results remain true and even simplify in the context of  motivic ring $(\Sm,\Pro)$-theories $R$ if we make the same assumptions on $(\Sm,\Pro)$. Formally, we just apply $\underline{\Ka}_0(-)$ to all of the arguments and equations and replace $\underline{\Ka}_0(\mathcal{K})$ with $\underline{\Ka}_0(\Sch^{ft})|_{(\Sm,\Pro)}$. As $\underline{\Ka}_0(Sm^{proj})=\underline{\Ka}_0(\Sch^{ft})|_{Sm,proj}$ for $\Char\kk=0$ by \cite{Bittner04}, it is enough to mention the existence of maps $e^!:\underline{\Ka}_0(\Sch^{ft}_X)\longrightarrow \underline{\Ka}_0(\Sch^{ft}_Y)$ and $e_\ast:\underline{\Ka}_0(\Sch^{ft}_Y)\longrightarrow \underline{\Ka}_0(\Sch^{ft}_X)$ functorial in $e$ for every locally closed embedding $e:Y\hookrightarrow X$ such that 
\begin{enumerate}
 \item $j^!=j^\ast$ for every open embedding $j:U\hookrightarrow X$,
 \item $i_!=i_\ast$ for every closed embedding $i:Z\hookrightarrow X$,
 \item base change holds, i.e.\[(Y_2\to X)^!(Y_1\to X)_\ast=(Y_1\cap Y_2 \to Y_2)_\ast(Y_1\cap Y_2 \to Y_1)^!,\]
 \item the formula $a=i_\ast i^!(a) +j_\ast j^!(a)=i_!i^!(a)+j_\ast j^\ast(a)$ holds for every $a\in \underline{\Ka}_0(\Sch^{ft}_X)$,
 \item $\int_{\AA^1}(\GG_m\hookrightarrow \AA^1)_\ast (\unit_{\GG_m})=0$ in $\Ka_0(\Sch^{ft}_\kk)$.
\end{enumerate}
For the construction of $e_\ast, e^!$ and the proof of their properties, we refer the reader to \cite{Bittner04}, section 6.
\end{remark}

\begin{remark}\label{integral_identity_4} \rm
 There are two possible modifications of the proof which are important in the proof of Proposition \ref{integral_identity_3}(4). First of all, we can replace $V$ with any $\GG_m$-invariant open subset $\hat{U}$ containing the zero section. The latter is the limit of $V^+$ and $V^-$ under the $\GG_m$-action if $z\in \GG_m$ goes to $0$ and $\infty$ respectively. Thus, $V^+,V^-\subset \hat{U}$. Denoting the closed complement of $\hat{U}$ in $V$ with $C$, the proof of the integral identity is literally the same if we replace $Y$ with $Y \setminus \overline{C}$ which is the partial compactification of $U$ with respect to $Y\xrightarrow{q}Z$. Also $D^\pm$ has to be replaced by the ``boundary'' $D^\pm \setminus \overline{C}$ of $\hat{U}$ which is open in $D^\pm$. We advise the reader to check the arguments once more. \\
 The second modification is obtained by assuming that $\hat{U}$ is only open in the analytic topology of $V$, defined over $\kk=\CC$. This of course requires a framework of $\phi$ which works in the analytic topology as well. This is for instance true for  (the pull-back of)  $\phi^{perv}$. 
\end{remark}

\section{$\lambda$-rings}

\subsection{Examples of $\lambda$-rings}
\begin{definition}
 A $\lambda$-ring is a commutative ring $R$ with unit $1$ and a map $\sigma_t:R \ni a \longmapsto \sigma_t(a)=\sum_{n \in \NN} \sigma^n(a)t^n \in 1+tR[[t]] \subseteq R[[t]]$ such that
 \begin{itemize}
  \item[(i)] $\sigma_t(0)=1, \sigma_t(a+b)=\sigma_t(a)\cdot \sigma_t(b)$, i.e.\ $\sigma_t:(R,+,0)\longrightarrow (1+tR[[t]],\cdot,1)$ is a group homomorphism,
  \item[(ii)] $\sigma_t(a)=1+at \mod t^2$ (normalization).
 \end{itemize}
 A homomorphism $f:(R,\sigma_t) \longrightarrow (R',\sigma'_t)$ of $\lambda$-rings is a ring homomorphism $f:R\to R'$ such that $\sigma_t(f(a))=f(\sigma_t(a))$ for all $a\in R$. A $\lambda$-ideal is an ideal $I\subseteq R$ of the ring $R$ such that $\sigma_t(a)\in 1+tI[[t]]$ for all $a\in I$. In that case, there is an obvious (universal) quotient homomorphism $(R,\sigma_t) \longrightarrow (R/I,\tilde{\sigma}_t)$.   
\end{definition}

\begin{example} \rm
 If $(R,\sigma_t)$ is a $\lambda$-ring, $(R,\sigma^{op}_t(a):=\sigma_{-t}(a)^{-1})$ defines a $\lambda$-ring structure, too, the so-called opposite $\lambda$-ring. In all the examples below, the operation $\sigma^n:R\rightarrow R$ is induced by taking symmetric powers. The opposite operation $\lambda^n:=\sigma^{op,n}:R\rightarrow R$ is then related to exterior powers, whenever they make sense. Since mathematicians first considered these operations, our rings are called $\lambda$-rings and not $\sigma$-rings.  
\end{example}
\begin{example} \rm
 Any $\lambda$-ring structure on $\ZZ$ is uniquely determined by the power series $\sigma_t(1)=1+ t+\ldots$. Conversely every such power series defines a $\lambda$-ring structure on $\ZZ$ giving rise to a bijection between $\lambda$-ring structures on $\ZZ$ and power series of the form $1+t+\ldots$.  The standard $\lambda$-ring structure is $\sigma^{st}_t(a)=1/(1-t)^a$ determined by the geometric series $1+t+t^2+t^3+\ldots$. The opposite $\lambda$-ring structure is $\lambda_t(a)=(1+t)^a$ associated to the power series $1+t$.  
\end{example}
\begin{example} \rm
 Since $\ZZ=\Ka_0(\Vect_\kk)$ and $\sigma^{st}_t([V])=\sum_{n\ge 0} [\Sym^n V]t^n$ for the class of a vector space $V$, we can generalize this example by considering $R=\Ka_0(\mathcal{A})$ with $\sigma_t([V])=\sum_{n\ge 0} [\Sym^n V]t^n$ for $\mathcal{A}$ being an essentially small abelian symmetric tensor category and $V\in \mathcal{A}$. This will make $R$ into a $\lambda$-ring.
\end{example}
\begin{example} \rm
Since $\Ka_0(\mathcal{A})=\Ka_0(D^b(\mathcal{A}))$, we can generalize the previous example even further by considering Karoubian closed triangulated symmetric tensor categories $\altT$. This class of examples will be discussed in more detail in the next subsection, but let us quickly mention the outcome. We put $R=\Ka^0(\altT)$ and $\sigma_t([V])=\sum_{n\ge 0} [\Sym^n V]t^n$ for $V\in \altT$. Again, this is a $\lambda$-ring. Under good conditions, this $\lambda$-ring structure descends to $\Ka_0(\altT)$.
\end{example}
\begin{example} \rm \label{motives1}
 Let $M$ be a scheme over $\kk$ and $\Sym M:=\sqcup_{n\in \NN} M^n/\!\!/S_n$ be the commutative monoid in the category $\Sch_\kk$ freely generated by $M$. let $\Ka_0(\Sch^{ft}_{\Sym M})$ be the Grothendieck group of morphisms $u:X\longrightarrow \Sym M $ of finite type, i.e.\ the free abelian group generated by isomorphism classes $[u:X \longrightarrow \Sym M]$ of morphisms $u$ of finite type modulo the cut and paste relation $[u:X \longrightarrow \Sym M]=[u|_Z:Z\longrightarrow \Sym M]+[u|_{X\setminus Z}:X\setminus Z \longrightarrow \Sym M]$ for every closed subscheme $Z\subseteq X$. It becomes a ring by bilinear extension of $[X\longrightarrow \Sym M][Y\longrightarrow \Sym M]=[X\times Y \longrightarrow \Sym M\times \Sym M \xrightarrow{\;\oplus\;} \Sym M]$ with unit $1=[\Spec \kk \xrightarrow{\;0\;} \Sym M]$. Moreover, the operations $\sigma^n([X\longrightarrow \Sym M]) =[X^n/\!\!/S_n \longrightarrow (\Sym M)^n/\!\!/S_n \xrightarrow{\;\oplus\;} \Sym M]$ can be extended making $K_0(\Sch^{ft}_{\Sym M})$ into a 
$\lambda$-ring. 
\end{example}
\begin{definition}
 Given a $\lambda$-ring $(R,\sigma_t)$, we define the Adams operations $\psi_t:R\longrightarrow tR[[t]]$ by means of the logarithmic derivative
 \[ \psi_t(a)=\sum_{n\ge 1}\psi^n(a)t^n:=\frac{d \log \sigma_t(a)}{d \log t}=\frac{t \sigma_t'(a)}{\sigma_t(a)}.\]
\end{definition}
Using the properties of the logarithmic derivative, we immediately prove the following lemma.
\begin{lemma}
 Given a $\lambda$-ring $(R,\sigma_t)$, the Adams operations satisfy the following properties
 \begin{enumerate}
  \item $\psi_t:R\longrightarrow tR[[t]]$ is a group homomorphism with respect to ``$+$'', i.e.\ $\psi_t(0)=0$ and $\psi_t(a+b)=\psi_t(a)+\psi_t(b)$.
  \item If  $R$ is a $\QQ$-algebra, the $\lambda$-operations $\sigma_t$ can be expressed by means of the Adams operations
  \[ \sigma_t(a)=\exp\Big(\int \psi_t(a)\frac{dt}{t}\Big),\quad \mbox {i.e.} \quad\sigma^n(a)=\sum_{k|n} \frac{\psi^{n/k}(a)}{k} \quad\forall n>0.\]
 \end{enumerate}
\end{lemma}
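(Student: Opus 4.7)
Both statements are formal consequences of the fact that the logarithmic derivative turns multiplication into addition. The key observation is that the very definition $\psi_t(a)=t\sigma_t'(a)/\sigma_t(a)=t\,(d/dt)\log\sigma_t(a)$ identifies $\psi_t$ with the logarithmic derivative of $\sigma_t$, and the logarithmic derivative is a group homomorphism from $(1+tR[[t]],\cdot)$ to $(tR[[t]],+)$.

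For part (1), I would apply the product rule: for any $f,g\in 1+tR[[t]]$ one has $t(fg)'/(fg)=tf'/f+tg'/g$, and in particular $t\cdot 1'/1=0$. Taking $f=\sigma_t(a)$ and $g=\sigma_t(b)$ and using axiom (i) of a $\lambda$-ring, namely $\sigma_t(a+b)=\sigma_t(a)\sigma_t(b)$ and $\sigma_t(0)=1$, this immediately gives $\psi_t(a+b)=\psi_t(a)+\psi_t(b)$ and $\psi_t(0)=0$. Thus $\psi_t\colon (R,+)\to (tR[[t]],+)$ is a group homomorphism.

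For part (2), assume now that $R$ is a $\QQ$-algebra. The hypothesis $R\supseteq \QQ$ is used exactly to form the formal logarithm $\log f:=\sum_{n\geq 1}(-1)^{n-1}(f-1)^n/n$, which for any $f\in 1+tR[[t]]$ is a well-defined element of $tR[[t]]$ satisfying $(d/dt)\log f=f'/f$. Applied to $f=\sigma_t(a)$ this yields
\[
\frac{d}{dt}\log\sigma_t(a)\;=\;\frac{\sigma_t'(a)}{\sigma_t(a)}\;=\;\frac{\psi_t(a)}{t}.
\]
Since $\log\sigma_t(a)$ vanishes at $t=0$, term-by-term integration (again requiring $\QQ$-coefficients, to divide by $n$) gives $\log\sigma_t(a)=\sum_{n\geq 1}\psi^n(a)t^n/n=\int \psi_t(a)\,dt/t$, whence $\sigma_t(a)=\exp\bigl(\int \psi_t(a)\,dt/t\bigr)$. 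The explicit coefficient formula for $\sigma^n(a)$ is then obtained by expanding the exponential and collecting the coefficient of $t^n$.

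There is no real obstacle here: once one sees that $\psi_t$ \emph{is} the logarithmic derivative of $\sigma_t$, everything is a formal manipulation of power series. The only subtlety worth flagging is the use of $\QQ$-coefficients in part (2), which is essential for $\log$, $\exp$, and termwise integration to make sense; part (1) on the other hand works over any commutative ring $R$.
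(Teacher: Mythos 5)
Your approach is exactly the paper's: the paper offers essentially a one-line proof ("Using the properties of the logarithmic derivative, we immediately prove the following lemma"), and you correctly identify that everything reduces to $\psi_t$ being the logarithmic derivative of $\sigma_t$, which turns products into sums; the use of $\QQ$-coefficients to form $\log$/$\exp$ and integrate termwise is exactly what is needed for part (2).

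There is, however, a gap in your final sentence. You state that the explicit coefficient formula $\sigma^n(a)=\sum_{k\mid n}\psi^{n/k}(a)/k$ "is then obtained by expanding the exponential and collecting the coefficient of $t^n$." This is not what one obtains. Expanding $\sigma_t(a)=\exp\bigl(\sum_{m\ge 1}\psi^m(a)t^m/m\bigr)$ and collecting the $t^n$-coefficient gives the Newton-type identity, a polynomial in $\psi^1(a),\ldots,\psi^n(a)$ summed over all partitions of $n$, not just divisors. For instance, at $n=2$ the expansion yields $\sigma^2(a)=\tfrac12\psi^2(a)+\tfrac12\psi^1(a)^2$, whereas the lemma's stated formula gives $\psi^2(a)+\tfrac12\psi^1(a)$; these disagree already for a line element $a$ (where $\sigma^2(a)=a^2$ and $\psi^k(a)=a^k$). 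So the displayed coefficient formula in the lemma does not follow from the exponential identity, and by glossing over the expansion you missed a genuine error in the statement as printed. The exponential identity $\sigma_t(a)=\exp(\int\psi_t(a)\,dt/t)$ that you actually prove is the correct and useful content; the closed-form coefficient formula should instead be the Newton identity $n\,\sigma^n(a)=\sum_{k=1}^n\psi^k(a)\,\sigma^{n-k}(a)$ (equivalently the full expansion over partitions), not a sum over divisors of $n$.
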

 \begin{definition}
 If $R$ is a $\lambda$-ring, an $R$-$\lambda$-algebra $A$ is given by a unital $\lambda$-ring $A$, with an $R$-algebra structure such that $R\xrightarrow{r\mapsto r\cdot 1} A$ is a $\lambda$-ring homomorphism.
\end{definition}

\subsection{Schur functors} \label{schurfunctors}
Let $\altT$ be a Karoubian closed triangulated $\QQ$-linear symmetric tensor category, i.e.\ a Karoubian closed triangulated $\QQ$-linear category with a bi-exact tensor product $\otimes$ and a unit object $\unit$ together with the usual associativity, commutativity and unit isomorphisms satisfying the usual identities. Using the fact that $\altT$ is Karoubian closed, general arguments show that for $\EE\in \altT$
\[ \EE^{\otimes n} =  \bigoplus\limits_{\lambda \dashv n} W_\lambda \otimes_\QQ S^\lambda(\EE) \]
for certain objects $S^\lambda(\EE)$, where $W_\lambda$ denotes the irreducible representation of $S_n$ associated to the partition $\lambda$ of $n$. The decomposition is functorial,  giving rise to Schur functors $S^\lambda:\altT \longrightarrow \altT$ for every partition $\lambda$. If $F:\altT \to \altT'$ is a symmetric tensor functor, it commutes with the Schur functors on $\altT$ and on $\altT'$. 
\begin{example} \rm \quad 
\begin{enumerate}
 \item For $\lambda=(n)$, the representation $W_\lambda$ is the trivial representation of $S_n$ and $S^\lambda(\EE)=:\Sym^n(\EE)$.  
  \item For $\lambda=(1,\ldots,1)$, the representation $W_\lambda$ is the sign representation of $S_n$ and $S^\lambda(\EE)=:\Alt^n(\EE)$. 
\end{enumerate}
\end{example}
The following proposition is a standard result. 
\begin{proposition}
 Let $\EE,\FF$ be in $\altT$. Then
 \begin{eqnarray}
   \Sym^n(0)&=& \begin{cases}
                \unit \mbox{ for }n=0, \\ 0 \mbox{ else,} \label{eqn0}
                \end{cases} \\
  \Sym^n(\EE \oplus \FF) & \cong & \bigoplus\limits_{i+j=n} \Sym^i(\EE)\otimes \Sym^j(\FF),\label{eqn1} \\
    \Sym^n(\EE\otimes \FF) & \cong & \bigoplus\limits_{\lambda \dashv n} S^\lambda(\EE)\otimes S^\lambda(\FF). \label{eqn2}
 \end{eqnarray}
\end{proposition}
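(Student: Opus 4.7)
\medskip

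My plan is to prove the three assertions by computing the $S_n$-isotypic decomposition of $(-)^{\otimes n}$ on each side and then applying the defining relation $\EE^{\otimes n} \cong \bigoplus_{\lambda \dashv n} W_\lambda \otimes_\QQ S^\lambda(\EE)$. Throughout I use the fact that in a Karoubian closed $\QQ$-linear category, passing to the $W_\lambda$-isotypic summand is functorial: if a finite group $G$ acts on an object $\altA$ and $V$ is a finite-dimensional $\QQ[G]$-module, one obtains a well-defined object $(V \otimes_\QQ \altA)^G$ which is compatible with tensor products of objects and with direct sums of $G$-representations, since the relevant projectors are constructed from the group-averaging idempotents in $\QQ[G]$.

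For (\ref{eqn0}), note that $0^{\otimes n}$ is a zero object for every $n \geq 1$ by bi-exactness of $\otimes$, so the summand $\Sym^n(0)$ vanishes; while $\Sym^0$ is by convention the unit functor sending any object to $\unit$. For (\ref{eqn1}), I would expand
\[ (\EE \oplus \FF)^{\otimes n} \;\cong\; \bigoplus_{I \subseteq \{1,\ldots,n\}} \EE^{\otimes I} \otimes \FF^{\otimes I^c}, \]
and observe that $S_n$ permutes these summands transitively on each $\binom{n}{i}$-orbit of subsets of fixed cardinality $i$. The trivial-isotypic component is therefore computed by averaging over $S_n$ the sum of the $\binom{n}{i}$ summands of type $(i,n-i)$; this gives $\Ind_{S_i \times S_{n-i}}^{S_n}(\text{triv})^{S_n}$-valued decomposition, which after using the isomorphism $(W \otimes \altA)^G \otimes (W' \otimes \altB)^H \cong ((W\boxtimes W') \otimes (\altA\otimes\altB))^{G\times H}$ applied to the trivial representation on $S_i \times S_{n-i}$ produces $\Sym^i(\EE)\otimes \Sym^{n-i}(\FF)$ in each degree.

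For (\ref{eqn2}), which is the heart of the matter, I would write
\[ (\EE \otimes \FF)^{\otimes n} \;\cong\; \EE^{\otimes n} \otimes \FF^{\otimes n} \;\cong\; \bigoplus_{\lambda,\mu \dashv n} (W_\lambda \otimes_\QQ W_\mu) \otimes_\QQ \big(S^\lambda(\EE) \otimes S^\mu(\FF)\big), \]
where the $S_n$-action used to define $\Sym^n$ on the left is the diagonal action, acting on $W_\lambda \otimes W_\mu$ on the right. Taking the $S_n$-trivial isotypic component, I use that for irreducible $\QQ$-representations of the symmetric group one has $(W_\lambda \otimes_\QQ W_\mu)^{S_n} \cong \Hom_{S_n}(W_\lambda^\vee, W_\mu)$, which by self-duality of $S_n$-irreducibles over $\QQ$ and Schur's lemma is $\QQ$ if $\lambda=\mu$ and $0$ otherwise. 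This collapses the double sum to $\bigoplus_{\lambda \dashv n} S^\lambda(\EE) \otimes S^\lambda(\FF)$, as required.

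The only delicate point I expect is the compatibility of ``take $G$-isotypic summand'' with tensor products and with the identification $\EE^{\otimes n} \otimes \FF^{\otimes n} \cong (\EE\otimes\FF)^{\otimes n}$; this is a diagram-chase in the symmetric tensor structure, and it reduces to verifying that the idempotents in $\QQ[S_n]$ cutting out $W_\lambda$-isotypic components go to the diagonal idempotents in $\QQ[S_n]\otimes \QQ[S_n]$ under the coproduct $S_n \to S_n \times S_n$. Apart from this, the argument is formal and uses only (i) Karoubian closure to extract isotypic summands as honest objects, (ii) bi-exactness of $\otimes$, and (iii) standard $S_n$-representation theory over $\QQ$.
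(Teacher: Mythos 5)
The paper does not prove this proposition — it is labeled ``a standard result'' and its $\lambda$-ring shadow is attributed elsewhere to Biglari and Deligne — so there is no approach of the paper to compare against; your argument is the standard one and it is correct. In each case you extract $\Sym^n$ as the $S_n$-trivial isotypic component of $(-)^{\otimes n}$ via the averaging idempotent $\frac{1}{n!}\sum_{\sigma\in S_n}\sigma$, then use: for (\ref{eqn0}) the vanishing of $0^{\otimes n}$ by bi-exactness; for (\ref{eqn1}) the subset decomposition of $(\EE\oplus\FF)^{\otimes n}$ into $S_n$-orbits plus Frobenius reciprocity for invariants of induced objects; and for (\ref{eqn2}) the collapse of $\bigoplus_{\lambda,\mu}(W_\lambda\otimes_\QQ W_\mu)\otimes S^\lambda(\EE)\otimes S^\mu(\FF)$ under $S_n$-invariants forced by Schur's lemma and the self-duality of $S_n$-irreducibles over $\QQ$. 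One small imprecision in your closing remark: under the diagonal map $\sigma\mapsto\sigma\otimes\sigma$, the symmetrizer in $\QQ[S_n]$ is sent to the averaging idempotent for the diagonal subgroup $S_n\hookrightarrow S_n\times S_n$, not to a tensor product of isotypic idempotents; identifying the invariants this cuts out is precisely the Schur's-lemma computation you already perform, so this phrasing issue does not affect the validity of the proof.
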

where (\ref{eqn1}) and (\ref{eqn2}) are natural equivalences of bifunctors.  If $\Sym(\EE):=\oplus_{n\in\NN}\Sym^n(\EE)$ is well defined in $\altT$, these equations imply 
\begin{eqnarray*}
\Sym(0)& = &\unit,\\
\Sym(\EE \oplus \FF) & \cong & \Sym(E) \otimes \Sym(F), \\
\Sym (\EE \otimes \FF) & \cong & \bigoplus\limits_{\lambda \in \Part } S^\lambda(\EE)\otimes S^\lambda(\FF),
\end{eqnarray*}
where $\Part$ denotes the set of all partitions. 
\begin{proposition}[\cite{Biglari}, \cite{Deligne1}]\label{lambda-ring}
 The Schur functors $S^\lambda$ induce well defined operations $\sigma^n$ on the additive Grothendieck group $\Ka^0(\altT)$,  satisfying the analogues of equation (\ref{eqn0}), (\ref{eqn1}) and (\ref{eqn2}). In particular, $\Ka^0(\altT)$ carries the structure of a $\lambda$-ring. If $\altT$ is the homotopy category $K^b(\Ab)$ of bounded complexes in a Karoubian closed $\QQ$-linear category $\Ab$ which is preserved by $\otimes$, then the $\lambda$-ring structure descends to $\Ka_0(\altT)$, the Grothendieck group with respect to distinguished triangles. The same holds for $\altT=D^b(\Ab)$ with $\Ab$  an abelian category preserved by the tensor product.
\end{proposition}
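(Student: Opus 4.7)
The plan is to build the $\lambda$-ring structure on $\Ka^0(\altT)$ first using (\ref{eqn0}) and (\ref{eqn1}), verify the analogue of (\ref{eqn2}) at the level of classes, and then address the descent to $\Ka_0(\altT)$ as the technical heart of the argument. Bi-exactness of $\otimes$ with respect to direct sums makes $\Ka^0(\altT)$ into a commutative unital ring with unit $[\unit]$. For the $\lambda$-structure, I define
\[
\sigma_t\colon\obj(\altT)/{\cong}\;\longrightarrow\;1+t\Ka^0(\altT)[[t]],\qquad [\EE]\longmapsto \sum_{n\ge 0}[\Sym^n\EE]\,t^n.
\]
The functorial isomorphism (\ref{eqn1}) shows that $\sigma_t([\EE\oplus\FF])=\sigma_t([\EE])\sigma_t([\FF])$; since the target is an abelian group under multiplication, the universal property of the Grothendieck group extends $\sigma_t$ uniquely to a group homomorphism $(\Ka^0(\altT),+)\to(1+t\Ka^0(\altT)[[t]],\cdot)$. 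The normalization $\sigma_t(a)=1+at\bmod t^2$ is immediate from $\Sym^0\EE=\unit$ and $\Sym^1\EE=\EE$, and $\sigma_t(0)=1$ is (\ref{eqn0}). The identity (\ref{eqn2}), being already an isomorphism in $\altT$, descends directly to $\Ka^0(\altT)$ as the multiplicative compatibility $\sigma^n([\EE][\FF])=\sum_{\lambda\dashv n}[S^\lambda\EE][S^\lambda\FF]$. Finally, (\ref{eqn1}) itself follows from the $\QQ$-linear Karoubian structure by decomposing $(\EE\oplus\FF)^{\otimes n}\cong\bigoplus_{i+j=n}\QQ[S_n/(S_i\times S_j)]\otimes_\QQ\EE^{\otimes i}\otimes\FF^{\otimes j}$ and applying the coinvariant projector $\frac{1}{n!}\sum_{\sigma\in S_n}\sigma$.

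For the descent to $\Ka_0(\altT)$ when $\altT=K^b(\Ab)$, it suffices to show that $\sigma_t$ respects distinguished triangles. Every such triangle is isomorphic to one associated with a termwise-split short exact sequence $0\to\EE^\bullet\to\FF^\bullet\to\altG^\bullet\to 0$ of bounded complexes in $\Ab$. Choosing a termwise splitting, $\FF^p\cong\EE^p\oplus\altG^p$ with differential $\bigl(\begin{smallmatrix}d_\EE & h\\ 0 & d_\altG\end{smallmatrix}\bigr)$, and on $(\FF^\bullet)^{\otimes n}$ this induces a decreasing $S_n$-equivariant filtration $F^n\subseteq\cdots\subseteq F^0=(\FF^\bullet)^{\otimes n}$ by subcomplexes, where $F^k$ is spanned by tensor summands with at least $k$ factors from $\EE^\bullet$. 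The differential preserves this filtration because $h$ sends $\altG$ into $\EE$, and the graded piece $F^k/F^{k+1}$ is $S_n$-equivariantly isomorphic to $\operatorname{Ind}_{S_k\times S_{n-k}}^{S_n}\bigl((\EE^\bullet)^{\otimes k}\otimes(\altG^\bullet)^{\otimes n-k}\bigr)$. Applying the coinvariant projector yields a filtration on $\Sym^n(\FF^\bullet)$ whose $k$-th associated graded piece is $\Sym^k(\EE^\bullet)\otimes\Sym^{n-k}(\altG^\bullet)$, so that in $\Ka_0(K^b(\Ab))$
\[
[\Sym^n(\FF^\bullet)]=\sum_{i+j=n}[\Sym^i\EE^\bullet]\,[\Sym^j\altG^\bullet],
\]
which is exactly the identity $\sigma_t([\FF^\bullet])=\sigma_t([\EE^\bullet])\sigma_t([\altG^\bullet])$. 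The case $\altT=D^b(\Ab)$ reduces to this one, since every distinguished triangle in $D^b(\Ab)$ is isomorphic in $D^b$ to the image of a termwise-split distinguished triangle of $K^b(\Ab)$, and the identity above descends through the canonical map $\Ka_0(K^b(\Ab))\to\Ka_0(D^b(\Ab))$.

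The main obstacle is the construction and $S_n$-equivariance of the filtration $F^\bullet$ together with the identification of its graded pieces; once this is in place, everything else is formal. Both the coinvariant projectors and the filtration-to-graded identification in $\Ka_0$ rely essentially on $\QQ$-linearity of $\altT$ together with Karoubian closure of $\Ab$, and the restriction from an arbitrary triangulated $\altT$ to $K^b(\Ab)$ or $D^b(\Ab)$ is used precisely to guarantee a concrete representative of each distinguished triangle in which a termwise splitting—and hence the filtration $F^\bullet$—can be written down explicitly.
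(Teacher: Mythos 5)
The paper does not prove this proposition; it cites \cite{Biglari} and \cite{Deligne1} and later refers the reader to \cite{Biglari} ``for more details.'' Your argument is therefore not comparable to anything in the paper's own text, but it is the standard proof (it is essentially the argument given in those references) and is correct. The two technical points worth noting are precisely the ones you isolated: (1) the passage from an arbitrary distinguished triangle in $K^b(\Ab)$ to a termwise-split short exact sequence of complexes goes through replacing $\FF^\bullet$ by the mapping cylinder, which you might make explicit; and (2) after applying the coinvariant projector, the filtration $G^k:=e_n(F^k)$ on $\Sym^n(\FF^\bullet)$ is again termwise split precisely because the $S_n$-action preserves the number of $\EE$-factors in each tensor summand, so $e_n$ respects the direct-sum decomposition degreewise. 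This termwise splitting is what lets you conclude that the inclusions $G^{k+1}\hookrightarrow G^k$ give distinguished triangles in $K^b(\Ab)$ (and hence in $D^b(\Ab)$), so that $[\Sym^n\FF^\bullet]=\sum_{i+j=n}[\Sym^i\EE^\bullet][\Sym^j\altG^\bullet]$ holds in $\Ka_0$, not merely in $\Ka^0$. Both steps are implicit in your write-up and hold; the proof is sound.
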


\subsection{Complete $\lambda$-rings}
As we have seen in the previous subsection, it is sometimes desirable to form the infinite sum $\sum_{n\in \NN} \sigma^n(a)$. To ensure convergence, we make the following definition. 
\begin{definition} 
A filtered $\lambda$-ring is a $\lambda$-ring $(R,\sigma_t)$ together with a descending filtration 
$ R=F^0R\supseteq F^1R \supseteq F^2R \supseteq \ldots$ such that 
\begin{itemize}
\item[(i)] $F^iR\cdot F^jR \subseteq F^{i+j}R$ for all $i,j\ge 0$,
\item[(ii)] $\sigma^n(F^iR)\subseteq F^{in}R$ for all $i,n\ge 0$.
\end{itemize}
In particular, $F^iR$ is a $\lambda$-ideal and we call the inverse limit $\hat{R}:=\varprojlim_i R/F^iR$ the completion of $R$ with respect to the topology induced by the filtration. We call $R$ a complete filtered $\lambda$-ring if the canonical morphism $R\to \hat{R}$ is an isomorphism.
\end{definition}
\begin{lemma} \label{complete_lambda_rings}
Let $(R,\sigma_t,F^\bullet R)$ be a complete filtered $\lambda$-ring. For $a\in F^1R$ the series $\Sym(a):=\sigma_1(a)=\sum_{n\ge 0}\sigma^n(a)$ is convergent and provides an isomorphism $\Sym: (F^1R,+,0) \xrightarrow{\,\sim\,} (1+F^1R,\cdot,1)$ of groups.  
\end{lemma}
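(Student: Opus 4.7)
The proof plan has four steps: convergence, homomorphism, injectivity, surjectivity. None of these should require a genuinely new idea; the work is purely formal, using the filtration axioms and completeness.

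First, I would verify convergence and that the map lands in $1 + F^1 R$. By axiom (ii) of the filtration, $\sigma^n(a) \in F^n R$ for all $n \geq 0$ whenever $a \in F^1 R$, so the partial sums $s_N := \sum_{n=0}^{N} \sigma^n(a)$ form a Cauchy sequence: $s_N - s_M \in F^{M+1} R$ for $N > M$. Completeness gives the limit $\Sym(a) \in R$, and the normalization axiom $\sigma_t(a) = 1 + at \mod t^2$ together with $a \in F^1 R$ shows $\Sym(a) - 1 \in F^1 R$.

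Next, the homomorphism property. The identity $\sigma_t(a+b) = \sigma_t(a) \cdot \sigma_t(b)$ holds in $R[[t]]$ by the definition of a $\lambda$-ring, which coefficient-wise reads $\sigma^n(a+b) = \sum_{i+j=n} \sigma^i(a) \sigma^j(b)$. Summing over $n$ (which is permissible in the complete topology because both $\sigma^i(a)$ and $\sigma^j(b)$ lie in $F^i R$ and $F^j R$ respectively, hence the off-diagonal blocks of the Cauchy product are controlled by the filtration) yields $\Sym(a+b) = \Sym(a) \Sym(b)$. Combined with $\Sym(0) = \sigma_1(0) = 1$, this is the group homomorphism property.

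For injectivity, suppose $\Sym(a) = 1$ with $a \neq 0$. Since $R$ is complete, $\bigcap_i F^i R = 0$, so there exists a largest $n \geq 1$ with $a \in F^n R$. By axiom (ii), $\sigma^k(a) \in F^{kn} R \subseteq F^{n+1} R$ for all $k \geq 2$. Thus $\Sym(a) \equiv 1 + a \pmod{F^{n+1} R}$, forcing $a \in F^{n+1} R$, a contradiction.

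The main step is surjectivity, which I would establish by successive approximation. Given $b \in 1 + F^1 R$, construct a Cauchy sequence $(a_n)_{n \geq 1}$ with $a_n \in F^1 R$ and $\Sym(a_n) \equiv b \pmod{F^{n+1} R}$, and with $a_{n+1} - a_n \in F^{n+1} R$. Set $a_1 := b - 1 \in F^1 R$; then $\Sym(a_1) = 1 + (b-1) + (\text{terms in } F^2 R) \equiv b \pmod{F^2 R}$. Inductively, given $a_n$, set $b_n := b \cdot \Sym(a_n)^{-1} \in 1 + F^{n+1} R$ (the inverse exists in the complete ring by the usual geometric series $(1+x)^{-1} = \sum (-x)^k$ which converges for $x \in F^1 R$), let $\epsilon_n := b_n - 1 \in F^{n+1} R$, and put $a_{n+1} := a_n + \epsilon_n$. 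Then $\Sym(\epsilon_n) \equiv 1 + \epsilon_n = b_n \pmod{F^{n+2} R}$ by the same argument used in the injectivity step, since $\sigma^k(\epsilon_n) \in F^{k(n+1)} R \subseteq F^{n+2} R$ for $k \geq 2$. Using the already-established multiplicativity, $\Sym(a_{n+1}) = \Sym(a_n) \Sym(\epsilon_n) \equiv \Sym(a_n) \cdot b_n = b \pmod{F^{n+2} R}$. The sequence $(a_n)$ is Cauchy, its limit $a \in F^1 R$ exists by completeness, and continuity of $\Sym$ (which follows from the same filtration estimates used to prove convergence) gives $\Sym(a) = b$. The only mild subtlety is ensuring that $\Sym \colon F^1 R \to 1 + F^1 R$ is continuous with respect to the induced topologies; this is immediate from the estimate $\Sym(a) - \Sym(a') = \Sym(a')(\Sym(a - a') - 1) \in F^m R$ whenever $a - a' \in F^m R$ (using multiplicativity and the first step).
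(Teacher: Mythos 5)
The paper states this lemma without proof, so there is no argument of its own to compare against. Your proof is correct and is the standard one: convergence and the codomain statement follow from $\sigma^n(F^1R)\subseteq F^nR$, multiplicativity from the coefficient-wise expansion of $\sigma_t(a+b)=\sigma_t(a)\sigma_t(b)$ plus filtration control of the Cauchy product, injectivity from $\Sym(a)\equiv 1+a\pmod{F^{n+1}R}$ for $a\in F^nR$ together with Hausdorffness ($\bigcap_i F^iR=0$), and surjectivity by successive approximation using the same congruence, invertibility of $1+F^1R$ via geometric series, and continuity of $\Sym$. Each filtration estimate you invoke checks out; in particular, the step $\sigma^k(\epsilon_n)\in F^{k(n+1)}R\subseteq F^{n+2}R$ for $k\ge 2$, and $b_n-1=(b-\Sym(a_n))\Sym(a_n)^{-1}\in F^{n+1}R$, are both sound. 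A slightly more compact way to package the bijectivity is to note that $\Sym$ is a continuous homomorphism of complete filtered groups which induces the canonical isomorphism $F^nR/F^{n+1}R\to (1+F^nR)/(1+F^{n+1}R)$, $a\mapsto 1+a$, on graded pieces, and then quote the general fact that a filtered map inducing isomorphisms on all graded pieces of complete Hausdorff filtered groups is itself an isomorphism; but that general fact is proved by exactly your successive approximation, so this is a repackaging rather than a different route.
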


\subsection{Adjoining roots of polynomials} \label{adjoining_roots}

The remaining part of this section is rather technical, but simplifies a lot if one is only interested in $\lambda$-rings arising from Karoubian closed triangulated $\QQ$-linear symmetric tensor categories as seen before. The problem we will address is how to  adjoin roots of polynomials in the category of $\lambda$-rings. \\
Let us start by defining some polynomials $P^{m,n}$ depending on $m,n\in \NN$. Choose variables $x_1, \ldots, x_{mn}, y_1,\ldots,y_m$, and consider the $S_m\times S_{mn}$-invariant polynomial $h_m(\overline{y},\overline{x}^{\overline{n}})$ in $y_1,\ldots,y_m$ and all monomials $\overline{x}^{\overline{n}}=\prod_{i=1}^{mn}x_i^{n_i}$ of total degree $n=\sum_{i=1}^{mn} n_i$, where $h_d$ denotes the d-th totally symmetric polynomial. As $\kk[y_1,\ldots,y_m]^{S_m}\otimes\kk[x_1,\ldots,x_{mn}]^{S_{mn}}$ is generated by totally symmetric polynomials $h_1(\overline{y}),\ldots,h_m(\overline{y}),h_1(\overline{x}),\ldots, h_{mn}(\overline{x})$, there must be a polynomial $P^{m,n}$ such that
\[ h_m(\overline{y},\overline{x}^{\overline{n}})=P^{m,n}(h_1(\overline{y}),\ldots,h_m(\overline{y}),h_1(\overline{x}),\ldots,h_{mn}(\overline{x})).\]

\begin{definition} \label{Picard_group}
 For a $\lambda$-ring $(R,\sigma_t)$ we define the following subset
 \[ R_{sp}:=\{ a\in R \mid \sigma^m(b\sigma^n(a))=P^{m,n}(\sigma^1(b),\ldots,\sigma^m(b), \sigma^1(a),\ldots, \sigma^{mn}(a)) \forall b\in R, n\in \NN\},\]
and call $(R,\sigma_t)$ special if $R_{sp}=R$. We also define $\Pic(R,\sigma_t):=\{a\in R_{sp}\mid  \sigma^n(a)=a^n \;\forall\,n\in \NN \}$ and call elements in $\Pic(R,\sigma_t)$ line elements. 
\end{definition}
One can show that the relations used to define $R_{sp}$ imply 
\[\psi^m(\psi^n(a)b)=\psi^{mn}(a)\psi^m(b) \mbox{ for all } a\in R_{sp}, b\in R,\] 
and if $R$ is a $\QQ$-algebra, these equations are equivalent to the defining equations for $R_{sp}$.  
\begin{lemma} \label{special_elements} Given a $\lambda$-ring $(R,\sigma_t)$, the following assertions are true.
 \begin{itemize}
   \item[(i)] The set $\Pic(R,\sigma_t)$ is a commutative semigroup under multiplication. If moreover $\sigma^n(1)=1$ for all $n\in\NN$, then $1\in \Pic(R,\sigma_t)$, and $\Pic(R,\sigma_t)$ is a commutative monoid. The functor $\Pic(-)$ from the category of $\lambda$-rings $(R,\sigma_t)$ with $1\in \Pic(R,\sigma_t)$ to the category of commutative monoids has a left adjoint associating to a given monoid $M$ the monoid ring $\ZZ M$ with the unique $\lambda$-ring structure $\sigma_t$ such that $\sigma^n(a)=a^n$ for all $a\in M$ and $n\in \NN$. This left adjoint functor is a fully faithful embedding (see \cite{Betley}, Lemma 2.2).
  
   \item[(ii)] The subset $R_{sp}$ is a special $\lambda$-subring of $(R,\sigma_t)$. The inclusion of the full subcategory of special $\lambda$-rings has a left adjoint with adjunction $(R,\sigma_t)\longrightarrow (R^{sp},\tilde{\sigma}_t)$, where $R^{sp}$ is the quotient of $R$ by the ideal 
  \[ I=\Big( \sigma^m(b\sigma^n(a))- P^{m,n}(\sigma^{mn}(a),\ldots,\sigma^1(a),\sigma^m(b),\ldots, \sigma^1(b)) \mid \forall a,b\in R\Big) \]
  which turns out to be a $\lambda$-ideal. In particular $R\to R^{sp}$ has a universal property.

\end{itemize}

\end{lemma}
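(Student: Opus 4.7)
The plan is to handle parts (i) and (ii) in sequence, reducing each to the universal polynomial identities encoded by the $P^{m,n}$.

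For (i), I will first show $\Pic(R,\sigma_t)$ is closed under multiplication. Given $a,b\in\Pic(R,\sigma_t)$, since $a\in R_{sp}$ I apply the defining identity with $n=1$ and $c=b$ to obtain
\[ \sigma^m(ab) \;=\; P^{m,1}\bigl(\sigma^1(b),\dots,\sigma^m(b),\sigma^1(a),\dots,\sigma^m(a)\bigr) \;=\; P^{m,1}(b,\dots,b^m,a,\dots,a^m). \]
On the other hand $P^{m,1}$ encodes the identity $h_m(y_1,\dots,y_m,x_1,\dots,x_m)$ expressed via the elementary totally symmetric functions, so substituting the ``one-variable'' specialization $y_i=b^i$, $x_i=a^i$ (which corresponds to the Chern-root picture where $a$, $b$ each have a single root equal to themselves) yields $(ab)^m$. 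An analogous substitution in the defining polynomial identities of $R_{sp}$ gives $ab\in R_{sp}$, so $ab\in\Pic(R,\sigma_t)$. When $\sigma^n(1)=1$ for all $n$, the element $1$ trivially satisfies $\sigma^n(1)=1^n$ and all relations defining $R_{sp}$ (since $\sigma^n(b\cdot 1)=\sigma^n(b)$ collapses the relevant $P^{m,n}$), so $1\in\Pic(R,\sigma_t)$.

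To construct the left adjoint, I define $L(M):=\ZZ M$ with its standard monoid ring structure and declare $\sigma_t(a)=(1-at)^{-1}$ for every $a\in M$; then I extend to $\ZZ M$ by forcing the group homomorphism property $\sigma_t(x+y)=\sigma_t(x)\sigma_t(y)$. To see this produces a well-defined $\lambda$-ring, the critical observation is that $\ZZ M$ sits inside a larger ring where the same formula makes sense freely, and the polynomial identities needed for the axioms follow from the generating function manipulation. The adjunction
\[ \Hom_{\lambda}(\ZZ M, R) \;\xrightarrow{\;\sim\;}\; \Hom_{\mathrm{Mon}}(M,\Pic(R,\sigma_t)) \]
is then given by restriction to $M$; its inverse sends a monoid homomorphism $f$ to the unique ring homomorphism $\tilde f$ extending it, and I check $\tilde f$ respects $\sigma_t$ by verifying the property on the generators $a\in M$ (where it holds by definition of $\Pic$) and propagating via the defining identity for $R_{sp}$ together with additivity of $\sigma_t$ as a group homomorphism. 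For full faithfulness, it suffices to show the unit $M\to\Pic(\ZZ M)$ is bijective, i.e.\ $\Pic(\ZZ M)=M$: a nontrivial $\ZZ$-linear combination $x=\sum n_i a_i$ has $\sigma_t(x)=\prod(1-a_it)^{-n_i}$, and direct examination of this rational function shows $\sigma^n(x)=x^n$ can fail already at $n=2$ unless $x\in M$. This is the main technical step of (i).

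For (ii), I will first verify $R_{sp}$ is a sub-$\lambda$-ring. The defining relations are preserved under sums and products because the polynomials $P^{m,n}$ satisfy compatibility relations coming from the plethystic structure on the ring of symmetric functions; concretely, if $a,a'\in R_{sp}$ then $\sigma^m\bigl(b\sigma^n(a+a')\bigr)$ expands via $\sigma^n(a+a')=\sum_{i+j=n}\sigma^i(a)\sigma^j(a')$ and the bilinear identities for $P^{m,n}$, and similarly for products $aa'$ and for $\sigma^k(a)$. The ideal $I$ generated by the differences $\sigma^m(b\sigma^n(a))-P^{m,n}(\cdots)$ is then shown to be a $\lambda$-ideal: using the formula $\sigma^n(a+b)=\sum\sigma^i(a)\sigma^j(b)$ and the projection formula for $\sigma^n$ under products (which reduces to the same family of polynomial identities), each $\sigma^n$ sends generators of $I$ into $I$, and hence $\sigma^n(I)\subseteq I$. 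The quotient $R^{sp}:=R/I$ then inherits a well-defined $\lambda$-structure $\tilde\sigma_t$, and by construction every element of $R^{sp}$ satisfies the defining relations of being special, so $R^{sp}$ is special. The universal property is immediate: any $\lambda$-ring homomorphism $R\to S$ into a special $S$ annihilates the generators of $I$ (they vanish in $S$ by definition of special), hence factors uniquely through $R^{sp}$.

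The main obstacle will be the identification $\Pic(\ZZ M)=M$ for full faithfulness in (i), and the combinatorial bookkeeping required to check that the ideal $I$ in (ii) is closed under all the $\sigma^n$; both reduce to unpacking the universal identities encoded in the polynomials $P^{m,n}$.
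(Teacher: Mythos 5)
The paper gives no argument for this lemma; it is stated without proof, with a citation to Betley for the full faithfulness. Your proposal fills in a sketch, but it has several real gaps precisely at the nontrivial points.

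On part (i), the closure of $\Pic(R,\sigma_t)$ under multiplication is essentially right. The key universal identity you need (and should state, since it does all the work) is $P^{m,n}(z_1,\dots,z_m,w,w^2,\dots,w^{mn})=w^{mn}z_m$, obtained by specializing the Chern-root identity to a single root. With it, $\sigma^m(c(ab)^n)=\sigma^m\bigl((ca^n)\sigma^n(b)\bigr)=b^{mn}\sigma^m(ca^n)=b^{mn}a^{mn}\sigma^m(c)=(ab)^{mn}\sigma^m(c)$, which matches the required $R_{sp}$-identity for $ab$. (Your written substitution ``$y_i=b^i,x_i=a^i$'' is not the single-root specialization you describe in words; what you actually use is $\sigma^i(b)=b^i$ as the $i$-th argument of $P^{m,1}$, i.e.\ $h_i(\bar y)=b^i$.)

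The adjunction argument is one-sided. You carefully check that $\tilde f$ is a $\lambda$-ring homomorphism, and that part is fine: the generators $a\in M$ freely span $\ZZ M$ as an abelian group, so checking $\tilde f\circ\sigma_t=\sigma_t\circ\tilde f$ on generators, together with the group-homomorphism axiom for $\sigma_t$, suffices. But you never verify that restriction is well defined, i.e.\ that for a $\lambda$-ring homomorphism $g:\ZZ M\to R$ the image $g(a)$ actually lands in $\Pic(R,\sigma_t)$. The relation $\sigma^n(g(a))=g(a)^n$ follows at once, but the membership $g(a)\in R_{sp}$ does \emph{not}: the defining condition of $R_{sp}$ quantifies over all $b\in R$, and $g$ only controls $b$ in its image. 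For a general $\lambda$-ring $R$ (not assumed special), $\sigma^n(a)=a^n$ does not imply $a\in R_{sp}$, so functoriality of $\Pic$ along arbitrary $\lambda$-ring homomorphisms — and hence the restriction map — requires an argument you have not given. Likewise, full faithfulness hinges on the identity $\Pic(\ZZ M)=M$, which you assert but do not prove; note that $0$ always lies in $R_{sp}$ and satisfies $\sigma^n(0)=0^n$, so at minimum you have to explain why the presence of $0\in\Pic(\ZZ M)\setminus M$ is not an obstruction (which may indicate a convention about $\Pic$ that differs from the literal Definition~\ref{Picard_group}).

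On part (ii), the argument that $R_{sp}$ is a sub-$\lambda$-ring is sketched at a level that is plausible but not self-contained (closure under $\sigma^k$ in particular requires a plethysm-type identity obtained by setting $b=1$ in the $R_{sp}$-relation, which in turn needs $\sigma^n(1)=1$). The real problem is the claim that the ideal $I$ is a $\lambda$-ideal. You argue that since $\sigma^n$ sends generators of $I$ into $I$, therefore $\sigma^n(I)\subseteq I$. That inference is not valid: a general element of $I$ is $\sum_k c_k D_k$ with $c_k\in R$, and by additivity of $\sigma_t$ we reduce to showing $\sigma^n(c_kD_k)\in I$. But $\sigma^n$ has \emph{no} compatibility with ring multiplication in a general $\lambda$-ring, so $\sigma^n(c_kD_k)$ cannot be re-expressed in terms of $\sigma^\bullet(D_k)$. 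The ``projection formula for $\sigma^n$ under products'' you invoke is exactly the special-$\lambda$-ring axiom you are trying to impose by passing to the quotient; it is not available in $R$. This is the crux of the statement and needs a genuine argument (or a reformulation, taking the $\lambda$-ideal generated by the differences rather than the ordinary ideal).
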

\begin{remark} \rm
 For a special $\lambda$-ring $(R,\sigma_t)$ the composition $R_{sp}\hookrightarrow R\twoheadrightarrow R^{sp}$ is an isomorphism. It would be interesting to find out whether or not this composition is always an isomorphism.   
\end{remark}
\begin{example} \rm
 We have $1\in \ZZ_{sp}$ if and only if $\ZZ$ is equipped with the standard $\lambda$-ring structure. Then, $\Pic(\ZZ,\sigma_t^{st})=\{1\}$. Moreover, $(\ZZ,\sigma^{st}_t)$ is the initial object in the full subcategory of special $\lambda$-rings and in the full subcategory of $\lambda$-rings $(R,\sigma_t)$ satisfying $1\in R_{sp}$. The category of all $\lambda$-rings has no initial object.
\end{example}
\begin{example} \rm
 The $\lambda$-ring structures constructed on $\Ka_0(\mathcal{A})$ and on $\Ka_0(\altT)$ for an abelian, respectively triangulated, essentially small symmetric tensor category $\mathcal{A}$, respectively $\altT$, are special. In other words, any $\lambda$-ring which is a ``decategorification'' is special. Moreover, the class of any object which is invertible with respect to the tensor product is in $\Pic(\Ka_0(\mathcal{A}),\sigma_t)$, respectively in $\Pic(\Ka_0(\altT),\sigma_t)$.  
\end{example}
\begin{example} \rm \label{motives_special}
 One can show (see \cite{LarsenLunts}) that $\Ka_0(\Sch^{ft}_{\Sym M})$ is never special but $1,\LL\in \Pic(\Ka_0(\Sch^{ft}_{\Sym M}),\sigma_t)$, where $\LL$ denotes the class of $\AAA\xrightarrow{\;0\;}\Sym M$.
\end{example}
\begin{example} \label{lambda_ring_1} \rm Given a $\lambda$-ring $R$ with $1\in \Pic(R,\sigma_t)$, there is a unique $\lambda$-ring structure $\sigma_t$ on $R[T]$ such that $\sigma^n(aT^k)=\sigma^n(a)T^{kn}$ for all $a\in R$ and $k\in \NN$. Moreover, $1,T\in \Pic(R[T],\sigma_t)$. Given a fixed natural number $r>0$ and an element $\LL\in R_{sp}$ such that $\sigma^n(\LL)=\LL^n$ for all $n\in \NN$, the principal ideal $I:=(\LL T^r-1)\subset R[T]$ is a $\lambda$-ideal. Indeed, as $\LL T^r-1\in R[T]_{sp}$, it suffices to show that $\sigma^n(\LL T^r-1)\in I$ for all $n>0$. But
 \[ \sigma^n(\LL T^r-1)=\LL^n T^{nr}-\LL^{n-1}T^{(n-1)r}=\LL^{n-1}T^{(n-1)r}(\LL T^r-1)\in I \]
 for all $n>0$. Hence $R[\LL^{-1/r}]:=R[T]/I$, with $\LL^{-1/r}$ denoting the residue class of $T$, is a $\lambda$-ring with $\LL^{1/r}=(\LL^{-1/r})^{r-1}\LL\in \Pic(R[\LL^{-1/r}],\sigma_t)$ being the inverse of $\LL^{-1/r}$. 
\end{example}
\begin{example} \label{lambda_ring_2} \rm As in the previous example, we consider a $\lambda$-ring $R$ with $1\in \Pic(R,\sigma_t)$ and an element $\LL\in R_{sp}$ satisfying $\sigma^n(\LL)=\LL^{n}$ for all $n\in \NN$. Consider the polynomial ring $R[T_m \mid m>0]$ in infinitely many variables $T_1,T_2,\ldots $, and define elements $\sigma^n_m$ for $n\in \NN$ inductively with respect to $n$ by 
\[ \sigma^n_m:= T_{mn}(\sigma^{n-1}_m + \ldots + \sigma^1_m + \sigma^0_m) \]
starting with $\sigma^0_m=1$ for all $m>0$. There is a unique $\lambda$-ring structure on $R[T_m \mid m > 0]$ restricting to the given structure on $R$ such that $T_m\in R[T_m \mid m > 0]_{sp}$ and $\sigma^n(T_m)=\sigma^n_m$ for all $m>0$ and all $n\in \NN$. Using the shorthand $r_m:=(\LL^m-1)T_m-1$, we therefore have
\begin{equation} \label{eq1} (\LL^{mn}-1)\sigma^n(T_m)=(r_{mn}+1)\big(\sigma^{n-1}(T_m)+ \ldots+ \sigma^1(T_m)+ \sigma^0(T_m)\big)=(r_{mn}+1)\sigma^{n-1}(T_m+1). \end{equation}
Applying $\sigma^n$ to $\LL^mT_m=r_m+T_m+1$ yields
\[ \LL^{mn}\sigma^n(T_m)=\sigma^n(r_m) + \sigma^{m-1}(r_m)\sigma^1(T_m+1) +\dots + \sigma^1(r_m)\sigma^{n-1}(T_m+1)+ \sigma^n(T_m+1) \]
which can be written using $\sigma^n(T_m+1)=\sigma^n(T_m)+ \sigma^{n-1}(T_m+1)$ and equation (\ref{eq1}) as
\[ r_{mn}\sigma^{n-1}(T_m+1)= \sigma^n(r_m) + \sigma^{n-1}(r_m)(T_m+1) + \ldots + r_m\sigma^{n-1}(T_m+1). \]
By induction on $n$, we see that $\sigma^n(r_m)$ is contained in the ideal $I:=(r_{m'}\mid m'>0)$ for all $m,n>0$. As $\LL,r_m\in R[T_m \mid m > 0]_{sp}$, this implies that $I$ is a $\lambda$-ideal. Thus, $R[(\LL^m-1)^{-1}\mid m>0]:=R[T_m \mid m > 0]/I$ is a $\lambda$-ring containing $R$ as a $\lambda$-subring. The elements $(\LL^m-1)^{-1}$ are given by the residue classes of $T_m$ and $(\LL^m-1)^{-1}\in R[(\LL^m-1)^{-1}\mid m>0]_{sp}$ follows. Note that $(\LL^m-1)^{-1}=(\psi^m(\LL-1))^{-1}$ for all $m>0$. 
\end{example}

\begin{lemma} \label{lambda_adjunction_3}
 Let $(R,\sigma_t)$ be a $\lambda$ ring and let $(P_\alpha)_{\alpha\in A}$ be a (possibly infinite) set of polynomials in $R_{sp}[T]$. There is a (unique up to isomorphism) universal $\lambda$-ring homomorphism $(R,\sigma_t)\longrightarrow (R',\sigma'_t)$ and a distinguished family $(x_\alpha)_{\alpha\in A}$ of elements in $R'$ such that
 \begin{itemize}
  \item[(i)] $x_\alpha\in R'_{sp}$ for all $\alpha\in A$,
  \item[(ii)] $P_\alpha(x_\alpha)=0$ for all $\alpha\in A$.
 \end{itemize}
We call $(R',\sigma'_t)$ the $\lambda$-ring obtained from $R$ by adjoining roots of $P_\alpha$ and use the notation $R\langle x_\alpha \mid \alpha\in A\rangle$.  
\end{lemma}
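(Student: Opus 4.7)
The plan is to construct the desired $\lambda$-ring in two stages: first universally adjoin indeterminate special elements $x_\alpha$ to $R$ without imposing any relations, obtaining an intermediate $\lambda$-ring $S$; then quotient $S$ by the smallest $\lambda$-ideal $J$ containing $\{P_\alpha(x_\alpha)\mid \alpha\in A\}$ and take $R':=S/J$. Uniqueness up to isomorphism follows automatically from the universal property, so the real content lies in producing $S$ and in verifying that the resulting quotient satisfies the required factorization.

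For the first stage I would start from the polynomial ring $S_0:=R[\sigma^n(x_\alpha)\mid\alpha\in A,\ n\geq 1]$, with the convention $\sigma^0(x_\alpha):=1$ and $\sigma^1(x_\alpha):=x_\alpha$, and extend $\sigma_t$ from $R$ to $S_0$ by declaring that each $x_\alpha$ be a special element, i.e.\ by imposing
\[\sigma^m\bigl(b\cdot \sigma^n(x_\alpha)\bigr)=P^{m,n}\bigl(\sigma^1(b),\dots,\sigma^m(b),\sigma^1(x_\alpha),\dots,\sigma^{mn}(x_\alpha)\bigr)\]
as in Definition \ref{Picard_group}, and on sums via $\sigma_t(a+b)=\sigma_t(a)\sigma_t(b)$. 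A cleaner conceptual alternative is to use that the ring of symmetric functions $\Lambda$ is the free $\lambda$-ring on one special element and realize $S$ as the coproduct $R\sqcup\bigl(\bigsqcup_{\alpha\in A}\Lambda\bigr)$ in the category of $\lambda$-rings; this coproduct exists because $\lambda$-rings form an algebraic (monadic) category over sets and therefore admit all small colimits. Either way, $S$ carries distinguished elements $x_\alpha\in S_{sp}$ satisfying the universal property that any $\lambda$-ring homomorphism $\phi\colon R\to T$ together with a family $y_\alpha\in T_{sp}$ extends uniquely to a $\lambda$-ring homomorphism $\tilde\phi\colon S\to T$ with $\tilde\phi(x_\alpha)=y_\alpha$.

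For the second stage, let $J$ be the intersection of all $\lambda$-ideals of $S$ containing the elements $P_\alpha(x_\alpha)$; arbitrary intersections of $\lambda$-ideals are plainly $\lambda$-ideals, so $J$ is well-defined and minimal. Setting $R':=S/J$ yields a $\lambda$-ring whose distinguished elements (images of $x_\alpha$) are still special, as specialness is preserved by $\lambda$-ring homomorphisms, and satisfy $P_\alpha(x_\alpha)=0$ by construction. The universal property follows formally: for any $\phi,\,y_\alpha$ as above, the kernel of $\tilde\phi\colon S\to T$ from the first stage is a $\lambda$-ideal containing every $P_\alpha(x_\alpha)$, hence contains $J$, so $\tilde\phi$ descends uniquely to a $\lambda$-ring homomorphism $R'\to T$. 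The main obstacle is the first stage: one must check that the recursive prescription of $\sigma^m$ on $S_0$ via the universal polynomials $P^{m,n}$ is internally consistent (independent of the order in which the identities are applied to a given monomial in the $\sigma^n(x_\alpha)$) and that the resulting $\sigma_t$ genuinely satisfies the axioms of a $\lambda$-ring. For a single free special generator this reduces to the classical fact that $\Lambda$ is a (special) $\lambda$-ring, and the general case follows by compatibility of $\lambda$-structures under iterated adjunction of special elements — which is precisely the point at which the assumption $P_\alpha\in R_{sp}[T]$, rather than merely $P_\alpha\in R[T]$, is indispensable.
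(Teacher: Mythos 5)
Your main construction is the paper's, essentially verbatim: adjoin free generators (your $\sigma^n(x_\alpha)$, the paper's $s_n^\alpha$) to $R$ as a polynomial ring, extend $\sigma_t$ recursively via the universal polynomials $P^{m,n}$ so that each $x_\alpha$ becomes special, and then quotient by the $\lambda$-ideal generated by the $P_\alpha(x_\alpha)$, invoking the universal property at the end. Both you and the paper defer the internal consistency of the recursive prescription to ``the properties of the $P^{m,n}$''.

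Two cautions about the side remarks, though. You write that ``specialness is preserved by $\lambda$-ring homomorphisms''; this holds for \emph{surjective} homomorphisms (the defining condition quantifies over all $b$ in the target, and surjectivity lets you pull $b$ back), which is what you need in the quotient step, but it is false in general. For exactly the same reason, your ``cleaner conceptual alternative'' is suspect: $\Lambda$ is the free \emph{special} $\lambda$-ring on one generator, but it does not represent the functor $R\mapsto R_{sp}$ on the category of all $\lambda$-rings, since a morphism $\Lambda\to R$ only forces the specialness identities for $b$ in its image, not for arbitrary $b\in R$. Consequently the coproduct $R\sqcup\bigl(\bigsqcup_\alpha\Lambda\bigr)$ need not carry honestly special elements $x_\alpha$ in the sense of Definition \ref{Picard_group}. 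Your first route (and the paper's) --- imposing the specialness identities directly against all of $R[s_n^\alpha]$ during the recursive construction --- is the one that actually delivers the universal property.
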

\begin{proof}
 We consider the polynomial ring $R'':=R[s_n^\alpha\mid n \ge 1, \alpha\in A]$ and define operations $\sigma^m$ recursively on monomials as follows. We fix a total order on $A$ and consider a monomial $z=c\prod_{k\ge 1, \alpha\in A} (s_k^\alpha)^{i_k^\alpha}$ which can be written as $z=s_n^\beta z'$, where $(\beta,n)\in A\times \NN$ is the biggest index $(\alpha,k)$ in the lexicographic order on $A\times \NN$ such that $i_k^\alpha\not=0$. Now we put   
 \[ \sigma^m(z) = P^{m,n}(s_{mn}^\beta,\ldots,s_1^\beta,\sigma^m(z'),\ldots, \sigma^1(z')) \]
 which defines $\sigma^m(z)$ recursively. Finally, we extend $\sigma^m$ to the free additive group $R''$ such that $\sigma_t$ is a group homomorphism. By the properties of the $P^{m,n}$, this extension defines a $\lambda$-ring structure and is independent of the order on $A$. Moreover, it is the unique extension such that $s_n^\alpha=\sigma^n(s_1^\alpha)\in R''_{sp}$. Finally, we mod out the $\lambda$-ideal generated by the elements $P_\alpha(s_1^\alpha)$ and denote the residue class of $s_1^\alpha$ with $x_\alpha$.
\end{proof}

\begin{example} \rm \label{lambda_ring_3}
 Suppose for simplicity $|A|=1$ and $P(T)=aT^r-1$ with $1,a\in R_{sp}$ and $r\ge 1$. Then, $\psi^n(a)^{-1/r}$ exists in $R\langle x\rangle$ for all $n>0$. Indeed, by applying $\psi^n$ to $P$, we obtain $\psi^n(a)\psi^n(x)^r=1$ in $R\langle x\rangle$. Hence, $\psi^n(a)$ is a unit, and $\psi^n(x)$ is an $r$-th root of its inverse. Thus, we get a ring homomorphism \[ R[(\psi^n(a))^{-1/r} \mid n \ge 1] \longrightarrow R\langle x\rangle\]
 which is surjective if $R$ is a $\QQ$-algebra.
\end{example}
\begin{example} \rm \label{lambda_ring_4} As a special case of the previous example consider the case $a=\LL-1, r=1$ with $1,\LL\in \Pic(R,\sigma_t)$. In this particular case, the ring homomorphism
 \[ R[(\psi^n(\LL-1))^{-1}\mid n\ge 1]=R[(\LL^n-1)^{-1}\mid n\ge 1] \longrightarrow R\langle x\rangle  \]
is surjective. Indeed, it contains the generators $\sigma^n(x)$ of the ring on the right hand side, as one can see by induction on $n\in \NN$ using the formula $\sigma^n(x)(\LL^n-1)=\sigma^{m-1}(x+1)=\sigma^{m-1}(x)+\ldots+ 1$. On the other hand, we can use the $\lambda$-ring structure on $R[(\LL^n-1)^{-1}\mid n\ge 1]$ constructed in Example \ref{lambda_ring_2} to obtain a $\lambda$-ring homomorphism
\[ R\langle x \rangle \longrightarrow R[(\LL^n-1)^{-1}\mid n\ge 1]\]
mapping $x$ to $(\LL-1)^{-1}$. As the first one was surjective and the composition is the identity on $R[(\LL^n-1)^{-1}\mid n\ge 1]$, we finally get
\[ R\langle (\LL-1)^{-1}\rangle \cong R[(\LL^n-1)^{-1}\mid n\ge 1] \]
as $\lambda$-rings. 
\end{example}
\begin{example} \rm \label{lambda_adjunction} Another special case is given for $a=\LL\in \Pic(R,\sigma_t)$. Fix any $r\in \NN$, and consider the $\lambda$-ring constructed in Example \ref{lambda_ring_1}. Thus we get a surjective $\lambda$-ring homomorphism 
\[ R\langle x\rangle \longrightarrow R[\LL^{-1/r}]\]
mapping $x$ to $\LL^{-1/r}$. For $r=1$, this is an isomorphism with the morphisms of Example \ref{lambda_ring_3} as its inverse because $(\psi^{n}(\LL))^{-1}=\LL^{-n} \in R[(\psi^n(\LL))^{-1} \mid n \ge 1]$ must already be contained in $R[\LL^{-1}]$. For $r>1$ the situation is different. Assuming that $r>1$ is even, then $R\langle x\rangle \longrightarrow R[\LL^{-1/r}]$ can never be an isomorphism. Otherwise, $x\in \Pic(R\langle x \rangle, \sigma_t)$ and $-x\in \Pic(R\langle x \rangle, \sigma_t)$ as there is an automorphism of $R\langle x\rangle $ mapping $x$ to $-x$ by the universal property, but this is impossible. However, we can form the two quotient $\lambda$-rings $R\langle x\rangle^\pm$ by modding out the lambda ideal generated by $\sigma^n(\pm x)- (\pm x)^n$ for all $n\in \NN$. Then, $\pm x \in \Pic(R\langle x\rangle^\pm,\sigma_t)$, and the pair $(R\langle x\rangle^\pm, x)$ satisfies a corresponding universal property. In particular, by mapping $x\to -x$, we get a $\lambda$-ring isomorphism $R\langle 
x\rangle^+\cong R\langle x \rangle^-$. If we map $x$ to $\pm \LL^{-1/r}$, we get a well-defined surjective $\lambda$-ring homomorphism
\[ R\langle x \rangle^\pm \longrightarrow R[\LL^{-1/r}]. \] Composed with the ring homomorphism
\[ R[\LL^{-1/r}] \longrightarrow R[(\LL^n)^{-1/r} \mid n \ge 1] \longrightarrow R\langle x\rangle \longrightarrow R\langle x\rangle^\pm \]
of Example \ref{lambda_ring_3}, we get the unique automorphism of $R[\LL^{-1/r}]$ fixing $R$ and mapping $\LL^{-1/r}$ to $\pm \LL^{-1/r}$. Moreover, the morphism $R[\LL^{-1/r}] \to R\langle x\rangle^\pm$ is surjective as $R\langle x\rangle^\pm$ is generated as a ring by $\sigma^n(\pm x)=(\pm x)^n$ which is the image of $\LL^{-n/r}$ for all $n\in \NN$. Thus,
\[ R\langle \LL^{-1/r}\rangle^\pm \cong R[\LL^{-1/r}] \]
as $\lambda$-rings with $\LL^{-1/r}$ on the left hand side mapping to $\pm \LL^{-1/r}$ on the right hand side. For odd $r>1$ the situation is similar due to the presence of nontrivial Galois $\lambda$-automorphisms of $R\langle \LL^{-1/r} \rangle$.
\end{example}

\subsection{Tensor product of $\lambda$-rings}

Using the notation of the previous subsection, we can introduce the tensor product of two $\lambda$-rings.

\begin{proposition}
Given three $\lambda$-rings $(R,\sigma_t), (R',\sigma'_t), (R'',\sigma''_t)$ and two $\lambda$-ring homomorphisms $\eta':R\to R'_{sp}\subseteq R'$ and $\eta'':R\to R''_{sp}\subseteq R''$, there is a well-defined $\lambda$-ring structure $(\sigma'\otimes\sigma'')_t$ on $R'\otimes_R R''$ such that 
\[\sigma^m(a\otimes b)=P^{m,1}(\sigma'^1(a),\ldots,\sigma'^m(a),\sigma''^1(b),\sigma''^m(b))\] 
for all $a\in R', b\in R''$. If $1'\in \Pic(R',\sigma'_t)$ and $1''\in \Pic(R'',\sigma''_t)$, then $1'\otimes 1''\in \Pic(R'\otimes R'',(\sigma'\otimes \sigma''_t)$. Moreover, $R'\ni a\mapsto a\otimes 1''\in R'\otimes R''$ is a $\lambda$-ring homomorphism and similarly for $R''\to R'\otimes R''$. The lambda ring $(R'\otimes_R R'',(\sigma'\otimes\sigma'')_t)$ together with the bilinear map $\otimes:R'\times R''\to R'\otimes_R R''$ is called the tensor product of $R'$ and $R''$ over $R$.
\end{proposition}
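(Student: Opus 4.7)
The construction proceeds by first defining the $\lambda$-operations explicitly on pure tensors and then extending them to sums by the group homomorphism axiom $\sigma_t(x+y)=\sigma_t(x)\sigma_t(y)$. On a pure tensor $a\otimes b\in R'\otimes_R R''$, I would set
\[
\sigma^m(a\otimes b) := P^{m,1}\bigl(\sigma'^1(a),\ldots,\sigma'^m(a),\sigma''^1(b),\ldots,\sigma''^m(b)\bigr),
\]
motivated by rewriting $a\otimes b=(a\otimes 1'')(1'\otimes b)$ and applying the universal identity that defines $R_{sp}$. An arbitrary element of $R'\otimes_R R''$ is a finite sum of pure tensors, and the value of $\sigma^m$ on such a sum is then dictated by expanding $\prod_i \sigma_t(a_i\otimes b_i)$ in $1+tR'\otimes_R R''[[t]]$ and reading off the coefficient of $t^m$.

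The crux of the proof is to show that this prescription descends to $R'\otimes_R R''$, i.e.\ is compatible with the defining relations of the tensor product over $R$. Bilinearity in each tensor slot reduces to a universal identity of the shape $P^{m,1}(\sigma'^{*}(a+a'),\sigma''^{*}(b))=\sum_{k=0}^m P^{k,1}(\sigma'^{*}(a),\sigma''^{*}(b))\cdot P^{m-k,1}(\sigma'^{*}(a'),\sigma''^{*}(b))$, reflecting the group homomorphism property in the universal $\lambda$-ring of symmetric functions. The serious step is the middle relation $\eta'(r)a\otimes b=a\otimes \eta''(r)b$ for $r\in R$; here the hypothesis $\eta'(R)\subseteq R'_{sp}$ and $\eta''(R)\subseteq R''_{sp}$ is essential, for it lets me apply the defining identity of the special part to expand both $\sigma'^{k}(\eta'(r)a)=P^{k,1}(\sigma'^{*}(\eta'(r)),\sigma'^{*}(a))$ and $\sigma''^{k}(\eta''(r)b)=P^{k,1}(\sigma''^{*}(\eta''(r)),\sigma''^{*}(b))$ and to use that $\sigma^{*}(r)=\sigma'^{*}(\eta'(r))=\sigma''^{*}(\eta''(r))$ in $R$ since $\eta'$ and $\eta''$ are $\lambda$-ring maps. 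Equality of the two resulting polynomial expressions in $\sigma'^{*}(a)$, $\sigma''^{*}(b)$ and $\sigma^{*}(r)$ reduces to a combinatorial identity among iterated compositions of the polynomials $P^{m,n}$, which I would verify by the splitting principle: pass to a universal $\lambda$-ring extension in which every element decomposes as a sum of line elements, so that all generating series factor as products of linear terms and the identity collapses to an obvious symmetry of the set of pairwise products.

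Having established well-definedness, the $\lambda$-ring axioms are quick. Normalization $\sigma_t(x)\equiv 1+xt\pmod{t^2}$ is read off the leading coefficient of $P^{m,1}$ together with the normalization of $\sigma'$ and $\sigma''$, and the additive group homomorphism property is built into the construction of Step 1. The fact that the canonical maps $\iota':a\mapsto a\otimes 1''$ and $\iota'':b\mapsto 1'\otimes b$ are $\lambda$-ring homomorphisms follows from the defining formula by setting $b=1''$ or $a=1'$ and invoking the universal collapse $P^{m,1}(\sigma'^{*}(a),\sigma''^{*}(1''))=\sigma'^{m}(a)$ (and symmetrically), which is again a symmetric-function identity provable via the splitting principle. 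Under the additional hypothesis $1'\in\Pic(R',\sigma'_t)$ and $1''\in\Pic(R'',\sigma''_t)$ we have $\sigma'^{k}(1')=1'$, $\sigma''^{k}(1'')=1''$ for all $k$; evaluating $P^{m,1}$ at these constants and using Lemma \ref{special_elements} then gives $\sigma^{m}(1'\otimes 1'')=(1'\otimes 1'')^{m}$, and specialness of $1'\otimes 1''$ in $R'\otimes_R R''$ follows from the universal compatibility of the $P^{m,n}$ with themselves.

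The principal technical obstacle is the verification of Step 2, namely well-definedness across the middle tensor relation, which depends on a nontrivial universal identity among the polynomials $P^{m,n}$. Conceptually this identity expresses the associativity of tensor products of (formal) line bundles and is precisely the reason the specialness hypothesis $\eta'(R)\subseteq R'_{sp}$, $\eta''(R)\subseteq R''_{sp}$ is imposed; without it the two sides of the relation cannot be unpacked into the same polynomial expression. I expect the verification to proceed most transparently by a splitting-principle argument inside the universal $\lambda$-ring on the generators $\eta'(r)$, $a$, $b$, where everything reduces to formal manipulation of generating series of the form $\prod(1-\ell_i t)^{-1}$.
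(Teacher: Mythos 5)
Your approach is essentially the same as the paper's: both define $\sigma^m$ on pure tensors (or, in the paper's phrasing, on the free abelian group $\bigoplus_{(a,b)\in R'\times R''}\ZZ e_{(a,b)}$) via the universal polynomials $P^{m,1}$, and both then reduce the question to showing that the defining relations of the tensor product are compatible with the $\lambda$-operations (i.e.\ form a $\lambda$-ideal), with the specialness hypotheses $\eta'(R)\subseteq R'_{sp}$ and $\eta''(R)\subseteq R''_{sp}$ entering precisely in handling the middle relation $\eta'(r)a\otimes b = a\otimes\eta''(r)b$. Your explicit invocation of the splitting principle to verify the universal identities among the $P^{m,n}$ is a standard fleshing-out of what the paper calls a ``straightforward generalization of the corresponding proof for special $\lambda$-rings.''
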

\begin{proof} The proof is a straight forward generalization of the corresponding proof for special $\lambda$-rings. One starts by constructing a $\lambda$-ring structure on $\oplus_{(a,b)\in R'\times R''}\ZZ e_{(a,b)}$ using $P^{m,1}$ and shows that the defining relations of the tensor product form a $\lambda$-ideal. For this we need that $\eta'$ and $\eta''$ have images in $R'_{sp}$ and $R''_{sp}$ respectively.
\end{proof}
The proof of the following lemma is also straightforward and left to the reader. 
\begin{lemma} \label{lambda_adjunction_5}
Say we are given a homomorphism $\eta:R=R_{sp}\longrightarrow R'_{sp}\subseteq R'$ from a special $\lambda$-ring $(R,\sigma_t)$ to a  $\lambda$-ring $(R',\sigma'_t)$ with $1'\in \Pic(R',\sigma'_t)$, and a family $(P_\alpha)_{\alpha\in A}$ of polynomials in $R[T]$. By applying $\eta$, we obtain a family $(\eta(P_\alpha))_{\alpha\in A}$ of polynomials in $R'_{sp}[T]$ and can adjoin roots $x'_\alpha$ to $R'$. Then there is an isomorphism
\[ R'\langle x'_\alpha \mid \alpha\in A \rangle \cong R'\otimes_{R} R\langle x_\alpha\mid \alpha\in A\rangle \]
such that $x'_\alpha$ maps to $1'\otimes x_\alpha$. 
\end{lemma}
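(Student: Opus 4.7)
The plan is to prove the isomorphism by matching universal properties on both sides. The left-hand side $R'\langle x'_\alpha\mid \alpha\in A\rangle$ has, by Lemma \ref{lambda_adjunction_3}, the following universal property: giving a $\lambda$-ring homomorphism $R'\langle x'_\alpha\mid\alpha\in A\rangle\to S$ is equivalent to specifying a $\lambda$-ring homomorphism $\rho:R'\to S$ together with elements $y_\alpha\in S_{sp}$ satisfying $\rho_*(\eta(P_\alpha))(y_\alpha)=0$, where $\rho_*:R'[T]\to S[T]$ is the induced map. Meanwhile, the right-hand side $R'\otimes_R R\langle x_\alpha\mid\alpha\in A\rangle$, with the $\lambda$-structure supplied by the preceding proposition on tensor products (which applies since $\eta$ lands in $R'_{sp}$ and the canonical map $R\to R\langle x_\alpha\rangle$ automatically lands in the special subring, both $R$ being special itself), corepresents the functor of pairs of $\lambda$-ring homomorphisms $\rho_1:R'\to S$ and $\rho_2:R\langle x_\alpha\rangle\to S$ whose restrictions to $R$ coincide. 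Composing the second universal property of $R\langle x_\alpha\rangle$ with the one of the tensor product, giving such a pair $(\rho_1,\rho_2)$ is the same as giving $\rho_1:R'\to S$ together with elements $y_\alpha\in S_{sp}$ satisfying $\eta(P_\alpha)(y_\alpha)=0$.

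The two universal properties therefore agree, and the Yoneda lemma produces a canonical isomorphism, provided we verify that the tautological choice of data on the right-hand side is admissible, i.e.\ that $y_\alpha:=1'\otimes x_\alpha$ lies in $(R'\otimes_R R\langle x_\alpha\rangle)_{sp}$ and satisfies $\eta(P_\alpha)(1'\otimes x_\alpha)=0$. The second condition is a short computation: writing $P_\alpha=\sum_k r_k T^k$ with $r_k\in R$ and using the tensor product relation $\eta(r_k)\otimes 1=1'\otimes r_k$, one gets
\[
\eta(P_\alpha)(1'\otimes x_\alpha)=\sum_k(\eta(r_k)\otimes 1)(1'\otimes x_\alpha^k)=1'\otimes P_\alpha(x_\alpha)=0.
\]
The element $x_\alpha\in R\langle x_\alpha\rangle_{sp}$ then forces $1'\otimes x_\alpha\in (R'\otimes_R R\langle x_\alpha\rangle)_{sp}$ by Lemma \ref{special_elements} applied to the $\lambda$-subring generated by the images of $R'$ and $R\langle x_\alpha\rangle$, combined with the fact that $1'\in\Pic(R',\sigma'_t)$ satisfies $\sigma^k(1')=1'$ for all $k$, so $\sigma^n(1'\otimes x_\alpha)=P^{n,1}(1',\dots,1',\sigma^1(x_\alpha),\dots,\sigma^n(x_\alpha))=1'\otimes\sigma^n(x_\alpha)$.

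The main obstacle will be this last verification of specialness. Concretely one must show that for any $c\in R'\otimes_R R\langle x_\alpha\rangle$ and any $m,n$ the identity
\[
\sigma^m\bigl(c\,\sigma^n(1'\otimes x_\alpha)\bigr)=P^{m,n}\bigl(\sigma^\bullet(c),\sigma^\bullet(1'\otimes x_\alpha)\bigr)
\]
holds. By linearity and multiplicativity one reduces to the case $c=a\otimes b$ and then rewrites $c\cdot \sigma^n(1'\otimes x_\alpha)=a\otimes(b\,\sigma^n(x_\alpha))$; the identity to prove then follows from the defining relation $P^{m,1}$ of the tensor product $\lambda$-structure together with the specialness of $x_\alpha$ in $R\langle x_\alpha\rangle$, which gives $\sigma^k(b\,\sigma^n(x_\alpha))=P^{k,n}(\sigma^\bullet(b),\sigma^\bullet(x_\alpha))$. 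Combining these two polynomial identities reduces the check to the algebraic identity $P^{m,1}(\bar y, P^{1,n}(\bar z_1,\bar w),\dots,P^{m,n}(\bar z_m,\bar w))=P^{m,n}(\ldots)$ in the universal totally symmetric polynomial ring, which is a formal consequence of the associativity of taking complete symmetric functions of products. Once this is in place, the two universal properties are genuinely identified and the isomorphism sending $x'_\alpha$ to $1'\otimes x_\alpha$ follows.
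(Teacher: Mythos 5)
Your universal-property strategy hinges on the tensor product $R'\otimes_R R\langle x_\alpha\rangle$ being a pushout in the category of $\lambda$-rings, i.e.\ corepresenting pairs $(\rho_1,\rho_2)$ of $\lambda$-ring homomorphisms to a test ring $S$ that agree on $R$. But the proposition preceding the lemma only constructs the $\lambda$-ring structure on $R'\otimes_R R''$; it does not claim, and in general cannot claim, such a mapping property. To extend $(\rho_1,\rho_2)$ to a $\lambda$-ring homomorphism $R'\otimes_R R''\to S$ one would need $\sigma^m(\rho_1(a)\rho_2(b))=P^{m,1}(\sigma^1(\rho_1(a)),\dots,\sigma^m(\rho_1(a)),\sigma^1(\rho_2(b)),\dots,\sigma^m(\rho_2(b)))$ for all $a,b,m$, which is precisely the condition that one of $\rho_1(a)$, $\rho_2(b)$ lies in $S_{sp}$. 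For an arbitrary $\lambda$-ring $S$ this is not automatic: $S_{sp}$ is defined by a quantifier over all of $S$, so a $\lambda$-ring homomorphism out of a special $\lambda$-ring need not land in $S_{sp}$, and in particular $\rho_2(R)$ can fail to be special in $S$ even though $R$ is. So the right-hand side does not corepresent the functor you write down, and the Yoneda step does not close. The same issue underlies your aside that ``$R\to R\langle x_\alpha\rangle$ automatically lands in the special subring, both $R$ being special itself'': that statement is true, but not for the reason given; it requires unwinding the recursive definition of the $\lambda$-structure on $R[s^\alpha_n]$ in the proof of Lemma~\ref{lambda_adjunction_3}.

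The most economical repair is to drop the appeal to a pushout property and construct the second map by hand. You already have $R'\langle x'_\alpha\rangle\to R'\otimes_R R\langle x_\alpha\rangle$ from the universal property of the left-hand side once you have checked $\eta(P_\alpha)(1'\otimes x_\alpha)=0$ and $1'\otimes x_\alpha\in(R'\otimes_R R\langle x_\alpha\rangle)_{sp}$. For the other direction, let $\phi:R\langle x_\alpha\rangle\to R'\langle x'_\alpha\rangle$ be the $\lambda$-ring homomorphism given by the universal property with $x_\alpha\mapsto x'_\alpha$ and restriction $\eta$ on $R$; then $a\otimes b\mapsto a\,\phi(b)$ is a $\lambda$-ring homomorphism provided the image of $\phi$ lies in $(R'\langle x'_\alpha\rangle)_{sp}$, which again comes down to checking $\eta(R)\subseteq R'_{sp}\subseteq(R'\langle x'_\alpha\rangle)_{sp}$ and $\sigma^n(x'_\alpha)\in(R'\langle x'_\alpha\rangle)_{sp}$ against the explicit recursion. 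One then verifies the two maps are mutual inverses on the generators $a\otimes 1$, $1\otimes\sigma^n(x_\alpha)$. Equivalently, and more in the spirit of the paper's remark that the proof is straightforward, one can avoid universal properties entirely: by the construction of Lemma~\ref{lambda_adjunction_3}, $R\langle x_\alpha\rangle=R[s^\alpha_n]/I$ for a $\lambda$-ideal $I$, so $R'\otimes_R R\langle x_\alpha\rangle\cong R'[s^\alpha_n]/(R'\cdot I)$ and $R'\langle x'_\alpha\rangle=R'[s^\alpha_n]/I'$, and all that remains is to match the $\lambda$-structures on $R'[s^\alpha_n]$ and the ideals $R'\cdot I$ and $I'$.
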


\end{appendix}

\bibliographystyle{plain}
\bibliography{Literatur}

\begin{thebibliography}{10}

\bibitem{Behrend}
K.~Behrend.
\newblock {Donaldson--Thomas type invariants via microlocal geometry}.
\newblock {\em Ann. of Math. (2)}, 170, no. 3, 2009.
\newblock math.AG/0507523.

\bibitem{BBS}
K.~Behrend, J.~Byan, and B.~Szendr\H{o}i.
\newblock {Motivic degree zero Donaldson--Thomas invariants}.
\newblock {\em Invent. Math.}, 192, 2013.
\newblock arXiv:0909.5088.

\bibitem{Betley}
S.~Betley.
\newblock {An approach to $F_1$ via the theory of $\Lambda$-rings}.
\newblock 2014.
\newblock arXiv:1408.2987.

\bibitem{Biglari}
S.~Biglari.
\newblock On lambda operations on mixed motives.
\newblock {\em J. of K-theory}, 12, no. 2:381--404, 2013.

\bibitem{Bittner04}
F.~Bittner.
\newblock The universal euler characteristic for varieties of characteristic
  zero.
\newblock {\em Comp. Math.}, 140:1011--1032, 2004.

\bibitem{Bridgeland10}
T.~Bridgeland.
\newblock An introduction to motivic hall algebras.
\newblock {\em preprint}, 2010.

\bibitem{CisinskyDeglise}
D.~Ch. Cisinski and F.~D\'{e}glise.
\newblock {Triangulated categories of mixed motives}.
\newblock 2012.
\newblock arXiv:0912.2110.

\bibitem{DavisonMeinhardt4}
B.~Davison and S.~Meinhardt.
\newblock {Cohomological Donaldson--Thomas theory of a quiver with potential
  and quantum enveloping algebras}.
\newblock in preparation.

\bibitem{DaMe2}
B.~Davison and S.~Meinhardt.
\newblock {The motivic Donaldson-Thomas invariants of (-2) curves}.
\newblock 2012.
\newblock arXiv:1208.2462.

\bibitem{DaMe1}
B.~Davison and S.~Meinhardt.
\newblock {Motivic DT-invariants for the one loop quiver with potential}.
\newblock {\em Geometry and Topology}, 2015.
\newblock DOI: 10.2140/gt.2015.19.2535.

\bibitem{Deligne1}
P.~Deligne.
\newblock {Cat\'{e}gories tensorielles}.
\newblock {\em Mosc. Math. J.}, 2, no.:227--248, 2002.
\newblock Dedicated to Yuri I. Manin on the occasion of his 65th birthday.

\bibitem{DenefLoeser2}
J.~Denef and F.~Loeser.
\newblock {Geometry on arc spaces of algebraic varieties}.
\newblock In {\em {European Congress of Mathematics, Vol. I (Barcelona,
  2000)}}, volume 201 of {\em Progr. Math.}, pages 327--348. Birkh\"{a}user.

\bibitem{DenefLoeser3}
J.~Denef and F.~Loeser.
\newblock {Motivic exponential integrals and a motivic Thom--Sebastiani
  theorem}.
\newblock {\em Duke Math. J.}, 99, no. 2:285--309, 1999.
\newblock math.AG/9803048.

\bibitem{DenefLoeser1}
J.~Denef and F.~Loeser.
\newblock {Lefschetz numbers of iterates of the monodromy and truncated arcs}.
\newblock {\em Topology}, 4, no. 5:1031--1040, 2002.
\newblock math.AG/0001105.

\bibitem{Reineke1}
J.~Engel and M.~Reineke.
\newblock Smooth models of quiver moduli.
\newblock {\em Math. Z.}, 262, no. 4:817--848, 2009.
\newblock arXiv:0706.4306.

\bibitem{GHH}
M.G. Gulbrandsen, L.H. Halle, and K.~Hulek.
\newblock {A relative Hilbert--Mumford criterion}.
\newblock {\em Manuscripta Mathematica}, 148, no. 3:283--301, 2015.

\bibitem{Hartshorne}
R.~Hartshorne.
\newblock {\em Algebraic Geometry}.
\newblock Springer, 1977.
\newblock GTM 52.

\bibitem{Heinloth1}
F.~Heinloth.
\newblock {A note on funtional equations for zeta functions with values in Chow
  motives}.
\newblock {\em Ann. Inst. Fourier (Grenoble)}, 57, no. 6:1927--1945, 2007.
\newblock math.AG/0512237.

\bibitem{Hovey07}
M.~Hovey.
\newblock {\em Model Categories}.
\newblock AMS.

\bibitem{JoyceI}
D.~Joyce.
\newblock {Configurations in abelian categories. I. Basic properties and moduli
  stacks}.
\newblock {\em Advances in Mathematics}, 203:194--255, 2006.
\newblock math.AG/0312190.

\bibitem{JoyceCF}
D.~Joyce.
\newblock {Constrictable functions on Artin stacks}.
\newblock {\em J. L.M.S.}, 74, 2006.
\newblock math.AG/0403305.

\bibitem{JoyceII}
D.~Joyce.
\newblock {Configurations in abelian categories. II. Ringel--Hall algebras}.
\newblock {\em Advances in Mathematics}, 210:635--706, 2007.
\newblock math.AG/0503029.

\bibitem{JoyceIII}
D.~Joyce.
\newblock {Configurations in abelian categories. III. Stability conditions and
  identities}.
\newblock {\em Advances in Mathematics}, 215:153--219, 2007.
\newblock math.AG/0410267.

\bibitem{JoyceMF}
D.~Joyce.
\newblock {Motivic invariants of Artin stacks and `stack functions'}.
\newblock {\em Quarterly Journal of Mathematics}, 58, 2007.
\newblock math.AG/0509722.

\bibitem{JoyceIV}
D.~Joyce.
\newblock {Configurations in abelian categories. IV. Invariants and changing
  stability conditions}.
\newblock {\em Advances in Mathematics}, 217:125--204, 2008.
\newblock math.AG/0503029.

\bibitem{JoyceDTS}
D.~Joyce.
\newblock {Generalized Donaldson-Thomas invariants}.
\newblock {\em Surveys in Differential Geometry XVI}, 2011.

\bibitem{JoyceDT}
D.~Joyce and Y.~Song.
\newblock {A theory of generalized Donaldson--Thomas invariants}.
\newblock {\em Mem.Amer. Math. Soc.}, 217(1020), 2012.
\newblock math.AG/08105645.

\bibitem{KS1}
M.~Kontsevich and J.~Soibelman.
\newblock {Stability structures, motive Donaldson--Thomas invariants and
  cluster transformations}.
\newblock 2008.
\newblock math.AG/08112435.

\bibitem{KS3}
M.~Kontsevich and Y.~Soibelman.
\newblock {Motivic Donaldson--Thomas invariants: summary of results}.
\newblock In {\em Mirror symmetry and tropical geometry}, volume 527 of {\em
  Contemp. Math.}, pages 55--89. Amer. Math. Soc., Providence, RI, 2010.

\bibitem{KS2}
M.~Kontsevich and Y.~Soibelman.
\newblock {Cohomological Hall algebra, exponential Hodge structures and motivic
  Donaldson--Thomas invariants}.
\newblock {\em Commun. Number Theory Phys.}, 5, 2011.
\newblock arXiv:1006.2706.

\bibitem{LarsenLunts}
M.~Larsen and V.~Lunts.
\newblock {Rationality criteria for motivic zeta functions}.
\newblock {\em Compos. Math.}, 140, no. 6:1537--1560, 2004.
\newblock math.AG/0212158.

\bibitem{Schuermann}
L.~Maxima, M.~Saito, and J.~Sch\"urmann.
\newblock Symmetric products of mixed hodge modules.
\newblock {\em Journal de Math\'ematiques Pures et Appliqu\'ees}, 96, no.
  5:462–483, 2011.

\bibitem{Voevodsky}
C.~Mazza, V.~Voevodsky, and Ch. Weibel.
\newblock {\em {Lectures in Motivic Cohomology}}, volume~2 of {\em Clay
  Monographs in Math}.
\newblock AMS, http://math.rutgers.edu/$\sim$weibel/motiviclectures.html, 2006.

\bibitem{Meinhardt4}
S.~Meinhardt.
\newblock {Donaldson--Thomas invariants versus intersection cohomology for
  categories of homological dimension one}.
\newblock 2015.
\newblock arXiv:1512.03343.

\bibitem{MeinhardtReineke}
S.~Meinhardt and M.~Reineke.
\newblock {Donaldson--Thomas invariants versus intersection cohomology of
  quiver moduli}.
\newblock 2014.
\newblock arXiv:1411.4062.

\bibitem{MMNS}
A.~Morrison, S.~Mozgovoy, K.~Nagao, and B.~Szendr\H{o}i.
\newblock {Motivic Donaldson--Thomas invariants of the conifold and the refined
  topological vertex}.
\newblock {\em Adv. Math.}, 230, 2012.

\bibitem{MumfordGIT}
D.~Mumford and J.~Fogarty.
\newblock {\em Geometric {Invariant} {Theory}}, volume Erg. {Math.} 34, 2nd ed.
\newblock Springer, Berlin, Heidelberg, 1982.

\bibitem{Reineke_HN}
M.~Reineke.
\newblock The {H}arder-{N}arasimhan system in quantum groups and cohomology of
  quiver moduli.
\newblock {\em Invent. Math.}, 152(2):349--368, 2003.

\bibitem{Reineke_counting}
M.~Reineke.
\newblock Counting rational points of quiver moduli.
\newblock {\em Int. Math. Res. Not.}, pages Art. ID 70456, 19, 2006.

\bibitem{Reineke2}
M.~Reineke.
\newblock Poisson automorphisms and quiver moduli.
\newblock {\em J. Inst. Math. Jussieu}, 9, no. 3:653--667, 2010.
\newblock arXiv:0804.3214.

\bibitem{Reineke3}
M.~Reineke.
\newblock Cohomology of quiver moduli, functional equations, and integrality of
  {Donaldson-Thomas} type invariants.
\newblock {\em Comp. Math.}, 147, no. 3:943--964, 2011.
\newblock arXiv:0903.0261.

\bibitem{Reineke4}
M.~Reineke.
\newblock {Degenerate Cohomological Hall algebra and quantized Donaldson-Thomas
  invariants for m-loop quivers}.
\newblock {\em Doc. Math.}, 17:1--22, 2012.
\newblock arXiv:1102.3978.

\bibitem{Thomas1}
R.P. Thomas.
\newblock A holomorphic casson invariant for {Calabi--Yau} 3-folds, and bundles
  on {K3} fibrations.
\newblock {\em J. Diff. Geom.}, 54:367--438, 2000.
\newblock math.AG/9806111.

\end{thebibliography}

\vfill

\textsc{\small B. Davison: MA B1 437 (B\^atiment MA), Station 8, CH-1015 Lausanne, Switzerland}\\
\textit{\small E-mail address:} \texttt{\small nicholas.davison@epfl.ch}\\
\\

\textsc{\small S. Meinhardt: Fachbereich C, Bergische Universit\"at Wuppertal, Gau{\ss}stra{\ss}e 20, 42119 Wuppertal, Germany}\\
\textit{\small E-mail address:} \texttt{\small meinhardt@uni-wuppertal.de}\\

\end{document}